\newcommand{\numberset}{\mathbb}
\newcommand{\N}{\numberset{N}}
\newcommand{\R}{\numberset{R}}
\newcommand{\ud}{\mathrm{d}}
\theoremstyle{plain} 
\newtheorem{thm}{Theorem} 
\newtheorem{cor}[thm]{Corollary}
\newtheorem{lem}[thm]{Lemma}  
\newtheorem{prop}[thm]{Proposition}
\newtheorem*{theorem*}{Teorema}
\theoremstyle{definition} 
\newtheorem{defn}[thm]{Definition}
\newtheorem{rem}[thm]{Remark} 
\newtheorem{notation}{Notation}
\numberwithin{equation}{section}
\numberwithin{thm}{section}
\title[Hamilton-Jacobi equations for the $N$-body problem ]{\textbf{On the regularity of solutions to the Hamilton-Jacobi equations for the $N$-body problem}}
\date{}
\subjclass{70F10 70H20 70G75 49L25}
\keywords{viscosity solution; Hamilton-Jacobi equation; $N$-body problem; singularity set; expanding solutions}
\thanks{All authors are affiliated to INDAM-GNAMPA research group. This work is partially supported by the PRIN 2022 project 20227HX33Z -- \emph{Pattern formation in nonlinear phenomena}.  D. B. was supported by MUR - M4C2 1.5 of PNRR with grant no. ECS00000036.}
\author{Diego Berti}
\address{Dipartimento di Matematica “Giuseppe Peano”\\Università degli Studi di Torino}
\email{diego.berti@unito.it}
\author{Davide Polimeni}
\address{Dipartimento di Matematica “Giuseppe Peano”\\Università degli Studi di Torino}
\email{davide.polimeni@unito.it}
\author{Susanna Terracini}
\address{Dipartimento di Matematica “Giuseppe Peano”\\Università degli Studi di Torino}
\email{susanna.terracini@unito.it}
\begin{document}

\begin{abstract}

We prove that certain renormalized value functions associated with the $d$-dimensional ($d\geq2$) $N$-body problem corresponding to different limiting shapes of expanding solutions, under the assumption that the center of mass is at the origin, are viscosity solutions of the associated Hamilton-Jacobi equation. We analyze their singularities, defined as the initial configurations for which the minimizer of the associated variational problem is not unique. Moreover, we estimate the size of the closure of the singular set by proving its $\mathcal{H}^{d(N-1)-1}$-rectifiability, and we provide an upper bound on the Hausdorff dimension of the set of regular conjugate points.

\end{abstract}

\maketitle

\tableofcontents

\section{Introduction}The Hamilton-Jacobi equation plays a fundamental role in the analysis of dynamical systems, particularly in Celestial Mechanics. The Newtonian $N$-body problem -- describing the motion of $N$ point masses under mutual gravitational attraction -- provides a rich framework for studying viscosity solutions of the equation. Over the last two decades, substantial progress has been made in characterizing such solutions and connecting them to the geometric and variational properties of the $N$-body system (cf., e.g., \cite{MadernaVenturelli_GloballyMinimizingParabolic,MadernaVenturelli_HyperbolicMotions,Maderna2012, PercinoSanchez2014}).

We consider the classical Newtonian $N$-body problem in the Euclidean space $\mathbb{R}^d$, with $d \geq 2$, where $N$ point masses $m_i > 0$ occupy positions $r_i \in \mathbb{R}^d$. Newton's equation of motion for the $i$-th body can be written as:
\begin{displaymath}\label{eq:newton}
    m_i\ddot{r}_i = -\sum_{\substack{j=1 \\ j\neq i}}^{N} m_i m_j \frac{r_i - r_j}{|r_i - r_j|^3}.
\end{displaymath}
Since Newton's equations are invariant under translation, we may fix the origin of our inertial frame at the center of mass of the system and consider the configuration space with zero barycenter:
\[
    \mathcal{X} = \left\{x=(r_1,\ldots,r_N)\in\mathbb{R}^{dN} : \sum_{i=1}^{N} m_i r_i = 0\right\},
\]
and denote by
\[
    \Omega = \left\{x\in \mathcal{X} \,\middle|\, r_i \neq r_j \text{ for all } i \neq j\right\}
\]
the set of collision-free configurations, which is open and dense in $\mathcal{X}$. We write $\Delta = \mathcal{X} \setminus \Omega$ for the collision set.

The Newtonian potential is given by:
\[
    U(x) = \sum_{1 \leq i < j \leq N} \frac{m_i m_j}{|r_i - r_j|},
\]
so that the equations of motion take the more compact form:
\begin{equation}\label{eq_newton}
    \mathcal{M}\ddot{x} = \nabla U(x),
\end{equation}
where $\mathcal{M} = \text{diag}(m_1 I_d, \ldots, m_N I_d)$. The associated Lagrangian $L : T\Omega \to \mathbb{R} \cup \{+\infty\}$ and Hamiltonian $H : T^*\Omega \to \mathbb{R} \cup \{-\infty\}$ are defined by
\[
    L(x,v) = \frac{1}{2} \|v\|_\mathcal{M}^2 + U(x), \qquad
    H(x,p) = \frac{1}{2} \|p\|_{\mathcal{M}^{-1}}^2 - U(x),
\]
{where $\|\cdot\|_\mathcal{M}$ is the norm induced by the mass inner product on $\mathcal{X}$, given by
\[
    \langle x, y \rangle_\mathcal{M} = \sum_{i=1}^{N} m_i \langle r_i, s_i \rangle,
    \qquad x=(r_1,\ldots,r_N),\; y=(s_1,\ldots,s_N)\in \mathcal{X},
\]
and $\|\cdot\|_{\mathcal{M}^{-1}}$ denotes the dual norm on $T^*\Omega$, induced by the inverse mass metric. Here $\langle \cdot,\cdot \rangle$ is the standard Euclidean scalar product in $\mathbb{R}^d$.}

We are interested in global solutions $v : \Omega \to \mathbb{R}$ of the stationary Hamilton-Jacobi equation at a fixed energy level $h \geq 0$:
\begin{equation}\label{eq:HJ}
   H(x,\nabla v(x))= \frac12 \|\nabla v(x)\|_{\mathcal M^{-1}}^2 - U(x)=h,
\end{equation}
where, here and throughout the paper, $\nabla$ denotes the Euclidean gradient, having fixed the canonical basis of $T\Omega$.

 Our goal is to address existence and regularity questions concerning viscosity solutions associated with expansive solutions of the $N$-body problem, obtained via minimization of a suitably renormalized action functional with prescribed initial data.

\begin{defn}
    A motion $\gamma:[0,+\infty) \rightarrow \Omega$ is said to be \emph{expansive} if all mutual distances diverge, that is, $|r_i(t)-r_j(t)| \rightarrow +\infty$ as $t \rightarrow +\infty$ for all $i < j$. Equivalently, $\gamma$ is expansive if $U(\gamma(t)) \rightarrow 0$ as $t \rightarrow +\infty$.
\end{defn}

From energy conservation, it follows that expansive motions can only exist at nonnegative energy levels, since the condition $U(\gamma(t)) \to 0$ as $t \to +\infty$ implies $\|\dot{\gamma}(t)\|_\mathcal{M}^2 \to 2h$. To classify expansive motions, we define the minimum and maximum separation between bodies along a motion $\gamma$ at time $t$:
\[
    r(t) = \min_{i<j} |r_i(t) - r_j(t)|, \qquad R(t) = \max_{i<j} |r_i(t) - r_j(t)|.
\]

The following classical results, dating back to the 1970s, describe the asymptotic behavior of expansive motions. For positive functions $f$ and $g$, we write $f \approx g$ if there exist constants $\alpha, \beta > 0$ such that $\alpha \leq \frac{f}{g} \leq \beta$.

\begin{thm}\label{PollardTheorem}
\begin{itemize}
    \item[$(i)$] (Pollard, 1967 \cite{Pollard_BehaviorOfGravitationalSystems}) Let $\gamma$ be a motion defined for all $t > t_0$. If $r(t)$ is bounded away from zero, then $R(t) = O(t)$ as $t \rightarrow +\infty$. Moreover, $R(t)/t \rightarrow +\infty$ if and only if $r(t) \rightarrow 0$.
    
    \item[$(ii)$] (Marchal-Saari, 1976 \cite{MarchalSaari_FinalEvolution}) Let $\gamma$ be a motion defined for all $t > t_0$. Then either $R(t)/t \to +\infty$ and $r(t) \to 0$, or there exists a configuration $a \in \mathcal{X}$ such that $\gamma(t) = at + O(t^{2/3})$. In particular, for \emph{superhyperbolic} motions -- i.e., motions such that $\limsup_{t \to +\infty} R(t)/t = +\infty$ -- the quotient $R(t)/t$ diverges.
    
    \item[$(iii)$] (Marchal-Saari, 1976 \cite{MarchalSaari_FinalEvolution}) Suppose $\gamma(t) = at + O(t^{2/3})$ for some $a \in \mathcal{X}$, and that the motion is expansive. Then for each pair $i < j$ with $a_i = a_j$, we have $|r_i(t)-r_j(t)| \approx t^{2/3}$.
\end{itemize}
\end{thm}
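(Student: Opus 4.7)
All three statements rest on the Lagrange--Jacobi identity for the moment of inertia $I(t)=\tfrac{1}{2}\|x(t)\|_{\mathcal M}^{2}$,
\begin{equation*}
    \ddot I(t) \;=\; 2h+U(x(t)),
\end{equation*}
together with energy conservation $\tfrac12\|\dot x(t)\|_{\mathcal M}^2=h+U(x(t))$; both follow from the $(-1)$-homogeneity of $U$ through Euler's relation $\langle\nabla U(x),x\rangle=-U(x)$. For (i), if $r(t)\geq c>0$ on $[t_0,+\infty)$, the explicit form of $U$ gives $U(x(t))\leq M=M(c,\{m_i\})$, so $\ddot I\leq 2h+M$ integrates to $I(t)=O(t^{2})$; since the mass norm is equivalent to the Euclidean one and controls every mutual distance, $R(t)=O(t)$. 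The ``moreover'' part requires more refined analysis: the implication $r(t)\to 0 \Rightarrow R(t)/t \to +\infty$ is Pollard's main observation, obtained by showing that a partial collision at infinity forces super-linear escape of the non-colliding bodies via energy conservation, while the converse follows by extending the previous estimate along maximal intervals where $r\geq c$ for any chosen $c>0$.

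\textbf{Part (ii).} In the non-superhyperbolic case $\limsup R(t)/t<+\infty$, the Tauberian heart of the argument is the integrability $U(x(\cdot))\in L^{1}(t_0,+\infty)$: pairs with $|r_i-r_j|\asymp t$ contribute a $1/t$ density to $U$, while pairs with $|r_i-r_j|=o(t)$ belong to clusters whose internal moment of inertia, by Sundman-type inequalities, grows fast enough to make their contribution integrable. Once this is in hand, $\ddot I-2h\in L^{1}$, so $\dot I(t)=2ht+O(1)$ and $I(t)=ht^{2}+O(t)$. A further refinement, based on integrating $\nabla U(x(\cdot))$ in the Newton equation $\mathcal M\ddot x=\nabla U(x(t))$, upgrades this to vectorial convergence $\dot x(t)\to a$ and hence $x(t)/t\to a$, with $\|a\|_{\mathcal M}^{2}=2h$ by energy. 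The sharp remainder $O(t^{2/3})$ follows by a bootstrap with (iii): using the cluster lower bound $|r_i-r_j|\gtrsim t^{2/3}$ one obtains $|\nabla U(x(t))|=O(t^{-4/3})$, and integrating twice via $\mathcal M\ddot y=\nabla U(x(t))$ with $y(t)=x(t)-at$ yields $\|y(t)\|_{\mathcal M}=O(t^{2/3})$.

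\textbf{Part (iii) and main obstacle.} Group the indices by the limit velocity $a_i$ into clusters $\alpha$; for $i,j$ in the same cluster, $r_i-r_j=y_i-y_j$, which gives the upper bound $|r_i-r_j|\leq\|y(t)\|_{\mathcal M}=O(t^{2/3})$ from (ii). The matching lower bound is obtained by restricting the analysis to the cluster: bodies in other clusters lie at distances $\asymp t$ and contribute $O(t^{-2})$ to the internal Newton equation for $r_i-r_j$, so modulo an integrable perturbation the cluster behaves as an autonomous $N_\alpha$-body problem with effectively zero internal energy. The self-similar parabolic balance $\ddot I_\alpha\asymp U_\alpha\asymp I_\alpha^{-1/2}$ then forces $I_\alpha(t)\asymp t^{4/3}$, and Sundman's inequality inside the cluster yields $r_\alpha(t)\gtrsim t^{2/3}$. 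The main obstacle throughout is precisely this bootstrap: establishing $U(x(\cdot))\in L^{1}$ needs the cluster estimate $r_\alpha(t)\gtrsim t^{2/3}$, which in turn is most cleanly proved inside the $O(t^{2/3})$ framework. Disentangling this loop, via a careful combination of Sundman-type inequalities on sub-systems and the global Lagrange--Jacobi identity, is the technical core of the Marchal--Saari analysis and is the step I would expect to be the hardest.
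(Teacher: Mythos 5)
The paper does not give its own proof of this theorem: items $(i)$--$(iii)$ are cited verbatim from Pollard \cite{Pollard_BehaviorOfGravitationalSystems} and Marchal--Saari \cite{MarchalSaari_FinalEvolution}, with no argument supplied beyond the statement itself (the sentence immediately following the theorem merely draws a corollary). So there is no internal proof to compare your attempt against.

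That said, your sketch is a faithful outline of the classical argument. The Lagrange--Jacobi identity $\ddot I = 2h + U$ (which follows from $\ddot I = 2T - U$ and energy conservation $T = h + U$), the $O(t^2)$ bound on $I$ from $U$ being bounded when $r \geq c$, the integrability of $U(x(\cdot))$ to pass from $I \sim ht^2$ to existence of the asymptotic velocity $a$, and the cluster/Sundman analysis for the parabolic scaling $t^{2/3}$ are indeed the core ingredients in Pollard's and Marchal--Saari's proofs. You correctly identify the bootstrap between the $L^1$ estimate on $U$ and the lower bound $r_\alpha(t) \gtrsim t^{2/3}$ as the technically delicate point; in Marchal--Saari this loop is broken by first establishing a cruder a priori lower bound on cluster sizes using Sundman's inequality together with a quantitative form of $(i)$, and only then running the refinement. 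Since the paper treats these as black-box facts, there is nothing further to reconcile, but note that a complete write-up would have to flesh out the Tauberian step (passing from $I = ht^2 + O(t)$ to $\dot x \to a$, which needs convexity of $\sqrt{I}$ or an equivalent device) and the non-colliding-bodies dichotomy in the ``moreover'' of $(i)$, both of which you gesture at but do not close.
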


As a consequence, expansive motions cannot be superhyperbolic, and therefore must satisfy $\gamma(t) = at + O(t^{2/3})$ for some asymptotic velocity $a \in \mathcal{X}$. Chazy (\cite{Chazy}) provided a complete classification of expansive motions with zero barycenter, based on the asymptotic growth rate of mutual distances. They fall into the following categories:
\begin{itemize}
    \item[$(H)$] \textit{Hyperbolic}: $a \in \Omega$, with $|r_i(t) - r_j(t)| \approx t$ for all $i < j$;
    \item[$(P)$] \textit{Completely parabolic}: $a = 0$, with $|r_i(t) - r_j(t)| \approx t^{2/3}$ for all $i < j$;
    \item[$(HP)$] \textit{Hyperbolic-parabolic}: $a \in \Delta$, $a \neq 0$.
\end{itemize}

To further refine this classification, we introduce the notion of \emph{limit shape}, where we consider the diagonal action $S(t)\gamma = (S(t)r_1, \ldots, S(t)r_N)$. 

\begin{defn}
    A motion $\gamma(t)$ is said to have a \emph{limit shape} if there exists a family of similarity transformations $S(t)$ of $\mathbb{R}^d$ such that $S(t)\gamma(t) \to a \neq 0$ as $t \to +\infty$.
\end{defn}

For (half) hyperbolic motions, the limit shape is simply the asymptotic velocity $a = \lim_{t\to\infty} \frac{\gamma(t)}{t}$; for (half) parabolic motions, if a limit shape exists, it must be a central configuration.

\begin{defn}\label{def:central_configuration}
    A configuration $b \in \mathcal{X}$ is called a \emph{central configuration} if it is a critical point of $U$ restricted to the inertia ellipsoid
    \[
        \mathcal{E} = \left\{x \in \mathcal{X} : \langle \mathcal{M} x, x \rangle = 1\right\}.
    \]
    A central configuration $b_m \in \mathcal{E}$ is said to be \emph{minimal} if it minimizes $U$ over $\mathcal{E}$:
    \[
        U(b_m) = \min_{b \in \mathcal{E}} U(b).
    \]
\end{defn}

Polimeni and Terracini \cite{PolimeniTerracini} generalized and refined earlier results by Maderna and Venturelli \cite{MadernaVenturelli_GloballyMinimizingParabolic,MadernaVenturelli_HyperbolicMotions}, proving the existence of half-entire expansive solutions of all three types $(H)$, $(P)$, and $(HP)$ via a unified variational framework. Their approach relies on minimizing a renormalized Lagrangian action functional using the direct method in the calculus of variations.

\begin{thm}[Maderna and Venturelli, 2020 \cite{MadernaVenturelli_HyperbolicMotions}]\label{thm_hyperbolic}
    Given $d \geq 2$, for the Newtonian $N$-body problem in $\mathbb{R}^d$, there exists a hyperbolic solution $\gamma : [1,+\infty) \to \mathcal{X}$ of the form
    \[
        \gamma(t) = at - \log(t){\nabla_{\mathcal M}} U(a) + O(1), \quad \text{as } t \to +\infty,
    \]
    for any initial condition $x = \gamma(1) \in \mathcal{X}$ and any collision-free configuration $a \in \Omega$. {Here, $\nabla_{\mathcal M}$ denotes the gradient with respect to $\langle \cdot, \cdot \rangle_{\mathcal M}$.}
\end{thm}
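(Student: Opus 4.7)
The plan is to construct $\gamma$ by the direct method: for each large $T>1$, minimize the Lagrangian action on the space of $H^1$-paths on $[1,T]$ joining $x$ to $aT$, then pass to the limit $T\to+\infty$ using a priori estimates that force the minimizers to hug the straight line $t\mapsto at$. Concretely, fix $a\in\Omega$ and set
\[
\mathcal{A}_T(\gamma)=\int_1^T L(\gamma(t),\dot\gamma(t))\,\mathrm{d}t,\qquad \gamma(1)=x,\ \gamma(T)=aT,
\]
and let $\gamma_T$ be a minimizer. Existence follows from Tonelli's direct method: the kinetic part is weakly lower semicontinuous and coercive in $H^1$, while the potential $U\geq 0$ is continuous on $\Omega$ and lower semicontinuous (extended by $+\infty$) across $\Delta$. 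To compare with the natural competitor $t\mapsto (x(T-t)+aT(t-1))/(T-1)$ one checks that its action is bounded by $\tfrac12\|a\|_{\mathcal M}^2 T+O(1)$, which yields a uniform upper bound on $\mathcal{A}_T(\gamma_T)$ after subtracting the linear-in-$T$ part.

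The second step is to rule out collisions along $\gamma_T$. In the setting $d\geq 2$ this is guaranteed by Marchal's theorem on action-minimizing paths, which asserts that a local minimizer of the Newtonian action in dimension at least two cannot meet the collision set in the interior of its interval of definition. Consequently each $\gamma_T$ is a classical $C^2$ solution of the Newton equation $\mathcal M\ddot\gamma=\nabla U(\gamma)$ on $(1,T)$.

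Next come the uniform estimates that let one pass to the limit. Using the energy identity and the upper action bound one first shows that on any compact subinterval $[1,S]\subset[1,T]$ the paths $\gamma_T$ are uniformly bounded in $H^1$, and that their distance from $\Delta$ is bounded below uniformly on $[1,S]$ (since $a\in\Omega$ and the boundary values are collision-free). By Ascoli--Arzelà one extracts, through a diagonal argument, a subsequence converging locally uniformly (with $\dot\gamma_T$ converging weakly in $H^1_{\mathrm{loc}}$) to a classical solution $\gamma:[1,+\infty)\to\mathcal X$ with $\gamma(1)=x$, which is itself action-minimizing on every compact interval and therefore collision-free by the same Marchal argument. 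The quantitative part of the estimate, showing $\gamma(t)=at+O(\log t)$, comes from comparing $\mathcal A_T(\gamma_T)$ with the action of the linear path and exploiting the strict convexity of $L$ in $\dot\gamma$; the renormalization that makes the energy level appear is precisely the reason the estimate is logarithmic rather than polynomial.

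The main obstacle, and the final step, is to upgrade the crude estimate to the explicit asymptotic $\gamma(t)=at-\log(t)\nabla U(a)+O(1)$. Granted that $\gamma(t)=at+o(t)$ with $a\in\Omega$, one has $|r_i(t)-r_j(t)|\approx t|a_i-a_j|$ for every pair, so by the $(-2)$-homogeneity of $\nabla U$,
\[
\nabla U(\gamma(t))=\frac{1}{t^2}\nabla U(a)+o(t^{-2})\qquad\text{as }t\to+\infty.
\]
Substituting in Newton's equation and integrating from $t$ to $+\infty$ using the fact that $\dot\gamma(t)\to a$ (which follows from energy conservation together with $U(\gamma(t))\to 0$) yields
\[
\dot\gamma(t)=a-\frac{1}{t}\,\mathcal M^{-1}\nabla U(a)+o(t^{-1}),
\]
and a second integration from $1$ to $t$ produces the claimed $-\log(t)\,\nabla U(a)$ term, up to an $O(1)$ remainder. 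The delicate point here is justifying the $o(t^{-2})$ remainder in the expansion of $\nabla U(\gamma(t))$: one must propagate the crude bound $\gamma(t)=at+O(\log t)$ into the smooth function $\nabla U$ via its Lipschitz behavior on compact subsets of $\Omega$ rescaled by $t$, and show that the resulting error is integrable at infinity. This bootstrap from an $o(t)$-estimate to the sharp $O(1)$-estimate is the heart of the proof.
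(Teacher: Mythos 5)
Theorem \ref{thm_hyperbolic} is cited, not proved, in this paper. The route the paper points to is the renormalized action framework: minimize $\mathcal{A}_x(\varphi)$ of Definition \ref{def:ren_action} directly over $\mathcal{D}_0^{1,2}(1,+\infty)$ with reference path $r_0(t)=at$, using the coercivity of the renormalized functional together with Theorem \ref{th:ren_act_pr} (the original Maderna--Venturelli proof is different again, going through viscosity solutions and fixed points of the Lax--Oleinik semigroup). Your finite-horizon Dirichlet approximation -- minimize $\int_1^T L$ with $\gamma(T)=aT$, then send $T\to+\infty$ -- is a genuinely different third route, closer in spirit to classical scattering arguments, and the final bootstrap step you sketch (using $(-2)$-homogeneity of $\nabla U$ and integrating Newton's equation twice) is the correct way to pass from a crude estimate to the sharp asymptotic $at-\log(t)\,\nabla U(a)+O(1)$.

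There is, however, a real gap in the a priori estimates, and it is precisely the gap that the renormalized action is designed to close. Writing $\gamma_T(t)=at+\psi_T(t)$ with $\psi_T(1)=x-a$ and $\psi_T(T)=0$, the comparison with the linear competitor gives
\[
\frac12\int_1^T\|\dot\psi_T\|_{\mathcal{M}}^2\,\mathrm{d}t\ \le\ U(a)\log T+O(1),
\]
and Cauchy--Schwarz then yields only $\|\psi_T(t)\|_{\mathcal{M}}\lesssim\sqrt{t\,\log T}$ for $t\in[1,T]$. For fixed $t$ this bound diverges as $T\to+\infty$, so it does not survive the limiting process and does not establish the intermediate claim $\gamma(t)=at+O(\log t)$ (nor even $\gamma(t)=at+o(t)$); consequently, the limit trajectory obtained by Ascoli--Arzel\`a is not forced to have asymptotic velocity $a$, and energy conservation alone only controls $\|\dot\gamma\|_\mathcal{M}$, not its direction. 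The missing ingredient is a $T$-uniform bound of the form $\|\psi_T(t)\|_{\mathcal{M}}\le C\sqrt{t}$, which is exactly what the renormalized setting supplies for free: coercivity of $\mathcal{A}_{x,[1,T]}$ (Lemma \ref{lem:coercivity_estimates}) gives a bound on $\|\varphi\|_{\mathcal{D}_T}$ independent of $T$, and Proposition \ref{dis_hardy} converts it into the pointwise bound \eqref{dis_space_D012}. If you replace your action comparison with this renormalized coercivity estimate -- that is, subtract $U(at)$ inside the integrand before comparing -- the $O(\sqrt{t})$ control becomes uniform, the limit path has asymptotic velocity $a$, and your two-step bootstrap then closes the argument exactly as you describe.
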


The method in \cite{MadernaVenturelli_HyperbolicMotions} constructs global viscosity solutions to the Hamilton-Jacobi equation \eqref{eq:HJ} at positive energy, associated with a given collision-free asymptotic velocity $a$. These solutions are shown to be fixed points of the associated Lax-Oleinik semigroup.

For the parabolic case, the result of \cite{MadernaVenturelli_GloballyMinimizingParabolic} was improved in \cite{PolimeniTerracini}, providing sharper asymptotic estimates:

\begin{thm}[Maderna and Venturelli, 2009 \cite{MadernaVenturelli_GloballyMinimizingParabolic}; Polimeni and Terracini, 2024 \cite{PolimeniTerracini}]\label{thm_parabolic}
    Given $d \geq 2$, for the Newtonian $N$-body problem in $\mathbb{R}^d$, there exists a parabolic solution $\gamma : [1,+\infty) \to \mathcal{X}$ of the form
    \begin{equation}\label{eq:parabolic}
        \gamma(t) = \beta b_m t^{2/3} + o(t^{1/3^+}), \quad \text{as } t \to +\infty,
    \end{equation}
    for any initial configuration $x = \gamma(1) \in \mathcal{X}$, any minimal normalized central configuration $b_m$, and $\beta = \sqrt[3]{\frac{9}{2}U(b_m)}$.
\end{thm}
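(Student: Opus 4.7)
The plan is to construct $\gamma$ as a minimizer of a renormalized Lagrangian action with respect to the reference curve $\gamma_0(t) = \beta b_m t^{2/3}$. A direct computation, using $\|b_m\|_{\mathcal{M}} = 1$, the Lagrange-multiplier identity $\nabla U(b_m) = -U(b_m)\,\mathcal{M} b_m$ coming from Definition \ref{def:central_configuration}, and $\beta^3 = 9U(b_m)/2$, shows that $\gamma_0$ is itself a zero-energy parabolic solution of \eqref{eq_newton}, while $L(\gamma_0,\dot\gamma_0) \approx t^{-2/3}$, so that $\int_1^{+\infty} L(\gamma_0,\dot\gamma_0)\,dt = +\infty$ and a renormalization is unavoidable.

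For each $T > 1$ I would first minimize the fixed-endpoint action $\int_1^T L(\gamma,\dot\gamma)\,dt$ over absolutely continuous curves with $\gamma(1) = x$ and $\gamma(T) = \gamma_0(T)$, subtracting the divergent benchmark $\int_1^T L(\gamma_0,\dot\gamma_0)\,dt$. By Tonelli's direct method a minimizer $\gamma_T$ exists, and Marchal's lemma (valid for $d \geq 2$) guarantees that it is collision-free on $(1,T)$, hence a classical solution of Newton's equation. Taking $\gamma_0$ itself as a competitor and exploiting the $(-1)$-homogeneity of $U$, the renormalized action $\mathcal{A}_T(\gamma_T)$ remains uniformly bounded as $T \to +\infty$. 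Combined with the Lagrange-Jacobi identity, this bound delivers local $C^1$-compactness of the family $\{\gamma_T\}$; a diagonal extraction then yields a limit curve $\gamma : [1,+\infty) \to \mathcal{X}$ with $\gamma(1) = x$, solving Newton's equation at zero energy on its maximal collision-free interval. The zero-energy relation, together with the action bound, forces $\|\gamma(t)\|_{\mathcal{M}}^2 \sim \beta^2 t^{4/3}$ and $\gamma(t)/t^{2/3} \to \beta b_m$, which already yields the coarser expansion $\gamma(t) = \beta b_m t^{2/3} + o(t^{2/3})$.

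The sharper remainder $o(t^{1/3^+})$, which is the refinement of \cite{PolimeniTerracini} over \cite{MadernaVenturelli_GloballyMinimizingParabolic}, is the main technical obstacle. The natural strategy is to set $\xi(t) = \gamma(t) - \beta b_m t^{2/3}$ and linearize Newton's equation around $\gamma_0$; after the time change $s = \log t$ this becomes a constant-coefficient second-order ODE whose characteristic exponents are determined by the spectrum of $\nabla^2 U(b_m)$ restricted to $T_{b_m}\mathcal{E}$. Minimality of $b_m$ keeps that spectrum under control and allows a classification of the modes of $\xi$ by asymptotic growth. A variational blocking argument then shows that the minimization procedure selects only the decaying modes: any component of $\xi$ growing faster than $t^{1/3+\varepsilon}$ could be neutralized by a competitor with strictly smaller renormalized action, contradicting the variational characterization of $\gamma$. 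I expect that carrying out this blocking argument in the presence of the degenerate directions associated with the rotational and dilational invariances of the $N$-body problem, and matching the linearized asymptotic analysis with the nonlinear global construction, will be the most delicate step.
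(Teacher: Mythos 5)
Your proposal and the route actually taken in \cite{PolimeniTerracini} (which the paper cites and summarizes in the \emph{Renormalized Action Principle}, Theorem~\ref{th:ren_act_pr}) share the idea of subtracting the divergent benchmark action along $r_0(t)=\beta b_m t^{2/3}$, but the construction of the minimizer is genuinely different. You propose a sequence of two-point boundary value problems on $[1,T]$ with the artificial terminal constraint $\gamma(T)=\gamma_0(T)$, followed by Tonelli existence, a uniform bound, and a diagonal extraction. The paper's approach minimizes the renormalized action \emph{once}, directly on the half-line, over the Hilbert space $\mathcal D_0^{1,2}(1,+\infty)$; the direct method together with the coercivity estimates of Section~\ref{sec:coercivity_estimates} gives the minimizer in one shot, and Marchal's principle then upgrades it to a classical solution. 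The one-shot method avoids the limit passage entirely and, crucially, avoids imposing a terminal condition that has no intrinsic meaning: the limit shape $b_m$ is supposed to emerge asymptotically, not be prescribed at each finite $T$. Your version is closer in spirit to the original \cite{MadernaVenturelli_GloballyMinimizingParabolic} construction that \cite{PolimeniTerracini} was designed to supersede.

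Beyond this stylistic difference there is a concrete gap in your step from the uniform action bound to the limit shape. You claim that ``the zero-energy relation, together with the action bound, forces $\|\gamma(t)\|_\mathcal M^2\sim\beta^2 t^{4/3}$ and $\gamma(t)/t^{2/3}\to\beta b_m$.'' The Lagrange--Jacobi identity at zero energy gives the first assertion (the growth of the moment of inertia), but the second — convergence of the \emph{normalized configuration} to the specific minimal central configuration $b_m$ — is by no means an automatic consequence. For a general zero-energy free-time minimizer one knows that normalized accumulation points of $\gamma(t)/\|\gamma(t)\|$ are central configurations, but pinning down that the limit exists and equals the prescribed $b_m$ requires either the structure of the renormalized functional centered at $r_0=\beta b_m t^{2/3}$ (which your fixed-endpoint problem only partially captures), or a separate argument in the spirit of Chazy's theorem. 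As stated, this step does not follow from what precedes it. A smaller but real issue: $\gamma_0$ itself is not an admissible competitor in your finite-horizon problem unless $x=\beta b_m$, since $\gamma_0(1)=\beta b_m\neq x$; the comparison curve must be corrected near $t=1$ (and one must then verify the correction contributes only a bounded amount to the renormalized action, which is exactly what the term $x-r_0(1)$ in Definition~\ref{def:ren_action} is engineered to handle).

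On the refined remainder $o(t^{1/3^+})$: in the Hilbert-space formulation the membership $\varphi\in\mathcal D_0^{1,2}(1,+\infty)$ immediately gives $\|\varphi(t)\|_\mathcal M=O(\sqrt{t})$ via \eqref{dis_space_D012}, and the sharper rate is obtained in \cite{PolimeniTerracini} by exploiting the equation and a weighted bootstrap, not by an ODE normal-form analysis. Your linearization-after-$s=\log t$ idea with a spectral classification of modes of $\nabla^2 U(b_m)$ and a ``variational blocking argument'' is a legitimate alternative strategy (it is close to the McGehee blowup picture), but as written it is only a program: you have not identified the spectral gap that rules out slowly growing modes, nor shown that every competitor killing a bad mode actually lowers the renormalized action without reintroducing singularities. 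That is precisely the hard point, and flagging it as ``the most delicate step'' without resolving it leaves the sharp remainder unproven.
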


We point out that in Theorem \ref{thm_parabolic}, as well as in the following Theorem \ref{thm_partially_hyperbolic}, the condition that the central configuration is minimal is somehow not exhaustive, and the possibility of the existence of geodesic rays with other configurations cannot be excluded.

This variational framework also led to the first general existence results for hyperbolic-parabolic motions, where the asymptotic velocity $a \in \Delta \setminus \{0\}$ lies in the collision set, extending earlier results in \cite{Burgos_PartiallyHyperbolic}. Following \cite{BurgosMaderna_GeodesicRays, PolimeniTerracini}, we can define an \emph{$a$-cluster partition} via the equivalence relation:
\begin{equation}\label{eq:equivalence_relation}
    i \sim j \Longleftrightarrow a_i = a_j.
\end{equation}
Given a cluster $K$, they define the \emph{partial potential} $U_K$ (the restriction of $U$ to cluster $K$) and the \emph{$a$-clustered potential} $U_a$ (the sum over all cluster potentials $U_K)$. Notice that, in \cite{BurgosMaderna_GeodesicRays}, it is shown the key property that each of these partitions gives rise to an orthogonal decomposition for the inner product of the masses.

\begin{thm}[Polimeni and Terracini, 2024 \cite{PolimeniTerracini}]\label{thm_partially_hyperbolic}
    Given $d \geq 2$, for the Newtonian $N$-body problem in $\mathbb{R}^d$, there exists a hyperbolic-parabolic motion $\gamma : [1,+\infty) \to \mathcal{X}$ of the form
    \begin{equation}\label{eq:hyperbolic_parabolic}
           \gamma(t) = at + \beta b_m t^{2/3} + o(t^{1/3^+}), \quad \text{as } t \to +\infty,
    \end{equation}
    
    for any initial configuration $x = \gamma(1) \in \mathcal{X}$, any collision configuration $a \in \Delta$, any normalized minimal central configuration $b_m$ of the $a$-clustered potential, and any $h > 0$.
\end{thm}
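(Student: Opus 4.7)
The plan is to construct $\gamma$ as a minimizer of a suitably renormalized Lagrangian action, following the direct-method framework of \cite{MadernaVenturelli_HyperbolicMotions,MadernaVenturelli_GloballyMinimizingParabolic,PolimeniTerracini}. First I would fix the target asymptotic profile $\varphi(t)=at+\beta b_m t^{2/3}$ and compute the Lagrangian along it. Because inter-cluster distances along $\varphi$ grow linearly while intra-cluster distances grow like $t^{2/3}$, and because $b_m$ is a central configuration of the $a$-clustered potential $U_a$, the integrand $L(\varphi,\dot\varphi)-h$ decomposes into a constant piece $\tfrac12\|a\|_\mathcal{M}^2-h$, a parabolic piece of order $t^{-2/3}$ that integrates to a finite central-configuration action, and an inter-cluster potential of order $t^{-1}$ whose integral contributes a logarithmic counterterm exactly as in Theorem \ref{thm_hyperbolic}. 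Subtracting these divergent contributions yields a renormalized action $\widetilde{\mathcal A}_T$ for which $\widetilde{\mathcal A}_T(\varphi)$ stays uniformly bounded as $T\to+\infty$.

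Next I would minimize $\widetilde{\mathcal A}_T$ over $H^1$-curves joining $x$ to $\varphi(T)$, obtaining minimizers $\gamma_T$ by standard Tonelli arguments: coercivity and weak lower semicontinuity follow from non-negativity of $L+h$ together with the explicit form of the renormalization, and weak $H^1_\mathrm{loc}$ compactness produces the minimizer. A diagonal extraction then yields a limit $\gamma:[1,+\infty)\to\mathcal X$. The asymptotic expansion \eqref{eq:hyperbolic_parabolic} would be derived via comparison: inserting a test curve that coincides with $\gamma_T$ outside a window $[s/2,2s]$ and with $\varphi$ inside $[s,3s/2]$, and bounding the action difference using convexity of the kinetic term together with a Taylor expansion of $U$ around $\varphi$, leads to an iterative inequality which, combined with Theorem \ref{PollardTheorem}(iii), yields the required $o(t^{1/3^+})$ remainder uniformly in $T$, and hence for the limit $\gamma$.

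The main obstacle, and where the argument departs significantly from both the purely hyperbolic and purely parabolic cases, is excluding interior collisions of the limit minimizer $\gamma$. Since $a\in\Delta$, bodies in the same cluster are \emph{allowed} to approach at the parabolic rate $t^{2/3}$ without ever colliding, so one cannot apply Marchal's averaging to the full potential $U$ naively and must distinguish finite-time collisions from the legitimate asymptotic clustering at infinity. For any hypothetical finite-time collision of the minimizer, I would apply Marchal's averaging argument (valid for $d\geq 2$) locally around the collision time: by the Sundman--Sperling asymptotics the colliding subsystem is approximately self-similar with a central configuration, and rotating the approach in $\mathbb R^d$ produces an admissible variation that strictly decreases the action, contradicting minimality. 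The selection of the correct leading coefficient $\beta b_m$ in the parabolic term is enforced at infinity by the minimality of $b_m$ for $U_a$ on the inertia ellipsoid, since any competing asymptotic profile would raise the central-configuration piece of the renormalized action. Once interior collisions are ruled out, $\gamma$ solves \eqref{eq_newton} classically on $(1,+\infty)$, energy conservation gives $H(\gamma,\mathcal M\dot\gamma)\equiv h$ by the choice of the renormalization constants, and \eqref{eq:hyperbolic_parabolic} follows from the comparison estimate above.
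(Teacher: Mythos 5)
Your proposal reconstructs the existence result via a finite-horizon boundary-value scheme in the spirit of Maderna and Venturelli's original parabolic construction (minimize over curves from $x$ to $\varphi(T)$ on $[1,T]$, then diagonalize), whereas the approach of Polimeni and Terracini to which this paper refers is genuinely different: one minimizes, \emph{directly on the half-line}, the renormalized functional $\mathcal{A}_x$ of Definition~\ref{def:ren_action} over the weighted Sobolev space $\mathcal{D}_0^{1,2}(1,+\infty)$, and then invokes the Renormalized Action Principle (Theorem~\ref{th:ren_act_pr}) to pass from a minimizer $\varphi^{\min}$ to a collision-free free-time minimizer $\gamma=r_0+\varphi^{\min}+x-r_0(1)$. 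The half-line formulation buys two things your route has to earn by hand: first, membership in $\mathcal{D}_0^{1,2}$ automatically forces $\|\varphi(t)\|_\mathcal{M}\le\|\varphi\|_{\mathcal{D}}\sqrt{t}$ (inequality~\eqref{dis_space_D012}), so there is no diagonal extraction and no need to propagate uniform-in-$T$ estimates; second, the renormalization $U(\gamma)-U(r_0)-\langle\mathcal{M}\ddot r_0,\varphi\rangle_{\mathcal{M}}$ is designed precisely so that the functional is finite and coercive on $\mathcal{D}_0^{1,2}$, with no boundary data imposed at infinity other than finite weighted kinetic energy. Your route can in principle be made to work, but it imports substantial additional bookkeeping.

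Two concrete issues deserve attention. First, your claim that the ``parabolic piece of order $t^{-2/3}$ \dots\ integrates to a finite central-configuration action'' is incorrect as stated: $\int_1^T t^{-2/3}\,\mathrm{d}t$ diverges like $T^{1/3}$, so this piece must itself be part of the divergent counterterms being subtracted (your next sentence appears to acknowledge this, but the phrasing is misleading and would confuse a reader setting up the renormalization). In the paper's scheme this divergence is absorbed by subtracting $U(r_0(t))$. Second, Theorem~\ref{PollardTheorem}(iii) gives only the scale $|r_i-r_j|\approx t^{2/3}$ once the motion is known to be of the form $at+O(t^{2/3})$; it does \emph{not}, by itself, upgrade the remainder to $o(t^{1/3^+})$. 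The sharper remainder is precisely the refinement of \cite{PolimeniTerracini} over \cite{MadernaVenturelli_GloballyMinimizingParabolic}, and it requires a separate bootstrap argument on the Euler--Lagrange system (or, in the half-line picture, a refined decay estimate for the minimizer $\varphi^{\min}$ beyond the crude $O(\sqrt t)$ bound). Your ``iterative inequality'' is the right kind of mechanism, but as written it is a placeholder for the hardest step of the proof.
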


This decomposition {reveals} that among hyperbolic-parabolic motions there are those consisting of clusters of bodies whose centers of mass diverge linearly, while internal distances within each cluster grow like $t^{2/3}$, converging to a prescribed limit shape given by a minimal configuration of $U_K$. It is worth mentioning that the existence of hyperbolic-parabolic motions whose limit shape is a non-minimal central configurations is an open problem. The action functional decomposes accordingly, separating global cluster dynamics from internal cluster structure.

\begin{cor}[Polimeni and Terracini, 2024 \cite{PolimeniTerracini}]
    The motions $\gamma(t)$ in Theorems \ref{thm_hyperbolic}, \ref{thm_parabolic}, and \ref{thm_partially_hyperbolic} are continuous at $t = 1$ and collision-free for $t > 1$. Moreover, they are free-time minimizers of the action at their respective energy levels.
\end{cor}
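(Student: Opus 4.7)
The plan is to handle the three claims separately, relying on the fact that each motion $\gamma$ in the cited theorems is obtained as a limit, along a diverging sequence $t_n \to +\infty$, of Tonelli minimizers $\gamma_n : [1, t_n] \to \mathcal{X}$ of the renormalized action with fixed endpoint $\gamma_n(1) = x$. Since this renormalized functional is coercive and weakly lower semicontinuous on the natural Sobolev class, the sequence $\gamma_n$ converges uniformly on compact subsets of $[1, +\infty)$ to $\gamma$. The endpoint value $\gamma(1) = x$ is preserved by this uniform convergence, which immediately yields continuity at $t = 1$.

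To establish collision-freeness for $t > 1$, I would invoke the Marchal-Chenciner theorem on the absence of interior collisions for action-minimizing arcs of the Newtonian $N$-body problem in dimension $d \geq 2$. A putative interior collision at some $t_0 > 1$ could be removed by a Marchal-type local variation supported in a small neighborhood of $t_0$, strictly decreasing the unrenormalized Lagrangian action on that window. Since the renormalization only alters the action through subtraction of the asymptotic Keplerian or parabolic profiles (which are smooth functions of the endpoints and of time), such a compactly supported variation produces the same strict decrease of the renormalized action. This contradicts the minimality of $\gamma_n$ restricted to $[1, t_n]$ for $n$ sufficiently large, and the strict inequality persists in the limit.

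For the free-time minimizer property at the prescribed energy $h$, I would combine two facts. Following \cite{MadernaVenturelli_HyperbolicMotions,MadernaVenturelli_GloballyMinimizingParabolic,PolimeniTerracini}, the viscosity solutions associated with the three cases are fixed points of the Lax-Oleinik semigroup generated by $L$; standard convex-duality arguments then identify characteristic curves of such fixed points with free-time minimizers at the corresponding energy level. Independently, differentiating the finite-time action with respect to the terminal time along the optimal parameter yields the Maupertuis identity $H(\gamma, \mathcal{M}\dot{\gamma}) = h$, which together with the previous identification completes the claim in each of the three regimes ($h > 0$ hyperbolic, $h = 0$ parabolic, $h > 0$ hyperbolic-parabolic).

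The main obstacle lies in the second step: Marchal's argument is originally formulated for finite-time minimizers of the unrenormalized Lagrangian action, whereas here we deal with limits of minimizers of a renormalized action on unbounded intervals, and in the parabolic and hyperbolic-parabolic cases the renormalization is genuinely nontrivial. It is necessary to show that a local, compactly supported variation retains its strict action-decrease after renormalization, and that this strict inequality survives the limit $t_n \to +\infty$. Careful localization of the variation, combined with uniform action estimates on the renormalized minimizers provided by \cite{PolimeniTerracini}, is the key technical ingredient that makes the Marchal mechanism interact cleanly with the asymptotic profile subtracted in the renormalization.
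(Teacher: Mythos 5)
Your proposal takes a genuinely different route from the one in \cite{PolimeniTerracini}, and the route has a logical gap in the third step that would have to be repaired before the argument closes.

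The paper derives this corollary not from a limit of Tonelli minimizers on $[1,t_n]$ but directly from the Renormalized Action Principle, Theorem~\ref{th:ren_act_pr}: a minimizer $\varphi^{\min}$ of $\mathcal{A}_x$ on $\mathcal{D}_0^{1,2}(1,+\infty)$ is found by the direct method, and the renormalized functional is designed precisely so that its minimality over $\mathcal{D}_0^{1,2}(1,+\infty)$ translates, by comparison with competitor paths sharing the same endpoint data, into free-time minimality of the \emph{unrenormalized} action at the prescribed energy level. Continuity at $t=1$ is immediate since $\varphi^{\min}(1)=0$ and $\mathcal{D}_0^{1,2}$ embeds in $C([1,T])$ for every $T$; and once $\gamma$ is known to be a free-time minimizer of the unrenormalized action, Marchal's Theorem~\ref{thm_marchal} applies \emph{directly} to every finite subinterval, so no interaction between the Marchal variation and the renormalization needs to be analyzed. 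This makes your second paragraph, with its careful localization and ``strict decrease persists in the limit'' argument, unnecessary: the obstacle you flag dissolves once the order of quantifiers is that of the Renormalized Action Principle.

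The real problem is in your third step. You want to conclude that $\gamma$ is a free-time minimizer because ``the viscosity solutions associated with the three cases are fixed points of the Lax--Oleinik semigroup, and characteristic curves of such fixed points are free-time minimizers.'' In the literature you cite, and in the present paper, the implication runs the other way: one first establishes (via the Renormalized Action Principle or its predecessors) that the motions are free-time minimizers, and only then shows that the associated value functions are viscosity solutions and fixed points of Lax--Oleinik. Indeed, proving that $v$ is a viscosity solution is the \emph{goal} of the present paper, not an input. Using the viscosity-solution characterization to prove the free-time minimality is circular in this setup. The Maupertuis identity $H(\gamma,\mathcal{M}\dot\gamma)=h$ that you invoke is correct but only gives energy conservation, which is necessary, not sufficient, for free-time minimality. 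To close the gap you would need an independent proof that $\gamma$ is a free-time minimizer, and that is exactly the content of Theorem~\ref{th:ren_act_pr}, which your proposal does not use.
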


We are now ready to state the main result of this paper.

\begin{thm}
    Let $a \in \Omega$ (type $(H)$), or let $b_m$ be a minimal central configuration of $U$ (type $(P)$), or let $a \in \Delta$ and $b_m$ be a normalized minimal central configuration of the $a$-clustered potential (type $(HP)$). Then, there exists a viscosity solution to the $N$-body Hamilton-Jacobi equation \eqref{eq:HJ}. The singular set of such a solution is a countably $\mathcal{H}^{d(N-1)-1}$-rectifiable subset of the configuration space $\mathcal{X}$. Moreover, we have
    \[
\mathrm{dim}_{\mathcal H}(\Gamma \setminus \Sigma) \le d(N-1)-2, 
\]
where $\Gamma$ and $\Sigma$ denote respectively the conjugate and irregular sets of points (Definitions \ref{def:irregular} and \ref{def:conjugate}).

\end{thm}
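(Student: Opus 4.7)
\emph{Construction of the viscosity solution.} For each of the three limiting regimes, the plan is to define a renormalized value function as the limit of finite-time minimum actions, subtracting the divergent part of the action dictated by the prescribed asymptotic shape. Concretely, fix a reference motion $\gamma_*$ of the required type provided by Theorems~\ref{thm_hyperbolic}--\ref{thm_partially_hyperbolic}, and for $T>1$ set
\[
u_T(x)=\inf\bigl\{A_h(\eta,[1,T])\ :\ \eta(1)=x,\ \eta(T)=\gamma_*(T)\bigr\},
\]
where $A_h$ is the free-time action at energy level $h\ge 0$. Using the existence of free-time minimizers with prescribed endpoints and the sharp asymptotic estimates established in~\cite{PolimeniTerracini}, one extracts $u(x)=\liminf_{T\to+\infty}u_T(x)$. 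Standard arguments in the spirit of Fathi, Maderna and Venturelli then show that $u$ is finite, locally Lipschitz on $\Omega$, a fixed point of the Lax--Oleinik semigroup, and hence a viscosity solution of~\eqref{eq:HJ}.

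\emph{Semiconcavity and rectifiability of the singular set.} The next step is to prove that $u$ is locally semiconcave on $\Omega$. Given $x_0\in\Omega$ and a calibrated minimizer $\eta_0$ issued from $x_0$, for $x$ near $x_0$ one constructs a competitor by joining $x$ to $\eta_0(\varepsilon)\in\Omega$ with a straight segment and then following $\eta_0$; since $L$ is smooth and strictly convex in $v$ on $T\Omega$, a quadratic Taylor expansion yields
\[
u(x)\le u(x_0)+\langle p_0,x-x_0\rangle_{\mathcal{M}}+C|x-x_0|^2,
\]
uniformly on compact subsets of $\Omega$. Once local semiconcavity is established, the singular set $\Sigma$ of $u$ (the points where $u$ fails to be differentiable, equivalently where the calibrating minimizer is not unique) is countably $\mathcal{H}^{n-1}$-rectifiable by the Alberti--Ambrosio--Cannarsa theorem, with $n=\dim_{\mathbb{R}}\mathcal{X}=d(N-1)$.

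\emph{Dimension of regular conjugate points.} For the bound on $\Gamma\setminus\Sigma$, the plan is to follow the Cannarsa--Sinestrari analysis of propagation of singularities. At a point $x\in\Gamma\setminus\Sigma$, the minimizer $\eta_x$ is unique but a Jacobi field along $\eta_x$ degenerates at some interior time, which forces the exponential-type map associated with the Hamiltonian flow of $(x,\nabla u(x))$ to have a drop of rank of at least one at $x$. A stratification of the critical set of this map, together with the area formula applied to the characteristic flow, confines $\Gamma\setminus\Sigma$ to a countable union of sets of Hausdorff dimension at most $n-2=d(N-1)-2$.

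\emph{Main obstacle.} The hardest part, in my view, is the joint treatment of the three regimes $(H)$, $(P)$, $(HP)$ within a single renormalization scheme, particularly for type $(HP)$, where the diverging part of the action lives at two distinct scales (linear and $t^{2/3}$) and where the cluster decomposition must be compatible with the semiconcavity estimates. Providing uniform semiconcavity constants near collision-free configurations, while controlling minimizers that may transiently approach $\Delta$, is what requires genuine work; this is where the asymptotic expansions~\eqref{eq:parabolic}--\eqref{eq:hyperbolic_parabolic} and the clustered variational estimates from~\cite{PolimeniTerracini} play their decisive role.
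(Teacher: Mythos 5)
Your overall outline is faithful to the structure of the paper's argument -- renormalize the action, prove semiconcavity, deduce $\mathcal{H}^{n-1}$-rectifiability of $\Sigma$, then control $\Gamma\setminus\Sigma$ \`a la Cannarsa--Sinestrari -- but the third step is where there is a genuine gap between what you propose and what is needed. You write that ``a Jacobi field along $\eta_x$ degenerates at some interior time, which forces the exponential-type map associated with the Hamiltonian flow of $(x,\nabla u(x))$ to have a drop of rank,'' and then appeal to a stratification of the critical set and the area formula. In the finite-horizon Cannarsa--Sinestrari theory this exponential-type map is the time-$t$ characteristic flow from a fixed initial slice, and everything is finite dimensional. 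Here the problem is on the half-line $[1,+\infty)$, and the conjugate set $\Gamma$ is defined (Definition~\ref{def:conjugate}) through the non-invertibility of the second variation $\ud^2\mathcal{A}_x(\varphi^x)$ as an operator $\mathcal{D}_0^{1,2}(1,+\infty)\to(\mathcal{D}_0^{1,2}(1,+\infty))^*$, not through a pointwise Jacobi field. To turn this into something to which a Sard-type lemma applies, you need several nontrivial ingredients that your plan omits: (i) that $\ud^2\mathcal{A}_x$ is a Fredholm operator, specifically a compact perturbation of an invertible one (Proposition~\ref{prop:dA}), which hinges on the compact embedding $\mathcal{D}_0^{1,2}(T,+\infty)\hookrightarrow L^2(T,+\infty;\ud t/t^{2+\varepsilon})$; (ii) a spectral theory for the associated weighted Sturm--Liouville problem \eqref{eq:eigenfunction} along the minimizer, giving a first eigenvalue $\Lambda(z,t)$ which is strictly monotone in $t$; (iii) the identification $\dim\ker\,\ud^2\mathcal{A}_{\xi(z,s),[s,+\infty)}=\dim\ker\Phi(z,s)$, where $\Phi$ is the fundamental matrix of the linearized problem, which is the bridge from the infinite-dimensional operator kernel to a finite-dimensional rank condition; and (iv) the construction of a genuine finite-time local flow, which requires first choosing a time $T_0$ large enough that the tail functional $\mathcal{A}_{\cdot,[T_0,+\infty)}$ has a coercive Hessian (Lemma~\ref{lem:T_hess_A_coerc}), so that the tail problem is solved by the Implicit Function Theorem and one can then flow backward on $(1-\varepsilon,T_0]$ to define the map $\xi(z,t)$ that plays the role of the exponential. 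These are not routine adaptations; they are the actual content of the $\Gamma\setminus\Sigma$ estimate.

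On the first two steps you are closer to the paper, but with notable differences. Your construction of the solution as $u(x)=\liminf_{T\to\infty}u_T(x)$ with the final endpoint pinned to a reference orbit $\gamma_*(T)$ is the Busemann-function route of Maderna--Venturelli; the paper instead minimizes the renormalized action $\mathcal{A}_x$ directly on $\mathcal{D}_0^{1,2}(1,+\infty)$ (where the minimum is already known to exist by coercivity), introduces finite-horizon approximations $v(T,x)$ with a free endpoint penalized by $-\langle\dot r_0(T),\cdot\rangle_\mathcal{M}$, shows each $v(T,\cdot)$ solves a time-dependent Hamilton--Jacobi equation, and passes to the limit using uniform-on-compacta convergence and the stability of viscosity solutions. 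This buys a uniform treatment of the three regimes at once, without having to verify existence of the Busemann-type $\liminf$ in the parabolic and mixed cases, which you flagged as the hardest part. Your semiconcavity argument (join-and-follow competitor plus Taylor expansion of $L$) is sound and the Alberti--Ambrosio--Cannarsa rectifiability criterion gives the $\mathcal{H}^{d(N-1)-1}$-rectifiability of $\Sigma$ itself; but to control the closure $\overline{\Sigma}\subset\Sigma\cup\Gamma$ one still needs the conjugate-set analysis above, which is where the real novelty lives.
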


\begin{rem}
    Our viscosity solutions to the Hamilton-Jacobi equation extend continuously to collision configurations, although they are no longer Lipschitz continuous at those points. Nevertheless, the results in \cite{Maderna2012, MadernaVenturelli_HyperbolicMotions} imply that they remain globally $1/2$-H\"older continuous on $\mathcal X$, including at collision configurations.
    \end{rem}

    \begin{rem}
        
In the case $h=0$, in \cite{Maderna2012}, explicit Busemann-type global solutions of are constructed for the two-body (Kepler) problem. In particular, setting $N=2$, $d=1$ and $m_1=m_2=1$ the planar Kepler problem can be seen as a fixed-center problem, with configuration variable $x=(x_1,x_2)\in\mathbb R^2$, equation \eqref{eq:HJ} reads as $|\nabla_x u(x)|^2 = 2/|x| $. Besides radial solutions, whose calibrated curves (straight lines) are parabolic homothetic motions, this equation also admits an explicit Busemann-type solution, which is not differentiable along a half-line and whose calibrated curves are half-parabolic trajectories (see \cite[Section 4]{Maderna2012}).
    \end{rem}
\subsection{The Variational Setting and the Renormalized Action Principle}

We now outline the variational approach used in \cite{PolimeniTerracini} to prove the existence of hyperbolic, parabolic, and hyperbolic-parabolic motions, relying on the optimization of a suitably renormalized action functional.

For the $N$-body problem, the Hamiltonian $H$ is defined on $\Omega \times \mathbb{R}^{dN}$ by
\begin{equation}\label{eq:hamiltonian}
    H(x, p) = \frac{1}{2} \|p\|_{\mathcal{M}^{-1}}^2 - U(x),
\end{equation}
where $\|\cdot\|_{\mathcal{M}^{-1}}$ denotes the dual norm with respect to the mass inner product. The Lagrangian is defined analogously by
\begin{equation}\label{eq:lagrangian}
    L(x, v) = \frac{1}{2} \|v\|_\mathcal{M}^2 + U(x).
\end{equation}

Given two configurations $x, y \in \mathcal{X}$ and $T > 0$, we define the set of admissible paths:
\[
\mathcal{C}(x, y, T) = \left\{ \gamma : [a, b] \rightarrow \mathcal{X} \,\middle|\, \gamma \text{ is absolutely continuous},\, \gamma(a) = x,\, \gamma(b) = y,\, b - a = T \right\},
\]
and the free-time path space:
\[
\mathcal{C}(x, y) = \bigcup_{T > 0} \mathcal{C}(x, y, T).
\]

The Lagrangian action of a curve $\gamma \in \mathcal{C}(x, y, T)$ is defined as
\[
    \mathcal{A}_L(\gamma) = \int_{a}^{b} L(\gamma, \dot{\gamma})\, \mathrm{d}t = \int_{a}^{b} \left( \frac{1}{2} \|\dot{\gamma}\|_\mathcal{M}^2 + U(\gamma) \right) \mathrm{d}t.
\]

According to Hamilton's principle of least action, if a curve $\gamma \in \mathcal{C}(x, y, T)$ minimizes the Lagrangian action, then it satisfies Newton's equations at every instant $t \in [a, b]$ for which $\gamma(t)$ is collision-free. However, due to the existence of curves with isolated collisions and finite action (see \cite{Poincare_SolutionsPeriodiques}), minimizing paths not always correspond to genuine solutions. Marchal's Principle (Theorem \ref{thm_marchal}) addresses this issue by ensuring that action-minimizing curves are free of collisions in the interior of the interval, thereby enabling the use of variational methods.

The idea of proving Marchal's Principle via averaged variations originates from Marchal \cite{Marchal_MethodOfMinimization}, with more complete arguments provided by Chenciner \cite{Chenciner}, and Ferrario and Terracini \cite{FerrarioTerracini}.

\begin{thm}[Marchal \cite{Marchal_MethodOfMinimization}, Chenciner \cite{Chenciner}, Ferrario and Terracini \cite{FerrarioTerracini}]\label{thm_marchal}
Assume $d\geq 2$, let $x, y \in \mathcal{X}$, and let $\gamma \in \mathcal{C}(x, y)$ be defined on some interval $[a, b]$. If
\[
    \mathcal{A}_L(\gamma) = \min \left\{ \mathcal{A}_L(\sigma)\ \middle|\ \sigma \in \mathcal{C}(x, y, b-a) \right\},
\]
then $\gamma(t) \in \Omega$ for all $t \in (a, b)$.
\end{thm}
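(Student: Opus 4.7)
The strategy is the classical Marchal averaged-variation argument, run by contradiction. Suppose the action minimizer $\gamma$ on $[a,b]$ has a collision at some interior time $t_0 \in (a,b)$. Since $\gamma$ is absolutely continuous with finite action, collisions are isolated in time (by the Sundman--Sperling asymptotics), so I can pick $\varepsilon>0$ so small that the only collision time in $I := [t_0-\varepsilon, t_0+\varepsilon]$ is $t_0$ and such that $I\subset(a,b)$. Let $K\subseteq\{1,\dots,N\}$ be a maximal colliding cluster at $t_0$, fix a body index $i_0\in K$, and let $\phi\in C_c^\infty(I,[0,1])$ be a bump with $\phi\equiv 1$ on a smaller interval around $t_0$. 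The plan is to build a one-parameter family of competitors by shifting this single body and then to average out the perturbation, exploiting a mean-value inequality for the Newtonian kernel.

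Given $\xi$ in a ball $B_\delta(0)$ lying in a suitable linear subspace $V\subset\mathbb{R}^d$ (with $\dim V \in\{2,3\}$, chosen depending on $d$; see below), define $\gamma_\xi(t)$ by replacing $r_{i_0}(t)$ with $r_{i_0}(t)+\phi(t)\xi$ and compensating the other bodies by $-\phi(t)\xi\,m_{i_0}/M$ (so that $\gamma_\xi\in\mathcal{C}(x,y,b-a)$ remains in $\mathcal X$ with the same endpoints). The next step is to split the action difference:
\[
\Delta\mathcal{A}(\xi) \;=\; \mathcal{A}_L(\gamma_\xi)-\mathcal{A}_L(\gamma) \;=\; \Delta_{\mathrm{kin}}(\xi) + \Delta_{\mathrm{pot}}(\xi).
\]
A direct expansion yields $|\Delta_{\mathrm{kin}}(\xi)|\le C\|\phi'\|_{L^2}^2|\xi|^2$, uniformly in $\xi\in B_\delta$. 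For the potential part I isolate the interactions involving $i_0$: since all pairs not containing $i_0$ contribute $O(|\xi|)$ in total (supports of $\phi$ far from $t_0$ are where the denominators are bounded), the only dangerous terms are
\[
\int_I \phi(t)\text{-weighted }\sum_{j\ne i_0}\frac{m_{i_0}m_j}{|r_{i_0}(t)-r_j(t)+\phi(t)\xi|}\,\mathrm{d}t
\]
and its $\xi=0$ counterpart.

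The crucial averaging step is now to integrate $\Delta\mathcal{A}(\xi)$ in $\xi$ against the normalized Lebesgue measure on $B_\delta\subset V$. By Fubini,
\[
\frac{1}{|B_\delta|}\int_{B_\delta}\Delta_{\mathrm{pot}}(\xi)\,\mathrm{d}\xi
\;=\; \sum_{j\ne i_0} m_{i_0}m_j\int_I\!\!\left[\,\Xint-_{B_\delta}\!\frac{\mathrm{d}\xi}{|z_j(t)+\phi(t)\xi|} - \frac{1}{|z_j(t)|}\right]\mathrm{d}t,
\]
with $z_j=r_{i_0}-r_j$. The point is that the Newtonian kernel $\zeta\mapsto 1/|\zeta|$ is \emph{strictly superharmonic} as a distribution on $\mathbb{R}^k$ for $k\in\{2,3\}$ (classically for $k=2$ and as a fundamental solution for $k=3$), so its spherical/ball averages are strictly smaller than its value at the center, with a quantitative gain when the center is close to the origin. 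Thus the bracket is $\le 0$ for every $j$ and $t$, and it is \emph{strictly negative} on the positive-measure subset of $I$ where $z_{j}(t)$ is small (i.e.\ near the collision, for $j\in K$), with a lower bound on the gain coming from the Sundman--Sperling blow-up rate $|z_j(t)|\asymp |t-t_0|^{2/3}$ for $j\in K$. Summing, the averaged potential variation is bounded above by a strictly negative quantity of order $-c\,\delta^2$ for a positive constant $c$ independent of the bump $\phi$. Combined with the $O(\delta^2\|\phi'\|_{L^2}^2)$ kinetic cost, choosing $\phi$ with small $\|\phi'\|_{L^2}$ forces $\frac{1}{|B_\delta|}\int_{B_\delta}\Delta\mathcal{A}(\xi)\,\mathrm{d}\xi<0$. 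Hence some specific $\xi^*\in B_\delta$ gives $\mathcal{A}_L(\gamma_{\xi^*})<\mathcal{A}_L(\gamma)$, contradicting the minimality of $\gamma$.

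The main obstacle is making the averaging produce a \emph{strict} gain in every ambient dimension $d\ge 2$, because $1/|\cdot|$ fails to be superharmonic in $\mathbb{R}^d$ for $d\ge 4$. This is resolved (following Ferrario--Terracini) by choosing the perturbation subspace $V$ to be $2$- or $3$-dimensional: one must verify that for a generic choice of $V$ the restriction of $\zeta\mapsto 1/|\zeta|$ to affine planes parallel to $V$ remains strictly superharmonic in the $V$-variable near the collision, which holds because after the Sundman--Sperling blow-up the colliding cluster lies in a linear span of dimension $\le d$, and one may select $V$ aligned with the tangent directions of approach to the limiting central configuration. A secondary technicality is that the cluster $K$ may consist of more than two bodies; in that case the same shifting-one-particle construction still works, since the intra-cluster pairs $(i_0,j)$ with $j\in K\setminus\{i_0\}$ are precisely the ones producing the dominant singular gain, while the shifts on the complementary bodies are harmless. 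These two points, together with Sundman--Sperling asymptotics for the rate of approach to collision, are the substantive inputs needed to complete the proof outlined above.
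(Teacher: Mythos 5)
The paper cites Theorem~\ref{thm_marchal} without proof, so there is no internal argument to compare against; your sketch should be judged as a reconstruction of the averaged-variation argument of Marchal, Chenciner, and Ferrario--Terracini. The overall blueprint is right: isolate a collision time via finiteness of the action and Sundman--Sperling, shift a single body with a bump in time and a direction $\xi$, compensate to stay in $\mathcal X$, average the action difference over a small ball of directions, and balance the $O(|\xi|^2)$ kinetic cost against a strict gain in the averaged potential. For $d\ge 3$ the reasoning is sound once one fixes a three-dimensional perturbation subspace $V$: with $c=|s^\perp|>0$ the restricted kernel $\zeta\mapsto(|\zeta|^2+c^2)^{-1/2}$ on $V\simeq\mathbb R^3$ is strictly superharmonic, and for $c=0$ it is superharmonic in the distributional sense through $-\Delta(1/|\zeta|)=4\pi\delta_0$, so the ball average drops strictly below the center value once the ball engulfs the origin, which happens near $t_0$ by Sundman--Sperling. (A harmless slip: the compensating shift of the other bodies should be $-\phi(t)\xi\,m_{i_0}/(M-m_{i_0})$, not $-\phi(t)\xi\,m_{i_0}/M$, to keep the center of mass fixed.)

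The genuine gap is the case $d=2$, and it comes from the central claim that ``the Newtonian kernel $\zeta\mapsto 1/|\zeta|$ is strictly superharmonic as a distribution on $\mathbb R^k$ for $k\in\{2,3\}$ (classically for $k=2$).'' This is false for $k=2$: a direct computation gives $\Delta(1/|\zeta|)=|\zeta|^{-3}>0$ on $\mathbb R^2\setminus\{0\}$, so $1/|\zeta|$ is strictly \emph{sub}harmonic away from the origin; the 2D fundamental solution of $-\Delta$ is $-\tfrac{1}{2\pi}\log|\zeta|$, not $1/|\zeta|$, and the $N$-body interaction is $1/r$ regardless of ambient dimension. Consequently, when $d=2$ the only available perturbation subspace $V$ is two-dimensional, and the spherical (or ball) average of $1/|s-\xi|$ is \emph{larger} than $1/|s|$ whenever the disc of radius $\delta$ around $s$ avoids the origin; the inequality you invoke goes the wrong way on the portion of $\mathrm{supp}\,\phi$ where $|s(t)|\gtrsim\delta$. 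The average does drop below $1/|s|$ once the disc contains the origin, so there is a competition: a favorable contribution near $t_0$ and an unfavorable one farther away, and (using $|s(t)|\asymp|t-t_0|^{2/3}$) both scale like $\delta^{1/2}$ up to constants. Establishing that the favorable part strictly dominates is precisely the delicate quantitative estimate that Ferrario--Terracini carry out, and it does not follow from superharmonicity, which simply fails in $\mathbb R^2$. As written, your argument proves the statement only for $d\ge 3$ and leaves the case $d=2$ --- which the theorem explicitly includes --- unresolved.
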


To state the Renormalized Action Principle, we introduce the following notion:

\begin{defn}\label{def:free-time_minimizers}
A curve $\gamma: I \to \mathcal{X}$ is a \emph{free-time minimizer} for the Lagrangian action at energy $h$ if, for all intervals $[a, b], [a', b'] \subset I$ and all curves $\sigma: [a', b'] \to \mathcal{X}$ with $\gamma(a) = \sigma(a')$ and $\gamma(b) = \sigma(b')$, we have:
\[
    \int_a^b L(\gamma, \dot{\gamma})\, \mathrm{d}t + h(b - a) \leq \int_{a'}^{b'} L(\sigma, \dot{\sigma})\, \mathrm{d}t + h(b' - a').
\]
\end{defn}

The main strategy is to construct solutions to the $N$-body problem of the form:
\[
    \gamma(t) = r_0(t) + \varphi(t) + x - r_0(1),
\]
where $r_0$ is a fixed reference path (linear in the hyperbolic case, parabolic self-similar in the parabolic case, or a hybrid in the hyperbolic-parabolic case), $\varphi$ is a perturbation in a suitable function space, and $x$ is the initial configuration. Specifically, we work with the space
\begin{displaymath}\label{eq:space}
 \mathcal D=
    \left\{ \varphi \in H^1_{\mathrm{loc}}([1, +\infty), \mathcal{X}) : \varphi(1) = 0,\ \int_1^{+\infty} \|\dot{\varphi}(t)\|_\mathcal{M}^2\, \mathrm{d}t < +\infty \right\},
\end{displaymath}
endowed with the norm
\[
    \|\varphi\|_{\mathcal{D}} = \left( \int_1^{+\infty} \|\dot{\varphi}(t)\|_\mathcal{M}^2\, \mathrm{d}t \right)^{1/2}.
\]

This is the space in which the minimization of the renormalized Lagrangian action takes place. {From \cite{BDFT} (where $\mathcal D$ is denoted by $\mathcal{D}_0^{1,2}(1, +\infty)$),} let us recall two fundamental inequalities in the space $\mathcal D$.

\begin{prop}[Cfr. \cite{BDFT}]
The space $\mathcal D$ is a Hilbert space, and $C_c^\infty(1, +\infty)$ is dense in it.
\end{prop}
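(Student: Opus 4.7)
The plan has two parts corresponding to the two assertions. For the Hilbert space structure, I would introduce the bilinear form
\[
\langle \varphi, \psi \rangle_\mathcal{D} := \int_1^{+\infty} \langle \dot\varphi(t), \dot\psi(t) \rangle_\mathcal{M}\, \mathrm{d}t,
\]
which is well-defined by Cauchy--Schwarz in $L^2$ and symmetric by construction. Positive-definiteness is immediate: if $\|\varphi\|_\mathcal{D} = 0$ then $\dot\varphi = 0$ almost everywhere, so $\varphi$ is constant, and the boundary condition $\varphi(1) = 0$ forces $\varphi \equiv 0$. For completeness, given a Cauchy sequence $(\varphi_n) \subset \mathcal{D}_0^{1,2}$, the sequence $(\dot\varphi_n)$ is Cauchy in $L^2([1,+\infty); \mathcal{X})$ with respect to the mass norm; calling its limit $\psi$, the primitive $\varphi(t) := \int_1^t \psi(s)\, \mathrm{d}s$ lies in $H^1_{\mathrm{loc}}$, satisfies $\varphi(1)=0$, $\dot\varphi=\psi \in L^2$, and $\|\varphi_n - \varphi\|_\mathcal{D} \to 0$.

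For density, I would approximate an arbitrary $\varphi \in \mathcal{D}_0^{1,2}$ in three stages, based on the pointwise bound $\|\varphi(t)\|_\mathcal{M}^2 \le (t-1)\|\varphi\|_\mathcal{D}^2$ that follows from Cauchy--Schwarz applied to $\varphi(t) = \int_1^t \dot\varphi$. \emph{Stage one} is truncation at infinity by a logarithmic cutoff $\chi_R$ equal to $1$ on $[1,R]$, vanishing on $[R^2,+\infty)$, and linear in $\log t$ on $[R,R^2]$, so that $|\chi_R'(t)| \le (t \log R)^{-1}$. The error splits into
\[
\int_1^{+\infty} (1-\chi_R)^2 \|\dot\varphi\|_\mathcal{M}^2\, \mathrm{d}t \xrightarrow[R\to\infty]{} 0
\]
by dominated convergence, and
\[
\int_1^{+\infty} |\chi_R'|^2 \|\varphi\|_\mathcal{M}^2\, \mathrm{d}t \le \frac{\|\varphi\|_\mathcal{D}^2}{(\log R)^2} \int_R^{R^2} \frac{t-1}{t^2}\, \mathrm{d}t \le \frac{\|\varphi\|_\mathcal{D}^2}{\log R} \xrightarrow[R\to\infty]{} 0.
\]
\emph{Stage two} multiplies $\chi_R\varphi$ by a smooth bump $\zeta_\delta$ vanishing on $[1,1+\delta]$ and equal to $1$ on $[1+2\delta,+\infty)$; the same Cauchy--Schwarz bound together with the absolute continuity of $t \mapsto \int_1^t \|\dot\varphi\|_\mathcal{M}^2$ at $t=1$ yields convergence in $\mathcal{D}$-norm as $\delta\to0$. \emph{Stage three} mollifies the resulting $H^1$ function, which is now compactly supported strictly inside $(1,+\infty)$, by convolution with a standard mollifier of radius smaller than the distance from its support to the endpoints, producing the desired $C_c^\infty(1,+\infty)$ approximation.

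The main obstacle is Stage one. A naive cutoff at scale $R$ with $|\chi_R'|\sim1/R$ generates an error of order $\|\varphi\|_\mathcal{D}^2$ uniformly in $R$, because elements of $\mathcal{D}_0^{1,2}$ need not lie in $L^2$ and may grow like $\sqrt{t}$; the logarithmic cutoff exploits this precise growth through the scale-invariant integral $\int_R^{R^2}\mathrm{d}t/t=\log R$, which is beaten by the $(\log R)^{-2}$ factor from $|\chi_R'|^2$. Stages two and three are then routine $H^1$ manipulations made available once the function has been put into a setting compactly contained in $(1,+\infty)$.
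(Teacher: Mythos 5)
The paper states this proposition with a citation to \cite{BDFT} and does not reproduce the argument, so there is no in-paper proof to compare against; your proof is correct and self-contained. The Hilbert space part is the standard verification, and your completeness argument (take $\varphi(t)=\int_1^t\psi$ where $\psi$ is the $L^2$-limit of $\dot\varphi_n$) is exactly right.

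For density, you correctly isolate the only delicate step: the cutoff at infinity. Since functions in $\mathcal{D}_0^{1,2}(1,+\infty)$ need not lie in $L^2$ and can grow as $\sqrt{t}$, a slope-$O(1/R)$ cutoff over a window of length $R$ yields an $O(1)$ error, while your logarithmic profile over $[R,R^2]$ with $|\chi_R'|\lesssim (t\log R)^{-1}$ gains $(\log R)^{-2}$ against $\int_R^{R^2} t^{-1}\,\mathrm{d}t=\log R$ and so kills the error. An essentially equivalent alternative is to use a $1/t$-profile cutoff over a single dyadic window $[R,2R]$ and invoke the Hardy inequality (Proposition \ref{dis_hardy}) together with absolute continuity of $\int_R^{\infty}\|\varphi(t)\|_\mathcal{M}^2 t^{-2}\,\mathrm{d}t$; your version has the small advantage of not presupposing Hardy, which the paper also imports from \cite{BDFT}. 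Stages two and three are routine and correctly handled: the refined pointwise bound $\|\chi_R\varphi(t)\|_\mathcal{M}^2 \le (t-1)\int_1^t\|\tfrac{\ud}{\ud s}(\chi_R\varphi)\|_\mathcal{M}^2\,\mathrm{d}s$ near $t=1$ is precisely what makes the $\zeta_\delta$-cutoff error vanish as $\delta\to 0$, and mollification of the resulting function, compactly supported strictly inside $(1,+\infty)$, converges in the $\mathcal{D}$-seminorm.
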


\begin{prop}[\textit{Hardy inequality}, Cf. \cite{BDFT}]\label{dis_hardy}
For every $\varphi \in \mathcal{D}$, the following inequalities hold:
\begin{displaymath}
    \int_1^{+\infty} \frac{\|\varphi(t)\|_\mathcal{M}^2}{t^2}\, \mathrm{d}t \leq 4 \int_1^{+\infty} \|\dot{\varphi}(t)\|_\mathcal{M}^2\, \mathrm{d}t,
\end{displaymath}
\begin{equation}\label{dis_space_D012}
    \sup_{t \in [1, +\infty)} \frac{\|\varphi(t)\|_\mathcal{M}^2}{t - 1} \leq \int_1^{+\infty} \|\dot{\varphi}(t)\|_\mathcal{M}^2\, \mathrm{d}t.
\end{equation}
\end{prop}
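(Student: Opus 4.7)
The plan is to prove each of the two inequalities directly by elementary arguments, using the density of $C_c^\infty((1,+\infty),\mathcal{X})$ in $\mathcal{D}_0^{1,2}(1,+\infty)$ (stated in the preceding proposition) only where an approximation is required. The whole statement is essentially the one-dimensional Hardy inequality on a half-line, specialized to the mass-weighted norm on $\mathcal{X}$, so the method is classical; the only care is to route the argument through the mass inner product $\langle\cdot,\cdot\rangle_\mathcal{M}$.

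The pointwise inequality \eqref{dis_space_D012} requires no approximation at all. Since $\varphi\in\mathcal{D}_0^{1,2}(1,+\infty)$ is absolutely continuous with $\varphi(1)=0$, the fundamental theorem of calculus gives $\varphi(t)=\int_1^t\dot\varphi(s)\,\mathrm{d}s$ for every $t\ge 1$, and the triangle and Cauchy-Schwarz inequalities yield
\[
\|\varphi(t)\|_\mathcal{M}\le\int_1^t\|\dot\varphi(s)\|_\mathcal{M}\,\mathrm{d}s\le(t-1)^{1/2}\left(\int_1^{+\infty}\|\dot\varphi(s)\|_\mathcal{M}^2\,\mathrm{d}s\right)^{1/2}.
\]
Squaring, dividing by $t-1$, and taking the supremum over $t\in[1,+\infty)$ delivers the bound.

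For the weighted integral inequality, I would first establish it for $\varphi\in C_c^\infty((1,+\infty),\mathcal{X})$. Writing $1/t^2=-\tfrac{\mathrm{d}}{\mathrm{d}t}(1/t)$ and integrating by parts the scalar quantity $\|\varphi(t)\|_\mathcal{M}^2$, whose derivative is $2\langle\varphi,\dot\varphi\rangle_\mathcal{M}$, gives
\[
\int_1^{+\infty}\frac{\|\varphi(t)\|_\mathcal{M}^2}{t^2}\,\mathrm{d}t=\left[-\frac{\|\varphi(t)\|_\mathcal{M}^2}{t}\right]_1^{+\infty}+2\int_1^{+\infty}\frac{\langle\varphi(t),\dot\varphi(t)\rangle_\mathcal{M}}{t}\,\mathrm{d}t,
\]
with vanishing boundary contributions since $\varphi(1)=0$ and $\varphi$ has compact support in $(1,+\infty)$. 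Applying Cauchy-Schwarz to the remaining integral and then dividing by the square root of the left-hand side (which may be assumed positive, otherwise there is nothing to prove) produces the factor $4$. The final and only mildly delicate step is the extension to arbitrary $\varphi\in\mathcal{D}_0^{1,2}$: I pick $\varphi_n\in C_c^\infty$ with $\varphi_n\to\varphi$ in the $\mathcal{D}$-norm, so that $\dot\varphi_n\to\dot\varphi$ in $L^2$ and, by the pointwise inequality applied to $\varphi_n-\varphi_m$, the sequence $\varphi_n$ is uniformly Cauchy on each bounded subinterval, hence converges to $\varphi$ pointwise almost everywhere; Fatou's lemma on the left and norm convergence on the right then pass the inequality for $\varphi_n$ to the limit. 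The main obstacle, such as it is, is precisely this density approximation, since the rest of the argument is a routine computation.
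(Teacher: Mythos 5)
Your proof is correct, and it follows essentially the same route the paper itself takes in the closely related Proposition~\ref{prop:compact_emb}: writing $1/t^2$ as $-\tfrac{\ud}{\ud t}(1/t)$, integrating by parts the scalar $\|\varphi(t)\|_\mathcal{M}^2$, applying Cauchy--Schwarz first pointwise in $\mathcal{X}$ and then in $t$, and dividing out the square root of the left-hand side. The only organizational difference is that the paper first proves the finite-$T$ bound (where the boundary term at $T$ is manifestly nonpositive) and then invokes density of $C_c^\infty(1,+\infty)$ to pass to $T=+\infty$, whereas you start directly with $C_c^\infty$ test functions on the half-line and carry out the limit via the pointwise estimate \eqref{dis_space_D012} plus Fatou's lemma; both are standard and equivalent. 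Your proof of \eqref{dis_space_D012} via the fundamental theorem of calculus and Cauchy--Schwarz is also the natural one.
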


In order to cope with the nonintegrability of the Lagrangian of an expansive motion on the half-line $[1, +\infty)$, we introduce the following renormalization:

\begin{defn}[Renormalized Lagrangian Action]\label{def:ren_action}
Given an initial configuration $x \in \mathcal{X}$ and a reference path $r_0$, we define the \emph{Renormalized Lagrangian Action} as the functional $\mathcal{A}_x: \mathcal{D} \to \mathbb R{\cup\{+\infty\}}$ given by:
\begin{displaymath}\label{def:renormalized_action}
    \mathcal{A}_x(\varphi) = \int_1^{+\infty} \left[ \frac{1}{2} \|\dot{\varphi}(t)\|_\mathcal{M}^2 + U(\varphi(t) + r_0(t) + x - r_0(1)) - U(r_0(t)) - \langle \mathcal{M} \ddot{r}_0(t), \varphi(t) \rangle \right]\, \mathrm{d}t.
\end{displaymath}
\end{defn}

By applying the direct method in the calculus of variations to this functional, one obtains the existence of minimizers. The conclusion then follows from the following result, which relies on Marchal's Theorem on the absence of collisions for minimal trajectories:

\begin{thm}[Renormalized Action Principle, Polimeni and Terracini, 2024 \cite{PolimeniTerracini}]\label{th:ren_act_pr}
Let $x \in \mathcal{X}$ be an initial configuration, and let $r_0$ be a reference path. If $\varphi^{\min} \in \mathcal D$ is a minimizer of the renormalized Lagrangian action, then the corresponding expansive motion
\[
    \gamma(t) = r_0(t) + \varphi^{\min}(t) + x - r_0(1)
\]
is a free-time minimizer of the Lagrangian action and, in particular, solves Newton's equations \eqref{eq_newton} for all $t \in (1, +\infty)$ (or for all $t \in [1, +\infty)$ if $x \in \Omega$).
\end{thm}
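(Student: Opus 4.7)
The plan is to construct the viscosity solution as the value function of the renormalized variational problem, and then read off the regularity of its singular set from the general theory of semiconcave functions. For each of the three cases, define
\[
u(x) = \inf\{\mathcal{A}_x(\varphi) : \varphi \in \mathcal{D}_0^{1,2}(1,+\infty)\},
\]
where $\mathcal{A}_x$ is the renormalized action of Definition \ref{def:ren_action} built from the reference path $r_0$ associated with the prescribed datum $a$ and/or $b_m$. By the results of \cite{PolimeniTerracini} recalled in Theorem \ref{th:ren_act_pr}, this infimum is attained, and any minimizer $\varphi^{\min}_x$ yields a free-time minimizer $\gamma_x(t)=r_0(t)+\varphi^{\min}_x(t)+x-r_0(1)$ at the appropriate energy level $h\ge0$. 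To show that $u$ is a viscosity solution of $H(x,\nabla u)=h$ on $\Omega$, I would follow the weak KAM/Lax--Oleinik approach: using $\gamma_x$ as a calibrated curve gives a dynamic-programming identity $u(\gamma_x(t))-u(\gamma_x(s))=\int_s^t L(\gamma_x,\dot\gamma_x)\,\mathrm{d}\tau -h(t-s)$ modulo explicit boundary contributions from the renormalization, from which the sub- and super-solution inequalities at collision-free points follow by the standard perturbation argument.

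The core step for the regularity statements is to establish that $u$ is locally semiconcave on $\Omega$, with a quadratic modulus. For fixed $x\in\Omega$ and its minimizer $\gamma_x$, I would construct, for $y$ in a small neighborhood of $x$, a competitor obtained by inserting a short straight segment from $y$ to $\gamma_x(1+\delta)$ and then following $\gamma_x$. Controlling the renormalized action on the perturbed segment (where $r_0$ is smooth and $\ddot r_0$ is bounded) and exploiting the fact that the terms $U(r_0(t))$ and $\langle\mathcal{M}\ddot r_0(t),\varphi\rangle$ appear in $\mathcal{A}_x$ in a way that is insensitive to local translations, one obtains
\[
u(y)\le u(x)+\langle p_x,y-x\rangle_\mathcal{M}+C|y-x|^2,
\]
with $p_x=-\mathcal{M}\dot\gamma_x(1)$ and $C$ depending only on local bounds on $U$ near $x$ and on $\|\dot\gamma_x(1)\|_\mathcal{M}$. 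This is the classical finite-horizon computation, made legitimate here by the convergence of the tail integrals ensured by the Hardy inequality of Proposition \ref{dis_hardy}.

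From local semiconcavity on $\Omega$, the claimed structure of the singular set follows from the theory of Cannarsa--Sinestrari and Alberti--Ambrosio--Cannarsa: identifying $\mathcal{X}$ with $\mathbb{R}^{d(N-1)}$, the set $\Sigma$ of points where the superdifferential $D^+u$ is not a singleton is countably $\mathcal{H}^{d(N-1)-1}$-rectifiable. For the bound on $\Gamma\setminus\Sigma$, I would combine the characterization of regular conjugate points as differentiability points whose calibrating curve carries a nontrivial Jacobi field vanishing at the endpoint with the propagation-of-singularities theorem for semiconcave solutions of Hamilton--Jacobi equations: at a regular conjugate point the semiconcavity inequality must be saturated in at least two independent directions (one from the conjugacy, one from propagation), which, by the stratification theorem for semiconcave functions, forces $\Gamma\setminus\Sigma$ to have Hausdorff dimension at most $d(N-1)-2$.

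The main obstacle will be the lack of uniformity of all these estimates near the collision set $\Delta$. The potential $U$ blows up along $\Delta$, and in cases $(P)$ and $(HP)$ the reference path $r_0$ itself starts at a configuration with partial collisions, so standard Tonelli/weak KAM theory does not apply globally on $\mathcal{X}$. To handle this, I would prove that the semiconcavity modulus $C$ can be chosen depending only on $\mathrm{dist}(x,\Delta)$ and on a priori bounds for the minimizer's initial velocity, which in turn come from Marchal's Theorem \ref{thm_marchal} together with the coercivity estimates for $\mathcal{A}_x$ on $\mathcal{D}_0^{1,2}(1,+\infty)$. The viscosity solution identity at collision-free points and the rectifiability conclusions are intrinsic to $\Omega$, so the singular behavior of $u$ on $\Delta$ (continuous but not Lipschitz, as noted in the remark) affects neither the main regularity statements nor the dimensional bounds.
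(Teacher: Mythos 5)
You have sketched a proof of the wrong statement. The statement you were asked to prove is Theorem~\ref{th:ren_act_pr}, the Renormalized Action Principle, which asserts that a minimizer $\varphi^{\min}$ of the renormalized action gives rise to a collision-free, free-time-minimizing solution of Newton's equations. Your proposal never addresses this: it takes Theorem~\ref{th:ren_act_pr} as a black box (``By the results of \cite{PolimeniTerracini} recalled in Theorem~\ref{th:ren_act_pr}, this infimum is attained, and any minimizer $\varphi^{\min}_x$ yields a free-time minimizer\ldots'') and proceeds to outline a proof of the paper's \emph{main} theorem on viscosity solutions, semiconcavity, and rectifiability of $\Sigma$ and $\Gamma\setminus\Sigma$. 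That is a different result, proved later in the paper using the Renormalized Action Principle as an ingredient.

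For the record, the present paper does not itself prove Theorem~\ref{th:ren_act_pr}; it is cited from Polimeni and Terracini \cite{PolimeniTerracini}, so there is no internal proof to compare against. A genuine proof would need roughly three steps that are all absent from your proposal: (i) a localization argument showing that for any $1\le a<b<+\infty$ the restriction of $\gamma$ to $[a,b]$ minimizes the classical fixed-time action $\mathcal{A}_L$ over $\mathcal{C}(\gamma(a),\gamma(b),b-a)$, because a compactly supported perturbation of $\varphi^{\min}$ cannot lower the renormalized action and on a compact interval the renormalized action differs from $\mathcal{A}_L$ only by terms depending on the endpoints; (ii) an application of Marchal's Theorem~\ref{thm_marchal} to exclude interior collisions on every $(a,b)$, hence on $(1,+\infty)$, after which classical regularity yields a solution of \eqref{eq_newton}; (iii) an energy-level comparison, using the asymptotics of $r_0$ and the decay of $\varphi^{\min}$, to upgrade fixed-time minimality to free-time minimality at the level $h$ in the sense of Definition~\ref{def:free-time_minimizers}. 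Your proposal, being aimed at the wrong target, supplies none of these.
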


One advantage of this variational framework is that it allows one to define a value function directly dependent on the initial configuration. As noted in \cite{PolimeniTerracini} with a heuristic argument, a linear correction of this value function provides a viscosity solution to the Hamilton-Jacobi equation. In this paper, we rigorously prove this assertion and investigate the size and structure of the set of singular points of the value function.

\subsection{Viscosity solutions of the Hamilton-Jacobi equation}

Given the Hamiltonian \eqref{eq:hamiltonian}, we study the supercritical Hamilton-Jacobi equation
\begin{equation}\label{eq:hj}
    H(x,  \nabla v(x)) = h, \quad x \in \Omega,
\end{equation}
which, in the context of the $N$-body problem, takes the form
\[
H(x,\nabla v(x)) = \sum_{i=1}^{N}\frac{1}{2 m_i} \left| \frac{\partial v}{\partial r_i} \right|^2 - \sum_{i<j} \frac{m_i m_j}{|r_i - r_j|} = h.
\]

In general, classical solutions of \eqref{eq:hj} may fail to exist due to the lack of compactness and regularity, or the development of singularities, especially in the presence of singular potentials such as in the $N$-body problem (cfr \cite{FatMad2007,Maderna2012}). For this reason, it is natural to consider a weaker notion of solution -- namely, that of viscosity solutions -- which allows for meaningful interpretation of solutions even when the function $v$ is not differentiable.

We now recall the definition of viscosity solutions used in this paper.

\begin{defn}\label{def:viscosity}
     For a given $x\in\mathcal{X}$, define the sets
    \begin{displaymath}\begin{split}
        &D^- v(x) = \left\{ p \in \mathbb{R}^n : \liminf_{y\rightarrow x}\frac{v(y)-v(x) - \langle p, y-x \rangle_\mathcal{M}}{\|y-x\|_\mathcal{M}} \geq 0 \right\},\\
        &D^+ v(x) = \left\{ p \in \mathbb{R}^n : \limsup_{y\rightarrow x}\frac{v(y)-v(x) - \langle p, y-x \rangle_\mathcal{M}}{\|y-x\|_\mathcal{M}} \leq 0 \right\},
    \end{split}\end{displaymath}
    which are referred to as the Fréchet superdifferential and subdifferential of $v$ at $x$, respectively.

    A function $v \in C(\mathcal{X})$ is called a \emph{viscosity supersolution} of \eqref{eq:hj} if, for every $x \in \mathcal{X}$, it holds that
    \[
    H(x, \mathcal M p) \geq h, \quad \forall p \in D^- v(x).
    \]
    Similarly, $v$ is a \emph{viscosity subsolution} of \eqref{eq:hj} if, for every $x \in \mathcal{X}$, it holds that
    \[
    H(x, \mathcal M p) \leq h, \quad \forall p \in D^+ v(x).
    \]
    A function $v$ is a \emph{viscosity solution} of \eqref{eq:hj} if it is both a viscosity supersolution and a viscosity subsolution.
\end{defn}

\subsection{Regularity of the value function in the finite horizon problem}
Although in this paper we will mainly deal with stationary solutions of the Hamilton-Jacobi equation, it is worth mentioning a few salient facts concerning the time-dependent case in a finite horizon setting. In optimal control and Hamilton-Jacobi theory, the value function plays a central role as it encodes the minimal cost to reach a given state. Understanding its regularity properties is crucial for both theoretical and numerical purposes. In particular, non-smoothness of the value function often corresponds to the presence of multiple minimizers or singularities in the dynamics. To address this, we rely on the foundational results of Cannarsa and Sinestrari.

In \cite{MR2041617}, within a more general framework, Cannarsa and Sinestrari established several fundamental results on the regularity of viscosity solutions to Hamilton-Jacobi equations. Fix $t > 0$, and let $\mathrm{AC}([0,t],\,  \Omega)$ denote the set of absolutely continuous arcs $\xi:[0,t] \to  \Omega$. For a fixed $x \in \Omega$, define the functional
\begin{equation}\label{eq:value_function_CS}
    \mathcal{J}_t (\xi)=\int_{0}^{t}L(\xi(s),\dot \xi(s))\,\ud s + u_0(\xi(0)), \quad \xi \in \mathcal{B},
\end{equation}
where the space $\mathcal B$ depends on $t$ and $x$, as follows:
\[
\mathcal{B}=\mathcal B_{t,x} = \{\xi \in \mathrm{AC}([0,t], \Omega):\, \xi(t)=x\}.
\]
The associated value function is given by
\[
u(t,x)=\min_{\xi \in \mathcal{B}}\mathcal{J}_t(\xi).
\]

In \cite[Theorem 6.4.3]{MR2041617}, it is proved that if $u_0$ is continuous on $\Omega$, then $u(t,\cdot)$ is locally semiconcave with linear modulus on $\Omega$ for every $t > 0$. Moreover, given $T > 0$, it is shown in \cite[Theorem 6.4.5]{MR2041617} that $u$ is a viscosity solution of the Hamilton-Jacobi Cauchy problem
\begin{displaymath}
    \begin{cases}
\partial_t u(t,x)+H(x,\nabla u(t,x))=0, & \text{in } [0,T]\times \Omega, \\
u(0,x)=u_0(x), & \text{in } \Omega.
    \end{cases}
\end{displaymath}

\begin{rem}
    In \cite{MR2041617}, the final endpoint of the trajectory is fixed. In contrast, in the present paper, we consider a reversed-time formulation, so that the configuration is fixed at the initial time. This reversal corresponds to the study of the backward Hamilton-Jacobi equation. Although the variational structure remains the same, the time orientation affects the interpretation of minimizers and the evolution of singularities.
\end{rem}

\begin{rem}
    Another important difference lies in the structure of the Lagrangian. In our setting, we consider the classical $N$-body problem, where the Lagrangian is of the form
    \[
    L(x,v) = \sum_{i=1}^N \frac{1}{2} m_i |v_i|^2 + \sum_{i<j} \frac{m_i m_j}{|r_i - r_j|},
    \]
    which is quadratic in the velocities but includes singular potentials due to mutual interactions. While \cite{MR2041617} allows for general smooth Lagrangians, we work within a singular setting where collision configurations may occur. However, in the domain $\Omega \subset \mathcal{X}$ we exclude collisions, so that $L$ remains smooth and the results from \cite{MR2041617} can be adapted.
\end{rem}

Cannarsa and Sinestrari aimed to investigate the singularities of the value function, which correspond to points where the minimizing arc of the associated variational problem is not unique. Furthermore, they showed that the value function inherits the same regularity as the data of the problem outside the closure of its singular set.

We now recall some relevant definitions, following \cite{MR2041617}, to study the regularity of the value function in the (compact) {\em finite-horizon} case formulated in \eqref{eq:value_function_CS}. To this end, consider $u_0$ in \eqref{eq:value_function_CS} of class $C^{R+1}$, for some $R\ge 1$. For any $z \in \mathbb R^n$, we denote by $(\xi(\cdot,z),\eta(\cdot,z))$ the solution of 
\begin{equation}
\label{e:CS_system}
\begin{cases}
\dot \xi(s)=\partial_p H(\xi(s), \eta(s))
\\
\dot \eta (s)=-\partial_x H(\xi(s),\eta(s)) \ \ s\ge 0,
\end{cases}
\end{equation}
with initial data $(\xi(0),\eta(0))=(z, \nabla u_0(z))$. It turns out that $\xi$, $\eta$ and their time derivatives are of class $C^R$ in both arguments. Moreover, it is proved that an arc is an extremal for problem \eqref{eq:value_function_CS} if and only if it is of the form $\xi=\xi(\cdot, z)$ for some $z \in \mathbb R^n$. Denote by $(\xi_z,\eta_z)$ the derivative with respect to $z$ of $(\xi,\eta)$. By differentiating  \eqref{e:CS_system}, one can see that $(\xi_z, \eta_z)$ satisfies the linearized system of  \eqref{e:CS_system}, with initial conditions $(\xi_z(0), \eta_z(0))=(I_n, \nabla^2 u_0(z))$ (see \cite[(6.15)--(6.18)]{MR2041617} and their relative comments for more insights).

\begin{defn}
\label{def:CS_irregular_points}
A point $(t,x)\in[0,T]\times\mathbb{R}^n$ is called \emph{regular} if there exists a unique minimizer of \eqref{eq:value_function_CS}. All other points are called \emph{irregular}.
\end{defn}

\begin{defn}
\label{def:CS_conjugate_points}
A point $(t,x)\in[0,T]\times\mathbb{R}^n$ is called \emph{conjugate} if there exists $z\in\mathbb{R}^n$ such that $\xi(t,z)=x$, the arc $\xi(\cdot,z)$ is a minimizer of \eqref{eq:value_function_CS}, and $\det(\xi_z(t,z)) = 0$.
\end{defn}

We denote by $\Sigma$ the set of irregular points and by $\Gamma$ the set of conjugate points.

Cannarsa and Sinestrari provided a detailed description of the topological and measure-theoretic properties of the singular set $\Sigma$. In particular, they showed that its closure $\bar{\Sigma}$ is also countably $\mathcal{H}^n$-rectifiable, meaning it has the same rectifiability as $\Sigma$. Additionally, they obtained the sharper estimate
\[
\mathcal{H}^{n-1+\frac{2}{k-1}}(\Gamma\setminus\Sigma) = 0,
\]
where $k \geq 3$ denotes the differentiability class of the data. This implies that the value function is as smooth as the problem data outside a closed rectifiable set of codimension one.

We now state their main result, adapted to our setting.

\begin{thm}[Cannarsa and Sinestrari, 2004 \cite{MR2041617}]\label{thm:CS_6.6.4}
    Let $n > 1$, and suppose that $L \in C^{R+1}([0,T]\times\mathbb{R}^n \times\mathbb{R}^n)$ is a positive Lagrangian, quadratic in the velocities, as in \eqref{eq:lagrangian}, and that $u_0 \in C^{R+1}(\mathbb{R}^n)$ for some $R \geq 1$. Then the set of conjugate points $\Gamma$ is countably $\mathcal{H}^n$-rectifiable, and
    \[
    \mathcal{H}^{n-1+\frac{2}{R}}(\Gamma\setminus\Sigma)=0.
    \]
    In particular, if $L \in C^\infty([0,T]\times\mathbb{R}^n\times\mathbb{R}^n)$ and $u_0 \in C^\infty(\mathbb{R}^n)$, then the Hausdorff dimension satisfies $\dim_{\mathcal{H}}(\Gamma\setminus\Sigma)\leq n-1$.
\end{thm}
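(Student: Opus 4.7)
The plan is to identify $\Gamma$ as the image, under the characteristic flow, of its own critical locus, then combine a classical rectifiability result for zero sets of smooth functions with a quantitative Morse--Sard--Yomdin bound stratified by the rank defect.

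To begin, introduce the flow map $\Phi:[0,T]\times\mathbb{R}^n\to[0,T]\times\mathbb{R}^n$, $\Phi(t,z)=(t,\xi(t,z))$, where $\xi(\cdot,z)$ solves the Hamiltonian system with initial momentum $\nabla u_0(z)$. Under $L,u_0\in C^{R+1}$, smooth dependence on parameters gives $\xi\in C^R$ jointly in $(t,z)$, and the block structure of $D\Phi$ yields $\det D\Phi(t,z)=\det\xi_z(t,z)$. By the very definition of conjugate point, $\Gamma\subseteq\Phi(C)$ where $C=\{(t,z):\det\xi_z(t,z)=0 \text{ and } \xi(\cdot,z)\text{ minimizes on }[0,t]\}$. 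Stratify $C$ by rank defect, $C_k=\{(t,z)\in C:\dim\ker\xi_z(t,z)=k\}$ for $k=1,\ldots,n$.

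For the rectifiability of $\Gamma$, note that $g(t,z):=\det\xi_z(t,z)$ is $C^R$ with $R\ge 1$ and does not vanish identically, since $\xi_z(0,z)=I$. Iterating the implicit function theorem on the Taylor expansion of $g$ and of the lower-order partial derivatives that may vanish, the zero set of $g$ stratifies into $C^1$ submanifolds of dimension at most $n$; hence $\{g=0\}$, and a fortiori $C$, is countably $\mathcal{H}^n$-rectifiable. As $\Phi$ is locally Lipschitz, the image $\Phi(C)\supseteq\Gamma$ inherits the same rectifiability.

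For the sharp dimension bound, the key geometric input is the inclusion $\Gamma\setminus\Sigma\subseteq\Phi\bigl(\bigcup_{k\geq 2}C_k\bigr)$: at a simple fold of $\Phi$ (rank defect $k=1$) two minimizing characteristics coalesce above the image point, so semiconcavity of the value function $u$ forces a corner there, placing the point in $\Sigma$. Granting this, apply the Federer--Yomdin quantitative Morse--Sard theorem to the $C^R$ map $\Phi$ between $(n+1)$-dimensional spaces: for each $k$,
\[
\dim_{\mathcal H}\Phi(C_k)\;\leq\;(n+1-k)+\frac{k}{R}\;=\;n+1-k\Bigl(1-\tfrac{1}{R}\Bigr),
\]
which is non-increasing in $k$ and hence maximized over $k\geq 2$ at $k=2$, yielding $\dim_{\mathcal H}(\Gamma\setminus\Sigma)\leq n-1+\frac{2}{R}$. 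In the $C^\infty$ regime, letting $R\to\infty$ produces $\dim_{\mathcal H}(\Gamma\setminus\Sigma)\leq n-1$.

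The hard part is the fold-implies-irregularity inclusion, which is the genuinely geometric content of the theorem and not a consequence of regularity alone. It requires the Lagrangian-submanifold picture: the graph of $\nabla u_0$ evolves under the Hamiltonian flow as a Lagrangian manifold $\Lambda_t\subset T^*\mathbb{R}^n$, and a fold of the base projection $\Lambda_t\to\mathbb{R}^n$ is a Maslov-type singularity where two sheets of $\Lambda_t$ meet above a single base point; both sheets realize minimizing characteristics, so the base point hosts two distinct minimizers. Adapting this argument to the present singular $N$-body context only requires verifying that all estimates remain valid on the collision-free stratum $\Omega$, which is guaranteed by Marchal's Theorem \ref{thm_marchal}.
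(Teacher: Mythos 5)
Your rectifiability argument (stratify the zero locus of $g = \det \xi_z$ via iterated implicit function theorem and push forward through the locally Lipschitz map $\bar\xi(t,z)=(t,\xi(t,z))$) is sound and matches the structure of Cannarsa--Sinestrari's proof. However, your argument for the dimension bound contains a genuine error at its central step.

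You claim that $\Gamma\setminus\Sigma$ is contained in $\Phi\bigl(\bigcup_{k\ge 2}C_k\bigr)$, i.e.\ that every conjugate point at which $\ker \xi_z$ is one-dimensional (a ``fold'' in your language) must already be an irregular point. This is false. At a fold point the two local sheets of the Lagrangian manifold \emph{coincide} over the base point; the $2$-to-$1$ behavior occurs only on one side of the caustic, not on the caustic itself. The standard picture is the pre-shock point in conservation-law dynamics: a conjugate point with a \emph{unique} minimizer, where $u$ is differentiable but $\nabla u$ fails to be Lipschitz. Such points populate $\Gamma\setminus\Sigma$ precisely in the rank-defect-one stratum, so your inclusion discards exactly the set the theorem is about. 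Numerically your estimate $\dim_{\mathcal H}\Phi(C_k)\le(n+1-k)+k/R$ is fine (it reproduces $n-1+2/R$ at $k=2$), but direct Sard applied to the rank-defect-one stratum gives only $n+1/R$, which is strictly worse than the claimed bound, so you cannot avoid treating that stratum by a different method.

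The actual argument in \cite{MR2041617} (Theorems 6.6.3, 6.6.9 and Proposition 6.6.8) handles $G'=\{\operatorname{rk}\xi_z = n-1\}$ separately and non-trivially. One first proves that on $G$, $\partial_t(\det\xi_z)=0$ if and only if $\operatorname{rk}\xi_z=n-1$, so on $G'$ the implicit function theorem represents $G$ locally as a graph $t=\phi(z)$ with $\phi\in C^{R-1}$, and gives a $C^{R-1}$ vector field $\theta(z)$ generating $\ker\xi_z(\phi(z),z)$. The key geometric input replacing your false claim is Proposition 6.6.8: if $(\phi(\bar z),\xi(\phi(\bar z),\bar z))\notin\Sigma$ then $D\phi(\bar z)\cdot\theta(\bar z)=0$, whence the composed map $\tilde\xi(z)=\bar\xi(\phi(z),z)$ drops rank at $\bar z$; Sard for the $C^{R-1}$ map $\tilde\xi:\mathbb R^n\to\mathbb R^{n+1}$ then gives $\mathcal H^{n-1+\frac{1}{R-1}}(\bar\xi(G')\setminus\Sigma)=0$, and one finishes by combining with the straightforward bound $\mathcal H^{n-1+\frac{2}{R}}(\bar\xi(G''))=0$ coming from the higher rank-defect stratum. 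The proof of Proposition 6.6.8 itself requires a careful contradiction argument showing that non-orthogonality of $D\phi$ and $\theta$ forces the existence of a second minimizer; this is the genuinely delicate step, and it is not a consequence of the Maslov-type picture you invoke.
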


Their proofs rely on a version of Sard's theorem, stated as follows (\cite[Theorem 3.4.3]{Federer}).

\begin{thm}\label{thm:CS_Sard}
    Let $F:\mathbb{R}^N\rightarrow\mathbb{R}^M$ be a $C^R$ map for some $R \geq 1$. For each $k \in \{0,1,\dots,N-1\}$, define
    \[
    A_k = \{x\in\mathbb{R}^N\ |\ \mathrm{rk}(DF(x))\leq k\},
    \]
    where $\mathrm{rk}(DF)$ denotes the rank of the Jacobian matrix $DF$. Then
    \[
    \mathcal{H}^{k + \frac{N-k}{R}}(F(A_k)) = 0.
    \]
\end{thm}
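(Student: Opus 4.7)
The result is Federer's quantitative refinement of Sard's theorem (Theorem 3.4.3 in \emph{Geometric Measure Theory}). My plan is a covering argument finely tuned to the rank bound that defines $A_k$.

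First, I would localize: by $\sigma$-subadditivity of Hausdorff measure, it suffices to prove $\mathcal{H}^s(F(A_k \cap K)) = 0$ for arbitrary compact $K \subset \mathbb{R}^N$, where $s := k + (N-k)/R$. Fix such a $K$ and a uniform bound $\lambda$ on $F$ and its first $R$ derivatives over a neighborhood of $K$. For each $\rho>0$, I would partition $K$ into at most $C_K \rho^{-N}$ closed cubes $\{Q_j\}$ of side $\rho$ and retain only those meeting $A_k$.

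The core step is to control the image of each retained cube. For a basepoint $x_0 \in Q_j \cap A_k$, set $L_{x_0} = F(x_0) + \mathrm{Im}\,DF(x_0)$, a $k$-dimensional affine subspace of $\mathbb{R}^M$. I would then establish the following \emph{confinement lemma}: $F(A_k \cap Q_j)$ lies in a slab around $L_{x_0}$ of tangential extent $\le C\rho$ and transverse thickness $\omega(\rho)\,\rho^R$, where $\omega(\rho)\to 0$ as $\rho\to 0$ uniformly in $x_0\in K$. Such a slab can be covered by at most $C_1\,\omega(\rho)^{-k}\rho^{-k(R-1)}$ Euclidean balls of radius $\omega(\rho)\rho^R$ in $\mathbb{R}^M$. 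Summing the $s$-th powers of the radii over balls and cubes,
\[
\mathcal{H}^s_{\omega(\rho)\rho^R}\bigl(F(A_k \cap K)\bigr)\ \lesssim\ \rho^{-N}\,\omega(\rho)^{-k}\,\rho^{-k(R-1)}\,\bigl(\omega(\rho)\rho^R\bigr)^{s}\ =\ \omega(\rho)^{s-k}\,\rho^{Rs - N - k(R-1)}.
\]
A direct calculation gives $Rs - N - k(R-1) = 0$ exactly at $s = k + (N-k)/R$; since $s>k$, the prefactor $\omega(\rho)^{s-k}$ tends to zero as $\rho\to 0$, so $\mathcal{H}^s(F(A_k \cap K)) = 0$.

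The hard part is the confinement lemma. Naive first-order Taylor expansion delivers only a transverse thickness of $O(\rho^2)$, which matches the required $\rho^R$ only when $R=2$. Closing this gap for $R\ge 3$ is the technical heart of Federer's argument: one has to further stratify $A_k$ according to the simultaneous vanishing of higher-order derivatives of $F$ in directions transverse to $L_{x_0}$, and recurse either on $R$ or on the pair $(k,R)$. Equivalently, Whitney's extension theorem provides a local $C^R$ approximation of $F$ whose image is contained in a fixed $k$-dimensional submanifold, up to an error $o(\rho^R)$. This is precisely the point where the full $C^R$ hypothesis is exploited and where the interplay between rank and smoothness plays out; once the confinement lemma is in place, the remaining steps reduce to the routine exponent bookkeeping above.
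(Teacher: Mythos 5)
The paper does not prove Theorem~\ref{thm:CS_Sard}; it is quoted verbatim from Federer \cite[Theorem 3.4.3]{Federer} as a black box, so there is no in-paper argument to compare against. Your outline reproduces the skeleton of Federer's proof and the exponent bookkeeping is correct: with $s=k+(N-k)/R$ one gets $Rs-N-k(R-1)=0$, and since $s-k=(N-k)/R>0$ the prefactor $\omega(\rho)^{s-k}$ vanishes in the limit. The genuine gap, which you flag but do not close, is the confinement lemma. The Taylor estimate that the $C^R$ hypothesis hands you directly,
\[
F(x)=F(x_0)+DF(x_0)(x-x_0)+O(\|x-x_0\|^2),
\]
only confines $F(A_k\cap Q_j)$ to a slab of transverse thickness $O(\rho^2)$ around $L_{x_0}$; this matches the required $o(\rho^R)$ only for $R\le 2$. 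For $R\ge 3$ the uniform bound $\omega(\rho)\rho^R$ you assert is simply not available for the full set $A_k$ without further decomposition, so the covering estimate is not yet justified at the exponent you need.

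Federer closes this by a double induction on the dimension $N$ and on the order of vanishing: he stratifies $A_k$ into the set $B_m$ where all partials of $F$ of order $\le m$ vanish (handled by an $m$-th order Taylor expansion, which genuinely gives $O(\rho^{m+1})$ transverse thickness) and the complement, on which some higher partial derivative has nonvanishing differential; the implicit function theorem then restricts $F$ to a hypersurface and lowers $N$, allowing the induction to proceed. It is precisely this stratified recursion that converts the crude $O(\rho^2)$ bound into the full $o(\rho^R)$ confinement, and it cannot be bypassed. Your appeal to Whitney extension is also the wrong tool here: Whitney extension produces a $C^R$ function realizing a prescribed jet, i.e.\ it \emph{builds} maps, and does not yield the confinement of an existing map's image near a $k$-plane. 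As it stands the proposal records the correct covering count and the correct exponent balance, but it leaves the single step where the $C^R$ regularity is actually spent as an unproved assertion, so it is not a complete proof.
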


In our setting, we aim to apply the same Sard-type theorem, carefully accounting for the differences between our problem and that considered by Cannarsa and Sinestrari.

\subsection{Busemann Functions, Horofunctions, and the Gromov Boundary in Hamilton-Jacobi Theory}

As already highlighted in \cite{MadernaVenturelli_HyperbolicMotions}, hyperbolic solutions fit naturally into the framework of the Gromov boundary and other compactifications of the configuration space of the $N$-body problem, and for this reason have attracted a growing attention in the recent literature (see e.g. \cite{BurgosMaderna_GeodesicRays,MR4121133,zbMATH06723100,MR4331257,MR3868425,arXiv:2408.00877, zbMATH07765044,MR4600219,PolimeniTerracini} and references therein). In particular, consider the noncollision configuration space $\Omega$ endowed with the Jacobi-Maupertuis' metric associated with the $N$-body problem (a position-dependent rescaling of the Euclidean metric). For a given energy level $h>0$, this metric $j_h$ can be written in coordinates as n
\[ j_{ik}(x) \;=\; 2\,\big(h + U(x)\big)\,\delta_{ik}\,, \] 
which is proportional to the standard Euclidean (mass-weighted) metric $g_m$. Free-time action minimizers are then precisely the minimal geodesics of $j_h$, and in particular our expanding solutions correspond to geodesic rays (i.e. geodesics that extend infinitely in one direction while remaining minimizing at every finite time-section). Each such geodesic ray can be thought of as converging to a point at infinity, and the collection of all these \emph{endpoints at infinity} is the \emph{ideal boundary} (or \emph{Gromov boundary}) of $(\Omega, j_h)$. To describe these boundary points more concretely, we recall the notion of \emph{horofunction}, which is a convenient tool in metric geometry for capturing directions at infinity.

A \emph{horofunction} is a type of function that arises in the study of the asymptotic geometry of metric spaces. It encodes how distances grow at infinity and plays a key role in defining boundaries at infinity (such as the Gromov boundary). More precisely, let $(X,d)$ be a proper metric space (meaning all closed balls are compact), and fix a base point $x_0 \in X$. For each point $x \in X$, consider the function 
\[ 
f_x(y) \;:=\; d(y,x)\;-\;d(x_0,x)\,. 
\] 
This $f_x(y)$ measures the distance from $y$ to $x$ relative to the base point $x_0$ (note that $f_x(x_0)=0$ by definition). As $x$ \emph{goes to infinity} in $X$ (i.e. $x$ diverges without bound), the functions $f_x$ may sub-converge (locally uniformly) to a limiting function. Any such limit 
\[ 
b(y) \;=\; \lim_{n\to\infty} f_{x_n}(y) 
\] 
(for some sequence $x_n \to \infty$ in $X$) is by definition a horofunction on $X$. The space of all horofunction limits (modulo additive constants) is called the \emph{horofunction boundary} of $X$. In many cases, these horofunction boundaries coincide with the more abstract Gromov boundaries defined via geodesic rays. Intuitively, a horofunction $b$ behaves like an \emph{affine distance} from a point at infinity, and its sublevel sets are analogous to horospheres (level sets of distance from infinity).

{\em Horofunctions in Hamilton-Jacobi theory.} In the context of the Jacobi-Maupertuis' metric $j_h = 2(h+U(x))\,g_m$ on the configuration space $\Omega \subset E^N$ (with $g_m$ the mass inner product), horofunctions arise naturally as limits of rescaled action functionals. Let $\phi_h(x,y)$  the Jacobi-Maupertuis' distance between two configurations $x,y \in \Omega$ at energy $h>0$, which  is the minimal action in free time for the Lagrangian $L+h$.

To describe the construction of horofunctions associated with hyperbolic motions, let us fix  a non-colliding configuration \( a \in \Omega \), and a reference point \( x_0 \in \Omega \). For any sequence \( (x_n) \subset \Omega \) such that \( x_n = \lambda_n a + o(\lambda_n) \) with \( \lambda_n \to +\infty \), consider the normalized action potentials
\[
b_{x_n}(y) := \phi_h(y, x_n) - \phi_h(x_0, x_n), \qquad y \in \Omega.
\]
Any locally uniform limit of a subsequence of \( (b_{x_n}) \) is called a horofunction directed by \( a \). Geometrically, $b_{x_n}(y)$ is the extra action (or distance) required to go from $y$ to $x_n=\lambda_n a $ beyond that required from the reference point $x_0$ to $x_n$. Such limits are dominated by \( L + h \), and, as shown in \cite{MadernaVenturelli_HyperbolicMotions}, in the hyperbolic regime they arise as fixed points of the Lax-Oleinik semigroup. In particular, they are global viscosity solutions of the stationary Hamilton-Jacobi equation \( H(x, \nabla u) = h \). Moreover, for every \( x \in \Omega \), any such solution admits calibrating curves through \( x \), and  each calibrating curve is a hyperbolic action-minimizing trajectory (equivalently, a geodesic ray for the Jacobi-Maupertuis metric) with asymptotic shape \( a \).

It is important, however, not to identify \emph{a priori} a single function \( b_a \) with the direction \( a \). For a fixed \( a \in \Omega \), different sequences \( x_n = \lambda_n a + o(\lambda_n) \) may produce different locally uniform limits. Even when one considers a geodesic ray \( \gamma \) and defines the associated Busemann function
\[
b_\gamma(y) := \lim_{t \to +\infty} \left( \phi_h(y, \gamma(t)) - \phi_h(x_0, \gamma(t)) \right),
\]
the resulting function depends on the reference point \( x_0 \), up to an additive constant. Therefore, without further hypotheses, one must speak of the \emph{set} of horofunctions directed by \( a \), rather than a unique representative.

The elimination of this indeterminacy is one of the main results of \cite{MadernaVenturelli_2026}: if \( a \in \Omega \) is non-colliding and \( h = \|a\|^2 / 2 > 0 \), then every directed horofunction is a hyperbolic viscosity solution whose calibrating curves all share the same limit shape \( a \), and any two such solutions differ only by an additive constant. Upon fixing a normalization (e.g., \( u(x_0) = 0 \)), one thus obtains a uniquely defined horofunction \( b_a \), also interpretable as a unique Busemann function. The argument in \cite{MadernaVenturelli_2026}, see also \cite{PercinoSanchez2014}, relies on identifying a cone region in which the geodesic ray is unique and all solutions are differentiable, then propagating the equality via comparison along calibrated curves.

{\em Geometric and dynamical meaning.} From a dynamical perspective, \emph{in the framework of hyperbolic solutions}, horofunctions (and in particular the functions $b_a$ defined above) have several important interpretations:
\begin{itemize}
\item They provide canonical global viscosity solutions to the stationary Hamilton-Jacobi equation (selection principles often single out these specific solutions).
\item They encode the asymptotic behavior of action-minimizing trajectories or geodesic rays, effectively capturing the \emph{direction} in which the orbit proceeds as $t \to +\infty$.
\item They coincide with the classical \emph{Busemann functions} when one focuses on a particular geodesic ray. For instance, if $\gamma: [0,+\infty) \to \Omega$ is a geodesic ray (e.g. the trajectory of an hyperbolic solution), one defines 
\[ 
b_\gamma(y) \;=\; \lim_{t\to+\infty}\Big(\phi_h\big(y,\gamma(t)\big)\;-\;\phi_h\big(x_0,\gamma(t)\big)\Big)\,. 
\] 
This $b_\gamma$ is exactly the Busemann function associated with the ray $\gamma$, and it equals (up to an additive constant) the directed horofunction $b_a$ where $a$ is the asymptotic configuration (limit shape) of $\gamma(t)$ as $t\to\infty$.
\end{itemize}
Thus, in the hyperbolic case, horofunctions can be viewed as describing \emph{directions at infinity} in the configuration space, characterizing how the $N$-body system behaves asymptotically as time $t \to \infty$. In particular, each hyperbolic solution (geodesic ray) determines a unique horofunction (up to constant), and different asymptotic configurations $a$ give rise to different points on the boundary at infinity of $(\Omega,j_h)$.

 It is worth noticing that the uniqueness result in \cite{MadernaVenturelli_2026} ensures that, in the hyperbolic case, our value function $v$, associated with the renormalized action minimal value,  is a Busemann function. Moreover, it may be interesting that the linear correction in \eqref{eq:def_v} is itself the Busemann function of the free particle.

Up to this point, our focus has been on the hyperbolic case. We now turn to the parabolic and mixed hyperbolic-parabolic settings, where the notions of horofunction and ideal boundary require further refinement. The parabolic case is particularly subtle, as the Jacobi--Maupertuis' metric degenerates at infinity, and parabolic minimal rays converge only to central configurations. In this context, the ideal boundary must include all minimal central configurations, each represented by parabolic solutions of the form \eqref{eq:parabolic}.

As established in \cite{BPT2026}, the second term in the asymptotic expansion \eqref{eq:parabolic} can be improved to reach precisely the order $1/3$. Thus, two distinct parabolic solutions sharing the same limit shape remain at bounded distance (with respect to the Jacobi-Maupertuis' metric) from each other at infinity.

The mixed hyperbolic-parabolic case is even more complex. Here, the associated geodesic rays exhibit a two-term expansion at infinity as described in \eqref{eq:hyperbolic_parabolic}. Moreover, in the vicinity of such expansive solutions, the Jacobi-Maupertuis' metric becomes equivalent to the Euclidean one. As a result, different choices of the minimal $a$-clustered central configuration $b_m$ lead to diverging geodesic rays -- although even the same choice of $b_m$ may result in unbounded asymptotic behaviors. Therefore, any meaningful definition of the ideal boundary must be fine-tuned to distinguish among these cases.


\section{Preliminary estimates}

In this section, we establish some preliminary estimates that will be used later in the proof. We start by defining the space, for a given $T\in(1,+\infty]$,
\[
\mathcal{D}^T= \{\varphi\in H^{1,2}([1,T])\ :\ \varphi(1)=0,\ \int_{1}^{T}\|\dot\varphi(t)\|^2_\mathcal{M}\ \ud t<+\infty\},
\]
and the norm
\[
    \|\varphi\|_{\mathcal{D}^T} = \bigg( \int_{1}^{T} \|\dot{\varphi}(t)\|_\mathcal{M}^2\ \ud t \bigg)^{1/2},\quad \forall \varphi\in H^1([1,T]),\ \varphi(1)=0,
\]
which satisfies
\begin{displaymath}\label{dis:norm_T}
    \|\varphi(t)\|_\mathcal{M} \leq \|\varphi\|_{\mathcal{D}^T} \sqrt{t},\quad \forall \varphi\in \mathcal{D}^T.
\end{displaymath}

The results of Proposition \ref{dis_hardy} can be extended as follows, showing that $\mathcal{D}^T$ is compactly embedded in the space $L^2(1,T)$ equipped with proper weights. 
To this end, for every $T\in (1,+\infty]$ and $\varepsilon\ge 0$, we define $L^2(1,T;\ud t/t^{2+\varepsilon})$ as the space of functions $\varphi$ such that
\[
\int_1^T \frac{\|\varphi(t)\|_{\mathcal{M}}^2}{t^{2+\varepsilon}}\, \ud t < +\infty.
\]

\begin{prop}
\label{prop:compact_emb}
    Let $T\in(1,+\infty]$. Then, for all $\varepsilon \ge0$ and for all $\varphi\in\mathcal{D}^T$, the following Hardy-type inequality holds
    \begin{equation}\label{hardy_compact}
        \int_{1}^{T} \frac{\|\varphi(t)\|_\mathcal{M}^2}{t^{2+\varepsilon}}\ \ud t \leq \frac{4}{(1+\varepsilon)^2} \int_{1}^{T} \|\dot{\varphi}(t)\|_\mathcal{M}^2\ \ud t,
    \end{equation}
    that is, the space $\mathcal{D}^T$ is continuously embedded in the space $L^2(1,T;\ud t/t^{2+\varepsilon})$. Besides, $\mathcal{D}^T$ is compactly embedded in the space $L^2(1,T; \ud t/t^{2+\varepsilon})$ for all $\varepsilon>0$.
\end{prop}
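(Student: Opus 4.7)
The plan is to prove the weighted Hardy inequality \eqref{hardy_compact} by integration by parts, and then deduce the compact embedding for $\varepsilon>0$ by a cutoff/exhaustion argument, reducing the problem to the classical Rellich--Kondrachov theorem on bounded intervals.

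For the Hardy estimate, I would start from the identity $t^{-(2+\varepsilon)}=-\frac{\ud}{\ud t}\bigl((1+\varepsilon)^{-1}t^{-(1+\varepsilon)}\bigr)$ and integrate by parts on $[1,T]$ (approximating by $[1,T_0]$ with $T_0\nearrow T$ when $T=+\infty$). Since $\varphi(1)=0$ and the boundary contribution at the right endpoint is nonpositive, one obtains
\[
\int_1^T \frac{\|\varphi(t)\|_\mathcal{M}^2}{t^{2+\varepsilon}}\,\ud t \le \frac{2}{1+\varepsilon}\int_1^T \frac{\langle\varphi(t),\dot\varphi(t)\rangle_\mathcal{M}}{t^{1+\varepsilon}}\,\ud t.
\]
Factoring $t^{-(1+\varepsilon)}=t^{-(1+\varepsilon/2)}\cdot t^{-\varepsilon/2}$ and applying Cauchy--Schwarz, together with the trivial bound $t^{-\varepsilon}\le 1$ valid for $t\ge 1$ and $\varepsilon\ge 0$, produces
\[
\int_1^T \frac{\|\varphi\|_\mathcal{M}^2}{t^{2+\varepsilon}}\,\ud t \le \frac{2}{1+\varepsilon}\biggl(\int_1^T \frac{\|\varphi\|_\mathcal{M}^2}{t^{2+\varepsilon}}\,\ud t\biggr)^{1/2}\biggl(\int_1^T \|\dot\varphi\|_\mathcal{M}^2\,\ud t\biggr)^{1/2},
\]
from which \eqref{hardy_compact} follows by dividing and squaring. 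The continuous embedding into $L^2(1,T;\ud t/t^{2+\varepsilon})$ is then immediate.

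To obtain compactness when $\varepsilon>0$, the case $T<+\infty$ is essentially trivial: on the compact interval $[1,T]$ the weight $t^{-(2+\varepsilon)}$ is bounded above and below, so $L^2(1,T;\ud t/t^{2+\varepsilon})$ coincides with the standard $L^2(1,T)$ up to equivalence of norms, and the compact embedding reduces to Rellich--Kondrachov for $H^1([1,T])$. For $T=+\infty$ I would use a cutoff. Given a bounded sequence $\{\varphi_n\}\subset \mathcal{D}_0^{1,2}(1,+\infty)$, by Hilbert-space weak compactness one extracts a weakly convergent subsequence $\varphi_n\rightharpoonup \varphi$; setting $\psi_n=\varphi_n-\varphi$, the sequence $\{\psi_n\}$ remains bounded in $\mathcal{D}_0^{1,2}$ and is weakly null. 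For each fixed $M>1$, restriction yields a bounded sequence in $H^1([1,M])$, so Rellich on $[1,M]$ provides a further subsequence with $\psi_n\to 0$ in $L^2(1,M;\ud t/t^{2+\varepsilon})$. The tail is controlled by \eqref{dis_space_D012}, which yields $\|\psi_n(t)\|_\mathcal{M}^2\le (t-1)\|\psi_n\|_\mathcal{D}^2\le C t$, so
\[
\int_M^{+\infty} \frac{\|\psi_n(t)\|_\mathcal{M}^2}{t^{2+\varepsilon}}\,\ud t \le C\int_M^{+\infty}\frac{\ud t}{t^{1+\varepsilon}} = \frac{C}{\varepsilon M^\varepsilon},
\]
uniformly in $n$ and tending to zero as $M\to+\infty$. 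A standard $\eta/2$ argument, combining a sufficiently large $M$ with strong convergence on $[1,M]$ for $n$ large, yields strong convergence of the extracted subsequence in $L^2(1,+\infty;\ud t/t^{2+\varepsilon})$.

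The main technical point, which also explains why the strict inequality $\varepsilon>0$ is necessary in the compactness statement, is precisely this tail estimate: the convergence of $\int_M^{+\infty}\ud t/t^{1+\varepsilon}$ to $0$ as $M\to\infty$ requires $\varepsilon>0$, and indeed at $\varepsilon=0$ compactness genuinely fails (for instance, a family of bump perturbations concentrating far out along $[1,+\infty)$ remains bounded in $\mathcal{D}_0^{1,2}$ while escaping in $L^2(1,+\infty;\ud t/t^2)$). Beyond this point, the remaining tools are standard: integration by parts, Cauchy--Schwarz, and the classical Rellich--Kondrachov theorem on bounded intervals.
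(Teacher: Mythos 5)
Your proof is correct and follows essentially the same route as the paper: the weighted Hardy inequality is obtained by the same integration-by-parts plus Cauchy--Schwarz computation (the paper absorbs the extra factor via $t^{-2\varepsilon}\le t^{-\varepsilon}$ rather than your $t^{-\varepsilon}\le1$ split, which is an inconsequential algebraic variant), and the compactness hinges on the same tail estimate coming from \eqref{dis_space_D012}. The paper packages the $T=+\infty$ compactness step as pointwise convergence plus dominated convergence with dominating function $C/t^{1+\varepsilon}$, whereas you split off a finite interval and invoke Rellich there before closing with an $\eta/2$ argument; the two formulations are interchangeable.
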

\begin{proof}
    First, we prove that the embedding is continuous. For all fixed $T\in(1,+\infty)$ and $\varepsilon\ge0$, this follows from
    \[
    \begin{split}
        \int_{1}^{T} \frac{\|\varphi(t)\|_\mathcal{M}^2}{t^{2+\varepsilon}}\ \ud t & = -\int_{1}^{T} \frac{\ud}{\ud t}\frac{1}{t^{1+\varepsilon}}\|\varphi(t)\|_\mathcal{M}^2\ \ud t \\
        & = -\frac{1}{1+\varepsilon}\frac{\|\varphi(T)\|_\mathcal{M}^2}{T^{1+\varepsilon}} + \frac{2}{1+\varepsilon}\int_{1}^{T} \frac{\langle \varphi(t),\dot{\varphi}(t)\rangle_\mathcal{M}}{t^{1+\varepsilon}}\ \ud t\\
        &\leq \frac{2}{1+\varepsilon}\int_{1}^{T} \frac{\langle \varphi(t),\dot{\varphi}(t)\rangle_\mathcal{M}}{t^{1+\varepsilon}}\ \ud t\\
        &\leq \frac{2}{1+\varepsilon}\bigg( \int_{1}^{T} \frac{\|\varphi(t)\|_\mathcal{M}^2}{t^{2+2\varepsilon}}\ \ud t \bigg)^{1/2} \bigg( \int_{1}^{T} \|\dot{\varphi}(t)\|_\mathcal{M}^2\ \ud t \bigg)^{1/2}\\
        &\leq \frac{2}{1+\varepsilon}\bigg( \int_{1}^{T} \frac{\|\varphi(t)\|_\mathcal{M}^2}{t^{2+\varepsilon}}\ \ud t \bigg)^{1/2} \bigg( \int_{1}^{T} \|\dot{\varphi}(t)\|_\mathcal{M}^2\ \ud t \bigg)^{1/2},
    \end{split}
    \]
    which gives the Hardy-type inequality \eqref{hardy_compact}. For $T=+\infty$, the thesis follows from the fact that the space $C_c^\infty(1,+\infty)$ is dense in $\mathcal{D}$.

    Now, let $\varepsilon>0$. For $T<+\infty$, the compactness follows from Rellich-Kondrachov Theorem. For $T=+\infty$, let $(\varphi_n)_n$ be a bounded sequence in $\mathcal{D}$, i.e. there exists $C>0$ such that $\|\varphi_n\|_\mathcal{D}\le C$, for every $n \in \mathbb N$. Then, there exists a subsequence $(\varphi_{n_k})_k$ converging pointwise to a function $\bar\varphi$ on $[1,+\infty)$. Since, by \eqref{dis_space_D012},
    \[
    \frac{\|\varphi_{n_k}(t) - \bar\varphi(t)\|_\mathcal{M}^2}{t^{2+\varepsilon}} \leq \frac{\|\varphi_{n_k}-\bar\varphi\|_\mathcal{D}^2}{t^{1+\varepsilon}} \le \frac{C}{t^{1+\varepsilon}}\in L^1(1,+\infty),
    \]
    we can use the Dominated Convergence Theorem to say that
    \[
    \lim_{k\rightarrow+\infty}\int_{1}^{+\infty} \frac{\|\varphi_{n_k}(t) - \bar\varphi(t)\|_\mathcal{M}^2}{t^{2+\varepsilon}}\ \ud t = 0.
    \]
    This concludes the proof.
\end{proof}

\begin{rem}
    Clearly, for $T=+\infty$ and $\varepsilon=0$, we obtain the same results of Proposition \ref{dis_hardy}.
\end{rem}

\subsection{Uniform coercivity estimates}\label{sec:coercivity_estimates}

The aim of this subsection is to prove that the renormalized Lagrangian action satisfies uniform coercivity estimates with respect to $T$ and $x$ when the initial configuration belongs to a compact set. This will be crucial in the proof of the main theorems and will be used several times in the following arguments.

\begin{notation}
Fix $T>1$. 
    For a function $\varphi\in \mathcal{D}^T$, we will write
    \[
    \mathcal{A}_{x}^T(\varphi) = \int_{1}^{T} \frac{1}{2}\|\dot{\varphi}(t)\|_\mathcal{M}^2 + U(\varphi(t) + r_0(t) + x - r_0(1)) - U(r_0(t)) - \langle \mathcal{M}\ddot{r}_0(t),\varphi(t)\rangle\ \ud t.
    \]
    To keep notations compact, in the case $T=+\infty$, with $\mathcal{A}_x^{+\infty}$ we mean the functional $\mathcal{A}_x$, defined on $\mathcal{D}$.
\end{notation}

\begin{lem}\label{lem:coercivity_estimates}
    Given a compact set $\mathcal{K}\subset\mathcal{X}$, fix $x\in\mathcal{K}$ and $T\in (1,+\infty]$. Then, there exist positive constants $A,B,C\in\R$, which do not depend on $x$ and $T$, such that for all $\varphi$ belonging to a bounded subset of $\mathcal{D}^T$, it holds  
    \begin{displaymath}\label{eq:unif_coerc_est}
    \mathcal{A}_{x}^T(\varphi) \geq A\|\varphi\|_{\mathcal{D}^T}^2 - B \|\varphi\|_{\mathcal{D}^T} + C.
    \end{displaymath}
\end{lem}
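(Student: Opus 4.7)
The strategy is to produce a quadratic-in-$\|\varphi\|_{\mathcal{D}_T}$ lower bound by writing the integrand in a form that exposes the natural cancellations of the renormalization. Fix $R > 0$ with $\|\varphi\|_{\mathcal{D}_T} \le R$ for every $\varphi$ in the bounded subset and set $y(t) := \varphi(t) + x - r_0(1)$. The kinetic term contributes $\tfrac{1}{2}\|\varphi\|_{\mathcal{D}_T}^2$, so the task is to estimate the integral of $U(y+r_0) - U(r_0) - \langle \mathcal{M}\ddot r_0, \varphi\rangle$ from below by $-B\|\varphi\|_{\mathcal{D}_T} + C'$, with constants depending only on $\mathcal{K}$ and $R$.

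The pivotal algebraic step is the identity $\mathcal{M}\ddot r_0(t) = \nabla U(r_0(t))$, which holds for every $t > 1$ in all three cases: for the hyperbolic reference $r_0(t)=at$ both sides vanish (and one uses instead the decay $\|\nabla U(at)\|\lesssim t^{-2}$); for the parabolic reference $r_0(t) = \beta b_m t^{2/3}$ it follows from the central configuration equation $\nabla U(b_m) = -U(b_m)\mathcal{M}b_m$, the homogeneity of $U$ of degree $-1$, and the calibration $\beta^3 = \tfrac{9}{2}U(b_m)$ of Theorem \ref{thm_parabolic}; in the hyperbolic-parabolic case it holds cluster-by-cluster. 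Applying Taylor's formula to $U$ at $r_0(t)$ in the increment $y(t)$ and using this identity gives
\[
U(y + r_0) - U(r_0) - \langle \mathcal{M}\ddot r_0, \varphi\rangle \;=\; \langle \mathcal{M}\ddot r_0, x - r_0(1)\rangle \;+\; R_2\bigl(r_0(t), y(t)\bigr),
\]
with $R_2$ the integral remainder. Since $\|\mathcal{M}\ddot r_0(t)\|_\mathcal{M} \lesssim t^{-4/3}$ in the parabolic/HP case (vanishes for $H$), the linear term is integrable uniformly in $x\in\mathcal{K}$ and $T$. This is precisely what removes the otherwise unbounded contribution $\int_1^T U(r_0)\,\mathrm{d}t \sim T^{1/3}$ of the parabolic regime.

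To control $R_2$ I split $[1,T] = [1,T_0]\cup [T_0,T]$ with $T_0 = T_0(R,\mathcal{K})$ chosen large enough (independently of $x$ and $T$) that the pointwise bound $\|\varphi(t)\|_\mathcal{M}\le R\sqrt{t}$ from \eqref{dis_space_D012}, combined with the boundedness of $\|x - r_0(1)\|_\mathcal{M}$ on $\mathcal{K}$, forces the mutual distances of $r_0(t)+y(t)$ on $[T_0,T]$ to stay comparable to those of $r_0(t)$; this is possible because $\sqrt{t}$ grows strictly slower than the $t^{2/3}$ or $t$ growth of the reference mutual distances, and the reference shapes $b_m$ and $a$ are collision-free in the relevant sense. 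On $[T_0, T]$ one then has $\sup_{s\in[0,1]}|D^2U(r_0 + sy)|\le C\,t^{-2}$ (parabolic/HP) or $C\,t^{-3}$ (hyperbolic), so
\[
|R_2(r_0(t),y(t))| \;\le\; \frac{C}{t^{2}}\bigl(\|\varphi(t)\|_\mathcal{M}^2 + \|x - r_0(1)\|_\mathcal{M}^2\bigr),
\]
and the Hardy-type inequality \eqref{hardy_compact} yields $\int_{T_0}^T |R_2|\,\mathrm{d}t \le 4CR^2 + C'_\mathcal{K}$, uniform in $T$. On $[1,T_0]$ the Taylor expansion may fail (the argument of $U$ can approach collisions), so I drop the non-negative term $U(y+r_0)$, bound $\int_1^{T_0} U(r_0)\,\mathrm{d}t$ by a constant depending only on $T_0$, and estimate $\int_1^{T_0}|\langle \mathcal{M}\ddot r_0,\varphi\rangle|\,\mathrm{d}t$ via Cauchy--Schwarz and Proposition \ref{prop:compact_emb}, producing the $-B\|\varphi\|_{\mathcal{D}_T}$ term.

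The main obstacle is exactly this tension: any naive lower bound obtained by discarding $U(y+r_0)\ge 0$ is fatal in the parabolic and hyperbolic-parabolic regimes, since $\int_1^T U(r_0)\,\mathrm{d}t$ diverges like $T^{1/3}$ and no estimate of the form $-B\|\varphi\|_{\mathcal{D}_T}+C$ is possible. It is only the Newton-type identity $\nabla U(r_0) = \mathcal{M}\ddot r_0$, together with the second-order Taylor expansion, that cancels this divergence and leaves behind an integrable residue. A secondary technical point is ensuring that the threshold $T_0$ is uniform in $x \in \mathcal{K}$, which follows from the compactness of $\mathcal{K}$. Collecting the bounds gives the desired inequality $\mathcal{A}_{x,[1,T]}(\varphi)\ge A\|\varphi\|_{\mathcal{D}_T}^2 - B\|\varphi\|_{\mathcal{D}_T}+C$ with $A,B,C$ depending only on $\mathcal{K}$ and $R$, independent of $x \in \mathcal{K}$ and $T \in (1,+\infty]$.
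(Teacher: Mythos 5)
Your argument takes a genuinely different route from the paper's. The paper works on the whole interval $[1,T]$ without any time splitting: it decomposes the potential term pair-by-pair (hyperbolic case), uses the pointwise inequality $U(\xi)\ge U_{\min}/\|\xi\|_\mathcal{M}$ valid on all of $\mathcal X$ including collisions (parabolic case), and clusters (HP case), then applies convexity of $s\mapsto 1/s$, Cauchy--Schwarz, and the Hardy inequality. It never Taylor-expands $U:\Omega\to\mathbb R$ itself. You instead split at a threshold $T_0=T_0(R,\mathcal K)$, drop $U\ge 0$ near $t=1$, and run a second-order Taylor expansion of $U$ for $t\ge T_0$, where the bound $\|\varphi(t)\|_\mathcal{M}\le R\sqrt t$ keeps the argument of $U$ away from $\Delta$. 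Your observation that in the parabolic case the renormalization cancels the otherwise divergent contribution precisely because $\mathcal{M}\ddot r_0 = \nabla U(r_0)$ --- a consequence of the calibration $\beta^3=\frac{9}{2}U(b_m)$ and the central configuration equation --- is correct and conceptually sharper than the paper's implicit use of the explicit formula for $\nabla U(r_0)$. Two things buy the paper its cleaner statement: it never needs the a priori bound $\|\varphi\|_{\mathcal D_T}\le R$ (its constants are genuinely uniform in $\varphi$, whereas yours depend on $R$ through $T_0$), and by expanding the scalar $1/s$ rather than $U$ itself it avoids entirely the question of whether $r_0+sy$ stays in $\Omega$.

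One factual slip to flag: for $r_0(t)=at$ with $a\in\Omega$, you write that "both sides vanish'' in $\mathcal M\ddot r_0=\nabla U(r_0)$. This is wrong: $\mathcal M\ddot r_0=0$ but $\nabla U(at)\ne 0$, so the identity simply fails in the hyperbolic case. The correct statement, which you do invoke in the parenthetical, is that the residual $\langle\nabla U(at),y(t)\rangle$ is integrable because $\|\nabla U(at)\|\lesssim t^{-2}$ against $\|y(t)\|\lesssim\sqrt t$. The same caveat applies to the HP case: the Newton identity holds only for the $a$-clustered potential $U_a$, not the full $U$; the inter-cluster remainder $\nabla U(r_0)-\nabla U_a(r_0)$ decays like $t^{-2}$ and must be handled as in the hyperbolic case, a point your "cluster-by-cluster'' phrase glosses over. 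These are expository rather than structural defects: once corrected, your splitting-plus-Taylor strategy does yield the claimed lower bound on bounded subsets of $\mathcal D_0^{1,2}(1,T)$.
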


\begin{proof}
For fixed $x\in\mathcal{X}$ and $T=+\infty$, coercivity estimates have been proved in \cite{PolimeniTerracini}. 

Now, we fix $x\in \mathcal K$, with $\mathcal K$ a compact subset of $\mathcal{X}$, and $T\in(1,+\infty]$. Let $\bar k\in\R$ be such that $\|x\|_\mathcal{M}\leq \bar k$ for all $x\in \mathcal K$. The goal is to prove that the renormalized Lagrangian action satisfies the coercivity estimates uniformly with respect to both $x$ and $T$.

As usual, we study the three cases separately.

\smallskip
{\bf Hyperbolic case.}
We write the action as
\[
\mathcal{A}_{x}^T(\varphi) = \sum_{i<j}m_im_j\mathcal{A}_{x}^T(\varphi)_{ij},
\]
where 
\[
\mathcal{A}_{x}^T(\varphi)_{ij} = \int_{1}^{T}\frac{1}{2M}|\dot\varphi_{ij}(t)|^2 + 
\frac1{|\varphi_{ij}(t)+a_{ij}t+x_{ij}-a_{ij}|}-\frac1{|a_{ij}t|}\,\ud t,
\]
$M=\sum_{i=1,\dots,N}m_i$, and 
\[
\varphi_{ij}=\varphi_i-\varphi_j, \ a_{ij}=a_i-a_j, \ x_{ij}=x_i-x_j.
\]
We then show uniform coercivity estimates of each term $\mathcal{A}_{x}^T(\varphi)_{ij}$.

\smallskip
We have
\begin{displaymath}
\begin{split} 
    \int_{1}^{T} \frac1{|\varphi_{ij}(t)+a_{ij}t+x_{ij}-a_{ij}|}-\frac1{|a_{ij}t|}\,\ud t \ge \int_{1}^{T} \frac{1}{|\varphi_{ij}(t)| +|a_{ij}| t +  |x_{ij} - a_{ij}|} - \frac{1}{|a_{ij}|t}\, \mathrm{d}t.
    \end{split} 
    \end{displaymath} 

    Then, by the convexity of $\frac1t$, we can use Taylor's expansion and then Hardy inequality to obtain
    \[
    \begin{split}
    &\int_{1}^{T} \frac{1}{|\varphi_{ij}(t)| +  |a_{ij}| t +|x_{ij} - a_{ij}|} - \frac{1}{|a_{ij}|t}\, \mathrm{d}t \\
    &\ge \int_{1}^{T} -\frac{|\varphi_{ij}(t)|}{|a_{ij}| t^2 } - \frac{|x_{ij} - a_{ij}|}{|a_{ij}|t^2}\, \mathrm{d}t\\
    &\ge -\frac{1}{|a_{ij}|}\bigg(\int_{1}^{T} \frac{|\varphi_{ij}(t)|^2}{t^2}\ \ud t\bigg)^{1/2}\bigg(\int_{1}^{T}\frac{1}{t^2}\ \ud t\bigg)^{1/2} -\int_{1}^{T} \frac{|x_{ij} - a_{ij}|}{|a_{ij}|t^2}\ \ud t\\
    &\ge -\frac{2}{|a_{ij}|}\|\varphi_{ij}\|_{\mathcal{D}^T}\bigg(\int_{1}^{+\infty}\frac{1}{t^2}\ \ud t\bigg)^{1/2} - \int_{1}^{+\infty} \frac{|x_{ij} - a_{ij}|}{|a_{ij}|t^2}\ \ud t\\
    & = -\frac{2}{|a_{ij}|}\|\varphi_{ij}\|_{\mathcal{D}^T} + \frac{|x_{ij} - a_{ij}|}{|a_{ij}|}.
    \end{split}
    \]

    We have thus proved that
\[
\mathcal{A}_{x}^T(\varphi)_{ij} \ge \frac{1}{2M}\|\varphi_{ij}\|_{\mathcal{D}^T}^2-\frac{2}{|a_{ij}|}\|\varphi_{ij}\|_{\mathcal{D}^T}+\frac{|x_{ij} - a_{ij}|}{|a_{ij}|},
\]
from which the uniform coercivity with respect to $T$ and $x$ trivially follows.

\smallskip
{\bf Parabolic case.} In this case, it holds
\[
\nabla U(r_0) = \nabla U(\beta b_m t^{2/3})= -\frac{U_{min}}{\beta^2t^{4/3}}\mathcal{M}b_m,
 \]
 with $U_{min}=U(b_m)$.
 
Following the same computations made in \cite[Section 4.1]{PolimeniTerracini}, we get
\[
\begin{split}
\mathcal{A}_{x}^T(\varphi) &= \frac{1}{2}\|\varphi\|^2_{\mathcal{D}^T}+ \int_{1}^{T} U(\varphi(t) + \beta b_m t^{2/3} + \tilde x) - U(\beta b_m t^{2/3}) - \langle \nabla U(\beta b_m t^{2/3}),\varphi(t)\rangle\ \ud t
    \\
    &\ge \frac{1}{2}\|\varphi\|^2_{\mathcal{D}^T}+ U_{min}\int_1^T  \frac{1}{\|\varphi(t)\|_{\mathcal{M}}+\beta t^{2/3}\|b_m\|_\mathcal{M}+\|\tilde x\|_\mathcal{M}}-\frac{1}{\beta t^{2/3}\|b_m\|_\mathcal{M}}+\frac{\langle \mathcal{M}b_m,\varphi(t)\rangle}{\beta^2 t^{4/3}}\,\ud t
    \\
    & \ge \frac{1}{2}\|\varphi\|^2_{\mathcal{D}^T}-U_{min}\int_1^T\frac{\|\varphi(t)\|_\mathcal{M}^2}{2\beta^3\|b_m\|_\mathcal{M}^3t^2}+\frac{\langle \mathcal{M}\varphi(t),\tilde x\rangle}{\beta^3\|b_m\|_\mathcal{M}^3t^2}\ \ud t -U_{min}\int_1^T \frac{\langle\mathcal{M} b_m,\tilde x\rangle \beta t^{2/3}+1/2\|\tilde x\|_\mathcal{M}}{\beta^3\|b_m\|_\mathcal{M}^3t^2}\ \ud t,
    \end{split}
\]
where we set $\tilde x=x-\beta b_m$. 

The following inequalities hold:
\begin{itemize}
    \item \[-U_{min}\int_1^T\frac{\|\varphi(t)\|_\mathcal{M}^2}{2\beta^3\|b_m\|_\mathcal{M}^3t^2}\ \ud t \ge -\frac{8}{18}\|\varphi\|_{\mathcal{D}^T}^2.
    \]
    \item \[\begin{split}-U_{min}\int_1^T\frac{\langle \mathcal{M}\varphi(t),\tilde x\rangle}{\beta^3\|b_m\|_\mathcal{M}^3t^2}\ \ud t&\ge-\frac{2}{9}\int_{1}^{T} \frac{\|\varphi(t)\|_\mathcal{M}\|\tilde x\|_\mathcal{M}}{t^2}\ \ud t\\
    &\ge -\frac{2}{9}\bigg(\int_{1}^{T}\frac{||\varphi(t)\|_\mathcal{M}^2}{t^2}\ \ud t\bigg)^{1/2}\bigg(\int_{1}^{T}\frac{\|\tilde x\|_\mathcal{M}^2}{t^2}\ \ud t\bigg)^{1/2}\\
    &\ge-\frac{4}{9}\|\varphi\|_{\mathcal{D}^T}\|\tilde x\|_\mathcal{M}. \end{split}\]
    \item \[
    \begin{split}
        -U_{min}\int_1^T \frac{\langle\mathcal{M} b_m,\tilde x\rangle \beta t^{2/3}+1/2\|\tilde x\|_\mathcal{M}}{\beta^3\|b_m\|_\mathcal{M}^3t^2}\ \ud t& \ge -\frac{2}{9}\int_{1}^{T}\frac{\|\tilde x\|_\mathcal{M}\beta t^{2/3} + 1/2\|\tilde x\|_\mathcal{M}}{t^2}\ \ud t\\
        & \ge -\frac{2}{9}\int_{1}^{+\infty}\frac{\|\tilde x\|_\mathcal{M}\beta t^{2/3} + 1/2\|\tilde x\|_\mathcal{M}}{t^2}\ \ud t\\
        &= \frac{\|\tilde x\|_\mathcal{M}}{9}(2\beta-1).
    \end{split}
    \]
\end{itemize}

It follows
\[
\mathcal{A}_{x}^T(\varphi) \geq \frac{1}{18}\|\varphi\|_{\mathcal{D}^T}^2 - \frac{4}{9}\|\varphi\|_{\mathcal{D}^T}\|\tilde x\|_\mathcal{M} + \frac{\|\tilde x\|_\mathcal{M}}{9}(2\beta-1),
\]
and hence the proof for the parabolic case is completed.

\smallskip
{\bf Hyperbolic-Parabolic case.} As in \cite[Section 5]{PolimeniTerracini}, considering the cluster partition of the bodies determined by the equivalence relation \eqref{eq:equivalence_relation}, the renormalized Lagrangian action can be written as the sum of two terms as follows: for every $\varphi\in \mathcal{D}^T$, we define
\begin{equation}\label{def:decomposed_lagrangian_action}
\begin{split}
    \mathcal{A}_{x}^T(\varphi) & = \sum_{K\in\mathcal{P}} \mathcal{A}_{K}(\varphi) + \sum_{K_1,K_2\in\mathcal{P},\ K_1\neq K_2} \mathcal{A}_{K_1,K_2}(\varphi)\\
    &  = \sum_{K\in\mathcal{P}}\bigg(\sum_{i,j\in K,\ i<j} m_i m_j \mathcal{A}_{K}^{ij}(\varphi)\bigg) + \frac{1}{2}\sum_{K_1,K_2\in\mathcal{P},\ K_1\neq K_2}\bigg(\sum_{i\in K_1,\ j\in K_2} m_i m_j \mathcal{A}_{K_1,K_2}^{ij}(\varphi)\bigg),
\end{split}
\end{equation}
where
\begin{displaymath}\label{action_partially_hyperbolic_inside}
\begin{split}
    \mathcal{A}_{K}^{ij}(\varphi) = & \int_{1}^{T} \frac{1}{2M} |\dot{\varphi}_{ij}(t)|^2 + \frac{1}{|\varphi_{ij}(t)+\beta_K b^K_{ij}t^{2/3}+\Tilde{x}_{ij}|} - \frac{1}{|\beta_K b^K_{ij}t^{2/3}|} \\
    &+ \frac{2}{9}\frac{\beta_K}{M} \frac{\langle b^K_{ij},\varphi_{ij}(t)\rangle}{t^{4/3}}\ \ud t,
\end{split}
\end{displaymath}
\begin{displaymath}\label{action_partially_hyperbolic_outside}
\begin{split}
    \mathcal{A}_{K_1,K_2}^{ij}(\varphi) = &\int_{1}^{T}\frac{1}{2M} |\dot{\varphi}_{ij}(t)|^2+ \frac{1}{|\varphi_{ij}(t)+a_{ij}t+\beta_{K_{1,2}}b_{ij}^{K_{1,2}}t^{2/3}+\Tilde{x}_{ij}|} - \frac{1}{|a_{ij}t|}\ \ud t,
\end{split}
\end{displaymath}
where $\beta_K >0$, $b^K$ is a minimal central configuration in the cluster $K$, $\beta_{K_{1,2}}b^{K_{1,2}}=(\beta_{K_1}b^{K_1},\beta_{K_2}b^{K_2})$ with $b^{K_i}$ minimal central configuration for the cluster $K_i$, $i=1,2$, and $\beta_{K_i}\in\R$, $i=1,2$ (see \cite[Section 5]{PolimeniTerracini}).

We start with the interaction term $\mathcal{A}_{K_1,K_2}^{ij}$, which behaves like the renormalized Lagrangian action in the hyperbolic case, with the difference that here, in the denominator of the potential, there is also a term of order 2/3. By the triangular inequality, we have
\[
\begin{split}
    \mathcal{A}_{K_1,K_2}^{ij}(\varphi) = &\int_{1}^{T}\frac{1}{2M} |\dot{\varphi}_{ij}(t)|^2+ \frac{1}{|\varphi_{ij}(t)+a_{ij}t+\beta_{K_{1,2}}b_{ij}^{K_{1,2}}t^{2/3}+\Tilde{x}_{ij}|} - \frac{1}{|a_{ij}t|}\ \ud t\\
    &\geq \frac{1}{2M} \|\varphi_{ij}\|_{\mathcal{D}^T}^2 + \int_{1}^{T} \frac{1}{|\varphi_{ij}(t)|+|a_{ij}|t+|\beta_{K_{1,2}}b_{ij}^{K_{1,2}}|t^{2/3}+|\Tilde{x}_{ij}|} - \frac{1}{|a_{ij}|t}\ \ud t.
\end{split}
\]
By the convexity of the function $\frac{1}{t}$, it holds
\[
\begin{split}
    \int_{1}^{T} \frac{1}{|\varphi_{ij}(t)|+|a_{ij}|t+|\beta_{K_{1,2}}b_{ij}^{K_{1,2}}|t^{2/3}+|\Tilde{x}_{ij}|}\ \ud t &\geq -\frac{1}{|a_{ij}|^2}\int_{1}^{T} \frac{|\varphi_{ij}(t)|+|\beta_{K_{1,2}}b_{ij}^{K_{1,2}}|t^{2/3}+|\Tilde{x}_{ij}|}{t^2}\ \ud t \\
    &\ge-\frac{1}{|a_{ij}|^2}(3|\beta_{K_{1,2}}b_{ij}^{K_{1,2}}| + 4\|\varphi_{ij}\|_{\mathcal{D}^T}+|\Tilde{x}_{ij}|).
\end{split}
\]

So, it holds
\[
\mathcal{A}_{K_1,K_2}^{ij}(\varphi) > \frac{1}{2M}\|\varphi_{ij}\|_{\mathcal{D}^T}^2 - \frac{4}{|a_{ij}|^2} \|\varphi_{ij}\|_{\mathcal{D}^T} + -\frac{1}{|a_{ij}|^2}(3|\beta_{K_{1,2}}b_{ij}^{K_{1,2}}| + |\Tilde{x}_{ij}|).
\]

To study the uniform coercivity of the term $\mathcal{A}_{K}^{ij}(\varphi)$, we notice that it has a similar expression to the renormalized action in the parabolic case. Thus, the same computations lead us to the conclusion 
\[
    \mathcal{A}_{K}^{ij}(\varphi) \geq \frac{1}{18 M_K}\|\varphi_{ij}\|_{\mathcal{D}^T}^2 - \frac{4}{9}\|\varphi_{ij}\|_{\mathcal{D}^T}\|\tilde x_{ij}\|_\mathcal{M} + \frac{\|\tilde x_{ij}\|_\mathcal{M}}{9}(2\beta_K-1).
\]
This concludes the proof.

\end{proof}

\subsection{Distance of minimal solutions from collisions}\label{sec:collisionless_minimal_solutions}

As a consequence of the previous uniform coercivity estimates, we will prove that any expansive motion $\gamma(t)$ with initial configuration in $\Omega$, given by the Renormalized Action Principle Theorem \ref{th:ren_act_pr}, is far from collisions for all $t\in[1,+\infty)$, locally uniformly with respect to $x$.

\begin{lem}\label{lemma_collisionless_minimal}
    For any $\bar x\in\Omega$, there is a neighborhood $\mathcal{U}(\bar x)$ such that $\mathcal{U}(\bar x)\cap\Delta=\emptyset$ and there is a real constant $C>0$ such that, for any $x\in \mathcal{U}(\bar x)$, the motions $\gamma(t)$ given by Theorems \ref{thm_hyperbolic}, \ref{thm_parabolic} and \ref{thm_partially_hyperbolic} with initial configuration $x$ satisfy
    \begin{equation}\label{collisionless_minimal_solutions}
        d(\gamma(t),\Delta)\geq C\quad\forall t\in[1,+\infty),\ \text{uniformly with respect to }x.    
    \end{equation}
    
\end{lem}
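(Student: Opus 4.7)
The plan is to combine a uniform coercivity bound on the minimizing perturbations with a contradiction argument based on Marchal's principle. Choose a compact neighborhood $\mathcal{K} \subset \Omega$ of $\bar x$ with $\mathcal{K} \cap \Delta = \emptyset$ (possible since $\Omega$ is open), and for each $x \in \mathcal{K}$ let $\varphi_x^{\min} \in \mathcal{D}_0^{1,2}(1,+\infty)$ be any minimizer of $\mathcal{A}_x$, so that $\gamma_x(t) = r_0(t) + \varphi_x^{\min}(t) + x - r_0(1)$ by Theorem \ref{th:ren_act_pr}. Since $x \mapsto \mathcal{A}_x(0)$ is continuous and therefore bounded on $\mathcal{K}$, the uniform coercivity estimate of Lemma \ref{lem:coercivity_estimates} (applied with $T = +\infty$) produces $R = R(\mathcal{K})$ such that $\|\varphi_x^{\min}\|_{\mathcal{D}} \le R$ for every $x \in \mathcal{K}$; combined with \eqref{dis_space_D012} this gives
\[
\|\varphi_x^{\min}(t)\|_{\mathcal{M}} \le R\sqrt{t-1}, \qquad t \ge 1,\ x \in \mathcal{K}.
\]

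For large times the reference path dominates the perturbation. In case $(H)$, $r_0(t) = at$ with $\delta := \min_{i<j}|a_i - a_j| > 0$, so $|\gamma_{x,i}(t) - \gamma_{x,j}(t)| \ge \delta t - c_1 \sqrt{t-1} - c_2$ for constants $c_1, c_2$ depending only on $R$ and the diameter of $\mathcal{K}$. In case $(P)$, the analogous estimate holds with $\delta t$ replaced by $\delta' t^{2/3}$, since $b_m \in \Omega$ and $t^{2/3}$ still dominates $\sqrt{t-1}$. In case $(HP)$, different clusters separate linearly while internal separations within a cluster $K$ grow like $t^{2/3}$ (because $b^K \in \Omega$). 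In all three cases we obtain $T_0 > 1$ and $C_\infty > 0$, depending only on $\mathcal{K}$, such that
\[
d(\gamma_x(t), \Delta) \ge C_\infty \quad \text{for all } t \ge T_0,\ x \in \mathcal{K}.
\]

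It remains to bound $d(\gamma_x(t), \Delta)$ from below uniformly on the compact interval $[1, T_0]$. Suppose no such bound exists: then there are sequences $x_n \to \bar x$ in $\mathcal{K}$, $t_n \to t^* \in [1, T_0]$ and (passing to a subsequence) a fixed pair $i < j$ with $|\gamma_{x_n,i}(t_n) - \gamma_{x_n,j}(t_n)| \to 0$. By Step~1 the sequence $\varphi_{x_n}^{\min}$ is bounded in $\mathcal{D}_0^{1,2}(1,+\infty)$; extract a weakly convergent subsequence with limit $\varphi^*$. The Rellich embedding $H^1([1,T_0]) \hookrightarrow C([1,T_0])$ makes the convergence uniform on $[1,T_0]$, so $\gamma_{x_n}(t_n) \to \gamma^*(t^*)$, where $\gamma^*(t) = r_0(t) + \varphi^*(t) + \bar x - r_0(1)$, and the limit configuration has a collision between bodies $i$ and $j$. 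A $\Gamma$-convergence argument shows that $\varphi^*$ is itself a minimizer of $\mathcal{A}_{\bar x}$: the $\limsup$ inequality uses continuity of $x \mapsto \mathcal{A}_x(\psi)$ at any fixed competitor $\psi$; the $\liminf$ inequality combines weak lower semicontinuity of the kinetic term, weak continuity of the linear correction $\int \langle \mathcal{M}\ddot r_0, \varphi\rangle\,\ud t$, and Fatou's lemma applied to the nonnegative integrand $U(\varphi + r_0 + x - r_0(1))$ (with $U$ extended by $+\infty$ on $\Delta$). Theorem \ref{th:ren_act_pr} and Marchal's Theorem \ref{thm_marchal} then force $\gamma^*$ to be collision-free on $(1,+\infty)$, so $t^* = 1$ and $\gamma^*(1) = \bar x \in \Delta$, contradicting $\bar x \in \Omega$. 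Taking $C := \min(C_\infty, C_{[1,T_0]})$, where $C_{[1,T_0]} > 0$ is the bound just produced, completes the proof.

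The main obstacle is Step~3: justifying the $\Gamma$-convergence of $\mathcal{A}_{x_n}$ to $\mathcal{A}_{\bar x}$ despite the singular potential. The saving observation is that on any compact time interval the reference paths stay uniformly away from $\Delta$, so the singular terms in the renormalized Lagrangian admit integrable bounds that legitimize Fatou's lemma, while the upgrade from weak convergence in $\mathcal{D}_0^{1,2}$ to uniform convergence on $[1,T_0]$ transfers the putative collision of the sequence back to the limit minimizer, where Marchal's theorem rules it out.
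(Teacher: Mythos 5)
Your proof follows the same two-step skeleton as the paper's: a large-time bound via the uniform coercivity estimate and the growth of the reference path, followed by a compactness--contradiction argument on the remaining compact time interval. But you go further than the paper in one crucial respect, and you have a real gap in another.

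What you do better: the paper's own proof of this lemma reaches the point where a uniform limit $\bar\gamma$ of the minimizing trajectories has a collision somewhere on $[1,\tau]$, and then simply asserts ``$\bar\gamma(1)\in\Delta$'' without saying why the collision time must be $t=1$. That step is not automatic: a limit of collision-free curves can, in principle, collide at an interior time $t^*\in(1,\tau)$, and Newtonian collisions carry finite action, so finiteness of the action does not rule this out. You correctly identify that the missing ingredient is Marchal's Theorem~\ref{thm_marchal} applied to the limit curve, which requires first knowing that the limit is itself a minimizer. That is the right idea and it is what the paper leaves implicit.

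The gap: your justification that $\varphi^*$ is a minimizer of $\mathcal A_{\bar x}$ leans on ``Fatou's lemma applied to the nonnegative integrand $U(\varphi+r_0+x-r_0(1))$.'' This does not work over $[1,+\infty)$, because both $\int_1^{+\infty}U(\gamma)\,\ud t$ and $\int_1^{+\infty}U(r_0)\,\ud t$ diverge for expansive motions ($U(r_0(t))\sim t^{-1}$ in case $(H)$ and $U(r_0(t))\sim t^{-2/3}$ in cases $(P)$, $(HP)$), so you cannot split the renormalized integrand into separately integrable pieces and apply Fatou to the $U(\gamma)$ piece in isolation; the inequality becomes $+\infty\le+\infty$. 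The lower semicontinuity of $\mathcal A_x$ has to be established via a decomposition in which the ``remainder'' integrand is summable at infinity, which is exactly what the Structure Lemma~\ref{lem:quad_pot} provides: $\mathcal A_x=Q+P_x$ with $Q$ a positive definite quadratic form and the integrand of $P_x$ bounded by $Ct^{-\beta}$, $\beta>1$, for large $t$. You would need to invoke that machinery, or replace it by something equivalent.

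A cheaper fix that avoids the infinite-horizon issue altogether: you only need to rule out collisions on the \emph{finite} interval $[1,\tau]$. By the Corollary following Theorem~\ref{thm_partially_hyperbolic}, each $\gamma_n$ is a free-time minimizer of the Lagrangian action on $[1,\tau]$. Since this involves only finite-time comparison curves, stability of free-time minimality under uniform convergence (using weak lower semicontinuity of the kinetic term and Fatou on the genuinely nonnegative $\int_a^b U(\gamma)\,\ud t$ over compact subintervals, with competitors that stay away from $\Delta$) gives that $\gamma^*$ is a free-time minimizer on $[1,\tau]$. Marchal then forbids collisions of $\gamma^*$ in $(1,\tau)$, the large-time bound from your Step~2 forbids a collision at $t=\tau$, and the remaining option $\gamma^*(1)=\bar x\in\Delta$ contradicts $\bar x\in\Omega$. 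This is the argument the paper should have spelled out.
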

\begin{proof}
    By Marchal's Principle, we already know that the minimal curve $\gamma(t) = r_0(t) + \varphi^{\bar x}(t) + \bar x + r_0(1)$ is free of collisions for $t\in[1,+\infty)$.

    From uniform coercivity estimates Lemma \ref{lem:coercivity_estimates} it follows that for any constant $R>0$ there is a constant $C_R>0$ such that for all $x\in B_R(0)$, $\|\varphi^{x}\|_\mathcal{D}\leq C_R$. Thus, we know that for any $R>0$ there is $C_R>0$ such that
    \begin{displaymath}
        \begin{split}
            |\gamma_i(t) - \gamma_j(t)| &\geq |(a_i-a_j)t + (\beta b_i - \beta b_j)t^{2/3}| - |\varphi_i^{ x}(t) - \varphi_j^{ x}(t)| - |\Tilde{ x}_i - \Tilde{ x}_j| \\
            & \geq C' t^{2/3} - C_R t^{1/2} - C'' \\
            & \geq 1
        \end{split}
    \end{displaymath}
    for some $t\geq \tau=\tau(R)\geq1$ and for proper $C',C''>0$.
    
    The fact that $|\gamma_i(t) - \gamma_j(t)| \geq1$ for all $i<j$ and for $t\in[\tau,+\infty)$ implies that there is a constant $C>1$ such that $d(\gamma(t),\Delta)\geq C$. Indeed, the collision set $\Delta$ can be represented as a union of hyperplanes:
    \[
    \Delta = \bigcup_{i<j} \Delta_{ij},
    \]
    with $\Delta_{ij}:=\{ x\in\mathcal{X} : x_i = x_j\}$. It is easy to see that $d(\gamma(t),\Delta) = \min_{i<j} d(\gamma(t),\Delta_{ij})$.

    Now, we want to prove that the inequality \eqref{collisionless_minimal_solutions} holds also for $t\in[1,\tau]$. Suppose by contradiction that there is a sequence $({\bar x}_n)_n\subset \mathcal{U}(\bar x)$ such that $d(\gamma_n(t),\Delta)\rightarrow0$ as $n\rightarrow+\infty$ for $t\in[1,\tau]$, where, for any $n\in\N$, the curve $\gamma_n(t)$ is minimal for the value function with initial configuration ${\bar x}_n$. By Ascoli-Arezelà's Theorem, it can be proved that there is a subsequence $(\gamma_{n_k}(t))_k$ converging uniformly over the compact subsets $[1,\tau]$. This implies that the subsequence converges also pointwise over compact sets. 
    
    Let $\gamma_{n_k}(t)\rightarrow\Bar{\gamma}(t)\in\Delta$ as $k\rightarrow+\infty$ for $t\in[1,\tau]$. Then $\Bar{\gamma}(1)\in\Delta$. Since $({\bar x}_n)_n\subset \mathcal{U}(\bar x)$, if we take the limit we obtain $\Bar{\gamma}(1)\in \overline{\mathcal{U}(\bar x)}$, which is a contradiction. 
\end{proof}

\section{Hamilton-Jacobi equations}

The main goal of this section is to prove the following theorem.
\begin{thm}
\label{thm:HJ}
    Fix $x\in\Omega$. The renormalized value function
    \begin{equation}
        \label{eq:def_v}
    v(x) = \min_{\varphi\in\mathcal{D}} \mathcal{A}_x(\varphi) - \langle a,x \rangle_\mathcal{M}
      \end{equation}
    is a viscosity solution of the Hamilton-Jacobi equation
    \begin{equation}
        \label{eq:HJ_v}
       \frac{1}{2}\|\nabla v(x)\|_{\mathcal{M}^{-1}}^2 - U(x) = \frac{\|a\|_\mathcal{M}^2}{2}.
     \end{equation}
\end{thm}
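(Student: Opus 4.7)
The plan is to derive the viscosity property of $v$ from a dynamic programming principle (DPP) established via a splicing construction in the renormalized action. First, by the uniform coercivity of Lemma~\ref{lem:coercivity_estimates}, for every $x\in\Omega$ the minimum defining $v$ is attained by some $\varphi^x\in\mathcal{D}_0^{1,2}(1,+\infty)$; by Theorem~\ref{th:ren_act_pr} and Lemma~\ref{lemma_collisionless_minimal} the associated motion $\gamma^x(t)=r_0(t)+\varphi^x(t)+x-r_0(1)$ is a $C^2$ free-time action minimizer at energy $h=\tfrac{1}{2}\|a\|_\mathcal{M}^2$ staying uniformly away from $\Delta$ for $x$ in compact subsets of $\Omega$. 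Continuity of $v$ on $\Omega$ follows by a standard direct-method argument: weak subsequential limits of $\varphi^{x_n}$ (for $x_n\to x$) are competitors at $x$, yielding $v(x)\le\liminf v(x_n)$, while $v(x_n)\le\mathcal{A}_{x_n}(\varphi^x)-\langle a,x_n\rangle_\mathcal{M}\to v(x)$ gives the reverse inequality.

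The key step is the following splicing identity. For any admissible $\sigma:[1,T]\to\Omega$ with $\sigma(1)=x$ and $\sigma(T)=y$, define $\tilde\gamma$ to coincide with $\sigma$ on $[1,T]$ and with $\gamma^y(\cdot-T+1)$ on $[T,+\infty)$, and set $\tilde\varphi(t)=\tilde\gamma(t)-r_0(t)-x+r_0(1)\in \mathcal{D}_0^{1,2}(1,+\infty)$. A direct computation splitting $\mathcal{A}_x(\tilde\varphi)$ at $T$ and integrating by parts produces
\[
\mathcal{A}_x(\tilde\varphi) \;=\; \int_1^T L(\sigma,\dot\sigma)\,\ud t \;+\; h(T-1) \;+\; \langle a, x-y\rangle_\mathcal{M} \;+\; \mathcal{A}_y(\varphi^y).
\]
The cross term $\langle a, x-y\rangle_\mathcal{M}$ arises from the boundary contributions of integrating $\langle\dot r_0,\dot\sigma\rangle_\mathcal{M}$ on $[1,T]$; in the hyperbolic and hyperbolic-parabolic cases the logarithmic divergence $\int U(r_0(t))\,\ud t\sim U(a)\log T$ on $[1,T]$ is exactly cancelled by the corresponding contribution from the time-translated reference on the tail. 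Combined with optimality $\mathcal{A}_x(\varphi^x)\le\mathcal{A}_x(\tilde\varphi)$ and the definition \eqref{eq:def_v}, whose linear correction $-\langle a,x\rangle_\mathcal{M}$ absorbs the cross term, this yields the DPP
\[
v(x)-v(y) \;\le\; \int_1^T L(\sigma,\dot\sigma)\,\ud t \;+\; h(T-1),
\]
with equality when $\sigma=\gamma^x|_{[1,T]}$, established by applying the identity in reverse with a suitable time-shifted competitor for $\mathcal{A}_y$ (a standard Bellman sub-additivity argument).

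The viscosity property now follows by a standard weak-KAM argument. For the subsolution direction: given $p\in D^+v(x)$ and $q\in\mathbb{R}^{dN}$, apply the DPP to the linear test curve $\sigma(t)=x+(t-1)q$ (admissible for $T$ close to $1$), pass to the limit $T\to 1^+$, and use the definition of $D^+v$ to deduce $\langle -p, q\rangle_\mathcal{M}-L(x,q)\le h$; taking the supremum over $q$ and using the Legendre identity $H(x,p)=\sup_q[\langle p,q\rangle_\mathcal{M}-L(x,q)]=\tfrac{1}{2}\|p\|_\mathcal{M}^2-U(x)$ (which equals $\tfrac{1}{2}\|\nabla v\|_{\mathcal{M}^{-1}}^2-U(x)$ under the pairing convention of Definition~\ref{def:viscosity}) together with the evenness of $H$ in $p$ yields $H(x,p)\le h$. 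For the supersolution direction: apply the equality DPP along the $C^2$ minimizer $\gamma^x$ near $t=1$ with $p\in D^-v(x)$ and let $T\to 1^+$ to obtain $\langle -p,\dot\gamma^x(1)\rangle_\mathcal{M}-L(x,\dot\gamma^x(1))\ge h$, hence $H(x,p)\ge h$. The three regimes $(H)$, $(P)$ and $(HP)$ are treated uniformly via the decomposition \eqref{def:decomposed_lagrangian_action}, the splicing identity holding for each cluster and inter-cluster term.

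The main obstacle is the splicing identity itself. The renormalization in $\mathcal{A}_x$ couples to the starting point $x$ through the shift $x-r_0(1)$ inside the potential and to the reference $r_0$ through $U(r_0)+\langle\mathcal{M}\ddot r_0,\varphi\rangle$, neither of which is invariant under the time-translation needed to shift the tail motion $\gamma^y$ by $T-1$. Verifying the cancellation of the $U(a)\log T$ divergences between the two halves in the hyperbolic case must be done by direct computation; in the parabolic and hyperbolic-parabolic cases the additional $\langle\mathcal{M}\ddot r_0,\varphi\rangle$ term is handled by exploiting Newton's equation $\mathcal{M}\ddot r_0=\nabla U(r_0)$ for the self-similar reference and the energy identity $\tfrac{1}{2}\|\dot r_0\|_\mathcal{M}^2-U(r_0)=0$. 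Once the identity is in place, the passage to the viscosity inequalities is routine.
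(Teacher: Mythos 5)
Your approach is correct in outline and genuinely different from the paper's proof. The paper proves Theorem~\ref{thm:HJ} via finite-horizon approximation: it introduces $v(T,x)$ defined through the truncated functional $\mathcal{A}_{x,[1,T]}$, shows in Proposition~\ref{prop:conv_v_T} that $v(T,\cdot)\to v$ uniformly on compacts, proves in Proposition~\ref{prop:HJ_v_T} that each $v(T,\cdot)$ solves a time-dependent Hamilton--Jacobi equation with a defect term $\langle\ddot r_0(T),\gamma(T)\rangle_\mathcal{M} - H(r_0(T),\dot r_0(T))$ that vanishes as $T\to+\infty$, and then invokes the stability principle for viscosity solutions. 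The paper also gives an alternative direct proof, which verifies the viscosity inequalities by perturbing the initial datum $x\mapsto x+hz$ and comparing $\mathcal{A}_{x+hz}(\varphi^h)$ with $\mathcal{A}_x(\varphi^h)$, using the pointwise limit of minimizers. Your route, by contrast, is a classical weak-KAM calibration argument built on a splicing identity for the renormalized action. I have checked the splicing identity by direct computation in the hyperbolic and parabolic cases, and it does work exactly as you describe: in the hyperbolic case the two $U(a)\log T$ divergences cancel, and in the parabolic case the exact identity $\tfrac12\|\dot r_0\|^2_\mathcal{M}-U(r_0)\equiv 0$ (which uses $\beta^3=\tfrac92 U(b_m)$) together with repeated integration by parts kills all residual boundary contributions. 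The cross term $\langle a,x-y\rangle_\mathcal{M}$ does emerge and is exactly absorbed by the linear correction $-\langle a,\cdot\rangle_\mathcal{M}$ in \eqref{eq:def_v}, which is the conceptual reason the Busemann-type renormalization is the ``right'' one. Your observation that the equality case of the DPP follows by applying the same identity to the time-shifted tail $s\mapsto\gamma^x(s+T-1)$ as a competitor for $\mathcal{A}_y$ is also correct. What your approach buys is transparency: it shows $v$ is a calibrated subsolution in the weak-KAM sense and gives the DPP directly, without manufacturing a time-dependent approximation. What the paper's approach buys is robustness: it avoids the delicate divergence cancellations entirely and handles all three regimes uniformly via the estimate \eqref{eq:V_bound}, at the cost of needing the uniform convergence machinery and the stability principle.

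One small inaccuracy worth flagging: you assert that the $\langle\mathcal{M}\ddot r_0,\varphi\rangle$ term is handled ``by exploiting Newton's equation $\mathcal{M}\ddot r_0 = \nabla U(r_0)$ for the self-similar reference.'' This identity holds in the pure parabolic case (where $r_0=\beta b_m t^{2/3}$ is an exact self-similar solution), but \emph{fails} in the hyperbolic-parabolic case, where $r_0(t)=at+\beta b_m t^{2/3}$ is not a solution of Newton's equations and $\mathcal{M}\ddot r_0 = -\tfrac29\beta\mathcal{M}b_m t^{-4/3}\neq \nabla U(r_0)$. The splicing computation in that regime must instead proceed through the cluster decomposition \eqref{def:decomposed_lagrangian_action}, carrying out the hyperbolic-type and parabolic-type cancellations cluster by cluster. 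This is exactly the kind of bookkeeping you correctly anticipate as ``the main obstacle,'' and while I believe it closes, you should not appeal to Newton's equation for $r_0$ to shortcut it in the $(HP)$ case. The rest of the weak-KAM passage --- taking $\sigma(t)=x+(t-1)q$ for the subsolution bound, taking $\sigma=\gamma^x|_{[1,T]}$ with the equality DPP for the supersolution bound, and then letting $T\to 1^+$ --- is standard and correct, using only the local Lipschitz continuity of $v$ (Proposition~\ref{prop:semiconcavity}) and the smoothness of the minimizer $\gamma^x$ on $[1,+\infty)$ guaranteed by Marchal's principle when $x\in\Omega$.
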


The proof of Theorem \ref{thm:HJ} is in Section \ref{sec:proofHJ}, and needs a number of preliminary results, contained in this and the next subsections. Among these results, it is worth stressing out the following proposition, regarding the uniform convergence over compact sets of $v(T,x)$ (defined through the functionals $\mathcal{A}_{x,[1,T]}$) to $v(x)$, as $T\rightarrow+\infty$.

\begin{prop}
\label{prop:conv_v_T}
For $x \in\Omega$ and $T>1$, let $v(x)$ be defined as in Theorem \ref{thm:HJ}, and let $v(T,x)$ be defined as
\begin{equation}\label{eq:v(T,x)}
v(T,x):=\min_{\varphi\in\mathcal{D}^T}\mathcal{A}_{x}^T(\varphi)-\langle \dot r_0(T),x \rangle_\mathcal{M}.
\end{equation}

Then, $v(T,x)$ and $v(x)$ are continuous in $\Omega$, and, if $(T_n)_n$ is a sequence such that $T_n \to +\infty$, then
    \[
    \lim_{n\to +\infty} v(T_n,x)=v(x) \ \mbox{ uniformly on compact subsets of } \ \Omega.
    \]
\end{prop}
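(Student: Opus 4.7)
The plan splits into three parts: continuity of $v(T,\cdot)$ and $v(\cdot)$ on $\Omega$, pointwise convergence $v(T_n,x)\to v(x)$, and an equicontinuity argument upgrading this to uniform convergence on compact subsets of $\Omega$. Throughout I will exploit that $\dot r_0(T)\to a$ as $T\to+\infty$ (linear for type $(H)$, $r_0(t)=\beta b_m t^{2/3}$ for $(P)$, mixed for $(HP)$), which is what makes the linear correction $\langle\dot r_0(T),x\rangle_\mathcal{M}$ in \eqref{eq:v(T,x)} compatible with $\langle a,x\rangle_\mathcal{M}$ in \eqref{eq:def_v}.

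For well-posedness and continuity: for every $T\in(1,+\infty]$ and $x\in\mathcal{X}$ the existence of a minimizer of $\mathcal{A}_{x,[1,T]}$ follows from Lemma \ref{lem:coercivity_estimates} (minimizing sequences are $\mathcal{D}_0^{1,2}(1,T)$-bounded), the weak lower semicontinuity of the kinetic term, and the continuity of the remaining terms under weak convergence, guaranteed by the compact embedding of Proposition \ref{prop:compact_emb}. To prove continuity of $v(T,\cdot)$ at any $\bar x\in\Omega$, I use Lemma \ref{lemma_collisionless_minimal} to ensure that all minimizing trajectories associated with $x$ in a neighborhood of $\bar x$ stay at a uniform positive distance from the collision set $\Delta$; then the integrand in $\mathcal{A}_{x,[1,T]}$ is jointly continuous in $(x,\varphi)$ on the relevant domain, and a standard $\Gamma$-convergence argument, combined with the uniform coercivity in $x$ on compacts, yields continuity of the minimum value.

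For pointwise convergence, the upper bound $\limsup v(T_n,x)\le v(x)$ follows by testing $v(T_n,x)$ against the restriction $\varphi^x|_{[1,T_n]}$ of a minimizer $\varphi^x$ of $\mathcal{A}_x$: since the integrand defining $\mathcal{A}_x(\varphi^x)$ belongs to $L^1(1,+\infty)$ and $\dot r_0(T_n)\to a$, we have $\mathcal{A}_{x,[1,T_n]}(\varphi^x|_{[1,T_n]})-\langle\dot r_0(T_n),x\rangle_\mathcal{M}\to v(x)$. For the lower bound, let $\varphi^{T_n}$ realize $v(T_n,x)$ and extend it to $\tilde\varphi^{T_n}\in\mathcal{D}_0^{1,2}(1,+\infty)$ by freezing it at the value $\varphi^{T_n}(T_n)$ on $[T_n,+\infty)$; by Lemma \ref{lem:coercivity_estimates} the $\mathcal{D}$-norm is uniformly bounded, so along a subsequence $\tilde\varphi^{T_n}\rightharpoonup\varphi^*$. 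The key tail estimate
\begin{displaymath}
\mathcal{A}_x(\tilde\varphi^{T_n})=\mathcal{A}_{x,[1,T_n]}(\varphi^{T_n})+o(1)
\end{displaymath}
and the weak lower semicontinuity of $\mathcal{A}_x$ then yield $v(x)\le\mathcal{A}_x(\varphi^*)-\langle a,x\rangle_\mathcal{M}\le\liminf v(T_n,x)$.

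Finally, the uniformity in $x\in\mathcal{K}$ (compact in $\Omega$) of the coercivity bound in Lemma \ref{lem:coercivity_estimates}, together with Lemma \ref{lemma_collisionless_minimal}, gives a modulus of continuity for $v(T,\cdot)$ on $\mathcal{K}$ that is independent of $T$; equicontinuity of $\{v(T_n,\cdot)\}_n$ combined with the pointwise convergence just established forces uniform convergence on $\mathcal{K}$ by Arzelà-Ascoli. The main obstacle is precisely the tail estimate in the lower-bound step: by \eqref{dis:norm_T}, $|\varphi^{T_n}(T_n)|$ is only controlled by $O(\sqrt{T_n})$, which must be balanced against the decay of $\nabla U(r_0(t))$ and $\nabla^2 U(r_0(t))$ on $[T_n,+\infty)$. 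The natural strategy is a second-order Taylor expansion of $U$ around $r_0(t)$, carried out separately for types $(H)$, $(P)$, $(HP)$ to exploit the specific asymptotics of $r_0$ and $\ddot r_0$ (and the relation $\mathcal{M}\ddot r_0=\nabla U(r_0)$ satisfied asymptotically in the parabolic components); if the constant extension is insufficient, a more careful interpolating extension driving $\tilde\varphi^{T_n}$ to zero on a scale $\sim T_n$ will suffice, paralleling the case split in Lemma \ref{lem:coercivity_estimates}.
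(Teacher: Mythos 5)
Your plan for the continuity of $v(T,\cdot)$ and $v(\cdot)$ and for the $\limsup$ bound (testing $v(T_n,x)$ against the restriction $\varphi^x|_{[1,T_n]}$) is sound and is essentially what the paper does via the structure lemma. The uniform-convergence upgrade via equicontinuity and Arzelà--Ascoli is also a legitimate alternative to the paper's direct uniform estimates. The gap is in the $\liminf$ step, and I think it is not repairable along the lines you propose.

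You extend the finite-horizon minimizer $\varphi^{T_n}$ by freezing it at $\varphi^{T_n}(T_n)$ and want
\[
\mathcal{A}_x(\tilde\varphi^{T_n})=\mathcal{A}_{x,[1,T_n]}(\varphi^{T_n})+o(1).
\]
The tail is
\[
\int_{T_n}^{+\infty} U\bigl(r_0(t)+c_n\bigr)-U(r_0(t))-\langle \nabla U(r_0(t)),\varphi^{T_n}(T_n)\rangle\,\ud t, \qquad c_n=\varphi^{T_n}(T_n)+\tilde x,
\]
and after a second-order Taylor expansion (as you suggest) the dominant remaining contribution is the quadratic piece $\tfrac12\langle \nabla^2 U(\zeta(t))c_n,c_n\rangle$. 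In the hyperbolic case $\|\nabla^2 U(r_0(t))\|\sim t^{-3}$, $|c_n|^2\lesssim T_n$, and the integral over $(T_n,\infty)$ is $O(T_n^{-1})\to 0$, so your plan works there. But in the parabolic (and the parabolic components of the hyperbolic-parabolic) case $\|\nabla^2 U(r_0(t))\|\sim t^{-2}$, and $\int_{T_n}^{\infty}t^{-2}T_n\,\ud t\sim 1$, i.e. the tail is $O(1)$, not $o(1)$. Since the Hessian of the Keplerian potential is positive in the radial directions, you cannot even argue that this $O(1)$ term has the favorable sign: it can be a genuine loss of order one in the lower bound $\mathcal{A}_{x,[1,T_n]}(\varphi^{T_n})=\mathcal{A}_x(\tilde\varphi^{T_n})-\mathrm{tail}\ge w_\infty(x)-\mathrm{tail}$.

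Your fallback -- driving $\tilde\varphi^{T_n}$ to zero over a scale $\sim T_n$ -- does not repair this: a linear ramp from $|\varphi^{T_n}(T_n)|\sim\sqrt{T_n}$ down to $0$ over $[T_n, 2T_n]$ adds kinetic energy $\int_{T_n}^{2T_n}\bigl(T_n^{-1/2}\bigr)^2\,\ud t\sim 1$, again $O(1)$; faster decay only makes it worse and slower decay does not reduce the potential tail below $O(1)$. The paper circumvents exactly this obstruction: in Lemma \ref{lem:unif_conv} it never estimates the tail. Instead it uses that $\varphi^{T_n}$ is a \emph{critical point} of $\mathcal{A}_{x,[1,T_n]}$, tests the Euler--Lagrange identity $\ud\mathcal{A}_{x,[1,T_n]}(\varphi^{T_n})[\varphi^{T_n}-\bar\varphi^{n}]=0$ (with $\bar\varphi^{n}=\bar\varphi|_{[1,T_n]}$), and combines this with weak convergence and dominated convergence to obtain the strong norm convergence $\|\varphi^{T_n}\|_{\mathcal{D}_{T_n}}\to\|\bar\varphi\|_{\mathcal{D}}$, from which $\mathcal{A}_{x,[1,T_n]}(\varphi^{T_n})\to\mathcal{A}_x(\bar\varphi)\ge w_\infty(x)$ follows. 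This use of the stationarity of $\varphi^{T_n}$ -- not merely its boundedness -- is the idea missing from your argument.
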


The proof of Proposition \ref{prop:conv_v_T} is a direct corollary of  Lemmas \ref{lem:continuity_w} and \ref{lem:unif_conv}.

\subsection{Finite horizon approximation}\label{sec:finite_horizon_approximation}

Define the functions $w:[1,+\infty)\times \Omega \to \mathbb R$ and $w_\infty:\Omega \to \mathbb R$ as the following value functions 
\begin{equation}\label{eq:min_action}
w(T,x) = \min_{\varphi\in\mathcal D^T}\mathcal{A}_{x}^T(\varphi), \ \mbox{ and } \ w_\infty(x) = \min_{\varphi\in\mathcal D}\mathcal{A}_{x}(\varphi).
\end{equation}

Given $T>1$, we say that $w(T,\cdot)$ is a \emph{finite-horizon} value function associated with $T$. Our aim, in this section, is to prove properties of these finite-horizon value functions, as well as their uniform convergence to $w_\infty$ on compacts subsets of $\Omega$, as long as $T \to +\infty$.

\begin{lem}[Structure lemma]
\label{lem:quad_pot}
For every $T\in (1,+\infty]$, we have
\[
\mathcal{A}_{x}^T(\varphi)=Q_T(\varphi,\varphi)+P_{x,T}(\varphi), \ \mbox{ for } \ \varphi \in \mathcal{D}^T,
\]
where
\begin{itemize}
    \item $Q_T$ is a positive definite quadratic form on $\mathcal{D}^T \times \mathcal{D}^T$;
    \item $P_{x,T}$ is a functional on $\mathcal{D}^T$ such that there exists $V:\Omega \times (1,+\infty) \times \mathcal{X} \to \mathbb R$ for which
    \[
    P_{x,T}(\varphi)=\int_1^T V(x,t,\varphi(t))\ \ud t.
    \]
\end{itemize}
    Moreover, for every compact subset $\mathcal K\subset\Omega$ and $\hat M>0$, there exists $\hat T>1$, $C>0$ and $\beta>1$ such that
    \begin{equation}
    \label{eq:V_bound}
    \left| V(x,t,\varphi(t))\right| \le C t^{-\beta} \ \mbox{ if } \ t \ge \hat T,
    \end{equation}
    for every $x \in \mathcal{K}$ and $\varphi$ such that $\|\varphi(t)\|_\mathcal{M} \le \hat M \sqrt{t}$.
\end{lem}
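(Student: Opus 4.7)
I would set
\[
Q_T(\varphi,\varphi) := \tfrac12\int_1^T \|\dot\varphi(t)\|_\mathcal{M}^2\,\ud t + \tfrac12\int_1^T \langle \nabla^2 U(r_0(t))\varphi(t), \varphi(t)\rangle\,\ud t,
\]
so that the remaining integrand is
\[
V(x,t,y) \;=\; U\!\bigl(r_0(t)+y+\tilde x\bigr) \;-\; U(r_0(t)) \;-\; \langle \mathcal{M}\ddot r_0(t), y\rangle \;-\; \tfrac12\langle \nabla^2 U(r_0(t))\,y,\,y\rangle,
\]
where $\tilde x := x - r_0(1)$, and $P_{x,T}(\varphi)=\int_1^T V(x,t,\varphi(t))\,\ud t$. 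With this choice the second property in the lemma is immediate; the first reduces to proving positive-definiteness of $Q_T$, and the decay estimate for $V$ will come from a Taylor expansion of $U$ around $r_0(t)$.

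To handle both tasks, I would use Taylor's formula for $U$ along the segment $\{r_0(t)+s\psi(t):s\in[0,1]\}$, with $\psi(t)=y+\tilde x$, together with the asymptotic bounds $|\nabla^k U(r_0(t))|\le C_k t^{-(k+1)}$ in the hyperbolic case and $|\nabla^k U(r_0(t))|\le C_k t^{-2(k+1)/3}$ in the parabolic and HP cases. These follow from the fact that mutual distances along $r_0$ grow like $t$ or $t^{2/3}$, with constants depending only on $a$, $b_m$, and the masses. Under the hypothesis $\|\varphi(t)\|_\mathcal{M}\le \hat M\sqrt t$ and $x\in\mathcal{K}$, the size of $\psi(t)$ is $O(\sqrt t)$, which is $o(t^{2/3})$ as $t\to+\infty$; choosing $\hat T=\hat T(\mathcal{K},\hat M)$ large enough ensures that $r_0(t)+s\psi(t)$ remains collision-free uniformly in $s\in[0,1]$, so the derivative bounds extend to the whole segment.

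With these ingredients, expanding $U(r_0+\psi)$ to second order and exploiting the cancellations built into the renormalised Lagrangian yields the desired decay of $V$. In the hyperbolic case $\ddot r_0\equiv 0$, so $V=U(r_0+\psi)-U(r_0)-\tfrac12\langle\nabla^2 U(r_0)y,y\rangle$ is of order $t^{-2}\sqrt t = t^{-3/2}$. In the parabolic and HP cases, the identity $\mathcal{M}\ddot r_0=\nabla U(r_0)$ (exact for parabolic, and up to a faster-decaying inter-cluster correction for HP) kills the first-order-in-$y$ term of the expansion, and after the pure $\langle \nabla^2 U(r_0)y,y\rangle/2$ piece has been absorbed into $Q_T$, the residual $V$ reduces to a linear-in-$\tilde x$ term of order $t^{-4/3}$, a mixed $\tilde x$-$y$ term of order $t^{-3/2}$, a pure $\tilde x$ quadratic term of order $t^{-2}$, and a cubic Taylor remainder bounded by $t^{-8/3}\|\psi\|^3 = O(t^{-7/6})$. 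The slowest decay is $t^{-7/6}$, giving $\beta=7/6>1$.

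The main obstacle I foresee is proving that the enlarged $Q_T$ is positive definite. Here I would reuse the Hardy-type inequality \eqref{hardy_compact} together with a uniform bound $\sup_{t\ge 1} t^2 \|\nabla^2 U(r_0(t))\|_{\mathrm{op}}\le C_0$, so that
\[
\left|\int_1^T\langle \nabla^2 U(r_0(t))\varphi(t),\varphi(t)\rangle\,\ud t\right| \;\le\; C_0\int_1^T \frac{\|\varphi(t)\|_\mathcal{M}^2}{t^2}\,\ud t \;\le\; 4C_0\,\|\varphi\|_{\mathcal{D}_T}^2.
\]
Proving positive-definiteness then reduces to the strict inequality $4C_0<1$ which, once the normalisation of $b_m$ and $\beta_K$ is correctly tracked, is precisely the margin already produced in Lemma \ref{lem:coercivity_estimates} (yielding the explicit coefficient $1/18$ for $Q_T$ in the parabolic case). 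Hence $Q_T(\varphi,\varphi)\ge c_0\,\|\varphi\|_{\mathcal{D}_T}^2$ with $c_0>0$ independent of $T$, and every constant in the argument depends only on $\mathcal{K}$, $\hat M$ and $r_0$; the whole proof is therefore uniform in $T\in(1,+\infty]$ and in particular covers $T=+\infty$.
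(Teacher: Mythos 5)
Your decomposition and Taylor-expansion strategy for $V$ match the paper's overall approach, and your decay bookkeeping is actually more careful than the paper's: you correctly identify the residual $\langle\nabla U(r_0),\tilde x\rangle$, $\langle\nabla^2 U(r_0)\tilde x, y\rangle$, and $\langle\nabla^2 U(r_0)\tilde x,\tilde x\rangle$ terms arising because the expansion variable is $\psi = y + \tilde x$ while $Q_T$ involves only $y$, and you correctly observe that the cubic remainder's $t^{-7/6}$ still dominates. But using the same Hessian-augmented $Q_T$ in the hyperbolic case is a genuine gap, and the way you argue positive definiteness is also off even where the conclusion is true.

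Positive definiteness of the augmented $Q_T$ cannot follow from the two-sided operator-norm bound ``$4C_0<1$'' you invoke, because the operator norm $\|\nabla^2 U(r_0(t))\|_{\mathrm{op}}\,t^2$ is a constant (by homogeneity of $U$) that is not small; what one actually needs is the \emph{one-sided} lower bound $\langle\nabla^2 U(r_0(t))\varphi,\varphi\rangle \ge -\tfrac29\,\|\varphi(t)\|_\mathcal{M}^2/t^2$. That bound is specific to the parabolic and intra-cluster $(HP)$ cases: since $b_m$ is a minimal central configuration, the gradient term in the Hessian of $U$ along $r_0$ vanishes and the remaining rank-one and constrained-Hessian contributions are positive semidefinite, so the negative part is exactly $-U(b_m)\mathcal{M}\,\|r_0\|_\mathcal{M}^{-3}$; the normalization $\beta^3=\tfrac92U(b_m)$ then pins the constant at $2/9$, safely below the Hardy threshold $1/4$. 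In the hyperbolic case no such structure exists: $\nabla^2 U(at)=t^{-3}\nabla^2 U(a)$ with $a\in\Omega$ not a critical point, and the bottom eigenvalue $-c(a,m)$ of $\nabla^2 U(a)$ is unbounded (it diverges as two components of $a$ approach each other while $a$ stays in $\Omega$). Hardy with $\varepsilon=1$ only gives $\int_1^T\|\varphi\|_\mathcal{M}^2 t^{-3}\,\ud t\le \|\varphi\|_{\mathcal{D}_T}^2$, so $Q_T(\varphi,\varphi)\ge\tfrac{1-c(a,m)}{2}\|\varphi\|_{\mathcal{D}_T}^2$, which can be negative; taking $\varphi(t)=f(t)\,v$ with $v$ the worst eigenvector of $\nabla^2 U(a)$ and $f$ near-optimal for Hardy produces $Q_T(\varphi,\varphi)<0$ whenever $c(a,m)$ exceeds the sharp constant. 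The paper avoids this by taking the plain kinetic form $Q_T=\tfrac12\|\varphi\|_{\mathcal{D}_T}^2$ in the hyperbolic case and parking the whole potential difference in $V$ (which is still $O(t^{-3/2})$ by a first-order mean value estimate); the Hessian-augmented $Q_T$ is used only where the minimal-central-configuration structure delivers the required spectral control. You should adopt the same case-dependent $Q_T$.
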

\begin{proof}
\textbf{Hyperbolic case.} Since $\|\varphi\|_{\mathcal{D}^T}=\big(\int_{1}^{T} \|\dot{\varphi}(t)\|_\mathcal{M}^2\ \ud t\big)^{1/2}$ is a norm, it is trivial that it is a definite positive quadratic form.

We can write down the second part of the action as
\begin{equation}
\label{eq:PQ_hyperbolic}
    \mathcal{A}_{x}^T(\varphi) - \frac{\|\varphi\|_{\mathcal{D}^T}^2}{2} = \int_{1}^{T} \sum_{i<j} m_i m_j \bigg(\frac{1}{|a_{ij} t +\varphi_{ij}(t) + x_{ij} - a_{ij}|} - \frac{1}{|a_{ij}t|} \bigg)\ \ud t.
\end{equation}
Since for all $s\in(0,1)$ 
\[
\begin{split}
     |a_{ij}t + s(\varphi_{ij}(t)+x_{ij} - a_{ij})| & \geq |a_{ij}|t - s(\|\varphi_{ij}\|_{\mathcal{D}^T}\sqrt{t}+|x_{ij} - a_{ij}|) \\
     & > |a_{ij}|t - (\|\varphi_{ij}\|_{\mathcal{D}^T}\sqrt{t}+|x_{ij} - a_{ij}|),
\end{split}
\]
for $x \in \mathcal K$, and $a\neq 0$, there is a $T_1=T_1(\|\varphi\|_{\mathcal{D}^T}, \mathcal K)$ such that, for all $t \ge T_1$,
\begin{equation}
    \label{eq:away_delta}
    |a_{ij}t + s(\varphi_{ij}(t) + x_{ij} - a_{ij})| > |a_{ij}|t - (|x_{ij} - a_{ij}| + \|\varphi\|_{\mathcal{D}^T}\sqrt{t})>0.
\end{equation}

Hence, for $t$ large enough, by the Fundamental Theorem of Calculus, it holds
\[
    \frac{1}{|a_{ij} t +\varphi_{ij}(t) + x_{ij} - a_{ij}|} - \frac{1}{|a_{ij}t|} = \int_{0}^{1} \frac{\ud}{\ud s}\bigg[\frac{1}{|a_{ij}t + s(\varphi_{ij}(t)+x_{ij} - a_{ij})|}\bigg]\ \ud s.
\]

By passing to the absolute value, we get
\[
\begin{split}
   \Bigg| \frac{1}{| a_{ij}t +\varphi_{ij}(t) + x_{ij} - a_{ij}|} - \frac{1}{|a_{ij}t|}\Bigg|
    &= \Bigg| \int_{0}^{1} -\frac{[a_{ij}t + s(\varphi_{ij}(t) + x_{ij} - a_{ij})](\varphi_{ij}(t) + x_{ij}-a_{ij})}{|a_{ij}t + s(\varphi_{ij}(t) + x_{ij} - a_{ij} )|^3}\ \ud s\Bigg|
    \\
    & \le \int_{0}^{1} \frac{|\varphi_{ij}(t)+x_{ij}-a_{ij}|}{|a_{ij}t + s(\varphi_{ij}(t)+x_{ij} -a_{ij} |^2}\ \ud s.
\end{split}
\]

Using the inequality
\[
    \frac{|b+c|^2}{|b|^2 - |c|^2} \geq \frac{1}{3}, \qquad \text{for each }b,c\in\mathbb{R}^d \text{ such that }|b|\geq2|c|,
\]
which can easily be proved by elementary calculus, and knowing that there is a constant $k'\in\R$ large enough such that $|x_{ij} - a_{ij}|+\hat M\sqrt{t} \leq k'\sqrt{t}$, we thus have
\[
\begin{split}
   \int_{0}^{1} \frac{|\varphi_{ij}(t)+x_{ij}-a_{ij}|}{|a_{ij}t + s(\varphi_{ij}(t)+x_{ij} -a_{ij}) |^2}\ \ud s 
    & \leq \int_{0}^{1} 3\frac{\hat M\sqrt{t}+|x_{ij}-a_{ij}|}{|a_{ij}t|^2 - s|M\sqrt{t}+x_{ij} - a_{ij}|^2}\ \ud s \\
    & \leq \int_{0}^{1} \frac{3k'\sqrt{t}}{|a_{ij}|^2 t^2 - sk't}\ \ud s,
\end{split}
\]
where the last term is dominated, at infinity, by a term $\frac{k''}{t^\beta}$ with $k''$ not depending on $T$ and $x$, and $\beta>1$. This proves the statement in the hyperbolic case, with $V(x,t,\varphi)$ given by the integrand function of the right-hand side of \eqref{eq:PQ_hyperbolic}.

\smallskip
\textbf{Parabolic case.} In the parabolic setting, we add and subtract the term $\int_{1}^{T} \frac{1}{2}\langle \nabla^2 U(r_0(t))\varphi(t),\varphi(t)\rangle\ \ud t$ to the action, obtaining
\[
    \begin{split}
   \mathcal{A}_{x}^T(\varphi) = \int_{1}^{T} &\frac{1}{2}\|\dot{\varphi}(t)\|_\mathcal{M}^2 + \frac{1}{2}\langle \nabla^2 U(r_0(t))\varphi(t),\varphi(t)\rangle\\
    &+ U(r_0(t)+\varphi(t)+\Tilde{x}) - U(r_0(t)) - \langle \nabla U(r_0(t)),\varphi(t)\rangle - \frac{1}{2}\langle \nabla^2 U(r_0(t))\varphi(t),\varphi(t)\rangle\ \ud t.
    \end{split}
\]
We prove the statement taking into account
\[
Q_T(\varphi,\varphi) = \int_1^T \frac{1}{2}\|\dot \varphi(t)\|_\mathcal{M}^2+\frac{1}{2}\langle\nabla^2 U(r_0(t))\varphi(t),\varphi(t)\rangle \, \ud t,
\]
and
\[
P_{x,T}(\varphi) = \int_1^T  U(r_0(t)+\varphi(t)+ \Tilde{x}) - U(r_0(t)) - \langle \nabla U(r_0(t)),\varphi(t)\rangle - \frac{1}{2}\langle \nabla^2 U(r_0(t))\varphi(t),\varphi(t)\rangle\ \ud t.
\]

First, exactly as observed for the case $T=+\infty$ in \cite[Section 4.2]{PolimeniTerracini}, one can see that $\varphi \to Q_T(\varphi,\varphi)$ is a norm on $\mathcal{D}^T$ equivalent to $\|\cdot\|_{\mathcal{D}^T}$. Indeed, as observed in \cite[Section 4.2]{PolimeniTerracini}, there exists $C'$ such that the following chain of inequalities is satisfied pointwise 
\[
-\frac{2}{9}\frac{\|\varphi(t)\|_\mathcal{M}^2}{t^2} \le \langle \nabla^2 U(r_0(t))\varphi(t), \varphi(t)\rangle \le C' \frac{\|\varphi(t)\|_\mathcal{M}^2}{t^2}.
\]
Then, by applying Hardy inequality on $[1,T]$, we get
\[
C''\|\varphi\|_{\mathcal{D}^T}^2 \le Q_T(\varphi, \varphi) \le \frac{1}{18}\|\varphi\|^2_{\mathcal{D}^T},
\]
for some positive $C''>0$.

    \smallskip
    Second, we prove the bound \eqref{eq:V_bound} for large values of time as follows. If $t$ is large enough to satisfy \eqref{eq:away_delta}, we have
\[
    \begin{split}
    & U(r_0(t)+\varphi(t)+\Tilde{x}) - U(r_0(t)) - \langle \nabla U(r_0(t)),\varphi(t)\rangle - \frac{1}{2}\langle \nabla^2 U(r_0(t)),\varphi(t),\varphi(t)\rangle \\
    & = \int_{0}^{1}\int_{0}^{1}\int_{0}^{1} \langle \nabla^3 U(r_0(t) + \tau_1\tau_2\tau_3 (\varphi(t)+\Tilde{x}))(\varphi(t)+\Tilde{x}),\varphi(t)+\Tilde{x},\varphi(t)+\Tilde{x}\rangle \tau_1 \tau_2^2\ \ud \tau_1\ \ud \tau_2\ \ud \tau_3\ \ud t.
     \end{split}
\]
Since $\|\varphi(t)\|_\mathcal{M}\le \hat M\sqrt{t}$,  there exists $\Tilde{t}>1$ big enough such that, for all $t\geq \tilde t$,
\begin{displaymath}
    \|r_0(t) + \tau_1\tau_2\tau_3 (\varphi(t)+\Tilde{x})\|_\mathcal{M}>0.
\end{displaymath}
This implies that for all $t\geq \tilde t$ it holds, from \cite[pag. 25]{PolimeniTerracini}, 
\[
    \bigg|\langle \nabla^3 U(r_0(t) + \tau_1\tau_2\tau_3 (\varphi(t)+\Tilde{x}))(\varphi(t)+\Tilde{x}),\varphi(t)+\Tilde{x},\varphi(t)+\Tilde{x}\rangle \bigg| \leq \hat{C}\frac{\|\varphi(t) + \Tilde{x}\|_\mathcal{M}^3}{t^{8/3}} \leq \frac{\hat{C}'(\hat M, \mathcal{K})}{t^{7/6}},
\]
where the constants $\hat C, \hat C'$ do not depend on $T$ and $x\in \mathcal{K}$. 

\smallskip
\textbf{Hyperbolic-parabolic case.}
In the hyperbolic-parabolic setting, the renormalized Lagrangian action is decomposed as in \eqref{def:decomposed_lagrangian_action} and the lemma is proved using the same arguments from the previous two settings.
\end{proof}

Lemma \ref{lem:quad_pot} is instrumental in proving the continuity of the value functions in \eqref{eq:min_action}.

\begin{lem}\label{lem:continuity_w}
For every $T>1$, $w(T,\cdot)$ is continuous in $\Omega$, and $w_\infty$ is continuous in $\Omega$. 
\end{lem}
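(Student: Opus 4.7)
The strategy is to prove upper and lower semicontinuity at each $\bar x\in\Omega$ for both $w(T,\cdot)$ and $w_\infty$, applying the direct method together with the structural decomposition given by Lemma \ref{lem:quad_pot}. The existence of a minimizer $\varphi^{\bar x}$ for $w(T,\bar x)$ (resp.\ $w_\infty(\bar x)$) follows from Theorem \ref{th:ren_act_pr} combined with the uniform coercivity in Lemma \ref{lem:coercivity_estimates}; by Marchal's principle and Lemma \ref{lemma_collisionless_minimal}, the corresponding trajectory $\gamma^{\bar x}(t)=r_0(t)+\varphi^{\bar x}(t)+\bar x-r_0(1)$ keeps a uniform distance $C>0$ from the collision set $\Delta$.

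\textbf{Upper semicontinuity.} Given $x_n\to \bar x$ in $\Omega$, I test $w(\cdot, x_n)$ against the competitor $\varphi^{\bar x}$. Since $\gamma^{\bar x}$ is uniformly bounded away from $\Delta$, for $n$ large the perturbed trajectories $r_0(t)+\varphi^{\bar x}(t)+x_n-r_0(1)$ remain in a common compact subset of $\Omega$ on which $U$ is smooth and bounded. Pointwise convergence of the integrand, together with Dominated Convergence, yields $\mathcal{A}_{x_n,[1,T]}(\varphi^{\bar x})\to\mathcal{A}_{\bar x,[1,T]}(\varphi^{\bar x})$, hence $\limsup_n w(T,x_n)\le w(T,\bar x)$. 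For $T=+\infty$, the tail estimate \eqref{eq:V_bound} provides the integrable dominating function on $[\hat T,+\infty)$ needed to apply dominated convergence to the infinite-horizon part.

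\textbf{Lower semicontinuity.} Let $\varphi^{x_n}$ minimize $\mathcal{A}_{x_n,[1,T]}$. By the uniform coercivity of Lemma \ref{lem:coercivity_estimates}, the sequence $\{\varphi^{x_n}\}$ is bounded in $\mathcal{D}_0^{1,2}(1,T)$, so up to a subsequence $\varphi^{x_n}\rightharpoonup\bar\varphi$ weakly. Writing $\mathcal{A}_{x_n,[1,T]}=Q_T+P_{x_n,T}$ via Lemma \ref{lem:quad_pot}, the positive definite quadratic form $Q_T$ is weakly lower semicontinuous; the cross term $\int\langle\mathcal{M}\ddot r_0,\varphi\rangle\,\ud t$ is weakly continuous; and the potential piece $\int U(\gamma^{x_n}(t))\,\ud t$ is controlled by Fatou's lemma, since $U\ge 0$ and, by Rellich's theorem on bounded intervals (resp.\ by Proposition \ref{prop:compact_emb} for $T=+\infty$), one can extract a further subsequence along which $\varphi^{x_n}\to\bar\varphi$ pointwise a.e., and hence $\gamma^{x_n}\to\gamma^{\bar x}$ pointwise a.e. Combining these three facts I obtain $\liminf_n\mathcal{A}_{x_n,[1,T]}(\varphi^{x_n})\ge \mathcal{A}_{\bar x,[1,T]}(\bar\varphi)\ge w(T,\bar x)$, i.e.\ $\liminf_n w(T,x_n)\ge w(T,\bar x)$. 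In the case $T=+\infty$, the uniform bound $\|\varphi^{x_n}(t)\|_{\mathcal{M}}\le \hat M\sqrt{t}$ coming from the boundedness in $\mathcal{D}$ lets me apply \eqref{eq:V_bound} simultaneously to the whole sequence, so the tail of $P_{x_n,\infty}$ is uniformly negligible.

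\textbf{Main obstacle.} The delicate point is passing to the limit inside the potential term, which is singular at $\Delta$ and, when $T=+\infty$, integrated over an unbounded interval. The competitor trajectory stays collision-free thanks to Lemma \ref{lemma_collisionless_minimal}; the nonnegativity of $U$ justifies Fatou for the limiting trajectory without requiring a priori collision avoidance of $\bar\varphi$; and the decay $t^{-\beta}$ with $\beta>1$ from the structural Lemma \ref{lem:quad_pot} supplies the uniform integrability at infinity that closes both semicontinuity arguments in the $w_\infty$ case.
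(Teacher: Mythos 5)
Your upper-semicontinuity argument is the same in spirit as the paper's (test against a fixed competitor, let $x_n \to \bar x$, pass to the limit); the paper is slightly leaner in that it only uses continuity of $x \mapsto \mathcal{A}_{x}(\varphi)$ for fixed $\varphi$ and does not need to invoke collision avoidance, because if the competitor trajectory for $x_n$ does touch $\Delta$ the action is $+\infty$ and the inequality $\mathcal{A}_{x_n}(\varphi) \ge w(T,x_n)$ is trivially satisfied.

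For lower semicontinuity there is a genuine gap in the way you invoke Fatou. You state that you use the decomposition $\mathcal{A}_{x_n,[1,T]} = Q_T + P_{x_n,T}$ from Lemma \ref{lem:quad_pot}, but then argue about three objects (the quadratic form, the cross term $\int\langle \mathcal{M}\ddot r_0, \varphi\rangle\,\ud t$, and the potential $\int U(\gamma^{x_n})\,\ud t$), which does \emph{not} coincide with that lemma's two-term decomposition: in the hyperbolic case $P_{x,T}(\varphi) = \int \bigl(U(\gamma) - U(r_0)\bigr)\,\ud t$, and in the parabolic case $Q_T$ absorbs the Hessian correction and $P_{x,T}$ is a third-order Taylor remainder. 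The issue is that Fatou's lemma (``since $U\ge 0$'') applies only to $\int U(\gamma^{x_n})\,\ud t$ taken in isolation, yet for $T = +\infty$ this integral diverges (e.g.\ $U(\gamma(t)) \sim U(a)/t$ in the hyperbolic case, so $\int_1^{+\infty} U(\gamma)\,\ud t = +\infty$), making Fatou trivially $+\infty \ge +\infty$ and hence useless. Moreover, the structured $P_{x,T}$ of Lemma \ref{lem:quad_pot} is \emph{not} sign-definite, so Fatou cannot be applied to it either. Your term-by-term decomposition (kinetic + cross + $\int U(\gamma)$ $-$ $\int U(r_0)$) is only valid on bounded intervals, because on $[1,\infty)$ the four pieces diverge individually and derive their integrability precisely from the cancellation that Lemma \ref{lem:quad_pot} is designed to extract.

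The paper sidesteps this by proving \emph{convergence} (not merely a $\liminf$ inequality) of $P_{x_n}(\varphi^{x_n})$ to $P_{\bar x}(\bar\varphi)$, relying on two ingredients: uniform convergence of $\varphi^{x_n}$ to $\bar\varphi$ on any $[1,\bar T]$ (which the compact embedding of Proposition \ref{prop:compact_emb} provides and which is stronger than the pointwise a.e.\ convergence you use), and the uniform integrable bound $|V(x,t,\varphi)| \le C t^{-\beta}$ on $[\bar T,\infty)$ from \eqref{eq:V_bound}, which is where the structured cancellation is indispensable. Your plan does gesture at the tail estimate \eqref{eq:V_bound}, and it can be made to work, but you must separate cleanly the head $[1,\hat T]$ (where your elementary decomposition and Fatou are harmless) from the tail $[\hat T,\infty)$ (where you must work with the integrand of $P$ from Lemma \ref{lem:quad_pot} and not with $U(\gamma)$ alone), and you must not present the Fatou argument as covering the whole half-line for $T=+\infty$.
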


\begin{proof}

We prove the statement for $w_\infty$. The result for the finite-horizon value functions $w(T,\cdot)$ follows a fortiori.

\textbf{Upper semicontinuity.}
Let $(x_n)_n$ be a sequence converging to $x$. Using the trivial fact that $\mathcal A_x$ is continuous with respect to $x$, it holds
\[
\mathcal{A}_x(\varphi) = \limsup_{x_n\rightarrow x}\mathcal{A}_{x_n}(\varphi) \geq \limsup_{x_n\rightarrow x} w(x_n)
\]
for all $\varphi\in\mathcal{D}$. This plainly gives
\[
w(x)\geq \limsup_{x_n\rightarrow x}w(x_n).
\]

\textbf{Lower semicontinuity.}
Let $(x_n)_n$ be a sequence converging to $x$. Consider any sequence $(\varphi^{x_n})_n\subset\mathcal{D}$ of minimizers of $\mathcal{A}_{x_n}$. By the uniform coercivity estimates of Lemma  \ref{lem:coercivity_estimates}, we know that the sequence $(\varphi^{x_n})_n$ is uniformly bounded. Besides, up to a subsequence, it converges uniformly on compact subsets of $[1,+\infty)$ and weakly in $\mathcal D$ to a function $\bar\varphi\in\mathcal{D}$. 

We decompose the action $\mathcal{A}_x$ as in Lemma \ref{lem:quad_pot}. Being a quadratic form, we already know that $\varphi\mapsto Q_\infty(\varphi,\varphi)$ is weakly lower semi-continuous on $\mathcal{D}$.

By previous arguments, we know that there exists $\bar T>1$ such that on the interval $[\bar T,\infty)$ the integrand in $P_{x,\infty}(\varphi)$, here just $P_x(\varphi)$, is dominated by an $L^1$-function. 

After dividing $P_{x_n}(\varphi^{x_n})$ in two terms
\[
P_{x_n}(\varphi^{x_n})=\int_{1}^{\bar T} V(x_n,t,\varphi^{x_n}(t))\,\ud t + \int_{\bar T}^{+\infty} V(x_n,t,\varphi^{x_n}(t))\,\ud t,
\]
we observe what follows. On the interval $[1,\bar T]$, directly by the fact that $\varphi^{x_n}$ converges uniformly on $\bar \varphi$, and, thanks to \eqref{eq:V_bound}, on the interval $[\bar T,+\infty)$ by the Dominated Convergence Theorem, we obtain
\[
P_{x}(\bar\varphi) = \lim_{n\rightarrow+\infty} P_{x_n}(\varphi^{x_n}).
\]

Putting this together with the lower semicontinuity of $Q$, we obtain
\[
\begin{split}
w_\infty(x) & \le \mathcal{A}_x(\bar\varphi) = Q(\bar\varphi,\bar\varphi) +P_{x}(\bar\varphi)
\\
&\leq\liminf_{n\to+\infty} Q(\varphi^n,\varphi^n) +\liminf_{n\to+\infty}  P_{x_n}(\varphi^n)\leq \liminf_{n\rightarrow+\infty}\mathcal{A}_{x_n}(\varphi^{x_n}) = \liminf_{n\rightarrow+\infty}w_\infty(x_n).\end{split}
\]
\end{proof}

Now, we prove the convergence of $w(T,x)$ to $w_\infty(x)$.

\begin{lem}[Finite-horizon approximation]
\label{lem:unif_conv}
Let $(T_n)_n \subset (1,+\infty)$ be a sequence of finite times such that $T_n \to +\infty$, as $n \to +\infty$. 

Then, 
\[
\lim_{n \to +\infty} w(T_n,x)=w_\infty(x) \ \mbox{ uniformly on compact subsets of $\Omega$}.
\]
\end{lem}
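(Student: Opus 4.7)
The plan is to establish the sequential convergence $w(T_n, x_n) \to w_\infty(x)$ for every sequence $(x_n)_n \subset \mathcal{K}$ with $x_n \to x \in \mathcal{K}$ and every $T_n \to +\infty$. Combined with the continuity of $w_\infty$ from Lemma \ref{lem:continuity_w}, a standard subsequence argument then upgrades this to uniform convergence on $\mathcal{K}$: if uniform convergence failed, one could extract subsequences $(T_{n_k}, x_{n_k})$ with $|w(T_{n_k}, x_{n_k}) - w_\infty(x_{n_k})| \ge \varepsilon$ and, by compactness of $\mathcal{K}$, a convergent $x_{n_k} \to \bar x$, contradicting sequential convergence at $\bar x$.

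For the upper bound, I would use a minimizer $\varphi^\star$ of $\mathcal{A}_x$ (whose existence follows from the direct method via Lemma \ref{lem:coercivity_estimates}) as a competitor, writing
\[
w(T_n, x_n) \le \mathcal A_{x_n,[1,T_n]}(\varphi^\star|_{[1,T_n]}) = \mathcal{A}_x(\varphi^\star) + \bigl[\mathcal A_{x_n,[1,T_n]}(\varphi^\star) - \mathcal A_{x,[1,T_n]}(\varphi^\star)\bigr] - \int_{T_n}^{+\infty}[\text{density at }\varphi^\star]\,\ud t.
\]
The continuity-in-$x$ bracket vanishes by dominated convergence, using the $V$-bound of Lemma \ref{lem:quad_pot} for large $t$ and the uniform collision-free estimate of Lemma \ref{lemma_collisionless_minimal} for bounded $t$; the tail vanishes because $\varphi^\star \in \mathcal D_0^{1,2}(1,+\infty)$ has $L^2$-integrable kinetic density and Lemma \ref{lem:quad_pot} provides $|P_{[T_n,+\infty)}(\varphi^\star)| \le C T_n^{1-\beta}$.

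For the lower bound, let $\varphi^n$ be a minimizer of $\mathcal A_{x_n,[1,T_n]}$, with $\|\varphi^n\|_{\mathcal D_{T_n}} \le C$ uniformly in $n$ by Lemma \ref{lem:coercivity_estimates}. Extending by the constant $\varphi^n(T_n)$ past $T_n$ yields $\tilde\varphi^n \in \mathcal D_0^{1,2}(1,+\infty)$ uniformly bounded in $\mathcal D$, so up to subsequence $\tilde\varphi^n \rightharpoonup \varphi^\infty$ weakly with locally uniform convergence. For each fixed $T>1$, weak lower semicontinuity of $Q_T$ together with dominated convergence of $P_{x_n,T}$ (using Lemma \ref{lem:quad_pot}) give $\mathcal A_{x,[1,T]}(\varphi^\infty) \le \liminf_n \mathcal A_{x_n,[1,T]}(\varphi^n)$. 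To compare this with $\mathcal A_{x_n,[1,T_n]}(\varphi^n) = w(T_n,x_n)$, the aim is the remainder bound $\int_T^{T_n}[\text{density at }\varphi^n]\,\ud t \ge -\varepsilon(T)$ uniformly in $n$, with $\varepsilon(T) \to 0$ as $T \to +\infty$; this yields $\mathcal A_{x,[1,T]}(\varphi^\infty) \le \liminf_n w(T_n, x_n) + \varepsilon(T)$, and sending $T \to +\infty$ together with $w_\infty(x) \le \mathcal A_x(\varphi^\infty)$ closes the argument.

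The main obstacle lies in establishing the above tail estimate in the parabolic and hyperbolic-parabolic settings. Decomposing the density as $Q + V$, Lemma \ref{lem:quad_pot} gives $|\int_T^{T_n} V| \le C T^{1-\beta} \to 0$, and the kinetic part of $Q$ is non-negative; thus in the purely hyperbolic case the estimate is immediate. In the other regimes, however, $Q$ also contains the signed Hessian contribution $\tfrac{1}{2}\int_T^{T_n}\langle \nabla^2 U(r_0)\varphi^n, \varphi^n\rangle\,\ud t$, bounded below only by $-\tfrac{1}{9}\int_T^{T_n}\|\varphi^n\|^2/t^2\,\ud t$, an integral that the crude Hardy inequality of Proposition \ref{prop:compact_emb} (with $\varepsilon=0$) shows to be uniformly bounded in $n$ but not vanishing as $T \to +\infty$. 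Overcoming this requires the compact embedding $\mathcal D_0^{1,2}(1,+\infty) \hookrightarrow L^2(1,+\infty;\,\ud t/t^{2+\varepsilon})$ of Proposition \ref{prop:compact_emb} with $\varepsilon > 0$: the resulting strong convergence $\tilde\varphi^n \to \varphi^\infty$ in this weighted space, combined with a splitting at a large time $T_0$ between an interval where uniform convergence applies and a tail controlled by the vanishing $L^2$-tail of $\varphi^\infty$ at weight $1/t^2$, absorbs the problematic Hessian contribution into $\varepsilon(T)$.
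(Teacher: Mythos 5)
Your route genuinely differs from the paper's: the paper proves $\lim_n \mathcal A_{x,[1,T_n]}(\varphi^n)=\mathcal A_x(\bar\varphi)$ by using the fact that each $\varphi^n$ is a critical point and testing the Euler--Lagrange identity $\ud\mathcal A_{x,[1,T_n]}(\tilde\varphi^n)[\tilde\varphi^n-\bar\varphi]=0$; this shifts all difficulty to the potential/remainder terms, which decay like $t^{-\beta}$ with $\beta>1$ and are handled by dominated convergence. You instead take a purely variational path (weak lower semicontinuity of $Q$ plus a tail estimate), never invoking the E--L equation. Your upper bound is sound. You also correctly locate the obstruction in the lower bound: the critically weighted Hessian contribution $\int_T^{T_n}\langle\nabla^2 U(r_0)\varphi^n,\varphi^n\rangle\,\ud t$, which is of order $\int_T^{T_n}\|\varphi^n\|_\mathcal M^2/t^2\,\ud t$ and is not a priori small, uniformly in $n$, as $T\to+\infty$.

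However, the fix you propose does not close this gap. Strong convergence in $L^2(1,+\infty;\ud t/t^{2+\varepsilon})$ for $\varepsilon>0$ carries no information about the critically weighted quantity $\int\|\varphi^n\|_\mathcal M^2/t^2\,\ud t$: the two weights are not comparable, and the embedding of $\mathcal D_0^{1,2}$ into $L^2(\ud t/t^2)$ is merely continuous, not compact (this is precisely the content of Proposition \ref{prop:compact_emb}, where $\varepsilon=0$ is excluded from the compactness statement). Concretely, take $\varphi^n(t)=\sqrt{t}\,\eta(t/n)$ with $\eta$ a smooth bump supported in $[1,2]$: then $\|\varphi^n\|_\mathcal D=O(1)$, $\varphi^n\rightharpoonup 0$ in $\mathcal D_0^{1,2}$, $\varphi^n\to 0$ strongly in every $L^2(\ud t/t^{2+\varepsilon})$ with $\varepsilon>0$, yet $\int_T^{+\infty}\|\varphi^n\|^2/t^2\,\ud t$ stays bounded away from zero for all $n>T$. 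So boundedness plus all the convergences you have available are insufficient; some use of minimality of $\varphi^n$ is indispensable. This is exactly what the paper's E--L-testing trick supplies: it converts the dangerous $1/t^2$-weighted quadratic quantity into a term involving $\nabla^3 U$, which decays like $t^{-7/6}$ and is therefore handled by dominated convergence. To repair your argument you would need either to import this E--L step, or to prove a genuine a priori uniform tail bound on the finite-horizon minimizers (e.g.\ $\sup_n\int_T^{T_n}\|\dot\varphi^n\|_\mathcal M^2\,\ud t\to 0$ as $T\to\infty$) rather than relying on the compact embedding.
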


\begin{proof}
    Consider $\mathcal K$ a fixed compact subset of $\mathcal{X}$. For every $x \in \mathcal K$, and $n \in \mathbb N$, by the coercivity of $\mathcal{A}_{x}^{T_n}$, there exists $\varphi^n \in \mathcal{D}^{T_n}$ such that
    \[
    w(T_n,x)=\mathcal{A}_{x}^{T_n}(\varphi^n).
    \]
    Let $\tilde\varphi^n \in \mathcal{D}$ be the following extension of $\varphi^n$ to $\mathcal{D}$:
    \[
\tilde \varphi^n(t)=
\begin{cases}
    \varphi^n(t), \ &\mbox{ in } \ [1,T_n]
    \\
    \varphi^n(2T_n-t), \ &\mbox{ in }  \ (T_n,2T_n-1]
    \\
    0, \ &\mbox{ in } \ (2T_n-1,+\infty).
\end{cases}
\]
Clearly, $\mathcal{A}_{x}^{T_n}(\tilde \varphi^n)=\mathcal{A}_{x}^{T_n}(\varphi^n)$. Since $\|\tilde\varphi^n\|_\mathcal{D} = 2 \|\varphi^n\|_{\mathcal{D}_{T_n}}$, and the uniform coercivity of the family of functionals $\mathcal{A}_{x}^{T_n}$, (see Section \ref{sec:coercivity_estimates}), we deduce that $(\tilde\varphi^n)_n$ is a bounded sequence on $\mathcal{D}$. This implies that there exists $\bar\varphi \in \mathcal{D}$ such that $\tilde\varphi^n \to \bar \varphi$, weakly on $\mathcal{D}$, uniformly on compact subsets of $(1,+\infty)$, and pointwise on $(1,+\infty)$. 

We want to prove that
\[
\mathcal{A}_{x}(\bar\varphi) = \lim_{n\rightarrow+\infty}\mathcal{A}_{x}^{T_n}(\tilde\varphi^n) = \lim_{n\to+\infty}\mathcal{A}_{x}^{T_n}(\varphi^n),
\]
uniformly on compact sets with respect to $x$.

\smallskip
\textbf{Hyperbolic case.}\ Like we did for the study of the coercivity for the hyperbolic case, we write the renormalized Lagrangian action as
\[
\mathcal{A}_{x}^{T_n}(\tilde\varphi^n) = \sum_{i<j}m_i m_j \int_{1}^{T_n}\frac{1}{2M} \langle\dot{\tilde\varphi}_{ij}^n(t),\dot{\tilde\varphi}_{ij}^n(t)\rangle + U_{ij}(at + \tilde\varphi^n(t)+x-a)-U_{ij}(at)\ \ud t,
\]
where 
\[
U_{ij}(at+\tilde\varphi^n(t)+x-a) = \frac{1}{|a_{ij}t + \tilde\varphi^n_{ij}(t)+x_{ij}-a_{ij}|}.
\]

Since $\tilde\varphi^n$ is a critical point of the action, it holds
\[
\ud \mathcal{A}_{x}^{T_n}(\tilde\varphi^n)[\psi] = 0, \ \mbox{ for all } \ \psi\in\mathcal{D}^{T_n},
\]
that is, for all $\psi\in\mathcal{D}^{T_n}$,
\[
\int_{1}^{T_n} \frac1M\langle\dot{\tilde\varphi}_{ij}^n(t),\dot\psi_{ij}(t)\rangle + \langle\nabla U_{ij}(at+\tilde\varphi^n(t)+x-a),\psi_{ij}(t)\rangle\ \ud t = 0.
\]

Denoting $\bar\varphi^n = \bar\varphi\big|_{[1,T_n]}$, the above equation holds, in particular, for $\psi(t)=\tilde\varphi_{ij}^n(t)-\bar\varphi_{ij}^n(t)$. Then
\[
\int_{1}^{T_n}\frac1M\langle\dot{\tilde\varphi}_{ij}^n(t),\dot{\tilde\varphi}_{ij}^n(t)-\dot{\bar\varphi}_{ij}^n(t)\rangle + \langle\nabla U_{ij}(at+\tilde\varphi^n(t)+x-a),\tilde\varphi_{ij}^n(t)-\bar\varphi_{ij}^n(t)\rangle\ \ud t = 0.
\]

Starting with the potential term, it holds
\[
\begin{split}
\left|\langle\nabla U_{ij}(at+\tilde\varphi^n(t)+x-a),\tilde\varphi_{ij}^n(t)-\bar\varphi_{ij}^n(t)\rangle \right| \leq C\frac{|\tilde\varphi_{ij}^n(t) - \bar\varphi_{ij}^n(t)|}{|a_{ij}t + \tilde\varphi^n_{ij}(t)+x_{ij}-a_{ij}|^2} \leq C'\frac{t^{1/2}}{|a_{ij}t + \tilde\varphi^n_{ij}(t)+x_{ij}-a_{ij}|^2},
\end{split}
\]
where the constants $C,C'$ depend only on the norms $\|\varphi_{ij}\|_{\mathcal{D}^T}$, which are bounded, and, for $t\rightarrow+\infty$, the last term is dominated by the function $\frac{C'}{t^{3/2}}\in L^1(1,+\infty)$. Since $\tilde\varphi_{ij}^n(t)-\bar\varphi_{ij}^n(t)\rightarrow0$ for $t \in (1,+\infty)$, it holds, by the Dominated Convergence Theorem, that
\begin{equation*} 
\int_1^{+\infty} \langle\nabla U_{ij}(at+\tilde\varphi^n(t)+x-a),\tilde\varphi_{ij}^n(t)-\bar\varphi_{ij}^n(t)\rangle\, \ud t\rightarrow 0,
\end{equation*}
and, in particular,
\begin{equation} 
\label{eq:conv_pot}
\int_1^{+\infty} \langle\nabla U_{ij}(at+\tilde\varphi^n(t)+x-a),\tilde\varphi_{ij}^n(t)\rangle - \langle\nabla U_{ij}(at+\bar\varphi(t)+x-a),\bar\varphi_{ij}^n(t)\rangle \, \ud t\rightarrow 0.
\end{equation}
Since by the weak convergence of $(\tilde\varphi^n(t))_n$ in $\mathcal{D}$ it holds
\[
\lim_{n\rightarrow+\infty}\int_{1}^{T_n} \langle\dot{\tilde\varphi}_{ij}^n(t),\dot{\bar\varphi}_{ij}^n(t)\rangle\ \ud t = \int_{1}^{+\infty} |\dot{\bar\varphi}_{ij}(t)|^2\ \ud t, 
\]
we can conclude that 
\begin{equation} 
\label{eq:conv_norm}
\lim_{n\rightarrow+\infty} \int_{1}^{T_n} |\dot{\tilde\varphi}_{ij}(t)|^2\ \ud t = \int_{1}^{+\infty} |\dot{\bar\varphi}_{ij}(t)|^2\ \ud t.
\end{equation}

From \eqref{eq:conv_pot} and \eqref{eq:conv_norm},
since
\[
\begin{split} 
\mathcal{A}_{x}^{T_n}(\tilde\varphi^n) &= \sum_{i<j}m_i m_j \int_{1}^{T_n}\frac{1}{2M} \langle\dot{\tilde\varphi}_{ij}^n(t),\dot{\tilde\varphi}_{ij}^n(t)\rangle + U_{ij}(at + \tilde\varphi^n(t)+x-a)-U_{ij}(at)\ \ud t
\\
&=
\sum_{i<j}m_i m_j \int_{0}^{1} \int_{1}^{T_n}\frac{1}{2M} \langle\dot{\tilde\varphi}_{ij}^n(t),\dot{\tilde\varphi}_{ij}^n(t)\rangle+ \langle \nabla U_{ij}(a t+s(\tilde\varphi^n(t) + x-a)), \tilde \varphi^n_{ij}(t) +  x_{ij}-a_{ij}\rangle \ \ud t\, \ud s,
\end{split}
\]
we deduce
\[
\lim_{n\rightarrow+\infty}\mathcal{A}_{x}^{T_n}(\tilde \varphi^n) = \mathcal{A}_{x}(\bar \varphi)
\]
pointwise.

We can also state that the convergence is uniform on compact sets with respect to $x$. Indeed, writing
\[
|\mathcal{A}_{x}(\tilde\varphi^n)-\mathcal{A}_x(\bar\varphi)| = \bigg|\sum_{i<j}m_i m_j\int_{1}^{+\infty}\int_{0}^{1} \langle\nabla U_{ij}(at+\bar\varphi(t)+x-a+\theta(\tilde\varphi^n(t)-\bar\varphi(t)),\tilde\varphi_{ij}^n(t)-\bar\varphi_{ij}(t)\rangle\ \ud \theta\ \ud t\bigg|, 
\]
we know that, for any $\varepsilon>0$, there are $T_\varepsilon>1$ and $N_\varepsilon$ such that, for any $n\geq N_\varepsilon$,
\[
\int_{T_\varepsilon}^{+\infty}\int_{0}^{1} \langle\nabla U_{ij}(at+\bar\varphi(t)+x-a+\theta(\tilde\varphi^n(t)-\bar\varphi(t)),\tilde\varphi_{ij}^n(t)-\bar\varphi_{ij}(t)\rangle\ \ud \theta\ \ud t <\frac{\varepsilon}{2}.
\]
This follows from the fact that, for $t\rightarrow+\infty$, there is a constant $C''$ that does not depend on $x$ such that
\[
|\langle\nabla U_{ij}(at+\bar\varphi(t)+x-a+\theta(\tilde\varphi^n(t)-\bar\varphi(t)),\tilde\varphi_{ij}^n(t)-\bar\varphi_{ij}(t)\rangle|\leq \frac{C''}{t^{3/2}}.
\]
In addition,
the fact that, for any $n\geq N_\varepsilon$, 
\[
\int_{1}^{T_\varepsilon}\int_{0}^{1} \langle\nabla U_{ij}(at+\bar\varphi(t)+x-a+\theta(\tilde\varphi^n(t)-\bar\varphi(t)),\tilde\varphi_{ij}^n(t)-\bar\varphi_{ij}(t)\rangle\ \ud \theta\ \ud t< \frac{\varepsilon}{2},
\]
follows from the continuity of $\tilde\varphi^n$ and $\bar\varphi$ with respect to $x$, from the uniform convergence of $\tilde\varphi^n$ to $\bar\varphi$, and the fact that, if $n$ is sufficiently large, $|at+\bar \varphi+x-a+\theta(\tilde\varphi^n-\bar\varphi)|\ge \delta/2$, for some $\delta>0$.

\smallskip
\textbf{Parabolic case.}\ In the parabolic case, we consider the renormalized Lagrangian action, as in Lemma \ref{lem:quad_pot}
\[
\begin{split}
\mathcal{A}_{x}^{T_n}(\tilde\varphi^n) = \int_{1}^{T_n}&\frac{1}{2}\|\dot{\tilde\varphi}^n(t)\|_\mathcal{M} + \frac{1}{2}\langle\nabla^2U(\beta b_mt^{2/3})\tilde\varphi^n(t),\tilde\varphi^n(t)\rangle+  U(\beta b_m t^{2/3}+\tilde\varphi^n(t)+x-\beta b_m) - U(\beta b_mt^{2/3})\\& - \langle\nabla U(\beta b_m t^{2/3}),\tilde\varphi^n(t)\rangle - \frac{1}{2}\langle\nabla^2U(\beta b_mt^{2/3})\tilde\varphi^n(t),\tilde\varphi^n(t)\rangle\ \ud t
\\
&=Q_{T_n}(\tilde\varphi^n, \tilde\varphi^n)+P_{x,T_n}(\tilde\varphi^n).
\end{split}
\]

Like in the previous case, the differential of the action is zero at $\tilde\varphi^n$. It holds, in particular,
\[
\begin{split}
\int_{1}^{T_n} &\langle\dot{\tilde\varphi}^n(t),\dot{\tilde\varphi}^n(t)-\dot{\bar\varphi}^n\rangle_\mathcal{M} + \langle\nabla^2U(\beta b_mt^{2/3})\tilde\varphi^n(t),\tilde\varphi^n(t)-\bar\varphi^n(t)\rangle \\&+ \langle\nabla U(\beta b_m t^{2/3}+\tilde\varphi^n(t)+x-\beta b_m),\tilde\varphi^n(t)-\bar\varphi^n(t)\rangle - \langle\nabla U(\beta b_m t^{2/3}),\tilde\varphi^n(t)-\bar\varphi^n(t)\rangle\\& -\langle\nabla^2U(\beta b_mt^{2/3})\tilde\varphi^n(t),\tilde\varphi^n(t)-\bar\varphi^n(t)\rangle\ \ud t = 0.
\end{split}
\]
Regarding the \emph{potential} part, we get
\[
\begin{split}
&\langle\nabla U(\beta b_m t^{2/3}+\tilde\varphi^n(t)+x-\beta b_m),\tilde\varphi^n(t)-\bar\varphi^n(t)\rangle + \langle\nabla U(\beta b_m t^{2/3}),\tilde\varphi^n(t)-\bar\varphi^n(t)\rangle -\langle\nabla^2U(\beta b_mt^{2/3}\tilde\varphi^n(t),\tilde\varphi^n(t)-\bar\varphi^n(t)\rangle\\&
=\int_{0}^{1} \langle\nabla^2 U(\beta b_mt^{2/3} + \theta(\tilde\varphi^n(t)+x-\beta b_m))(\tilde\varphi^n(t)+x-\beta b_m) - \nabla^2 U (\beta b_mt^{2/3})\tilde\varphi^n(t),\tilde\varphi^n(t)-\bar\varphi^n(t)\rangle\ \ud \theta,
\end{split}
\]
and, if we do not consider the term $\langle\nabla^2 U(\beta b_mt^{2/3} + \theta(\tilde\varphi^n(t)+x-\beta b_m))(x-\beta b_m),\tilde\varphi^n(t)-\bar\varphi^n(t)\rangle$,
\[
\begin{split}
\int_{0}^{1} &\langle(\nabla^2 U(\beta b_mt^{2/3} + \theta(\tilde\varphi^n(t)+x-\beta b_m)) - \nabla^2(\beta b_mt^{2/3}))\tilde\varphi^n(t),\tilde\varphi^n(t)-\bar\varphi^n(t)\rangle\ \ud t \\
& = \int_{0}^{1} \int_{0}^{1} \langle\nabla^3 U(\beta b_mt^{2/3} + \sigma\theta(\tilde\varphi^n(t)+x-\beta b_m))(\tilde\varphi^n(t) + x + \beta b_m),\tilde\varphi^n(t),\tilde\varphi^n(t)-\bar\varphi^n(t)\rangle\theta\ \ud \sigma\ \ud\theta. 
\end{split}
\]
We have
\[
\begin{split}
    &\langle\nabla^3 U(\beta b_mt^{2/3} + \sigma\theta(\tilde\varphi^n(t)+x-\beta b_m))\tilde\varphi^n(t),\tilde\varphi^n(t),\tilde\varphi^n(t)-\bar\varphi^n(t)\rangle\theta\\
    &\leq C\frac{\|\tilde\varphi^n(t)\|_\mathcal{M}\|\tilde\varphi^n(t)\|_\mathcal{M}\|\tilde\varphi^n(t)-\bar\varphi^n(t)\|_\mathcal{M}}{\|\beta b_mt^{2/3} + \sigma\theta(\tilde\varphi^n(t)+x-\beta b_m)\|_\mathcal{M}^4}\\
    &\leq C' \frac{t^{3/2}}{\|\beta b_mt^{2/3} + \sigma\theta(\tilde\varphi^n(t)+x-\beta b_m)\|_\mathcal{M}^4},
\end{split}
\]
where the constants $C,C'$ depend only on $\|\varphi\|_{\mathcal{D}^T}$ or $\|\varphi\|_\mathcal{D}$, which are bounded, and, for $t\rightarrow+\infty$, the last term is dominated by $\frac{C'}{t^{7/6}}\in L^1(1,+\infty)$. Then, by the Dominated Convergence Theorem, 
\[
\begin{split}
&\langle\nabla U(\beta b_m t^{2/3}+\tilde\varphi^n(t)+x-\beta b_m) - \nabla U(\beta b_m t^{2/3})-\nabla^2U(\beta b_mt^{2/3})\tilde\varphi^n(t),\tilde\varphi^n(t)-\bar\varphi^n(t)\rangle\ \ud t\rightarrow0\quad\text{in }L^1.
\end{split}
\]
Arguing as in the hyperbolic case, we are able to prove that
\[
\lim_{n}
P_{x,T_n}(\tilde\varphi^n)=
P(\bar\varphi).
\]

Now we study the remaining term. We notice that
\[
\begin{split}
\int_{1}^{T_n} &\langle\dot{\tilde\varphi}^n(t),\dot{\tilde\varphi}^n(t)-\dot{\bar\varphi}^n\rangle_\mathcal{M} + \langle\nabla^2U(\beta b_mt^{2/3})\tilde\varphi^n(t),\tilde\varphi^n(t)-\bar\varphi^n(t)\rangle\ \ud t\\
& = \int_{1}^{T_n} \|\dot{\tilde\varphi}^n(t)\|_\mathcal{M}^2-\langle\dot{\tilde\varphi}^n(t),\dot{\bar\varphi}^n(t)\rangle_\mathcal{M} + \langle\nabla^2U(\beta b_mt^{2/3})\tilde\varphi^n(t),\tilde\varphi^n(t)\rangle-\langle\nabla^2U(\beta b_mt^{2/3})\tilde\varphi^n(t),\bar\varphi^n(t)\rangle\ \ud t.
\end{split}
\]

By the Dominated Convergence Theorem and the fact that $\tilde\varphi^n\rightharpoonup\bar\varphi$ in $\mathcal{D}$, we have that 
\[
\lim_{n\rightarrow+\infty} \int_{1}^{T_n} \langle\dot{\tilde\varphi}^n(t),\dot{\bar\varphi}^n(t)\rangle_\mathcal{M}  +\langle\nabla^2U(\beta b_mt^{2/3})\tilde\varphi^n(t),\bar\varphi^n(t)\rangle\ \ud t = \int_{1}^{+\infty} \|\dot{\bar\varphi}(t)\|_\mathcal{M}^2 + \langle\nabla^2U(\beta b_mt^{2/3})\bar\varphi(t),\bar\varphi(t)\rangle\ \ud t.
\]
So,
\[
\lim_{n\rightarrow+\infty} \int_{1}^{T_n} \|\dot{\tilde\varphi}^n(t)\|_\mathcal{M}^2 + \langle\nabla^2U(\beta b_mt^{2/3})\tilde\varphi^n(t),\tilde\varphi^n(t)\rangle\ \ud t = \int_{1}^{+\infty} \|\dot{\bar\varphi}(t)\|_\mathcal{M}^2 + \langle\nabla^2U(\beta b_mt^{2/3})\bar\varphi(t),\bar\varphi(t)\rangle\ \ud t.
\]

Hence, with the notations of Lemma \ref{lem:quad_pot}, we have proved
\[
\lim_{n}
Q_{T_n}(\tilde \varphi^n, \tilde \varphi^n) = Q(\bar\varphi, \bar \varphi).
\]

Since
\[
\begin{split}
\mathcal{A}_{x}^{T_n}(\tilde\varphi^n) = \int_{0}^{1}\int_{1}^{T_n}&\frac{1}{2}\|\dot{\tilde\varphi}^n(t)\|_\mathcal{M} + \frac{1}{2}\langle\nabla^2U(\beta b_mt^{2/3})\tilde\varphi^n(t),\tilde\varphi^n(t)\rangle_\mathcal{M} \\& +  \langle\nabla U(\beta b_m t^{2/3}+\theta (\tilde\varphi^n(t)+x-\beta b_m)),\tilde\varphi^n(t)+\beta b_m\rangle \\& - \langle\nabla U(\beta b_mt^{2/3}),\tilde\varphi^n(t)\rangle - \frac{1}{2}\langle\nabla^2U(\beta b_mt^{2/3})\tilde\varphi^n(t),\tilde\varphi^n(t)\rangle_\mathcal{M}\ \ud t\ \ud \theta,
\end{split}
\]
we can thus conclude that
\[
\lim_{n\rightarrow+\infty}\mathcal{A}_{x}^{T_n}(\tilde \varphi^n) = \mathcal{A}_{x}(\bar \varphi).
\]
The proof of the uniform convergence over compact sets runs as in the hyperbolic case.

\smallskip
\textbf{Hyperbolic-parabolic case.}\ As per usual, in the hyperbolic-parabolic case we use the decomposition of the renormalized Lagrangian action given by \eqref{def:decomposed_lagrangian_action}. Studying the two terms separately, we apply the same considerations that we made previously in the hyperbolic and parabolic cases and we get to the same conclusion.
\end{proof}

\subsection{Semiconcavity with linear modulus of the value function}\label{sez_semiconcavity}

By proving that the value function $v$ is semiconcave with linear modulus, we also obtain its local Lipschitz continuity on the set $\Omega$.
\begin{prop}\label{prop:semiconcavity}
    For any set $W\subset\subset\Omega$ there is a constant $C>0$ such that for any $x,z\in\mathcal{X}$ with $x,x+z,x-z\in W$ it holds
    \begin{equation}\label{slm}
        v(x+z) + v(x-z) - 2v(x) \leq C\|z\|^2.
    \end{equation}
\end{prop}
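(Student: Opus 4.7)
The approach is to use the minimizer $\varphi^x$ of $\mathcal{A}_x$ as a competitor for the variational problems defining $v(x+z)$ and $v(x-z)$, and to control the resulting defect by Taylor-expanding $U$ along the minimizing expansive motion $\gamma^x(t) := r_0(t) + \varphi^x(t) + x - r_0(1)$.

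It is enough to prove \eqref{slm} when $W$ is contained in an open ball $B(\bar x, \rho) \subset \Omega$ with $\rho$ sufficiently small; the general case follows from a Lebesgue-number-type covering argument, combined with local Lipschitz continuity of $v$ (itself a consequence of semiconcavity on small balls) to treat the regime where $\|z\|_\mathcal{M}$ is comparable to $\mathrm{diam}(W)$. Fix such a ball. The linear correction $-\langle a, x\rangle_\mathcal{M}$ is affine in $x$, hence contributes zero to $v(x+z) + v(x-z) - 2v(x)$. Taking $\varphi^x$ as competitor for $v(x \pm z)$ and observing that, among the terms of $\mathcal{A}_y(\varphi^x)$, only $U(\varphi^x(t) + r_0(t) + y - r_0(1))$ depends on the initial datum $y$ (the kinetic energy, $-U(r_0(t))$ and $\langle \mathcal{M}\ddot r_0(t), \varphi^x(t)\rangle$ are all independent of $y$), one obtains
\[
v(x+z) + v(x-z) - 2v(x) \le \int_1^{+\infty} \bigl[ U(\gamma^x(t) + z) + U(\gamma^x(t) - z) - 2 U(\gamma^x(t)) \bigr]\, \ud t.
\]

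Next, by Lemma \ref{lemma_collisionless_minimal} one has $d(\gamma^x(t), \Delta) \ge C_0 > 0$ uniformly in $x \in B(\bar x, \rho)$ and $t \in [1, +\infty)$. Shrinking $\rho$ so that $2\rho < C_0$, whenever $x, x \pm z \in B(\bar x, \rho)$ (in particular $\|z\|_\mathcal{M} < 2\rho$), the perturbed configuration $\gamma^x(t) + \theta z$ stays at distance at least $C_0 - 2\rho > 0$ from $\Delta$ for every $\theta \in [-1, 1]$ and $t \ge 1$. Since $U \in C^\infty(\Omega)$, Taylor's theorem yields
\[
U(y + z) + U(y - z) - 2 U(y) \le \|z\|_\mathcal{M}^2 \sup_{\|\xi\|_\mathcal{M} \le \|z\|_\mathcal{M}} \|\nabla^2 U(y + \xi)\|_{\mathrm{op}},
\]
reducing the task to showing that
\[
\int_1^{+\infty} \sup_{\|\xi\|_\mathcal{M} \le 2\rho} \|\nabla^2 U(\gamma^x(t) + \xi)\|_{\mathrm{op}}\, \ud t \le C
\]
with $C$ independent of $x \in B(\bar x, \rho)$.

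The last step combines the pointwise bound $\|\nabla^2 U(y)\|_{\mathrm{op}} \le C_1 \sum_{i<j} m_i m_j / |y_i - y_j|^3$ with uniform-in-$x$ lower bounds on the mutual distances $|\gamma^x_i(t) - \gamma^x_j(t)|$: one has a linear growth $\gtrsim t$ in the hyperbolic case, and $\gtrsim t^{2/3}$ in the parabolic and in the intra-cluster components of the hyperbolic-parabolic case, valid for $t$ larger than some $T_0$ independent of $x$. These are consequences of Theorems \ref{thm_hyperbolic}, \ref{thm_parabolic}, \ref{thm_partially_hyperbolic} together with the uniform coercivity of Lemma \ref{lem:coercivity_estimates}, which bounds $\|\varphi^x\|_\mathcal{D}$ uniformly on $B(\bar x, \rho)$ and thus controls the remainder through the Hardy inequality. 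The bounded perturbation $\|\xi\|_\mathcal{M} \le 2\rho$ changes each mutual distance only by a bounded amount, so the lower bounds persist (up to a constant factor) under the perturbation for $t \ge T_0$; on $[1, T_0]$, the estimate of Lemma \ref{lemma_collisionless_minimal} yields a uniform upper bound on the integrand. The respective $1/t^3$ and $1/t^2$ decays are integrable, producing the desired constant. The main technical hurdle is precisely the uniform-in-$x$ control of the asymptotics of $\gamma^x$, which ultimately rests on the coercivity machinery developed in Section \ref{sec:coercivity_estimates} together with the a priori distance estimates of Lemma \ref{lemma_collisionless_minimal}.
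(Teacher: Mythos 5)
Your proposal is correct and follows essentially the same strategy as the paper's proof: use $\varphi^x$ as a competitor for $v(x\pm z)$ so that the kinetic and linear-correction terms cancel, Taylor-expand the potential along $\gamma^x$, and control the Hessian of $U$ uniformly in $x$ by combining Lemma \ref{lemma_collisionless_minimal} (uniform distance from collisions) with the integrable decay $\|\nabla^2 U(\gamma^x(t)+z)\|\lesssim t^{-2}$ coming from the coercivity estimates; the reduction to small $\|z\|$ is handled in the paper simply by boundedness of $v$ on $\overline W$, which is slightly lighter than the Lipschitz/covering machinery you invoke, but the two are equivalent in effect.
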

\begin{proof}
    From the uniform coercivity estimates, it follows that there is a constant $\hat C>0$ such that $\|\varphi^x\|_\mathcal{D}\leq \hat C$ for all $x\in W$.

    Fixing a set $W\subset\subset \mathcal{X}\setminus \Delta$, we know from Lemma \ref{lemma_collisionless_minimal} that there is $\delta>0$ such that
    \[
    \inf_{t\geq1} d(\gamma^x(t),\Delta)\geq\delta,
    \]
    where $\gamma^x(t) = r_0(t) + \varphi^x(t) + x - r_0(1)$ and $\varphi^x$ realizes the minimum of the functional $\mathcal{A}_x$.

    Since $v$ is continuous over the set $\overline{W}$, $v$ is bounded over $\overline{W}$: this means that we only have to prove \eqref{slm} for small values of $\|z\|$. 

    For $\|z\|$ sufficiently small, we have
    \[
    \inf_{t\geq1} d(\gamma^{x}(t)\pm sz,\Delta)\geq\frac{\delta}{2}, \ \ s \in [0,1].
    \]
    Thus
    \begin{displaymath}
        \begin{split}
            &v(x+z) + v(x-z) - 2v(x) \leq \int_{1}^{+\infty} L_{x+z}^{ren}(\varphi^x(t)) + L_{x-z}^{ren}(\varphi^x(t)) - 2L_{x}^{ren}(\varphi^x(t))\ \ud t\\
            & = \int_{1}^{+\infty} U(r_0(t) + \varphi^x(t) + x + z - r_0(1)) + U(r_0(t) + \varphi^x(t) + x - z - r_0(1)) -2U(r_0(t) + \varphi^x(t) + x - r_0(1))\ \ud t\\
            & = \int_{1}^{+\infty} \int_{0}^{1} \langle \nabla U(r_0(t) + \varphi^x(t) + x - r_0(1) + sz), z\rangle - \langle \nabla U(r_0(t) + \varphi^x(t) + x - r_0(1) - sz), z\rangle\ \ud s\ \ud t\\
            & = \int_{1}^{+\infty} \int_{0}^{1} \int_{0}^{1} -2\langle \nabla^2 U(r_0(t) + \varphi^x(t) + x - r_0(1) + (1-2\tau)sz)sz, z\rangle\ \ud\tau\ \ud s\ \ud t,
        \end{split}
    \end{displaymath}
    where we denote $L_{x}^{ren}(\varphi^x(t))=\frac{1}{2}\|\varphi^x(t)\|_\mathcal{M}^2 + U(r_0(t) + \varphi^x(t) + x - r_0(1)) - U(r_0(t)) - \langle \ddot{r}_0(t),\varphi^x(t) \rangle_\mathcal{M}$. The thesis follows from the fact that there is a constant $C'>0$ with the property that there is $\varepsilon>0$ such that for $\|z\|<\varepsilon$ and for any $x\in W$
        \[
        \|\nabla^2 U(\gamma^x(t) + z)\| \leq \frac{C'}{t^2},\quad \forall t\geq 1,
        \]
        where we used the operator norm of a matrix.
    
\end{proof}

In particular, the semiconcavity of the value function implies that the function is locally Lipschitz continuous, see e.g. \cite{MR2041617}.

\subsection{Proof of Theorem \ref{thm:HJ}}\label{sec:proofHJ}

We prove Theorem \ref{thm:HJ} by means of the finite-horizon approximation of Proposition \ref{prop:conv_v_T}, and the \emph{stability principle} for viscosity solutions. To this end, the key idea is to show that each $v(T,x)$ satisfies a proper differential equation, as proved in the following proposition.

We recall that, similarly to Definition \ref{def:viscosity}, for a function $v:[1,+\infty)\times\Omega\rightarrow\R$, we define the Fréchet superdifferential and subdifferential of $v$ at $(T,x)$ as
    \begin{displaymath}\begin{split}
        &D^- v(T,x) = \bigg\{ (p_T,p_x)\in \R\times\mathcal{X} : \liminf_{(\bar T,y)\rightarrow (T,x)}\frac{v(\bar T,y)-v(T,x) -p_T(\bar T-T) - \langle p_x,y-x\rangle_\mathcal{M}}{\sqrt{(\bar{T}-T)^2+\|y-x\|_\mathcal{M}^2}}\geq 0\bigg\},\\
        &D^+v(T,x) = \bigg\{  (p_T,p_x)\in \R\times\mathcal{X} : \limsup_{(\bar T,y)\rightarrow (T,x)}\frac{v(\bar T,y)-v(T,x) -p_T(\bar T-T) - \langle p_x,y-x\rangle_\mathcal{M}}{\sqrt{(\bar{T}-T)^2+\|y-x\|_\mathcal{M}^2}}\leq 0\bigg\}.
    \end{split}\end{displaymath}
    
Given a function $F:[1,+\infty)\times\Omega\rightarrow\R$, we consider the equation
\begin{equation}\label{eq:viscosity_solution_v(T,x)}
\frac{\partial v}{\partial T}(T,x) + H(x,\nabla v(T,x)) + F(T,x)=0.
\end{equation}
We say that a continuous function $v$ is a viscosity supersolution of \eqref{eq:viscosity_solution_v(T,x)} if, for every $(T,x)$ and $(p_T,p_x)\in D^-v(T,x)$, it holds
    \[
    p_T + H(x,\mathcal Mp_x) + F_*(T,x) \ge 0,
    \]
    where $F_*$ is the {\em lower semicontinuous envelope} of $F$; $v$ is called viscosity subsolution of \eqref{eq:viscosity_solution_v(T,x)} if, for every $(T,x)$ and $(p_T,p_x)\in D^+v(T,x)$, it holds
    \[
    p_T + H(x,\mathcal Mp_x) + F^*(T,x) \le 0,
    \]
    where $F^*$ is the {\em upper semicontinuos envelope} of $F$.
    
If $v$ is both a viscosity supersolution and a viscosity subsolution, then it is called a viscosity solution of \eqref{eq:viscosity_solution_v(T,x)}.

\begin{prop}
    \label{prop:HJ_v_T}
Let $(T,x)\in(1,+\infty)\times\Omega$. Then:
\begin{itemize}
    \item there exists a minimizer $\varphi_{x,T}$ of $\mathcal{A}_{x}^T$ on $\mathcal{D}^T$ such that $v(T,x)$ is a viscosity subsolution of
    \[
    \frac{\partial v}{\partial T}(T,x) + H(x,\nabla v(T,x)) +\langle\ddot{r}_0(T),\varphi_{x,T}(T)+x\rangle_\mathcal{M} - H(r_0(T),\dot r_0(T)) \le 0,
    \]
    \item for all $\varphi_{x,T}$ minimizers of $\mathcal{A}_{x}^T$ on $\mathcal{D}^T$, $v(T,x)$ is a viscosity supersolution of 
    \[
    \frac{\partial v}{\partial T}(T,x) + H(x,\nabla v(T,x)) +\langle\ddot{r}_0(T),\varphi_{x,T}(T)+x\rangle_\mathcal{M} - H(r_0(T),\dot r_0(T)) \ge 0.
    \]
\end{itemize}
In particular, the function $v(T,x)$ is a viscosity solution in $(1,+\infty)\times\Omega$ of
\begin{displaymath}
    \label{eq:v_T}
\frac{\partial v}{\partial T}(T,x) = -\frac{1}{2}\|\nabla v(T,x)\|^2_{\mathcal{M}^{-1}} + U(x) - \inf_{\varphi_{x,T} \in \mathcal{Z}}\langle\ddot{r}_0(T),\varphi_{x,T}(T)+x\rangle_\mathcal{M}+\frac{1}{2}\|\dot r_0(T)\|_\mathcal{M}^2 - U(r_0(T)), 
\end{displaymath}
where $\mathcal{Z}$ is the set of functions $\varphi_{x,T}$ such that $\mathcal{A}_{x}^T(\varphi_{x,T})=  \min_{\varphi\in\mathcal{D}^T}\mathcal{A}_{x}^T(\varphi)$.

Moreover, it holds
\begin{equation}
    \label{eq:pa_T_v}
\lim_{T\to +\infty} \frac{\partial v}{\partial T}(T,x)=0.
\end{equation}
\end{prop}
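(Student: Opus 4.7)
The plan is to derive the HJ identity at points where $v(T,x)$ is differentiable via an envelope-theorem computation, and then upgrade to the viscosity sense by exploiting joint local semiconcavity together with the convexity of $H$ in the momentum.

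First, I would use the uniform coercivity of Lemma~\ref{lem:coercivity_estimates} and weak lower-semicontinuity (as in Lemma~\ref{lem:continuity_w}) to ensure that, for every $(T,x)\in(1,+\infty)\times\Omega$, the set $\mathcal Z$ of minimizers of $\mathcal A_{x,[1,T]}$ in $\mathcal D_0^{1,2}(1,T)$ is non-empty. By Marchal's theorem (Theorem~\ref{thm_marchal}) and Lemma~\ref{lemma_collisionless_minimal}, each associated curve $\gamma(t)=r_0(t)+\varphi_{x,T}(t)+x-r_0(1)$ is collision-free on $[1,T]$, solves $\mathcal M\ddot\gamma=\nabla U(\gamma)$, and --- since the endpoint is free --- satisfies the natural boundary condition $\dot\varphi_{x,T}(T)=0$, i.e.\ $\dot\gamma(T)=\dot r_0(T)$.

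Next I would prove joint local semiconcavity with linear modulus of $v(T,x)$ on $(1,+\infty)\times\Omega$ by adapting Proposition~\ref{prop:semiconcavity}, testing $v(T\pm\tau,x\pm z)$ with competitors built by constantly extending (possible because $\dot\varphi_{x,T}(T)=0$) or restricting a fixed minimizer, and by shifting $x$. This yields local Lipschitz continuity, differentiability a.e., and the standard characterizations that $D^+v(T_0,x_0)$ is the convex hull of limits of $\nabla v$ at nearby differentiability points while $D^-v(T,x)\neq\emptyset$ only at such points. At a differentiability point, the envelope theorem applied to $w(T,x):=v(T,x)+\langle\dot r_0(T),x\rangle_\mathcal M$ gives $\nabla_xw=\int_1^T\nabla U(\gamma)\,\mathrm dt=\mathcal M\bigl(\dot r_0(T)-\dot\gamma(1)\bigr)$ (by integrating Newton's equation) and $\partial_Tw=L^{\mathrm{ren}}(T,\varphi_{x,T}(T),0)=U(\gamma(T))-U(r_0(T))-\langle\mathcal M\ddot r_0(T),\varphi_{x,T}(T)\rangle$ (fundamental theorem plus natural b.c.), so $\nabla_xv=-\mathcal M\dot\gamma(1)$ and $\partial_Tv=U(\gamma(T))-U(r_0(T))-\langle\ddot r_0(T),\varphi_{x,T}(T)+x\rangle_\mathcal M$; energy conservation along $\gamma$ rearranges these into the HJ equality stated in the proposition.

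The viscosity properties then follow from the semiconcave-function toolbox. For the subsolution, any $(p_T,p_x)\in D^+v(T_0,x_0)$ is a convex combination of limits of gradients at regular points; by coercivity the associated minimizers subconverge in $\mathcal D_0^{1,2}$ to elements of $\mathcal Z$, and passing to the limit in the HJ identity combined with convexity of $p\mapsto H(x_0,p)$ yields the $\leq 0$ inequality for the minimizer attaining $\inf_{\varphi\in\mathcal Z}\langle\ddot r_0(T_0),\varphi(T_0)+x_0\rangle_\mathcal M$. For the supersolution, $D^-v\neq\emptyset$ forces differentiability; there $\dot\gamma(1)=-\mathcal M^{-1}\nabla_xv$ together with Newtonian ODE uniqueness makes $\varphi_{x,T}$ unique, so the HJ identity trivially gives the $\geq 0$ inequality for every element of $\mathcal Z$. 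Finally, to obtain $\lim_{T\to+\infty}\partial_Tv(T,x)=0$ I would rewrite $\partial_Tv=H(r_0(T),\dot r_0(T))-H(x,\nabla_xv(T,x))-\langle\ddot r_0(T),\varphi_{x,T}(T)+x\rangle_\mathcal M$ and bound each term: in the hyperbolic case $\ddot r_0\equiv 0$ and both Hamiltonians tend to $h=\tfrac12\|a\|_\mathcal M^2$ by Proposition~\ref{prop:conv_v_T}; in the (hyperbolic-)parabolic case $\|\ddot r_0(T)\|_\mathcal M=O(T^{-4/3})$ while $\|\varphi_{x,T}(T)\|_\mathcal M\leq\|\varphi_{x,T}\|_{\mathcal D_T}\sqrt T=O(T^{1/2})$, so the coupling is $O(T^{-5/6})\to 0$, and similar bookkeeping handles the remaining Hamiltonian terms. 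The main obstacle I foresee is the selection of $\varphi_{x_0,T_0}$ at non-regular points: one must verify that the minimizer realizing the subsolution inequality can genuinely be obtained as a weak/uniform limit of minimizers along a sequence of differentiability points, so that the limiting equation carries a \emph{bona fide} minimizer rather than merely an abstract element of $\overline{\mathcal Z}$.
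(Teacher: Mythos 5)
Your proposal reaches the same conclusion but takes a genuinely different route from the paper. The paper introduces the auxiliary value function $u(T,x)$ associated to the terminal-velocity boundary value problem \eqref{eq:problem_gamma_u}, establishes a Dynamic Programming Principle inequality for it (Lemma \ref{lem:DDP}), and derives the viscosity sub- and supersolution properties of $u$ directly from that DPP using straight-line test arcs and the optimal arc itself; the result is then transferred to $v(T,x)$ via an explicit algebraic identity relating $u$ and $v$. You instead work with $v(T,x)$ (equivalently $w(T,x)$) and route through the semiconcave-function toolbox: joint local semiconcavity in $(T,x)$ extending Proposition \ref{prop:semiconcavity} (which the paper proves only in $x$), the envelope theorem plus Newton's equation and energy conservation at differentiability points to get the pointwise HJ identity, and the characterization of $D^{\pm}v$ of a semiconcave function as a convex hull of reachable gradients combined with convexity of $H$ in the momentum. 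This is a legitimate and somewhat more self-contained alternative, but note it requires establishing the joint $(T,x)$ semiconcavity as an additional lemma: the extension/restriction competitor you sketch is the right device (the natural boundary condition $\dot\varphi_{x,T}(T)=0$ is precisely what makes the linear-in-$\tau$ terms cancel in the second difference), and the second-order $\tau^2$, $\tau z$ and $z^2$ contributions need the collision-distance bound of Lemma \ref{lemma_collisionless_minimal} to close the estimate, analogously to the proof of Proposition \ref{prop:semiconcavity}.

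The obstacle you flag at the end is surmountable with tools already in the paper. Each reachable gradient $(p_T^i,p_x^i)$ at a singular point $(T_0,x_0)$ is associated with a genuine minimizer $\bar\varphi^i\in\mathcal Z$: weak limits of minimizers along sequences of differentiability points are themselves minimizers, by the coercivity of Lemma \ref{lem:coercivity_estimates} and the lower-semicontinuity argument of Lemma \ref{lem:continuity_w}, so $\overline{\mathcal Z}=\mathcal Z$. For a general convex combination $(p_T,p_x)=\sum_i\alpha_i(p_T^i,p_x^i)\in D^+v(T_0,x_0)$, the convexity of $H(x_0,\cdot)$ gives $p_T+H(x_0,p_x)\le -\sum_i\alpha_i\langle\ddot r_0(T_0),\bar\varphi^i(T_0)+x_0\rangle_\mathcal M+H(r_0(T_0),\dot r_0(T_0))\le -\inf_{\varphi\in\mathcal Z}\langle\ddot r_0(T_0),\varphi(T_0)+x_0\rangle_\mathcal M+H(r_0(T_0),\dot r_0(T_0))$, and the infimum is attained because $\mathcal Z$ is bounded, weakly closed in $\mathcal D_0^{1,2}(1,T_0)$, and the functional $\varphi\mapsto\langle\ddot r_0(T_0),\varphi(T_0)+x_0\rangle_\mathcal M$ is weakly continuous. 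Your observation that $D^-v\neq\emptyset$ forces differentiability, hence $\nabla_x v=-\mathcal M\dot\gamma(1)$ for every minimizer and thus uniqueness by ODE uniqueness, is exactly what makes the ``for all $\varphi_{x,T}\in\mathcal Z$'' supersolution statement hold correctly.
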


Taking Proposition \ref{prop:HJ_v_T}, Theorem \ref{thm:HJ} follows at once, as follows.

\begin{proof}[Proof of Theorem \ref{thm:HJ}]
    
From the uniform coercivity estimates, fixed $x$, we have that $|\varphi_{x,T}(T)|\le C \sqrt{T}$, and hence
\[
\left|\langle \ddot r_0(T), \varphi_{x,T}(T)\rangle_\mathcal{M} \right| \le \|\ddot r_0(T)\|_\mathcal{M} C \sqrt{T} \to 0, \ \mbox{ as } \ T \to +\infty.
\]
From a direct inspection,
\[
\frac{1}{2}\|\dot r_0(T)\|_\mathcal{M}^2 - U(r_0(T)) \to \frac{\|a\|^2_\mathcal{M}}{2}, \ \mbox{ as } \ T\to +\infty.
\]
Hence, from \eqref{eq:pa_T_v} and the uniform convergence of $v(T,x)$ to $v(x)$ over compact subsets of $\Omega$ (Proposition \ref{prop:conv_v_T}), using the stability principle of viscosity solutions (see e.g. \cite{Crandall}), we conclude that the uniform limit $v(x)$ satisfies in the viscosity sense the limit equation, that is, \eqref{eq:HJ_v}.

Although the stability principle stated in \cite{Crandall} is formulated for sequences of continuous equations, it still applies in our case, since the only term which may lack continuity, that is, $\inf_{\varphi_{x,T} \in \mathcal{Z}}\langle\ddot{r}_0(T),\varphi_{x,T}(T)+x\rangle_\mathcal{M}$, converges to zero, as $T\to+\infty$, uniformly over compact sets with respect to $x$.
\end{proof}

The remainder of the section is then devoted to proving Proposition \ref{prop:HJ_v_T}.

We need several lemmas, involving supplementary value functions $u(T,x)$, defined for any $x \in\Omega$ and $T>1$, by
\begin{equation}\label{eq:u(T,x)_vers1}
\begin{split}
u(T,x)&= \min_{\eta \in H^1([1,T]):\,\eta(1)=x}\int_1^T L(\eta(t),\dot \eta(t))\,\ud t - \langle \dot r_0(T), \eta(T)\rangle_\mathcal{M}
\\
&=\int_1^T L(\gamma(t),\dot{\gamma}(t))\,\ud t - \langle \dot r_0(T), \gamma(T)\rangle_\mathcal{M},
\end{split}
\end{equation}
where $\gamma$ satisfies
\begin{equation} 
\label{eq:problem_gamma_u}
\begin{cases}
\ddot{\gamma} = \nabla U(\gamma) \ \mbox{ a.e. in } \ (1,T)
\\
\gamma(1)=x
\\
\dot{\gamma}(T)=\dot r_0(T).
\end{cases}
\end{equation}
Notice that, in general, Problem \eqref{eq:problem_gamma_u} is not uniquely solvable.

Furthermore, by setting $\hat \gamma(t)=\gamma(T+1-t)$, we get a solution of
\[
\begin{cases}
\ddot{\hat\gamma} = \nabla U(\hat \gamma) \ \mbox{ a.e. in } \ (1,T)
\\
\hat\gamma(T)=x
\\
\dot{\hat\gamma}(1)=-\dot r_0(T),
\end{cases}
\]
that satisfies
\begin{equation}\label{eq:u(T,x)_vers2}
\begin{split}
u(T,x) &=\int_1^T L(\hat\gamma(t),\dot{\hat\gamma}(t))\, \ud t -\langle \dot r_0(T), \hat\gamma(1)\rangle_\mathcal{M}
\\
&= \min_{\eta \in H^1([1,T]):\, \eta(T)=x}\int_1^T L(\eta(t),\dot\eta(t))\, \ud t -
\langle \dot r_0(T), \eta(1)\rangle_\mathcal{M}.
\end{split}
\end{equation}

    Retracing the arguments made by Cannarsa and Sinestrari, we want to prove a similar inequality to the Dynamic Programming Principle (Theorem 1.2.2 in \cite{MR2041617}), which translates into the following result.
\begin{lem}
\label{lem:DDP}
    Let $T\in\R$, $T>1$, and $x\in{\Omega}$, and consider a curve $\overline{\gamma}\in H^1([1,T])$ such that $\overline{\gamma}(T)=x$. Then, for all $T'\in[1,T]$,
    \begin{equation}\label{eq:dpp}
        u(T,x) \leq u(T',\overline{\gamma}(T')) + \int_{T'}^{T} L(\overline{\gamma}(t),\dot{\overline{\gamma}}(t))\ \ud t - \langle\dot{r}_0(T)-\dot{r}_0(T'),\Tilde{\gamma}(1)\rangle_\mathcal{M},
    \end{equation}
    where $\Tilde{\gamma}\in H^1([1,T'])$ realizes the minimum in $u(T',\overline{\gamma}(T'))$ and is such that $\Tilde{\gamma}(T')=\overline{\gamma}(T')$.
\end{lem}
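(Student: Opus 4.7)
The plan is a direct concatenation argument in the spirit of the usual Dynamic Programming Principle, with the only novelty being the careful handling of the linear correction $\langle \dot r_0(\cdot),\eta(1)\rangle_\mathcal{M}$ at the free initial endpoint. Concretely, I define a test curve $\eta:[1,T]\to\mathcal{X}$ by
\[
\eta(t)=\begin{cases}\tilde\gamma(t),& t\in[1,T'],\\ \overline{\gamma}(t),& t\in[T',T].\end{cases}
\]
Because $\tilde\gamma(T')=\overline{\gamma}(T')$ by assumption and both pieces belong to $H^1$ on their respective subintervals, $\eta\in H^1([1,T])$; moreover $\eta(T)=\overline{\gamma}(T)=x$, so $\eta$ is admissible in the minimization defining $u(T,x)$ (in the formulation \eqref{eq:u(T,x)_vers2}).

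Next, I invoke the minimality characterization of $u(T,x)$ at the competitor $\eta$. Noting that $\eta(1)=\tilde\gamma(1)$, I split the action integral at $T'$ to get
\[
u(T,x)\;\le\;\int_1^{T'}L(\tilde\gamma,\dot{\tilde\gamma})\,\ud t+\int_{T'}^{T}L(\overline{\gamma},\dot{\overline{\gamma}})\,\ud t-\langle \dot r_0(T),\tilde\gamma(1)\rangle_\mathcal{M}.
\]
Then I add and subtract $\langle \dot r_0(T'),\tilde\gamma(1)\rangle_\mathcal{M}$ and recognise, thanks to the hypothesis that $\tilde\gamma$ is a minimizer of $u(T',\overline{\gamma}(T'))$ with $\tilde\gamma(T')=\overline{\gamma}(T')$, that
\[
\int_1^{T'}L(\tilde\gamma,\dot{\tilde\gamma})\,\ud t-\langle \dot r_0(T'),\tilde\gamma(1)\rangle_\mathcal{M}=u(T',\overline{\gamma}(T')).
\]
Substituting this identity back yields precisely the claimed inequality \eqref{eq:dpp}.

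There is no real obstacle in this argument: the only point requiring attention is the bookkeeping for the linear correction, which differs from the standard DPP by the fact that the endpoint-at-$t=1$ term involves $\dot r_0$ evaluated at different times ($T$ versus $T'$), and this mismatch is exactly what produces the final correction term $-\langle \dot r_0(T)-\dot r_0(T'),\tilde\gamma(1)\rangle_\mathcal{M}$. Note that \eqref{eq:dpp} is only an inequality (not an equality) because $\tilde\gamma$ need not be optimal for $u(T,x)$, and because the concatenated curve $\eta$ may fail to be $C^1$ at $t=T'$; this is why one does not recover equality unless $\overline{\gamma}$ is itself chosen to realize the minimum for $u(T,x)$.
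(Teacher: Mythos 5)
Your proof is correct and follows the same concatenation argument as the paper: define the competitor $\eta$ by gluing $\tilde\gamma$ and $\overline{\gamma}$ at $T'$, test it in the minimization defining $u(T,x)$, and rewrite the boundary term by adding and subtracting $\langle\dot r_0(T'),\tilde\gamma(1)\rangle_\mathcal{M}$ to isolate $u(T',\overline{\gamma}(T'))$. If anything, your final step — substituting the specific minimizer $\tilde\gamma$ directly rather than "taking the infimum over all $\tilde\gamma$" as the paper writes — is slightly cleaner bookkeeping, since the extra term $\langle\dot r_0(T)-\dot r_0(T'),\tilde\gamma(1)\rangle_\mathcal{M}$ itself depends on $\tilde\gamma$, making the infimum phrasing a bit loose.
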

\begin{proof}
    Fix $T'\in[1,T]$ and let $\Tilde{\gamma}\in H([1,T'])$ such that $\Tilde{\gamma}(T') = \overline{\gamma}(T')$. Setting
    \[
    \xi(t)=\begin{cases}
        \Tilde{\gamma}(t),\quad t\in[1,T']\\
        \overline{\gamma}(t),\quad t\in[T',T]
    \end{cases}
    \]
    we have $\xi\in H^1([1,T])$ and $\xi(T)=x$. Therefore,
    \[\begin{split}
    u(T,x) &\leq \int_{1}^{T}L(\xi(t),\dot{\xi}(t))\ \ud t - \langle\dot{r}_0(T),\xi(1)\rangle_\mathcal{M}\\
    &=\int_{1}^{T'}L(\Tilde{\gamma}(t),\Tilde{\gamma}(t))\ \ud t + \int_{T'}^{T}L(\overline{\gamma}(t),\overline{\gamma}(t))\ \ud t - \langle\dot{r}_0(T),\Tilde{\gamma}(1)\rangle_\mathcal{M}\\
    &=\int_{1}^{T'}L(\Tilde{\gamma}(t),\Tilde{\gamma}(t))\ \ud t + \int_{T'}^{T}L(\overline{\gamma}(t),\overline{\gamma}(t))\ \ud t - \langle\dot{r}_0(T'),\Tilde{\gamma}(1)\rangle_\mathcal{M} - \langle\dot{r}_0(T)-\dot{r}_0(T'),\Tilde{\gamma}(1)\rangle_\mathcal{M}.
    \end{split}\]
    Taking the infimum over all $\Tilde{\gamma}\in H^1([1,T'])$, we get \eqref{eq:dpp}.    
\end{proof}

We now exploit Lemma \ref{lem:DDP} to prove the next result.
    \begin{lem}
    \label{lem:Eq_u_T}
    For every $(T,x)\in(1,+\infty)\times\Omega$, we have the following:
    \begin{itemize}
        \item for every $(p_T,p_x) \in D^+ u(T,x)$, there exists a curve $\gamma$ realizing $u(T,x)$ in \eqref{eq:u(T,x)_vers2} such that, in the viscosity sense,
        \[
        p_T+H(x,\mathcal M p_x)+\langle \ddot{r}_0(T), \gamma(1)\rangle_\mathcal{M} \le 0;        \]
        \item for every $(p_T,p_x) \in D^- u(T,x)$ and for every $\gamma$ realizing $u(T,x)$ in \eqref{eq:u(T,x)_vers2} it holds, in the viscosity sense,
        \[
        p_T + H(x, \mathcal M p_x) + \langle \ddot{r}_0(T),\gamma(1)\rangle_\mathcal{M} \geq 0.
        \]
    \end{itemize}

    In particular, the function
        $u(T,x)$ defined in \eqref{eq:u(T,x)_vers1} satisfies, in the viscosity sense,
        \begin{equation}
            \label{eq:Eq_u_T}
           -\frac{\partial u}{\partial T}(T,x)=-H(x,\nabla u(T,x)) + \inf_{\gamma \in \mathcal{Y}}\langle \ddot r_0(T),\gamma(T)\rangle_\mathcal{M},
         \end{equation}
        where $\mathcal Y$ is the set of curves $\gamma \in H^1([1,T])$, $\gamma(1)=x$ that realize $u(T,x)$.
    \end{lem}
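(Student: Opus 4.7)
The plan is to follow the standard Hamilton--Jacobi--Bellman pattern: combine both directions of the Dynamic Programming Principle with the definitions of $D^{\pm}u$ and the Legendre--Fenchel duality between $L$ and $H$, then pass to the limit. Lemma~\ref{lem:DDP} already supplies the ``$\leq$'' DPP. I would first derive the opposite ``$\geq$'' DPP by direct splitting: if $\gamma$ realizes $u(T,x)$ in \eqref{eq:u(T,x)_vers2}, then $\gamma|_{[1,T']}$ is admissible for $u(T',\gamma(T'))$, and after bookkeeping the $T$-dependence of the terminal term $-\langle\dot r_0(\cdot),\gamma(1)\rangle_\mathcal{M}$ one obtains
\[
u(T,x) \ge u(T',\gamma(T')) + \int_{T'}^{T} L(\gamma,\dot\gamma)\,\ud t - \langle \dot r_0(T)-\dot r_0(T'),\gamma(1)\rangle_\mathcal{M}.
\]

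For the subsolution property, given $(p_T,p_x)\in D^+u(T,x)$ I would take any smooth curve $\bar\gamma:[1,T]\to\Omega$ with $\bar\gamma(T)=x$ and $\dot{\bar\gamma}(T)$ chosen so that $\langle p_x,\dot{\bar\gamma}(T)\rangle_\mathcal{M}-L(x,\dot{\bar\gamma}(T))=H(x,p_x)$, i.e.\ the optimum in the Legendre duality. Applying Lemma~\ref{lem:DDP} with $T'=T-h$ and combining with the $D^+$ inequality gives
\[
-\int_{T-h}^{T} L(\bar\gamma,\dot{\bar\gamma})\,\ud t + \langle\dot r_0(T)-\dot r_0(T-h),\tilde\gamma_h(1)\rangle_\mathcal{M} \le -p_T h + \langle p_x, \bar\gamma(T-h)-x\rangle_\mathcal{M} + o(h).
\]
Dividing by $h$ and letting $h\to 0^+$, and using the fact (explained below) that the minimizers $\tilde\gamma_h$ of $u(T-h,\bar\gamma(T-h))$ converge, up to a subsequence, to a minimizer $\gamma^*$ of $u(T,x)$, yields the desired inequality
\[
p_T + H(x,p_x) + \langle\ddot r_0(T),\gamma^*(1)\rangle_\mathcal{M}\le 0.
\]

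For the supersolution property, given $(p_T,p_x)\in D^-u(T,x)$ and any minimizer $\gamma$ of $u(T,x)$, I would apply the ``$\geq$'' DPP established above at $T'=T-h$ and pair it with the $D^-$ inequality. Dividing by $h$ and sending $h\to 0^+$ gives
\[
L(x,\dot\gamma(T)) - \langle p_x,\dot\gamma(T)\rangle_\mathcal{M} \le p_T + \langle \ddot r_0(T),\gamma(1)\rangle_\mathcal{M},
\]
and since $L(x,v)-\langle p_x,v\rangle_\mathcal{M}\ge -H(x,p_x)$ by Fenchel--Young, we obtain $p_T+H(x,p_x)+\langle\ddot r_0(T),\gamma(1)\rangle_\mathcal{M}\ge 0$. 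Finally, the equation in the viscosity sense is a direct consequence of the two one-sided statements: the ``there exists'' nature of $\gamma^*$ in the subsolution delivers the bound with $\sup_{\gamma^*}[-\langle\ddot r_0,\gamma^*(1)\rangle_\mathcal{M}]=-\inf_{\gamma}\langle\ddot r_0,\gamma(1)\rangle_\mathcal{M}$, and the ``for all $\gamma$'' in the supersolution delivers the matching bound.

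The main obstacle will be the convergence $\tilde\gamma_h(1)\to\gamma^*(1)$. The uniform coercivity of Section~\ref{sec:coercivity_estimates} (adapted to the action with terminal cost $-\langle\dot r_0(T-h),\cdot\rangle_\mathcal{M}$) guarantees an a priori bound on $\|\tilde\gamma_h\|_{H^1([1,T-h])}$ uniform in $h$; hence, up to extraction, $\tilde\gamma_h$ converges weakly in $H^1$ and uniformly on compacts to some $\gamma^*$ with $\gamma^*(T)=x$. Lower semicontinuity of the action and continuity of $u$ (proved as in Lemma~\ref{lem:continuity_w}) force $\gamma^*$ to be a minimizer of $u(T,x)$, and in particular $\tilde\gamma_h(1)\to\gamma^*(1)$. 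A minor, but genuine, delicate point is to choose $\bar\gamma$ in the subsolution step so that $\dot{\bar\gamma}(T)$ is exactly the velocity that saturates Fenchel--Young at $p_x$; any smooth extension from $x$ backward with prescribed velocity at $T$ works, and its interplay with the collision-avoidance of $\tilde\gamma_h$ is controlled by Lemma~\ref{lemma_collisionless_minimal}.
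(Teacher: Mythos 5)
Your proposal reconstructs the paper's argument essentially verbatim: it uses Lemma~\ref{lem:DDP} for the subsolution inequality, the direct restriction of a minimizer (which is the ``$\geq$'' DPP) for the supersolution inequality, Fenchel--Young/Legendre duality to pass from arbitrary test velocities to $H(x,p_x)$, and the convergence $\gamma_h(1)\to\gamma(1)$ along a subsequence. The only cosmetic difference is that the paper uses straight-line test curves $\zeta(t)=x+(t-T)z$ and optimizes over $z$ at the end, whereas you propose choosing the saturating velocity at the outset; the substance is the same.
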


    Before starting the rigorous proof of Lemma \ref{lem:Eq_u_T} by proving that $u(T,x)$ is a viscosity solution of \eqref{eq:Eq_u_T}, we give the heuristic idea.

    By taking a point of differentiability $x$ and formally differentiating the value function with respect to $x$, we obtain
\[
\nabla u(T,x) = -\mathcal{M}\dot{\gamma}(1).
\]
Indeed, denoting $\psi(t):=\frac{\partial \gamma}{\partial x_i}(t)$ and with $e_i$ the $i$-th element of the canonical basis, we have
\[
\begin{split}
\frac{\partial u}{\partial x_i}(T,x) &= \int_{1}^{T} \frac{\partial L}{\partial \gamma}(\gamma(t),\dot{\gamma}(t))\psi(t) + \frac{\partial L}{\partial \dot{\gamma}}(\gamma(t),\dot{\gamma}(t))\dot{\psi}(t)\ \ud t - \langle \dot{r}_0(T),\psi(T)\rangle_\mathcal{M}\\
&=\frac{\partial L}{\partial \dot{\gamma}}(\gamma(t),\dot{\gamma}(t))\psi(t)\bigg|^T_1 - \int_{1}^{T} \bigg( \frac{\ud}{\ud t}\bigg(\frac{\partial L}{\partial\dot{\gamma}}(\gamma(t),\dot{\gamma}(t))\bigg) - \frac{\partial L}{\partial \gamma}(\gamma(t),\dot{\gamma}(t)) \bigg)\psi(t)\ \ud t - \langle\dot{r}_0(T),\psi(T)\rangle_\mathcal{M}\\
&=\langle\dot{r}_0(T),\psi(T)\rangle_\mathcal{M}-\langle\dot{\gamma}(1),\psi(1)\rangle_\mathcal{M}-\langle\dot{r}_0(T),\psi(T)\rangle_\mathcal{M}\\
&=-m_i\dot{\gamma}(1)\cdot e_i,
\end{split}
\]
which follows from integration by parts. Besides, using integration by parts and the conservation of the energy, we denote $\phi(t):=\frac{\partial \gamma}{\partial T}(t)$ and compute
\[
\begin{split}
\frac{\partial u}{\partial T}(T,x) &= L(\gamma(T),\dot{\gamma}(T)) + \int_{1}^{T} \frac{\partial L}{\partial \gamma}(\gamma(t),\dot{\gamma}(t))\phi(t) + \frac{\partial L}{\partial \dot{\gamma}}(\gamma(t),\dot{\gamma}(t))\dot{\phi}(t)\ \ud t\\
&-\langle \ddot{r}_0(T),\gamma(T)\rangle_\mathcal{M} - \langle \dot{r}_0(T),\dot{\gamma}(T)\rangle_\mathcal{M} - \langle \dot r_0(T), \phi(T)\rangle_\mathcal{M}\\
&= L(\gamma(T),\dot{\gamma}(T)) + \frac{\partial L}{\partial \dot{\gamma}}(\gamma(T),\dot{\gamma}(T))\phi(T) - \frac{\partial L}{\partial \dot{\gamma}}(\gamma(1),\dot{\gamma}(1))\phi(1) \\
&- \int_{1}^{T} \bigg( \frac{\ud}{\ud t}\bigg(\frac{\partial L}{\partial\dot{\gamma}}(\gamma(t),\dot{\gamma}(t))\bigg) - \frac{\partial L}{\partial \gamma}(\gamma(t),\dot{\gamma}(t)) \bigg)\phi(t)\ \ud t \\
&-\langle \ddot{r}_0(T),\gamma(T)\rangle_\mathcal{M} - \langle \dot{r}_0(T),\dot{\gamma}(T)\rangle_\mathcal{M} - \langle \dot r_0(T), \phi(T)\rangle_\mathcal{M},
\end{split}
\]
and therefore
\[
\begin{split} 
\frac{\partial u}{\partial T}(T,x)  &=-\frac{1}{2}\|\dot{\gamma}(T)\|_\mathcal{M}^2 + U(\gamma(T)) - \langle \ddot{r}_0(T),\gamma(T)\rangle_\mathcal{M}\\
&=-\frac{1}{2}\|\dot{\gamma}(1)\|_\mathcal{M}^2 + U(\gamma(1)) - \langle \ddot{r}_0(T),\gamma(T)\rangle_\mathcal{M}\\
&=-H(x,\nabla u(T,x)) - \langle \ddot{r}_0(T),\gamma(T)\rangle_\mathcal{M}.
\end{split}
\]

Now, we prove the same result in a rigorous way.

    \begin{proof}[Proof of Lemma \ref{lem:Eq_u_T}]
To simplify the computations, we consider the formulation of $u(T,x)$ given by the last line in \eqref{eq:u(T,x)_vers2}, so that the end-point of $\gamma$ realizing $u(T,x)$ is $x$.

\smallskip
Take $T>1$, $x\in\Omega$ and $(p_T,p_x)\in D^+ u(T,x)$. Then, for every $z\in\mathcal{X}$, 
\[
\limsup_{h\rightarrow0^+}\frac{u(T-h,x-hz)-u(T,x) + h(p_T + \langle z,p_x\rangle_\mathcal{M})}{h\sqrt{1+\|z\|_\mathcal{M}^2}}\leq 0,
\]
which is equivalent to
\[
\limsup_{h\rightarrow0^+}\frac{u(T-h,x-hz)-u(T,x)}{h}\leq -p_T - \langle z,p_x\rangle_\mathcal{M}.
\]
Setting $\zeta(t) = x + (t-T)z$, with $z\in\mathcal{X}$, we can apply \eqref{eq:dpp} to get
\[
\begin{split}
    u(T,x) &\leq u(T-h,\zeta(T-h)) + \int_{T-h}^{T}L(\zeta(t),\dot{\zeta}(t))\ \ud t - \langle \dot{r}_0(T)-\dot{r}_0(T-h),\gamma_h(1)\rangle_\mathcal{M}\\
    &=u(T-h,x-hz) + \int_{T-h}^{T}L(x+(t-T)z,z)\ \ud t - \langle \dot{r}_0(T)-\dot{r}_0(T-h),\gamma_h(1)\rangle_\mathcal{M},
\end{split}
\]
where $\gamma_h$ realizes the minimum in $u(T-h,\zeta(T-h))$. This implies
\[
\begin{split}
    \limsup_{h\rightarrow 0^+} \frac{u(T-h,x-hz)-u(T,x)}{h} &\geq \lim_{h\rightarrow0^+} -\frac{1}{h} \int_{T-h}^{T}L(x+(t-T)z,z)\ \ud t + \frac{1}{h} \langle \dot{r}_0(T)-\dot{r}_0(T-h),
{\gamma_h}(1)\rangle_\mathcal{M}\\
    &= -L(x,z) + \langle \ddot{r}_0(T),\gamma(1)\rangle_\mathcal{M},
\end{split}
\]
where we used the fact that 
\begin{equation}\label{eq:limit_gamma_h}
    \gamma_h(1)\rightarrow\gamma(1)\text{ as }h\rightarrow0^+,
\end{equation}
with $\gamma$ a minimizer realizing $u(T,x)$. Notice that not every $\gamma$ minimizer is in general obtained as a limit of $\gamma_h$.

\smallskip
We can thus conclude that 
\[
p_T + \langle p_x,z \rangle_\mathcal{M} - L(x,z) + \langle \ddot{r}_0(T),\gamma(1)\rangle_\mathcal{M} \leq 0, \ \mbox{ for all } \ z \in \mathcal X,
\]
which, taking $z=p_x$, yields 
\[
p_T + H(x,\mathcal M p_x) + \langle \ddot{r}_0(T),\gamma(1)\rangle_\mathcal{M} \leq 0,
\]
for every $\gamma \in \mathcal Y$ such that \eqref{eq:limit_gamma_h} holds.

Hence, we have 
\[
p_T + H(x,\mathcal M p_x) + \inf_{\gamma\in\mathcal Y}\langle \ddot{r}_0(T),\gamma(1)\rangle_\mathcal{M} \leq 0.
\]
Now, since $p_x$ is related to $\nabla_{\mathcal M} u = \mathcal M^{-1} \nabla u$, then $u(t,x)$ is a viscosity sub-solution of
\[
\frac{\partial u}{\partial T} +H(x,{\nabla u(T,x)})+\inf_{\gamma\in \mathcal{Y}} \langle \ddot{r}_0(T), \gamma(1)\rangle_{\mathcal M} =0.
\]

To prove that $u$ is a viscosity supersolution, suppose that $\gamma$ is a minimizer for the value function $u(T,x)$ with $x=\gamma(T)$ and $w=\dot{\gamma}(T)$. Then,
\[
\begin{split}
u(T-h,{\gamma}(T-h)) - u(T,x) 
&\leq \int_{1}^{T-h} L(\gamma(t),\dot{\gamma}(t))\ \ud t - \langle \dot{r}_0(T-h), \gamma(1) \rangle_\mathcal{M} - \int_{1}^{T} L(\gamma(t),\dot{\gamma}(t))\ \ud t + \langle \dot{r}_0(T), \gamma(1) \rangle_\mathcal{M}\\
&= -\int_{T-h}^{T} L(\gamma(t),\dot{\gamma}(t))\ \ud t + \langle \dot{r}_0(T) - \dot{r}_0(T-h), \gamma(1) \rangle_\mathcal{M}.
\end{split}
\]
This implies
\[
\begin{split}
    \lim_{h\rightarrow 0^+} \frac{u(T-h,{\gamma}(T-h))-u(T,x)}{h} &\leq \lim_{h\rightarrow0^+} -\frac{1}{h} \int_{T-h}^{T}L(\gamma(t),\dot{\gamma}(t))\ \ud t + \frac{1}{h} \langle \dot{r}_0(T)-\dot{r}_0(T-h),\gamma(1)\rangle_\mathcal{M}\\
    &= -L(x,w) + \langle \ddot{r}_0(T),\gamma(1)\rangle_\mathcal{M}.
\end{split}
\]
Given $(p_T,p_x)\in D^-u(T,x)$, from the fact that $u(T,x)$ is Lipschtiz continuous (see Section \ref{sez_semiconcavity}), it holds
\[
\liminf_{h\rightarrow 0^+} \frac{u(T-h,{\gamma}(T-h))-u(T,x)}{h}\geq -p_T - \langle p_x,w \rangle_\mathcal{M}.
\]
Then
\[
p_T + \langle p_x,w \rangle_\mathcal{M} - L(x,w) + \langle \ddot{r}_0(T),\gamma(1)\rangle_\mathcal{M} \geq 0,
\]
which leads to
\[
p_T + H(x,\mathcal M p_x) + \langle \ddot{r}_0(T),\gamma(1)\rangle_\mathcal{M} \geq 0.
\]
for every $\gamma \in \mathcal Y$

As a consequence, $u(T,x)$ is a viscosity supersolution of 
\[
\frac{\partial u}{\partial T} + H(x,\nabla u(T,x)) + \inf_{\gamma \in \mathcal Y}\langle\ddot{r}_0(T), \gamma(1)\rangle_\mathcal{M} =0.
\]

We have thus proved that the value function $u$ solves, for a fixed $T>1$, the equation
\[
-\frac{\partial u}{\partial T}(T,x) = \frac{1}{2}\|\nabla u(T,x)\|^2_{\mathcal{M}^{-1}} - U(x) + \inf_{\gamma \in \mathcal Y}\langle\ddot{r}_0(T),\gamma(T)\rangle_\mathcal{M}
\]
in the viscosity sense.
\end{proof}

We finally make use of the following relation between $v(T,x)$ and $u(T,x)$.

\begin{lem}For every $T>1$ and $x \in \Omega$, it holds
    \[v(T,x)=u(T,x)-\int_1^T \frac{1}{2}\|\dot r_0(t)\|^2_\mathcal{M}+U(r_0(t))\, \ud t +\langle \dot r_0(T), r_0(T)-r_0(1)\rangle_\mathcal{M}.
    \]
    \end{lem}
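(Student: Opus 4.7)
The plan is to exploit the affine change of variables $\eta(t) = r_0(t) + \varphi(t) + x - r_0(1)$, which sets up a bijection between admissible functions $\varphi \in \mathcal{D}_0^{1,2}(1,T)$ (with $\varphi(1)=0$) and admissible arcs $\eta \in H^1([1,T])$ with $\eta(1)=x$. One checks $\dot\eta=\dot r_0+\dot\varphi$ and $\eta(T)-\varphi(T)=r_0(T)-r_0(1)+x$, which is the only boundary bookkeeping needed.

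The core computation is to expand the Lagrangian under this substitution: $L(\eta,\dot\eta)=\tfrac12\|\dot r_0\|_\mathcal{M}^2+\langle\dot r_0,\dot\varphi\rangle_\mathcal{M}+\tfrac12\|\dot\varphi\|_\mathcal{M}^2+U(\eta)$. Integrating on $[1,T]$ and adding/subtracting $U(r_0(t))$ and $\langle\mathcal{M}\ddot r_0(t),\varphi(t)\rangle$ to assemble the integrand of $\mathcal{A}_{x,[1,T]}(\varphi)$, we obtain
\[
\int_1^T L(\eta,\dot\eta)\,\mathrm{d}t = \mathcal{A}_{x,[1,T]}(\varphi)+\int_1^T\!\!\Big(\tfrac12\|\dot r_0\|_\mathcal{M}^2+U(r_0)\Big)\mathrm{d}t+\int_1^T\!\!\langle\dot r_0,\dot\varphi\rangle_\mathcal{M}\,\mathrm{d}t+\int_1^T\!\!\langle\mathcal{M}\ddot r_0,\varphi\rangle\,\mathrm{d}t.
\]
The crucial step is an integration by parts on the cross term: since $\varphi(1)=0$, we have $\int_1^T \langle\dot r_0,\dot\varphi\rangle_\mathcal{M}\,\mathrm{d}t = \langle\dot r_0(T),\varphi(T)\rangle_\mathcal{M} - \int_1^T\langle\mathcal{M}\ddot r_0,\varphi\rangle\,\mathrm{d}t$, so the two $\ddot r_0$-integrals cancel and only the boundary term $\langle \dot r_0(T),\varphi(T)\rangle_\mathcal{M}$ survives.

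Substituting back, subtracting $\langle\dot r_0(T),\eta(T)\rangle_\mathcal{M}$, and using $\varphi(T)-\eta(T)=-(r_0(T)-r_0(1))-x$, we arrive at the pointwise identity
\[
\int_1^T L(\eta,\dot\eta)\,\mathrm{d}t - \langle\dot r_0(T),\eta(T)\rangle_\mathcal{M} = \mathcal{A}_{x,[1,T]}(\varphi) - \langle\dot r_0(T),x\rangle_\mathcal{M} + \int_1^T\!\!\Big(\tfrac12\|\dot r_0\|_\mathcal{M}^2+U(r_0)\Big)\mathrm{d}t - \langle\dot r_0(T),r_0(T)-r_0(1)\rangle_\mathcal{M},
\]
valid for every matched pair $(\varphi,\eta)$. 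Since the additive terms on the right are independent of $\varphi$, minimizing over $\varphi\in\mathcal{D}_0^{1,2}(1,T)$ on the right (equivalently, over $\eta\in H^1([1,T])$ with $\eta(1)=x$ on the left) yields the claimed identity between $u(T,x)$ and $v(T,x)$. There is no real obstacle beyond careful bookkeeping; the only subtlety is the integration by parts, which requires only the boundary condition $\varphi(1)=0$ and the smoothness of $r_0$ on $[1,T]$.
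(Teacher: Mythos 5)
Your proposal is correct and follows essentially the same route as the paper: the affine change of variables $\eta = r_0 + \varphi + x - r_0(1)$, expansion of the Lagrangian, and integration by parts on the cross term using $\varphi(1)=0$. The only (minor, and in fact advantageous) difference is organizational: you establish the pointwise identity for every matched pair $(\varphi,\eta)$ and then minimize, whereas the paper works directly with the minimizer $\bar\gamma$ of $u(T,x)$ and separately justifies that the corresponding $\bar\varphi$ realizes $w(T,x)$.
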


    \begin{proof}
        We recall that $v(T,x)=w(T,x)-\langle \dot r_0(T),x\rangle_\mathcal{M}$, as defined in \eqref{eq:v(T,x)}, with $w(T,x)$ defined in \eqref{eq:min_action}.

        The statement is then true if we prove that
        \[
u(T,x)=w(T,x)+\int_1^T \frac{1}{2}\|\dot r_0(t)\|^2_\mathcal{M}+U(r_0(t))\, \ud t -\langle \dot r_0(T), r_0(T)+x-r_0(1)\rangle_\mathcal{M}.
        \]
        This follows from the next chain of identities:
        \[
\begin{split}
u(T,x) &= \int_{1}^{T} L(\bar\gamma(t),\dot{\bar\gamma}(t))\ \ud t - \langle\dot{r}_0(T),\bar\gamma(T)\rangle_\mathcal{M}\\
& = \int_{1}^{T} \frac{1}{2}\|\dot{\bar\varphi}(t)\|_\mathcal{M}^2 + \frac{1}{2}\|\dot r_0(t)\|_\mathcal{M}^2 + \langle \dot{\bar\varphi}(t),\dot r_0(t)\rangle_\mathcal{M}
 +U(r_0(t) + \bar\varphi(t)+x-r_0(1))\ \ud t\\
 &- \langle \dot r_0(T), r_0(T)\rangle_{\mathcal{M}}-\langle\dot{r}_0(T),\bar\varphi(T)\rangle_\mathcal{M}- \langle\dot{r}_0(T),x-r_0(1)\rangle_\mathcal{M}\\
 & = \int_{1}^{T} \frac{1}{2}\|\dot{\bar\varphi}(t)\|_\mathcal{M}^2+U(r_0(t) + \bar\varphi(t)+x-r_0(1)) - \langle \ddot r_0(t),\bar\varphi(t)\rangle_\mathcal{M} + \frac{1}{2}\|\dot r_0(t)\|_\mathcal{M}^2
 \ \ud t
 \\
 & - \langle \dot r_0(T), r_0(T)\rangle_{\mathcal{M}} -\langle\dot{r}_0(T),x-r_0(1)\rangle_\mathcal{M},
 \end{split}
 \]
which gives
\[
 \begin{split}
  u(T,x) &= \int_{1}^{T} \frac{1}{2}\|\dot{\bar\varphi}(t)\|_\mathcal{M}^2+U(r_0(t) + \bar\varphi(t)+x-r_0(1)) -U(r_0(t))- \langle \ddot r_0(t),\bar\varphi(t)\rangle_\mathcal{M}\ \ud t + \int_{1}^{T} \frac{1}{2}\|\dot r_0(t)\|_\mathcal{M}^2 \\
 &+ U(r_0(t))\ \ud t - \langle \dot r_0(T), r_0(T)\rangle_{\mathcal{M}} -\langle\dot{r}_0(T),x-r_0(1)\rangle_\mathcal{M},
 \end{split}
\]
and therefore
\[
u(T,x)= w(T,x) + \int_{1}^{T} \frac{1}{2}\|\dot r_0(t)\|_\mathcal{M}^2 + U(r_0(t))\ \ud t - \langle \dot r_0(T), r_0(T)\rangle_{\mathcal{M}} -\langle\dot{r}_0(T),x-r_0(1)\rangle_\mathcal{M}.
\]
Above, we have taken advantage of the fact that if $\bar\gamma$ realizes $u(T,x)$, then $\mathcal{D}^T \ni \bar\varphi = \bar\gamma-r_0(t)-x+r_0(1)$ realizes $w(T,x)$. This follows from the fact that $\bar\varphi$ satisfies
\[
\begin{cases}
    \ddot\varphi = \nabla U(r_0+\varphi+x-r_0(1)) \ \mbox{ a.e. in } \ (1,T)
    \\
    \varphi(1)=0
    \\
    \dot\varphi(T)=0,
\end{cases}
\]
that is, $\varphi$ is a critical point of $\mathcal{A}_{x}^T$ on $\mathcal{D}^T$.
    \end{proof}

\begin{proof}[Proof of Proposition \ref{prop:HJ_v_T}]

    This follows from \eqref{eq:Eq_u_T}, after one observes that $\nabla v(T,x)=\nabla u(T,x)$ in the viscosity sense.

We claim that
\begin{equation}
    \label{eq:partial_T_u_limit}
\lim_{T\rightarrow+\infty} -\frac{1}{2}\|\dot{\gamma}(T)\|_\mathcal{M}^2 + U(\gamma(T)) - \langle \ddot{r}_0(T),\gamma(T)\rangle_\mathcal{M} = -\frac{\|a\|_\mathcal{M}^2}{2}.
\end{equation}
If \eqref{eq:partial_T_u_limit} is proved, then, since
\[
\frac{\partial u}{\partial T}(T,x)=-\frac{1}{2}\|\dot{\gamma}(T)\|_\mathcal{M}^2 + U(\gamma(T)) - \langle \ddot{r}_0(T),\gamma(T)\rangle_\mathcal{M}
\]
in the viscosity sense, we conclude that
\[
\lim_{T\to +\infty}\frac{\partial u}{\partial T}(T,x)=-\frac{\|a\|_\mathcal{M}^2}{2},
\]
in the viscosity sense.

\smallskip
We prove our claim \eqref{eq:partial_T_u_limit}:
\begin{itemize}
    \item $\lim_{T\rightarrow+\infty}\|\dot\gamma(T)\|_\mathcal{M}^2=\lim_{T\rightarrow+\infty}\|\dot r_0(T)\|_\mathcal{M}^2=\|a\|_\mathcal{M}^2$.
    \item $\lim_{T\rightarrow+\infty}U(\gamma(T))=0$. This follows from the fact that, for $T$ sufficiently large and for $x$ belonging to a compact set, due to the uniform coercivity estimates (cfr. Section \ref{sec:coercivity_estimates}), there is a positive constant $c\in\R$ which does not depend on $T$ and $x$ such that
    \[
    \begin{split}
    U(\gamma(T)) &= \sum_{i<j}\frac{m_i m_j}{|r_{0,ij}(T) + \varphi_{ij}(T)+x_{ij}-r_{0,ij}(1)|}\\
    &\leq \sum_{i<j}\frac{m_i m_j}{|r_{0,ij}(T)| - |\varphi_{ij}(T)|-|x_{ij}-r_{0,ij}(1)|}\\
    &\leq \sum_{i<j}\frac{m_i m_j}{|r_{0,ij}(T)| - c T^{1/2}-|x_{ij}-r_{0,ij}(1)|},
    \end{split}
    \]
    and the last term goes to zero when $T\rightarrow+\infty$.
    \item $\lim_{T\rightarrow+\infty} \langle \ddot{r}_0(T),\gamma(T)\rangle_\mathcal{M}=0$. To prove this, we can study the three cases separately. In the hyperbolic case, $\ddot r_0(T)=0$ for all $T\geq1$. In the parabolic case,
    \[
    \lim_{T\rightarrow+\infty}\langle \ddot{r}_0(T),\gamma(T)\rangle_\mathcal{M} =  \lim_{T\rightarrow+\infty}-\frac{2}{9}\langle \beta b_m T^{-4/3}, \beta b_m T^{2/3}+ \varphi(T) + x - \beta b_m\rangle_\mathcal{M}=\lim_{T\rightarrow+\infty}-\frac{2}{9}\frac{1}{T^2}=0,
    \]
    and in the hyperbolic-parabolic case,
    \[
    \lim_{T\rightarrow+\infty}\langle \ddot{r}_0(T),\gamma(T)\rangle_\mathcal{M} = \lim_{T\rightarrow+\infty}-\frac{2}{9}\langle \beta b_m T^{-4/3},aT + \beta b_m T^{2/3} + \varphi(T) + x -a - \beta b_m\rangle_\mathcal{M}=0,
    \]
    since $\lim_{T\rightarrow+\infty}\langle\beta b_m T^{-4/3},\varphi(T)\rangle_\mathcal{M}=0$ by the fact that, for $T$ sufficiently large, there is a positive constant $c'\in\R$ which does not depend on $T$ and $x$ such that $\|\varphi(T)\|_\mathcal{M}\leq c'T^{1/2}$.
\end{itemize}
\end{proof}

\subsubsection{Alternative direct proof of Theorem \ref{thm:HJ}}

    Theorem \ref{thm:HJ} can also be proved with more direct computations, simply by showing that Definition \ref{def:viscosity} is satisfied by the value function. For the sake of completeness, we also show this alternative proof.

    Given $\varphi^x\in\mathcal{D}$ realizing $v(x)$, set $\gamma=r_0+\varphi^x+x-r_0(1)$. Since the conservation of the energy implies
    \[
    H(\gamma(1),\dot \gamma(1))= \frac{\|a\|_\mathcal{M}^2}{2},
    \]
    to prove Theorem \ref{thm:HJ} it is enough to show that $\|\nabla v(x)\|_{\mathcal{M}^{-1}}=\|\dot\gamma(1)\|_\mathcal{M}$, in the viscosity sense, where $\gamma(t)$ is an arbitrary expansive motion starting at $x$.

\smallskip
Let $x \in \Omega$ and $z \in \mathcal{X}$. For $h>0$, let $x_h = x+hz$, and $\varphi^h \in \mathcal{D}$ be a minimizer of $\mathcal{A}_{x_h}$. It holds, $v(x_h)=\mathcal{A}_{x_h}(\varphi^h)-\langle a, x_h\rangle_\mathcal{M}$. Since $\varphi^h \in \mathcal{D}$, we have
\[
v(x_h)-v(x) \ge \mathcal{A}_{x_h}(\varphi^h)-\mathcal{A}_x(\varphi^h) - \langle a, x_h-x\rangle_\mathcal{M}.
\]
Hence, since $r_0(t)+\varphi^h(t)+x-r_0(1)+s h z $ is free from collisions, due to the minimality of $\varphi^h$, we have
\[
\begin{split} 
v(x_h)-v(x) &\ge \int_{1}^{+\infty} U(r_0(t)+\varphi^h(t)+x_h -r_0(1))-U(r_0(t)+\varphi^h(t)+x-r_0(1)) \, \ud t -h\langle a, z\rangle_\mathcal{M} 
\\
&=\int_{1}^{+\infty} h \int_0^1 \langle \nabla U(r_0(t)+\varphi^h(t)+x-r_0(1)+shz),z\rangle_\mathcal{M} \, \ud s\, \ud t - h\langle a, z\rangle_\mathcal{M},
\end{split}
\]
that is
\[
\frac{v(x_h)-v(x)}{h} \ge \int_{1}^{+\infty} \int_0^1 \langle \nabla U(r_0(t)+\varphi^h(t)+x-r_0(1)+shz),z\rangle \, \ud s\, \ud t - \langle a, z\rangle_\mathcal{M}.
\]

As we observed in the proof of the continuity of $v$ (see Proposition \ref{prop:conv_v_T}), there exists $\bar\varphi$ minimizer of $\mathcal{A}_x$ such that $\varphi^h \to \bar \varphi$ pointwise on $[1,+\infty)$, uniformly on $[1,T]$, for every $T>1$, and weakly on $\mathcal{D}$. In particular, we have $\varphi^h +shz \to \bar \varphi$ pointwise on $[1,+\infty)$ and uniformly on every $[1,T]$. It then suffices to observe that, for every $\varepsilon>0$,  there exists $T_\varepsilon>1$ such that
\[
\left|\langle \nabla U(r_0(t)+\varphi^h(t)+x-r_0(1)+shz),z\rangle\right| \le \frac{\|z\|_\mathcal{M}}{(1-\varepsilon)\|r_0(t)\|_\mathcal{M}^2}, \ \mbox{ for } \ t \ge T_\varepsilon,
\]
to apply dominated convergence Theorem and conclude that, as $h\to0^+$, 
\begin{equation} 
\label{eq:diff_v_1}
\begin{split} 
\frac{v(x_h)-v(x)}{h} &\ge \int_{1}^{+\infty} \int_0^1 \langle \nabla U(r_0(t)+\bar\varphi(t)+x-r_0(1)), z\rangle\,\ud s\,\ud t - \langle a,z \rangle_\mathcal{M} +o(1)
\\
&
=\int_{1}^{+\infty} \langle \nabla U(r_0(t)+\bar\varphi(t)+x-r_0(1)), z\rangle \,\ud t - \langle a,z \rangle_\mathcal{M} +o(1)
\\
&=\int_{1}^{+\infty} \langle \ddot \gamma(t), z\rangle_\mathcal{M}\,\ud t +\langle a,z\rangle_\mathcal{M} +o(1)
\\
&=-\langle \dot\gamma(1),z\rangle_\mathcal{M} +o(1),
\end{split}
\end{equation}
where $\gamma(t)=r_0(t)+\bar\varphi(t)+x-r_0(1)$. Therefore, if $p \in D^+v(x)$, then the following chain holds:
\[
\langle p,z\rangle_\mathcal{M}  \ge \limsup_{h \to 0^+} \frac{v(x_h)-v(x)}{h} \ge -\langle \dot \gamma(1), z\rangle_\mathcal{M}, \ \ \mbox{ for every } \ z \in \mathcal{X}.
\]
By choosing $z= -p$, we then obtain
\begin{equation}
    \label{eq:one}
-\|p\|_\mathcal{M}^2 \ge \langle \dot \gamma(1), p\rangle_\mathcal{M},
\end{equation}

Instead, by choosing $z= \dot \gamma(1)$, we have
\begin{equation} 
\label{eq:two}
\langle \dot \gamma(1),p\rangle_\mathcal{M} \ge -\|\dot \gamma(1)\|_\mathcal{M}.
\end{equation}
Putting together \eqref{eq:one} and \eqref{eq:two}, we conclude that
\begin{equation} 
\label{eq:three}
\|p\|_\mathcal{M}^2 \le \|\dot\gamma(1)\|_\mathcal{M}^2=\|a\|_\mathcal{M}^2+2U(\gamma(1))
\end{equation}
by the conservation of the energy and Theorem \ref{thm_partially_hyperbolic}.

\smallskip
On the other hand, if $\varphi^x$ is a minimizer of $\mathcal{A}_x$, so that $v(x)=\mathcal{A}_x(\varphi^x)-\langle a,x \rangle_\mathcal{M}$, we have
\[
v(x_h)-v(x) \le \mathcal{A}_{x_h}(\varphi^x)-\mathcal{A}_x(\varphi^x) - h\langle a, z\rangle_\mathcal{M}.
\]
With similar arguments as above, we then get
\[
\begin{split} 
\frac{v(x_h)-v(x)}{h} & \le \int_{1}^{+\infty}  U(r_0(t)+\varphi^x(t)+x_h -r_0(1))-U(r_0(t)+\varphi^x(t)+x-r_0(1))\, \ud t -\langle a,z\rangle_\mathcal{M}
\\
&=\int_{1}^{+\infty} \int_0^1 \langle \nabla U(r_0(t)+ \varphi^x(t) + x+shz-r_0(1)),z\rangle\, \ud t -\langle a, z\rangle_\mathcal{M} 
\\
& \le \int_{1}^{+\infty} \langle\nabla U(r_0(t)+ \varphi^x(t)+x-r_0(1)),z\rangle\, \ud t - \langle a, z\rangle_\mathcal{M} +o(1), \ \mbox{ as } \ h \to 0^+
\\
&= \int_{1}^{+\infty} \langle \ddot \gamma^x(t), z\rangle \, \ud t -\langle a,z\rangle_\mathcal{M} +o(1),
\end{split}
\]
that is,
\begin{equation}
    \label{eq:diff_v_2}
    \frac{v(x_h)-v(x)}{h} \le -\langle \dot \gamma^x(1), z\rangle_\mathcal{M} + o(1),\ \mbox{ as }h\to0^+.
\end{equation}

Hence, if $p \in D^-v(x)$, then
\begin{equation}
    \label{eq:four}
\langle p, z\rangle_\mathcal{M} \le \liminf_{h \to 0^+} \frac{v(x_h)-v(x)}{h} \le  -\langle \dot \gamma^x(1), z\rangle_\mathcal{M}, \ \mbox{ for every } \ z \in \mathcal{X}.
\end{equation}

As in the arguments to obtain \eqref{eq:three}, by putting together the inequalities from choosing $z=-p$ and $z=\dot \gamma^x(1)$ in \eqref{eq:four}, we get
\[
\|p\|_\mathcal{M}^2\ge \|\dot\gamma^x(1)\|_\mathcal{M}^2= \|a\|_\mathcal{M}^2+2U(\gamma^x(1)),
\]
by the conservation of the energy and Theorem \ref{thm_partially_hyperbolic}.

\smallskip
To conclude, we proved that, in the viscosity sense,
\[
\|\nabla_{\mathcal M} v(x)\|_\mathcal{M}^2 = \|a\|_\mathcal{M}^2+2U(x),
\]
that is
\begin{equation}
\label{eq:HJ-v}
H(x,\nabla v(x)) = \frac{\|a\|_\mathcal{M}^2}{2},
\end{equation}
thanks to $\|\nabla v(x)\|_{\mathcal M^{-1}}^2=
\|\nabla_{\mathcal M} v(x)\|_\mathcal{M}^2$.

\begin{rem}
\label{rem:regular_points}
    {
\rm
\emph{En passant}, in the proof of \eqref{eq:HJ-v}, we also proved that if for $x \in \Omega$ there is a unique $\varphi^x$ minimizing $\mathcal{A}_x$ on $\mathcal D$, then $v$ is differentiable at $x$, and 
\[
\nabla v(x)=-\mathcal{M}\dot \gamma^x(1),
\]
where $\gamma^x(t)=r_0(t)+\varphi^x(t)+x-r_0(1)$. 

Indeed, \eqref{eq:diff_v_1} and \eqref{eq:diff_v_2} together, in this case, imply
\[
-\langle \dot \gamma^x(1), z\rangle_\mathcal{M} \le \lim_{h\to0^+}\frac{v(x+hz)-v(x)}{h} \le -\langle \dot \gamma^x(1),z\rangle_\mathcal{M}, \ \mbox{ for every } \ z \in \mathcal{X}.
\]
    }
\end{rem}

\section{Fine regularity results on $v$}

In this section, which concerns regularity results on the value function $v(x)$, we treat the \emph{size} of two special sets of points: the set of irregular points and the set of the so-called conjugate points.

\subsection{Irregular and conjugate points}\label{reg_irregular}

Below, we introduce the definitions that we adopt for irregular and conjugate points associated with the variational problem defining $v(x)$. These notions are inspired by Definitions \ref{def:CS_irregular_points} and \ref{def:CS_conjugate_points}, suitably adapted to the present non-compact setting, where both the spatial diameter and the time interval are unbounded.

It is worth mentioning that these definitions are closely related to classical notions arising in the context of (smooth) Riemannian geometry. In that framework, the notions of {\em cut locus} and {\em conjugate locus} are classical and well studied (see, e.g., the books \cite{Bergerbook, Gallot_etal} and the references therein). More precisely, given a point $p$ in a Riemannian manifold, the cut locus of $p$ is defined as the set of endpoints of geodesics issuing from $p$ at which the minimizing property ceases to hold. Such a loss of minimality may occur for two distinct reasons: either because multiple minimizing geodesics reach the same point (corresponding to a failure of injectivity of the exponential map $\exp_p$), or because the differential $\ud\exp_p$ becomes noninvertible, giving rise to conjugate points. These notions are intimately related to the regularity properties of the distance function from $p$, viewed as the value function associated with the minimization of the metric length functional.

With this analogy in mind, we introduce Definitions \ref{def:irregular} and \ref{def:conjugate}.

\begin{defn}[Irregular points]\label{def:irregular}
    We say that a configuration $x\in\Omega$ is \emph{regular} if $\mathcal{A}_x$ admits a \emph{unique} minimum $\varphi^x$ on $\mathcal{D}$. All other points are called \emph{irregular}. We denote by $\Sigma$ the set of irregular points.
\end{defn}
As observed in Remark \ref{rem:regular_points}, the set $\Omega \setminus \Sigma$ corresponds to points in which $v$ is $C^1$, and the Hamilton-Jacobi equation \eqref{eq:HJ_v} is satisfied in the classical sense.

\medskip
Retracing the work by Cannarsa and Sinestrari \cite{MR2041617}, we want to define the set of \emph{conjugate} points in a similar way to what has been done in \cite{MR2041617}, taking into consideration that we are working with a renormalized Lagrangian action that is defined over a set of curves whose dominium is the half-line $[1,+\infty)$. 

We recall from \cite{PolimeniTerracini} that over the set of non-collisional configurations, the differential $\ud\mathcal{A}_x(\varphi)$ is continuous to the dual space $\mathcal{D}^*$, and it takes the following form:
\begin{equation}
    \label{eq:diffA}
    \ud \mathcal{A}_x(\varphi)[\psi] 
    = \int_{1}^{+\infty} \langle \dot{\varphi}(t),\dot{\psi}(t)\rangle_\mathcal{M} + \langle \nabla U(r_0(t)+\varphi(t)+x-r_0(1)),\psi(t)\rangle - \langle\ddot{r}_0(t),\psi(t)\rangle_\mathcal{M}\ \ud t.
\end{equation}
By composing $\ud \mathcal{A}_x(\varphi)$ with the inverse of Riesz's isomorphism $\mathcal{R}_i^{-1}:\mathcal{D}^* \to \mathcal{D}$, we can define a map $F:U\subset \Omega\times \mathcal{D} \to \mathcal{D}$
as:
\[
\begin{split}
F(x,\varphi)&:=\mathcal{R}_i^{-1}\circ\ud\mathcal{A}_x(\varphi)
\\
&= \varphi+ \mathcal{R}_i^{-1}[\nabla U(r_0 + \varphi + x - r_0(1)) - \mathcal{M}\ddot{r}_0].
\end{split}
\]

Two comments on $F$. First, zeros of $F$ correspond to couples $(\bar x,\bar \varphi)$ in which $\bar\varphi$ is a critical point of $\mathcal{A}_{\bar x}$. Second, if the differential $D_{\varphi}F(\bar x,\bar\varphi)$ is invertible, then the Implicit Function Theorem implies that there is $\mathcal{\bar U}\times \mathcal{\bar V}$ a neighborhood of $(\bar x, \bar\varphi) \in \Omega \times \mathcal{D}$, such that for every $x \in \mathcal{\bar U}$ there is one and only one critical point of $\mathcal{A}_x$ on $\mathcal{\bar V}$, say $\varphi^{x}\in\mathcal{\bar V}$, which depends on $x$ with maximal regularity. 

A sufficient condition to prove that $D_\varphi F(x,\varphi)$ is invertible is to prove that $\ud^2 \mathcal{A}_x(\varphi)$ is coercive. In contrast, the set of conjugate points heuristically consists of points such that the linearized problem degenerates. Hence, the following definition.

\begin{defn}[Conjugate points]\label{def:conjugate}
    We define the set of \emph{conjugate points} $\Gamma$ as the subset of points $x \in \Omega$ such that 
        \begin{itemize}
        \item the set of minimizers of $\mathcal A_x$ is isolated on the set of critical points,
            \item $\ud^2\mathcal A_{x}(\varphi^x) \in \mathcal{L}(\mathcal{D}; \mathcal{D}^*)$ is not invertible, for every $\varphi^x$ minimizer of $\mathcal A_x(\varphi^x)$,
        \end{itemize}
    where $\ud^2  \mathcal{A}_x$ is the bilinear form corresponding to $D_\varphi F(x,\varphi^x)$, given by
    \begin{equation}
        \label{eq:hessian_A}
    \ud^2 \mathcal{A}_x(\varphi^x)[\psi,\zeta] = \int_{1}^{+\infty} \langle\dot{\psi}(t),\dot{\zeta}(t)\rangle_\mathcal{M} + \langle\nabla^2 U(r_0(t) + \varphi^x(t) + x - r_0(1))\psi(t),\zeta(t)\rangle_\mathcal{M}\ \ud t.
    \end{equation}
\end{defn}

Our goal is to obtain an estimate on the dimension of the set of conjugate points $\Gamma$.

\subsection{Rectifiability of $\Sigma \setminus \Gamma$}

In this and the next sections, we make use of the following right-tail $T$-actions:
\begin{equation}
\label{eq:def_right_tale_func}
\tilde{\mathcal{A}}_{x}^{T}(\varphi) = \int_{T}^{+\infty} \frac{1}{2}\|\dot{\varphi}(t)\|_\mathcal{M}^2 + U(r_0(t) + \varphi(t) + x - r_0(T)) - U(r_0(t)) - \langle\ddot{r}_0(t),\varphi(t)\rangle_\mathcal{M}\ \ud t,
\end{equation}
where  $\varphi \in \tilde{ \mathcal D}_T$, where 
\[
\tilde{\mathcal D}_T = \{\varphi\in H^{1,2}([T,+\infty))\ :\ \varphi(T)=0,\ \int_{T}^{\infty}\|\dot\varphi(t)\|^2_\mathcal{M}\ \ud t<+\infty\},
\]
with $\|\varphi\|_{\tilde{\mathcal D}_T}$ defined as expected as the $L^2(T,+\infty)$ norm, with respect to $\mathcal M$, of $\dot \varphi$.

\begin{prop}
\label{prop:dA}
    For all $T>1$, and for any $x\in\Omega$ and $\varphi\in \tilde{\mathcal D}_T$, the differential $\ud \tilde{\mathcal A}_{x}^{T}(\varphi)$ is a compact perturbation of an invertible operator.
\end{prop}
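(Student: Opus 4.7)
The plan is to exploit Riesz's isomorphism $\mathcal{R}_i \colon \mathcal{D}_0^{1,2}(T,+\infty) \to (\mathcal{D}_0^{1,2}(T,+\infty))^*$ so that the nonlinear map
\begin{displaymath}
F_T \colon \mathcal{D}_0^{1,2}(T,+\infty) \to \mathcal{D}_0^{1,2}(T,+\infty), \qquad \varphi \longmapsto \mathcal{R}_i^{-1}\bigl[\ud \mathcal{A}_{x,[T,+\infty)}(\varphi)\bigr],
\end{displaymath}
splits as $F_T(\varphi) = \varphi + \mathcal{K}_T(\varphi)$, with the identity trivially invertible and $\mathcal{K}_T$ compact. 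A direct calculation analogous to \eqref{eq:diffA} gives
\begin{displaymath}
\ud \mathcal{A}_{x,[T,+\infty)}(\varphi)[\psi] = \int_T^{+\infty}\langle \dot{\varphi},\dot{\psi}\rangle_{\mathcal{M}}\,\ud t + \int_T^{+\infty}\langle \nabla U(r_0+\varphi+x-r_0(T)) - \mathcal{M}\ddot{r}_0,\, \psi\rangle\,\ud t;
\end{displaymath}
the first integral is exactly the inner product on $\mathcal{D}_0^{1,2}(T,+\infty)$ and thus corresponds under $\mathcal{R}_i^{-1}$ to the identity, so the content of the proposition reduces to showing compactness of
\begin{displaymath}
\mathcal{K}_T(\varphi) := \mathcal{R}_i^{-1}\Big[\psi \mapsto \int_T^{+\infty}\langle \nabla U(r_0+\varphi+x-r_0(T)) - \mathcal{M}\ddot{r}_0,\, \psi\rangle\,\ud t\Big].
\end{displaymath}

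To this end, I would take a bounded sequence $(\varphi_n)_n$ and apply the compact embedding of Proposition \ref{prop:compact_emb}: up to a subsequence, $\varphi_n \to \bar{\varphi}$ strongly in the weighted space $L^2((T,+\infty);\,\ud t/t^{2+\varepsilon})$ for every $\varepsilon>0$, weakly in $\mathcal{D}_0^{1,2}(T,+\infty)$, and pointwise almost everywhere. By Riesz duality, the desired strong convergence $\mathcal{K}_T(\varphi_n) \to \mathcal{K}_T(\bar{\varphi})$ is equivalent to
\begin{displaymath}
\sup_{\|\psi\|\le 1}\Big| \int_T^{+\infty}\bigl\langle \nabla U(r_0+\varphi_n+x-r_0(T)) - \nabla U(r_0+\bar{\varphi}+x-r_0(T)),\, \psi\bigr\rangle\,\ud t\Big| \to 0,
\end{displaymath}
where $\|\cdot\|$ denotes the norm on $\mathcal{D}_0^{1,2}(T,+\infty)$. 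The mean value theorem then reduces the problem to estimating $\int |\nabla^2 U(\xi_n(t))|\, |\varphi_n(t)-\bar{\varphi}(t)|\,|\psi(t)|\,\ud t$, for interpolating curves $\xi_n$ lying on the segment between $r_0+\varphi_n+x-r_0(T)$ and $r_0+\bar{\varphi}+x-r_0(T)$.

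The strategy to bound this supremum is to split the time domain into a compact window $[T,\tau]$ and a tail $[\tau,+\infty)$. On the tail, the inequality $\|\varphi(t)\|_{\mathcal{M}} \le C\sqrt{t}$ granted by \eqref{dis_space_D012}, together with the asymptotic domination of $r_0(t)$ (at rate $t$, $t^{2/3}$, or a combination thereof depending on the regime), keeps the family $(\xi_n(t))_n$ uniformly far from $\Delta$ once $\tau$ is large enough; this yields a pointwise bound $|\nabla^2 U(\xi_n(t))|\lesssim t^{-\alpha}$ with $\alpha > 2$, so that, combined with Hardy's inequality \eqref{hardy_compact} applied to $\psi$ and the weighted $L^2$-convergence of $\varphi_n$, the tail contribution goes to zero uniformly for $\|\psi\|\le 1$. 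On the compact window $[T,\tau]$, the boundary condition $\varphi_n(T)=0$, the fact that $x \in \Omega$, and an Ascoli--Arzel\`a argument in the spirit of Lemma \ref{lemma_collisionless_minimal} confine all $\xi_n$ to a common compact subset of $\Omega$; hence $\nabla^2 U$ is uniformly bounded there, Rellich--Kondrachov provides $\varphi_n \to \bar{\varphi}$ strongly in $L^2(T,\tau)$, and Cauchy--Schwarz concludes.

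The main obstacle I foresee is precisely the uniform collisionlessness of the interpolating family $(\xi_n)_n$, which has to be secured independently of $n$ and of the interpolation parameter, both on $[T,\tau]$ and in the tail. Once that quantitative control is in place, the remaining pointwise decay estimates on $\nabla^2 U$ parallel those already implicit in the proof of Lemma \ref{lem:quad_pot}, and the passage to the limit in the supremum follows by a standard combination of dominated convergence, Hardy's inequality, and compact Sobolev embeddings.
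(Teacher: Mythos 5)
Your reduction to the compactness of a single operator $\mathcal{K}_T$, namely $\varphi \mapsto \mathcal{R}_i^{-1}\bigl[\psi \mapsto \int_T^{+\infty}\langle \nabla U(r_0 + \varphi + x - r_0(T)) - \mathcal{M}\ddot r_0,\psi\rangle\,\ud t\bigr]$, works in the hyperbolic case but fails in the parabolic and hyperbolic-parabolic cases, and the failure is precisely at the step you flag as routine. In the parabolic setting $r_0(t) = \beta b_m t^{2/3}$, so by homogeneity $\nabla^2 U(r_0(t))$ decays only like $t^{-2}$; the interpolating curves $\xi_n$ inherit the same rate, since $\|\varphi(t)\|_\mathcal{M} = O(\sqrt t)$ is negligible compared to $t^{2/3}$. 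Thus your claimed bound $|\nabla^2 U(\xi_n(t))| \lesssim t^{-\alpha}$ with $\alpha > 2$ does not hold -- you only get $\alpha = 2$, the Hardy-critical exponent. Consequently, the tail integral $\int_\tau^\infty t^{-2}\,\|\varphi_n - \bar\varphi\|\,\|\psi\|\,\ud t$ requires strong convergence of $\varphi_n$ in $L^2(\ud t/t^2)$, which the embedding of Proposition \ref{prop:compact_emb} does \emph{not} supply: compactness is established only for $\varepsilon > 0$ strictly, and the embedding into $L^2(\ud t/t^2)$ is continuous but not compact (the sharp constant in Hardy's inequality is not attained). So the operator $\psi \mapsto \int_T^{\infty}\langle \nabla^2 U(r_0(t))\psi,\cdot\rangle\,\ud t$ is a genuine bounded, non-compact obstruction to the scheme ``identity plus compact.''

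This is why the paper handles the parabolic case with a different decomposition: it absorbs the term $\tfrac12\langle \nabla^2 U(r_0(t))\varphi,\varphi\rangle$ into the quadratic form $Q_T$, shows that this modified form is equivalent to the $\|\cdot\|_{\mathcal{D}_0^{1,2}(T,+\infty)}$ norm (hence invertible by Lax--Milgram), and only then proves that the remaining \emph{third-order} Taylor remainder of $U$ around $r_0$ is compact -- for which the relevant decay is $\|\varphi(t)+\tilde x\|^3/\|r_0(t)\|^4 \lesssim t^{3/2}/t^{8/3} = t^{-7/6}$, which is integrable and does permit dominated convergence along weakly convergent sequences. The hyperbolic-parabolic case then combines the two: the inter-cluster interactions are treated as in your hyperbolic argument, while the intra-cluster ones require the parabolic-style split. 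So the missing idea in your proof is that ``invertible operator'' in the statement must be taken to be not the identity but a $\varphi$-independent quadratic form carrying the leading $1/t^2$ behavior; trying to make the whole potential term compact is not achievable at the Hardy-critical rate.
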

\begin{proof}
We observe that $\ud \tilde{\mathcal A}_{x}^{T}$ is as in  \eqref{eq:diffA}, but with the integral evaluated in $[T,+\infty)$. Using Riesz isomorphism, we can write
\[
\ud\tilde{\mathcal{A}}_{x}^{T}(\varphi) = \mathbf{1} + \mathcal{R}_i^{-1}[\nabla U(r_0 + \varphi + x - r_0(1)) - \mathcal{M}\ddot{r}_0].
\]
We will divide the proof accordingly to the class of expansive motion under consideration.

\smallskip
{\bf Hyperbolic case.} The statement follows once one observes that $P_{x,T}^H\in \mathcal{L}(\tilde{\mathcal D}_T; {\tilde{\mathcal D}_T}^*)$, defined by
\[
P_{x,T}^H(\varphi)\ :\ \psi \rightarrow \int_{T}^{+\infty} \langle \nabla U(\gamma(t)),\psi(t)\rangle\ \ud t,
\]
is compact. Consider $(\varphi_n)_n$ a bounded sequence in $\tilde{\mathcal D}_T$. As we already observed throughout the paper, up to subsequences, $\varphi_n \to \bar\varphi$, for some $\bar\varphi \in \tilde{\mathcal D}_T$, where the convergence is, in particular, pointwise in the whole half-line $[T,+\infty)$. Hence, for every fixed $\psi \in \tilde{\mathcal D}_T$  such that $\|\psi\|_{\tilde{\mathcal D}_T} = 1$, we have
\[
\begin{split}
    P_{x,T}^H(\varphi_n)[\psi]-P_{x,T}^H(\bar\varphi)[\psi] & = \int_T^{+\infty} \langle\nabla U(\gamma_n(t)-\nabla U(\bar\gamma(t)),\psi(t)\rangle\, \ud t
    \\
    &=\int_T^{+\infty} \int_0^1 \langle \nabla^2 U(\gamma_n(t) +s(\varphi_n(t)-\bar\varphi(t)))(\varphi_n(t)-\bar\varphi(t)),\psi(t)\rangle\,\ud s\,\ud t.
\end{split}
\]
Since, for $t$ large enough and for every $s \in [0,1]$,
\[
\left| \langle \nabla^2 U(\gamma_n(t) +s(\varphi_n(t)-\bar\varphi(t)))(\varphi_n(t)-\bar\varphi(t)),\psi(t)\rangle\right| \le \frac{C\,\|\varphi_n(t)-\bar\varphi(t)\|\,\|\psi(t)\|}{t^3} \le \frac{C'}{t^{5/2}}\in L^1(T,+\infty),
\]
by the Dominated Convergence Theorem and the pointwise convergence of $\varphi_n$ to $\bar \varphi$, we deduce that
\[
\|P_{x,T}^H(\varphi_n)-P^H_{x,T}(\bar\varphi)\|_{\tilde{\mathcal D}_T^*}=
\sup_{\|\psi\|=1}\left|P_{x,T}^H(\varphi_n)[\psi]-P_{x,T}(\bar\varphi)[\psi]\right| \to 0, \ \mbox{ as } \ n\to \infty,
\]
that is, $P_{x,T}^H(\varphi_n)$ is convergent in $\tilde{\mathcal D}_T^*$, which means that $P_{x,T}^H$ is compact.

\smallskip
{\bf Parabolic case.} In this case, we need to start from $\tilde{\mathcal A}_{x}^T(\varphi)$ written as follows
\[
\begin{split}
\tilde{\mathcal A}_{x}^T(\varphi)&=\int_{T}^{+\infty} \frac{\|\dot\varphi(t)\|_\mathcal{M}^2}{2}+\langle \nabla^2 U(r_0(t))\varphi(t),\,\varphi(t)\rangle \,\ud t 
\\
&\quad \quad +\int_T^{+\infty} U(\gamma(t))-U(r_0(t))-\langle \nabla U(r_0(t)),\varphi(t)\rangle -\langle \nabla^2 U(r_0(t)) \varphi(t),\,\varphi(t)\rangle\,\ud t,
\end{split}
\]
where 
\[
\varphi \mapsto
\int_{T}^{+\infty} \frac{\|\dot\varphi(t)\|_\mathcal{M}^2}{2}+\langle \nabla^2 U(r_0(t))\varphi(t),\,\varphi(t)\rangle \,\ud t 
\]
is a norm, coming from a positive quadratic form which is equivalent to $\|\cdot\|_{\tilde{\mathcal D}_T}$, as shown in \cite[Section 4.2]{PolimeniTerracini}. Hence, by Lax-Milgram Theorem, its differential is invertible. Setting
\[
\mathcal{F}(\varphi) = \int_T^{+\infty} U(\gamma(t))-U(r_0(t))-\langle \nabla U(r_0(t)),\varphi(t)\rangle -\langle \nabla^2 U(r_0(t)) \varphi(t),\,\varphi(t)\rangle\,\ud t,
\]
it holds 
\[
\begin{split}
\ud \mathcal{F}(\varphi)[\psi] &= \int_T^{+\infty} \langle \nabla U(\gamma(t))-\nabla U(r_0(t)),\psi(t)\rangle-\langle \nabla^2 U(r_0(t))\varphi(t),\,\psi(t)\rangle\,\ud t
\\
&=\int_T^{+\infty} \int_0^1 \langle \nabla^2 U(\gamma(t)+\sigma_1(\varphi(t)+\tilde x))(\varphi(t)+\tilde x),\psi(t)\rangle 
-
\langle \nabla^2 U(r_0(t))\varphi(t),\,\psi(t)\rangle \, \ud t
\\
&=\int_T^{+\infty} \int_0^1 \int_0^1 \langle \left[\nabla^3 U(\gamma(t)+(\sigma_1+\sigma_2+\sigma_1\sigma_2)(\varphi(t)+\tilde x))(\varphi(t)+\tilde x+\sigma_1(\varphi(t)+\tilde x)\right]\,\varphi(t),\,\psi(t)\rangle \, \ud\sigma_1\ \ud \sigma_2\ \ud t
\\
&\qquad \qquad + \mbox{ lower order terms }.
\end{split}
\]
Since, for $t$ sufficiently large, we have
\[
\left| \langle \left[\nabla^3 U(\gamma(t)+(\sigma_1+\sigma_2+\sigma_1\sigma_2)(\varphi(t)+\tilde x))(\varphi(t)+\tilde x+\sigma_1(\varphi(t)+\tilde x)\right]\,\varphi(t),\,\psi(t)\rangle \right|
 \le \frac{C\,\|\varphi(t)\|^2\,\|\psi(t)\|}{\|r_0(t)\|^4},
\]
with the same arguments used in the proof of the hyperbolic case for the operator $P_{x,T}^H$, we conclude that $\ud \mathcal{F} \in \mathcal{L}(\tilde{\mathcal D}_T;\tilde{\mathcal D}_T^*)$ is compact.

\smallskip
{\bf Hyperbolic-Parabolic case.} The proof exploits the Lagrangian action decomposition that follows from the cluster partition of the bodies that has been presented in \cite{PolimeniTerracini}. In particular, the renormalized Lagrangian action is written as the sum of two terms, where one describes the movement of the bodies inside each cluster and the other describes the motions of couples of bodies that belong to different clusters. Since the first term describes a parabolic motion and the second describes a hyperbolic one, the computation will be similar to the two cases above.
\end{proof}

The following is the companion of Theorem 6.4.9 in \cite{MR2041617}.

\begin{thm}
\label{th:finiteness of d*}
If the set $\{\varphi\in\mathcal{D}:\, \varphi \ \mbox{ is a minimizer of the renormalized value function v(x)}\}$ is not finite, then $x\in \Gamma$.  
\end{thm}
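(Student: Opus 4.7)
The plan is to exploit uniform coercivity, the Fredholm structure of $d\mathcal{A}_x$ (Proposition \ref{prop:dA}), and the Implicit Function Theorem to force $\ud^2\mathcal{A}_x$ to degenerate at every minimizer of $\mathcal{A}_x$.

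First, by Lemma \ref{lem:coercivity_estimates} applied with $\mathcal{K}=\{x\}$, every family of minimizers of $\mathcal{A}_x$ is uniformly bounded in $\mathcal{D}_0^{1,2}(1,+\infty)$. Given an infinite sequence $\{\varphi^n\}_n$ of distinct minimizers, I would extract a weakly convergent subsequence $\varphi^n \rightharpoonup \bar\varphi$. Using the decomposition $\mathcal{A}_x = Q_\infty + P_x$ from the structure lemma \ref{lem:quad_pot}---with $Q_\infty$ equivalent to $\|\cdot\|_{\mathcal{D}}^2$ and $P_x$ continuous along weakly convergent sequences by dominated convergence, thanks to the tail bound $|V(x,t,\varphi(t))|\le C t^{-\beta}$ with $\beta>1$ that is uniform on bounded families---the argument of Lemma \ref{lem:continuity_w} shows that $\bar\varphi$ is itself a minimizer. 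Since moreover $\mathcal{A}_x(\varphi^n)=\mathcal{A}_x(\bar\varphi)=v(x)+\langle a,x\rangle_\mathcal{M}$ is constant, the identity $Q_\infty(\varphi^n,\varphi^n)=\mathcal{A}_x(\varphi^n)-P_x(\varphi^n)$ forces $Q_\infty(\varphi^n,\varphi^n)\to Q_\infty(\bar\varphi,\bar\varphi)$; together with the weak convergence and the equivalence of $Q_\infty$ with the squared $\mathcal{D}$-norm, this upgrades to strong convergence $\varphi^n\to\bar\varphi$ in $\mathcal{D}_0^{1,2}(1,+\infty)$. Hence $\bar\varphi$ is a minimizer at which critical points accumulate strongly.

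Second, I would invoke the Implicit Function Theorem. By Proposition \ref{prop:dA}, the differential $D_\varphi F(x,\bar\varphi)$ is a compact perturbation of the identity, hence a Fredholm operator of index zero, and under Riesz's isomorphism it corresponds to the bilinear form $\ud^2\mathcal{A}_x(\bar\varphi)$ of \eqref{eq:hessian_A}. If $D_\varphi F(x,\bar\varphi)$ were invertible, IFT would yield a neighborhood of $\bar\varphi$ in $\mathcal{D}_0^{1,2}(1,+\infty)$ on which $F(x,\cdot)$ vanishes only at $\bar\varphi$---contradicting the strong accumulation $\varphi^n\to\bar\varphi$ with $F(x,\varphi^n)=0$ and $\varphi^n\ne\bar\varphi$. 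Therefore $\ud^2\mathcal{A}_x(\bar\varphi)$ is not invertible, witnessing the second bullet of Definition \ref{def:conjugate} at $\bar\varphi$.

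Third, to promote degeneracy to the entire minimizer set $M$ and to verify the isolation condition of Definition \ref{def:conjugate}, I would proceed by a dichotomy on the compact set $M$ (compactness coming from the first step). Any minimizer $\varphi^\star$ at which $\ud^2\mathcal{A}_x(\varphi^\star)$ were invertible would, by IFT, be an isolated critical point and hence an isolated point of $M$; by compactness of $M$ such isolated points can be only finitely many, and removing them leaves an infinite compact subset $M'\subseteq M$ with no isolated points, at each of which the accumulation argument of the first two steps forces $\ud^2\mathcal{A}_x$ to degenerate. The main obstacle I foresee is precisely the residual finitely many isolated minimizers in $M\setminus M'$: closing the argument requires showing that such regular isolated minimizers cannot in fact coexist with an infinite compact family of non-isolated ones, which I would approach by tracking the persistence of the kernel of $\ud^2\mathcal{A}_x$ along sequences in $M'$ via the Fredholm index-zero property of Proposition \ref{prop:dA}. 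A parallel argument, leveraging the continuity of $d\mathcal{A}_x$ together with the strict ordering between the minimum action value $v(x)+\langle a,x\rangle_\mathcal{M}$ and higher critical levels, handles the first bullet of Definition \ref{def:conjugate} by ruling out non-minimizer critical points accumulating on $M$.
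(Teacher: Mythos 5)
Your first two steps capture essentially the paper's argument, merely run forwards rather than by contrapositive. The paper assumes $x\notin\Gamma$, reads this as invertibility of $\ud^2\mathcal{A}_x$ at each minimizer, applies the Implicit Function Theorem to conclude each minimizer is an isolated critical point, and then derives a contradiction from an infinite family of minimizers via exactly the weak compactness and norm-convergence argument you outline: a weak limit $\bar\varphi$ is itself a minimizer (Proposition~\ref{prop:dA} lets one pass to the limit in $\ud\mathcal{A}_x(\varphi_n)=0$), and $\|\varphi_n\|_\mathcal{D}\to\|\bar\varphi\|_\mathcal{D}$ follows by dominated convergence from an integral identity; your route via the structure Lemma~\ref{lem:quad_pot} ($\mathcal{A}_x=Q_\infty+P_x$ with $P_x$ weak-to-strong continuous) is a perfectly equivalent way to get the same norm convergence. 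So Steps~1--2 already constitute the whole of the paper's proof.

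Step~3, on the other hand, attempts to verify both bullets of Definition~\ref{def:conjugate} directly, which the paper does not attempt, and it contains a concrete error: a compact subset of a metric space may have infinitely many isolated points (for example $\{0\}\cup\{1/n:\,n\ge 1\}$), so the claim that ``by compactness of $M$ such isolated points can be only finitely many'' is false, and removing the isolated points from $M$ need not leave an infinite set. This is why your acknowledged obstacle does not close as stated. The paper sidesteps the issue entirely by phrasing the statement contrapositively: assuming $x\notin\Gamma$ hands you invertibility of $\ud^2\mathcal{A}_x$ at every minimizer for free, so you only ever need the IFT and the compactness argument once, and never have to ``promote'' degeneracy across the minimizer set. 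Reframing your proof contrapositively would make Step~3 unnecessary and your Steps~1--2 would then be a complete proof, in substance identical to the paper's.
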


\begin{proof}
    Assuming $x \notin \Gamma$ and $\varphi\in\mathcal{D}$, we have that $\ud^2 \mathcal A_x$ is invertible, that is, for every $J \in \mathcal{D}^*$ there exists a unique $\psi \in \mathcal{D}$ such that $J[\cdot]=\ud^2 \mathcal{A}_x(\varphi)[\psi,\cdot]$ in $\mathcal{D}^*$. This implies that the Implicit Function Theorem applies to the $0-$level of 
    \[     F(x,\varphi) := \ud\,\mathcal A_x(\varphi) \in \mathcal{D}^*,     \]
    and, therefore, there exists a neighborhood $\mathcal U_x\times\mathcal U_\varphi$ of $(x,\varphi)$ in $\Omega\times\mathcal D$ such that, for every $z \in  \mathcal U_x$, there exists a unique $\bar\varphi = \bar\varphi^z = \bar\varphi(z) \in \mathcal U_\varphi$ critical point of $\mathcal A_z$. Thus, there are at most countably many minimizers, and they are isolated. 

    Suppose that the number of such minimizers is not finite, and let $(\varphi_n)_n$ be the sequence of minimizers. Then, up to subsequences, there exists $\bar\varphi \in \mathcal{D}$ such that $\varphi_n \to \bar\varphi$ weakly and pointwise. By Proposition \ref{prop:dA}, we can conclude that $\bar\varphi$ must be a critical point and, in particular, it is a minimizer. It follows that 
    \[
    0=\|\varphi_n\|_{\mathcal{D}}-\|\bar\varphi\|_{\mathcal{D}}+2\,\int_1^{+\infty} \int_0^1 \langle \nabla U(r_0(t)+\varphi_n(t)+x-r_0(1)+s(\bar\varphi(t)-\varphi_n(t)), \bar\varphi(t)-\varphi_n(t)\rangle\,\ud s\, \ud t
    \]
    and then, by the Dominated Convergence Theorem, we obtain $\|\varphi_n\|_{\mathcal{D}} \to \|\bar\varphi\|_{\mathcal{D}}$. This in turn implies $\|\varphi_n - \bar\varphi\|_{\mathcal{D}}\to 0$, which contradicts the fact that $(\varphi_n)_n$ is discrete in $\mathcal{D}$.

\end{proof}

\begin{thm}\label{thm:finite_number_vi}
Let $x \in \Sigma \setminus \Gamma$ be given. Then there exists a neighborhood $\mathcal U_{x}$ of $x$ and a finite number of $v_1, \dots, v_k : \mathcal U_{x} \to \mathbb R$ such that $\nabla v_i \neq \nabla v_j$ if $i \neq j$ and $u=\min\{v_i\}_i$ in $\mathcal U_{x}$.
\end{thm}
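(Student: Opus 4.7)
The plan is to combine the finiteness result of Theorem \ref{th:finiteness of d*} with the Implicit Function Theorem applied branch-by-branch at each minimizer of $\mathcal{A}_x$, and then to rule out ``extra'' minimizers for nearby configurations by a weak-to-strong convergence argument. Throughout I treat the symbol $u$ in the statement as a typo for the value function $v$ of \eqref{eq:def_v}.

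First, since $x \notin \Gamma$, Theorem \ref{th:finiteness of d*} yields a finite list $\varphi_1, \ldots, \varphi_k$ of minimizers of $\mathcal{A}_x$. At each $\varphi_i$ the second differential $\ud^2 \mathcal{A}_x(\varphi_i)$ is invertible by the very definition of $\Gamma$, and Proposition \ref{prop:dA} guarantees that the map $F(z,\varphi) := \mathcal{R}_i^{-1}\circ \ud \mathcal{A}_z(\varphi)$ is a compact perturbation of the identity. Hence the Implicit Function Theorem applies: there exist pairwise disjoint neighborhoods $\mathcal{W}_1, \ldots, \mathcal{W}_k \subset \mathcal{D}_0^{1,2}(1,+\infty)$ of $\varphi_1, \ldots, \varphi_k$ and a common neighborhood $\mathcal{V}$ of $x$ in $\Omega$, together with $C^1$ maps $z \mapsto \varphi_i^z$ from $\mathcal{V}$ to $\mathcal{W}_i$, such that $\varphi_i^z$ is the \emph{unique} critical point of $\mathcal{A}_z$ lying in $\mathcal{W}_i$ and $\varphi_i^x = \varphi_i$.

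Second, I would define the candidate branches
\[
v_i(z) := \mathcal{A}_z(\varphi_i^z) - \langle a, z \rangle_{\mathcal{M}}, \qquad z \in \mathcal{V}, \ i = 1, \dots, k,
\]
which are $C^1$ on $\mathcal{V}$. Proceeding as in Remark \ref{rem:regular_points} (using the Euler--Lagrange equation satisfied by $\varphi_i^z$ to cancel the $z$-derivative of $\varphi_i^z$), one obtains $\nabla v_i(z) = -\mathcal{M}\dot\gamma_i^z(1)$, where $\gamma_i^z(t) = r_0(t)+\varphi_i^z(t)+z-r_0(1)$. If $\nabla v_i(x) = \nabla v_j(x)$ for some $i \neq j$, then $\gamma_i$ and $\gamma_j$ would both solve Newton's equation \eqref{eq_newton} on $[1,+\infty)$ with the same initial position $x \in \Omega$ and the same initial velocity, so by local uniqueness of the Cauchy problem they would coincide, forcing $\varphi_i = \varphi_j$. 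Hence $\nabla v_i(x) \neq \nabla v_j(x)$, and this separation persists in a possibly smaller neighborhood by continuity of $\nabla v_i$.

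The main step, which I expect to be the principal obstacle, is to show that $v(z) = \min_i v_i(z)$ on a sufficiently small neighborhood $\mathcal{U}_x \subset \mathcal{V}$. The inequality $v(z) \le \min_i v_i(z)$ is immediate from admissibility of each $\varphi_i^z$, and since the infimum defining $v(z)$ is attained, the reverse inequality reduces to proving that every minimizer $\psi^z$ of $\mathcal{A}_z$ belongs to $\bigcup_i \mathcal{W}_i$ whenever $z$ is close enough to $x$, for then the IFT forces $\psi^z = \varphi_i^z$ for some $i$. Arguing by contradiction, suppose there are $z_n \to x$ and minimizers $\psi_n$ of $\mathcal{A}_{z_n}$ with $\psi_n \notin \bigcup_i \mathcal{W}_i$. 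By the uniform coercivity of Lemma \ref{lem:coercivity_estimates}, $(\psi_n)_n$ is bounded in $\mathcal{D}_0^{1,2}(1,+\infty)$, hence up to subsequences $\psi_n \rightharpoonup \bar\psi$ weakly and pointwise. Repeating the lower semicontinuity argument of Lemma \ref{lem:continuity_w} (lower semicontinuity of the positive-definite quadratic form $Q$ from Lemma \ref{lem:quad_pot}, plus convergence of $P_{z_n}(\psi_n)$ via dominated convergence using \eqref{eq:V_bound}) together with continuity of $v$ from Proposition \ref{prop:conv_v_T}, one concludes that $\bar\psi$ is a minimizer of $\mathcal{A}_x$, so $\bar\psi = \varphi_i$ for some $i$. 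The delicate point is to upgrade weak to strong convergence: since $\mathcal{A}_{z_n}(\psi_n) \to \mathcal{A}_x(\varphi_i)$ and $P_{z_n}(\psi_n) \to P_x(\varphi_i)$, one gets $Q(\psi_n,\psi_n) \to Q(\varphi_i,\varphi_i)$, and this convergence of norms combined with weak convergence in the Hilbert space $\mathcal{D}_0^{1,2}(1,+\infty)$ (where $Q$ defines an equivalent inner product, as observed in Lemma \ref{lem:quad_pot}) yields $\psi_n \to \varphi_i$ strongly. But then $\psi_n \in \mathcal{W}_i$ for $n$ large, contradicting the choice of $\psi_n$ and closing the argument.
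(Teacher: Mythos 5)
Your proposal is correct and follows essentially the same route as the paper: finiteness of minimizers from Theorem~\ref{th:finiteness of d*}, Implicit Function Theorem branches $\varphi_i^z$ yielding $C^1$ functions $v_i$, and a compactness/weak-to-strong convergence argument to rule out minimizers outside $\bigcup_i\mathcal W_i$ for nearby configurations. The paper's proof is considerably terser, leaving that last step implicit (``the proof follows directly from Theorem~\ref{th:finiteness of d*} and IFT applied to $F$''), but your filling-in matches the intended argument, reusing the same weak-to-strong mechanism already deployed in the proof of Theorem~\ref{th:finiteness of d*}.
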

\begin{proof}
Let $\varphi_i$, for $i=1, \dots, k$, be the minimizers for $x \in \Sigma \setminus \Gamma$. We define
\begin{equation}\label{eq:min_v_i}
v_i(z) = \min_{\xi \, \in \,  \mathcal U_{\varphi_i} \subset \mathcal{D}} \mathcal A_z(\xi), \ \mbox{ for } \ z \in \mathcal U_x \ \mbox{ for every } \ i =1, \dots, k.
\end{equation}

We observe that every $z \in \mathcal U_x$ is regular for $\mathcal A_z|_{\mathcal U_{\varphi_i}}$, and that (up to take a smaller neighborhood of $x$ instead of $\mathcal U_x$) $z \notin \Gamma$ (since $\Gamma$ is closed). Then $v_i \in C^1(\mathcal U_x)$. 

Defining the functions $v_i$, $i=1,...,k$, as in \eqref{eq:min_v_i}, the proof follows directly from Theorem \ref{th:finiteness of d*} and IFT applied to $F$.
\end{proof}

Recall that, since we work in a space of configurations with fixed center of mass, $\dim\mathcal{X}=d(N-1)$. We have $v_i(x) = v_j(x)$, and, since $\nabla v_i(x) = -\mathcal M\, \dot \varphi_i(1) \neq -\mathcal M\, \dot \varphi_j(1) = \nabla v_j(x)$, the set $\{y:\, v_i(y)=v_j(y)\}$ is a $(d(N-1)-1)$-dimensional $C^\infty$ hypersurface for any pair $i\neq j$. Then, we also have the following corollary.

\begin{cor}
Let $ x \in \Sigma \setminus \Gamma$ be given. Then, there exists $\mathcal U_x$, a neighborhood of $x$, such that $\mathcal U_x \cap \Sigma$ is contained in a finite union of $(d(N-1)-1)$-dimensional hypersurfaces of class $C^\infty$.
\end{cor}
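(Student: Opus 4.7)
The plan is to exploit Theorem \ref{thm:finite_number_vi} to reduce the structure of $\Sigma$ near $x$ to a finite collection of smooth equalities between the branches $v_1,\dots,v_k$, and then apply the implicit function theorem to each equality separately.

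First, I would fix the neighborhood $\mathcal U_x$ and the functions $v_1,\dots,v_k\in C^1(\mathcal U_x)$ provided by Theorem \ref{thm:finite_number_vi}, so that $v(y)=\min_{i=1,\dots,k}v_i(y)$ for every $y\in\mathcal U_x$. By construction, each $v_i$ is a local ``branch'' corresponding to a distinct critical point $\varphi_i$ of $\mathcal{A}_x$, and by Remark \ref{rem:regular_points} one has $\nabla v_i(y)=-\mathcal{M}\dot{\gamma}_i(1)$, where $\gamma_i$ is the expansive motion associated with $\varphi_i$. Since $v_i(x)=v_j(x)$ but $\nabla v_i(x)\neq\nabla v_j(x)$ whenever $i\neq j$, the function $w_{ij}:=v_i-v_j$ is of class $C^\infty$ on $\mathcal U_x$ (using the smoothness of $U$ away from collisions together with the implicit function theorem applied to $F$ at $(x,\varphi_i)$, which yields $C^\infty$ dependence of the branch on the initial configuration), and it satisfies $w_{ij}(x)=0$ together with $\nabla w_{ij}(x)\neq 0$.

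Second, I would identify the set of irregular points within this neighborhood. A point $y\in\mathcal U_x$ is irregular (i.e.\ belongs to $\Sigma$) precisely when the minimum defining $v(y)$ is attained by at least two distinct branches, so
\[
\Sigma\cap\mathcal U_x \;\subseteq\; \bigcup_{1\le i<j\le k}\bigl\{y\in\mathcal U_x : v_i(y)=v_j(y)\bigr\} \;=\; \bigcup_{1\le i<j\le k} w_{ij}^{-1}(0).
\]
Up to shrinking $\mathcal U_x$, by continuity of the gradients the condition $\nabla w_{ij}\neq 0$ persists on the whole neighborhood for every pair $i<j$. The implicit function theorem then gives that each level set $w_{ij}^{-1}(0)$ is a $C^\infty$ embedded hypersurface in $\mathcal{X}$, i.e.\ of dimension $d(N-1)-1$. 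Since there are only finitely many such pairs, the union is a finite union of smooth hypersurfaces and contains $\Sigma\cap\mathcal U_x$, which is precisely the claim.

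The only delicate point is guaranteeing the $C^\infty$ regularity (rather than just $C^1$) of each branch $v_i$ on a uniform neighborhood of $x$; but this is a direct consequence of the smoothness of $U$ on $\Omega$ and the quantitative version of the implicit function theorem applied to the equation $F(y,\varphi)=0$ near $(x,\varphi_i)$, together with the fact that $D_\varphi F(x,\varphi_i)$ is invertible because $x\notin\Gamma$. No further obstacle arises, so the corollary follows.
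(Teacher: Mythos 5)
Your proof is correct and follows essentially the same route as the paper: starting from the finite collection of branches $v_1,\dots,v_k$ from Theorem \ref{thm:finite_number_vi}, noting that $\nabla v_i(x)\neq\nabla v_j(x)$ forces the pairwise coincidence sets $\{v_i=v_j\}$ to be smooth hypersurfaces, and observing that $\Sigma\cap\mathcal U_x$ sits inside their finite union. Your extra remark on the $C^\infty$ (rather than merely $C^1$) regularity of each branch via the implicit function theorem applied to $F$ is a worthwhile clarification, since the paper's Theorem \ref{thm:finite_number_vi} only states $C^1$ while the hypersurface claim implicitly needs higher regularity.
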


\subsection{Bound on the dimension of the set of conjugate points}

\subsubsection{Definition of a local flow}

Given $x_0 \in \Gamma$, consider $\varphi^{x_0}$ a corresponding minimizer of $\mathcal A_{x_0}$ on $\mathcal D$. Associated to $\varphi^{x_0}$ is $\gamma^{x_0}$, defined by $\gamma^{x_0}(t)=r_0(t)+\varphi^{x_0}(t)+x_0-r_0(1)$. Extend $\gamma^{x_0}$ to the interval $(1-\varepsilon, 1]$ as the unique smooth solution of \eqref{eq_newton}. Still denote by $\gamma^{x_0}$ this extension, defined in $(1-\varepsilon, +\infty)$. 

Let $z_0=\gamma^{x_0}(T)$. The Hessian of the action is defined as the bilinear form $\ud^2 \mathcal{A}_{x_0}(\varphi)$ given in \eqref{eq:hessian_A}. Following $\varphi(t)$, and consequently $\gamma(t)$, we consider a set of bilinear forms on $\tilde{\mathcal D}_T\times\tilde{\mathcal D}_T$
\[
\begin{split} 
\ud^2 \tilde{\mathcal{A}}_{z_0}^{T}(\tilde\varphi)[\psi,\eta]&=\int_T^{+\infty} \langle\dot\psi(t),\dot \eta(t)\rangle_\mathcal{M}+\langle\nabla^2 U(r_0(t)+\tilde\varphi(t)+\gamma^{x_0}(T)-r_0(T))\psi(t),\,\eta(t)\rangle \,\ud t
\\
&=\int_T^{+\infty} \langle\dot\psi(t),\dot \eta(t)\rangle_\mathcal{M}+\langle \nabla^2 U(r_0(t)+\varphi^{x_0}(t)+x_0-r_0(1))\psi(t),\,\eta(t)\rangle\,\ud t,
\end{split}
\]
where $\tilde\varphi = \varphi^{x_0}|_{[T,+\infty)}-\varphi^{x_0}(T)$. Notice that $\ud^2 \mathcal{A}_{x_0}(\varphi)$ is not invertible, as, by hypothesis, $x\in\Gamma$.
\begin{rem}
\label{rem:T_0}
    We recall that $\ud^2 \mathcal{A}_{x_0}(\varphi)$ is invertible if $\forall\ J\in \mathcal{D}^*$ there is a unique $\psi\in\mathcal{D}$ such that $\ud^2 \mathcal{A}_{x_0}(\varphi)[\psi,\cdot]=J[\cdot]$ in $\mathcal{D}^*$ (or, using Riesz's isomorphism, $\mathcal{R}_i^{-1}\circ \ud^2 \mathcal{A}_{x_0}(\varphi)[\psi,\cdot]=\mathcal{R}_i^{-1}\circ J[\cdot]$ in $\mathcal{D}$).
\end{rem}

By Lax-Milgram's Theorem, $\ud^2 \tilde{\mathcal{A}}_{z_0}^{T}(\tilde\varphi)$ is invertible if it is coercive, that is, if there is a constant $\alpha>0$ such that
\begin{equation}\label{coercivity_LxT}
   \ud^2 \tilde{\mathcal{A}}_{z_0}^T(\tilde\varphi)[\psi,\psi] \geq \alpha\|\psi\|^2_{\tilde{\mathcal D}_T},    
\end{equation}
for all $\psi\in\tilde{\mathcal{D}}_T$.

Now, we give the following lemma.
\begin{lem}
    \label{lem:T_hess_A_coerc}
    Given $x_0 \in \Gamma$ there exist ${T}_0>1$ and $C>0$ such that \eqref{coercivity_LxT} holds for every $\psi\in\tilde{\mathcal{D}}_{T_0}$.
\end{lem}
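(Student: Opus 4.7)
The plan is to exploit the explicit form of the bilinear form,
\[
\ud^2 \mathcal{A}_{\gamma^{x_0}(T),[T,+\infty)}(\tilde\varphi)[\psi,\psi] = \|\psi\|^2_{\mathcal{D}_0^{1,2}(T,+\infty)} + \int_T^{+\infty} \langle \nabla^2 U(\gamma^{x_0}(t))\psi(t),\psi(t)\rangle\,\ud t,
\]
so that \eqref{coercivity_LxT} reduces to bounding the potential integral from below by $-(1-\alpha)\|\psi\|^2_{\mathcal{D}_0^{1,2}(T,+\infty)}$. The main analytic tool will be a Hardy inequality on $[T,+\infty)$: extending any $\psi\in\mathcal{D}_0^{1,2}(T,+\infty)$ by zero on $[1,T]$ and applying Proposition \ref{dis_hardy} to the extension yields
\[
\int_T^{+\infty} \frac{\|\psi(t)\|_\mathcal{M}^2}{t^2}\,\ud t \le 4 \int_T^{+\infty} \|\dot\psi(t)\|_\mathcal{M}^2\,\ud t.
\]
I will then treat the three Chazy classes separately, in increasing order of difficulty.

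In the hyperbolic case, Theorem \ref{thm_hyperbolic} gives $\gamma^{x_0}(t)=at + O(\log t)$ with $a\in \Omega$, so mutual distances along $\gamma^{x_0}$ grow linearly; the homogeneity of degree $-1$ of $U$ then yields $\|\nabla^2 U(\gamma^{x_0}(t))\|_{op} \le C/t^3$ for large $t$. Estimating
\[
\Bigl|\int_T^{+\infty} \langle \nabla^2 U(\gamma^{x_0}(t))\psi,\psi\rangle\,\ud t\Bigr| \le \frac{C}{T}\int_T^{+\infty}\frac{\|\psi(t)\|_\mathcal{M}^2}{t^2}\,\ud t \le \frac{4C}{T}\|\psi\|^2_{\mathcal{D}_0^{1,2}(T,+\infty)},
\]
I will pick $T_0$ large enough that $4C/T_0 \le 1/2$, which yields coercivity with $\alpha=1/2$.

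In the parabolic case, I will first work with the frozen Hessian $\nabla^2 U(r_0(t))$. The minimality of the central configuration $b_m$, exploited as in \cite[Section 4.2]{PolimeniTerracini} and already invoked in the proof of Lemma \ref{lem:quad_pot}, gives the sharp pointwise bound $\langle \nabla^2 U(r_0(t))\psi,\psi\rangle \ge -\tfrac{2}{9}\|\psi(t)\|_\mathcal{M}^2/t^2$, which combined with Hardy yields coercivity of the frozen form with constant $1/9$. To pass from $r_0(t)$ to $\gamma^{x_0}(t)$ I will use $\|\gamma^{x_0}(t)-r_0(t)\| = o(t^{1/3^+})$ from Theorem \ref{thm_parabolic} together with $\|\nabla^3 U\|_{op}\lesssim 1/t^{8/3}$ along $\gamma^{x_0}$, obtaining $\|\nabla^2 U(\gamma^{x_0}(t)) - \nabla^2 U(r_0(t))\|_{op} = o(t^{-7/3^+})$, whose contribution to the potential integral is $o(T_0^{-1/3^+})\|\psi\|^2_{\mathcal{D}_0^{1,2}(T_0,+\infty)}$ and hence negligible for $T_0$ large.

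The hyperbolic-parabolic case is where I expect the real work to be, and my plan is to mirror the cluster decomposition of \eqref{def:decomposed_lagrangian_action} at the Hessian level: split $\nabla^2 U(\gamma^{x_0}(t))$ into intra-cluster diagonal blocks (each of which is asymptotically that of a pure parabolic motion in the internal variables of the cluster and inherits the sharp $-2/9$ constant from the minimality of the cluster central configuration) and inter-cluster off-diagonal blocks (whose operator norm is $O(1/t^3)$, since pairs of bodies in distinct clusters separate linearly). Applying the parabolic estimate to each diagonal block and the hyperbolic estimate to each off-diagonal block, then summing, will give coercivity for $T_0$ large. The main obstacle will be keeping precise track of the cross terms so that they remain a genuine lower-order perturbation of the intra-cluster quadratic forms, which is needed to preserve the sharp intra-cluster constant and to prevent a potential loss when multiple clusters interact.
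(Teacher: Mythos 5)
Your proposal is correct, and in the parabolic case you take a route that is genuinely different from, and in fact more careful than, the paper's.

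In the hyperbolic case your argument coincides with the paper's: the operator-norm bound $\|\nabla^2 U(\gamma^{x_0}(t))\|\le C/t^3$, converted to $\tfrac{C}{T}\cdot\tfrac{1}{t^2}$ and then Hardy. In the parabolic case the paper writes the inequality $\langle\nabla^2 U(\gamma^{x_0}(t))\psi,\psi\rangle \ge -U_{\min}\|\psi\|^2_\mathcal{M}/\|\gamma^{x_0}(t)\|^3_\mathcal{M}$ directly at the minimizing curve. That inequality rests on the decomposition of $\nabla^2 U(y)$ into the radial term $3U(b)\,\mathcal M y\otimes\mathcal M y/\rho^5$, the tangential Hessian $\nabla^2_b U(b)/\rho^3$, the cross term $\nabla_b U(b)\otimes\mathcal M y/\rho^4$ and the negative part $-U(b)\mathcal M/\rho^3$ (with $b=y/\rho$); the cross term vanishes and the tangential Hessian is positive semidefinite only when $b$ is a minimal central configuration, i.e.\ for $y$ proportional to $b_m$, not for the full $\gamma^{x_0}(t)$. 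Your approach — prove the sharp $-\tfrac{2}{9}\|\psi\|^2/t^2$ bound only for the frozen Hessian $\nabla^2 U(r_0(t))$ (where it is justified by minimality of $b_m$), then absorb $\nabla^2 U(\gamma^{x_0})-\nabla^2 U(r_0)$ as a third-derivative perturbation — sidesteps this issue and is the more rigorous version of what the paper is implicitly doing. You also sketch the hyperbolic-parabolic case via the cluster splitting of the Hessian, whereas the paper's proof of this lemma stops after the hyperbolic and parabolic cases and never returns to $(HP)$; your sketch is the right strategy, though you correctly flag that the inter-cluster cross terms still need to be carried through.

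One small inaccuracy worth fixing: you invoke $\|\gamma^{x_0}(t)-r_0(t)\|=o(t^{1/3^+})$, quoting Theorem \ref{thm_parabolic}. That asymptotic is for the particular solution constructed there; for an \emph{arbitrary} minimizer $\varphi^{x_0}\in\mathcal D_0^{1,2}(1,+\infty)$ the only a priori information is the Hardy bound \eqref{dis_space_D012}, giving $\|\varphi^{x_0}(t)\|_\mathcal{M}\le\|\varphi^{x_0}\|_{\mathcal D}\sqrt{t}$, hence $\gamma^{x_0}(t)-r_0(t)=O(\sqrt t)$. Since $\sqrt t=o(t^{2/3})$, the segment from $r_0(t)$ to $\gamma^{x_0}(t)$ stays at pairwise distance $\gtrsim t^{2/3}$ from $\Delta$, so $\|\nabla^3 U\|\lesssim t^{-8/3}$ along it and $\|\nabla^2 U(\gamma^{x_0}(t))-\nabla^2 U(r_0(t))\|=O(t^{-13/6})$. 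Its contribution is then bounded after Hardy by $O(T_0^{-1/6})\|\psi\|^2_{\mathcal D_0^{1,2}(T_0,+\infty)}$, still negligible for $T_0$ large — so your argument survives, but the exponent you quoted should be corrected.
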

\begin{proof}
    We distinguish the hyperbolic case from the parabolic one to prove the claim.

    Starting with the hyperbolic case, we notice that
\[
\langle \nabla^2 U(at)\psi(t),\psi(t)\rangle\geq -C\frac{\|\psi(t)\|_\mathcal{M}^2}{\|\gamma^{x_0}(t)\|_\mathcal{M}^3} \geq - \frac{C}{\|a\|_\mathcal{M}^3 T}\frac{\|\psi(t)\|_\mathcal{M}^2}{t^2},
\]
for some constant $C>0$ and for $t\geq T$. Then
\[
\int_{T}^{+\infty} \|\dot{\psi}(t)\|_\mathcal{M}^2 + \langle \nabla^2 U(\gamma^{x_0}(t))\psi(t),\psi(t)\rangle\ \ud t \geq \int_{T}^{+\infty} \|\dot{\psi}(t)\|_\mathcal{M}^2 - \frac{C'}{ T}\frac{\|\psi(t)\|_\mathcal{M}^2}{t^2}\ \ud t \geq \bigg(1-\frac{4C'}{T}\bigg)\int_{T}^{+\infty}\|\dot{\psi}(t)\|_\mathcal{M}^2\ \ud t.
\]
Choosing $T$ big enough so that $1-\frac{4C'}{T}>0$, we obtain the coercivity of $L_{x_0,T}$. 

To prove \eqref{coercivity_LxT} in the parabolic case, we use the inequality
\[
\langle \nabla^2 U(\gamma^{x_0}(t))\psi(t),\psi(t)\rangle \geq -U_{min}\frac{\|\psi(t)\|_\mathcal{M}^2}{\|\gamma^{x_0}(t)\|_\mathcal{M}^3}, 
\]
which implies
\[
\begin{split}
    \langle \nabla^2 U(\gamma^{x_0}(t))\psi(t),\psi(t)\rangle &\geq -U_{min}\frac{\|\psi(t)\|_\mathcal{M}^2}{\|r_0(t)\|_\mathcal{M}^3}\frac{1}{\big(1+\frac{\|\varphi(t)\|_\mathcal{M}}{\|r_0(t)\|_\mathcal{M}} + \frac{\|x - r_0(1)\|_\mathcal{M}}{\|r_0(t)\|_\mathcal{M}}\big)^3}\\
    & \geq -U_{min}\frac{\|\psi(t)\|_\mathcal{M}^2}{\|\beta b_m t^{2/3}\|_\mathcal{M}^3}\\
    & = -\frac{2}{9}\frac{\|\psi(t)\|_\mathcal{M}^2}{t^2},
\end{split}
\]
being $r_0(t) = \beta b_m t^{2/3}$. Then,
\[
\begin{split}
    \ud^2 \tilde{\mathcal{A}_{\gamma^{x_0}(T)}^T}(\tilde\varphi)[\psi,\psi] &\geq \|\psi\|_{\tilde{\mathcal D}_T}^2 - \frac{2}{9}\int_{T}^{+\infty}\frac{\|\psi(t)\|_\mathcal{M}^2}{t^2}\ \ud t\\
    &\geq \|\psi\|_{\tilde{\mathcal D}_T}^2 - \frac{8}{9} \|\psi\|_{\tilde{\mathcal D}_T}^2\\
    &= \frac{1}{9} \|\psi\|_{\tilde{\mathcal D}_T}^2.
    \end{split}
\]
\end{proof}

Let $T_0>1$ be a time such that $\ud^2 \tilde{\mathcal{A}}_{\gamma^{x_0}(T_0)}^{T_0}$ is coercive, as proved in Lemma \ref{lem:T_hess_A_coerc}. Denote $z_0=\gamma^x(T_0)$. By the Implicit Function Theorem, there are two neighborhoods $\mathcal{U}_{z_0}\subset\R^{d(N-1)}$ and $\mathcal U_{\varphi^{z_0}}\subset\tilde{\mathcal {D}}_{T_0}$, and there is a function $\varphi:\mathcal{U}_{x_0}\rightarrow \mathcal U_{\varphi^{z_0}}$ such that $\varphi(z)=\varphi^z$ is the unique critical point of the functional $\tilde{\mathcal A}_{z}^{T_0}$ in $\tilde{\mathcal{D}}_{T_0}$, that turns out to be its minimum. Moreover, $z \mapsto \varphi^z$ is smooth, and the function $u_{T_0}(z)$, defined by
\[
u_{T_0}(z)=\tilde{\mathcal{A}}_{z}^{T_0}(\varphi^z),
\]
is smooth and such that 
$\nabla u_{T_0}(z)=-\mathcal M\,\dot \varphi^z(T_0)$. 

Up to consider a ball $B_R=B_{R}(z_0) \subset \mathcal U_{z_0}$, by continuity, there exists $\varepsilon>0$ such that it is well defined 
$\xi=\xi(z,t)$, for every $z \in  B_R$, $t\in (1-\varepsilon, T_0]$, as the unique solution of
\begin{equation}\label{eq:system_xi}
\begin{cases}
    \ddot\xi = \nabla U(\xi),\quad 1-\varepsilon<t \le T_0\\
    \xi(z,T_0) = z \in {B}_R\\
    \dot\xi(z,T_0) = -\mathcal{M}^{-1}\,\nabla u_{T_0}(z) + \dot r_0(T_0).
\end{cases}
\end{equation}
We briefly comment on the existence of $\varepsilon$. If $z=z_0$,  then $\xi(z_0,1)=x_0 \notin \Delta$. Hence, there exists $\varepsilon>0$ such that we can extend $\xi$ backward up to the time $1-\varepsilon$. By continuity, it is possible to extend the arguments in a neighborhood of $x_0$.

\smallskip
Fix $x$ such that there exist $z \in B_R$ and $s \in (1-\varepsilon,T_0)$, for which $\xi(z,s)=x$. By translating by $r_0$ the flow $\xi$, we can define $\varphi(z,t) = \xi(z,t) - r_0(t)$, that satisfies
\begin{equation}
\label{eq:NDsystemT0}
\begin{cases}
\ddot{\varphi} = \nabla U(r_0+ \varphi)-\ddot r_0, \quad 1-\varepsilon < t \le T_0
    \\
    \dot {\varphi}(T_0) = -\mathcal{M}^{-1}\,\nabla u_{T_0}(z)
    \\
    \varphi(s) = x- r_0(s).
\end{cases}
\end{equation}
Clearly, $\varphi(z,T_0)=\varphi^z(T_0)=0$, so that $\varphi(z,\cdot)$ given in \eqref{eq:NDsystemT0}
 can be extended as the unique solution of $\eqref{eq:NDsystemT0}_1$ and $\eqref{eq:NDsystemT0}_2$ such that $\varphi(z,T_0)=\varphi^z(T_0)$.
 
It results that $\varphi(z,\cdot)$ is a critical point of 

\begin{equation}
\begin{cases}
    \label{finite-time problem}
    &\mathcal{J}_{x,s} (\varphi)= \displaystyle{\int_{s}^{T_0} \frac{\|\dot\varphi(t)\|_\mathcal{M}^2}{2}+U(r_0(t)+\varphi(t)) - U(r_0(t))-\langle \ddot r_0(t), \varphi(t)\rangle_\mathcal{M}\ \ud t +u_{T_0}(\varphi(T_0)+r_0(T_0))},
    \\
    &\varphi \in H^1(s,T_0),\, \varphi(s)=x-r_0(s), \,\varphi(T_0)+r_0(T_0) \in B_R.
    \end{cases}
\end{equation}

\begin{rem}
\label{rem:uniqueness}
    If $\mathcal{A}_x$ admits a unique minimizer $\varphi^x$, over $\mathcal D$, then $\mathcal J_{x,s}$ admits a unique minimizer, which coincides with $\varphi^x\big|_{(s,T_0)} - x+r_0(s)$.
\end{rem}

\subsubsection{Spectral theory along the flow}

Thanks to the definition of $\xi(z,T)$ and $\varphi(z,T)$ \eqref{eq:system_xi}-\eqref{eq:NDsystemT0}, we can consider, for every $z \in \mathcal B_0$ and $T \in (1-\varepsilon,+\infty)$, the functional 
$\tilde{\mathcal A}_{\xi(z,T)}^{T}$, over $\tilde{\mathcal D}_T$, defined in \eqref{eq:def_right_tale_func}. The Hessian of this functional evaluated at $\tilde\varphi(z,\cdot) = \varphi(z,\cdot)|_{[T,+\infty)}-\varphi(z,T)$ is the bilinear form over $\tilde{\mathcal D}_T\times \tilde{\mathcal D}_T$ given by
\[
\ud^2 \tilde{\mathcal A}_{\xi(z,T)}^{T}(\tilde\varphi(z,\cdot))[\psi,\eta]=\int_T^{+\infty} \langle\dot\psi(s),\dot\eta(s)\rangle_\mathcal{M}^2+\langle \nabla^2 U(\xi(z,T))\psi(s),\,\eta(s)\rangle\,\ud s.
\]

Given $z$ and $T$,  we observe that $\ud^2 \tilde{\mathcal{A}}_{\xi(z,T)}^T(\tilde\varphi(z))[\psi,\psi]$ corresponds to the numerator of the Raileigh quotient
\begin{equation*}
\mathcal R_a(\psi):=
\frac{\int_{T}^{+\infty}\|\dot{\psi}(t)\|_\mathcal{M}^2+\langle \nabla^2 U(\xi(z,t))\psi(t),\psi(t)\rangle\ \ud t}{\int_{T}^{+\infty}\frac{\|\psi(t)\|_\mathcal{M}^2}{t^3}\ \ud t}, \ \ \psi \in \tilde{\mathcal D}_T,
\end{equation*}
of the Sturm-Liouville problem
\begin{equation}
\label{eq:eigenfunction}
\begin{cases}
    -\ddot{\psi}(t) +  \nabla^2 U(\xi(z,t))\psi(t) = -\frac{\lambda}{t^3}\psi(t) \ \mbox{ a.e. in } \ (T,+\infty)
    \\
    \psi \in \tilde{\mathcal D}_T.
\end{cases}
\end{equation}

Fix $z,T$. We already observed, since (roughly speaking) $\nabla^2 U(\xi)\approx \nabla^2 U(r_0)$ for large values of $t$,  that there exists $\hat T >T$ such that $\ud^2 \tilde{\mathcal{A}}_{\xi(z,\hat T)}^{\hat T}(\tilde \varphi(z,\cdot))$ is coercive on $\tilde{\mathcal{D}}_{\hat T}$. Moreover, since $\xi(z,\cdot)$ is far from collisions, there exists $C>0$ such that
\[
\inf_{\eta:\, \|\eta\|=1} \langle \nabla^2 U(\xi(z,t))\eta(t),\,\eta(t)\rangle \ge - \frac{C}{t^3}, \ \mbox{ for every } \ t \in [T, \tilde T].
\]
As a consequence, by  using Lemma \ref{lem:T_hess_A_coerc}, there exist $\mu_0 >0$ and $\tilde\alpha>0$ such that
\[
\begin{split}
\int_T^{+\infty} &\|\dot \psi(t)\|_\mathcal{M}^2+\langle \nabla^2 U(\xi(z,t))\psi(t),\,\psi(t)\rangle + \mu_0 \frac{\|\psi(t)\|_\mathcal{M}^2}{t^3}\, \ud t \\
&= \int_{T}^{\tilde T} \|\dot \psi(t)\|_\mathcal{M}^2+\langle \nabla^2 U(\xi(z,t))\psi(t),\,\psi(t)\rangle + \mu_0 \frac{\|\psi(t)\|_\mathcal{M}^2}{t^3}\, \ud t \\
& \quad + \int_{\tilde T}^{+\infty} \|\dot \psi(t)\|_\mathcal{M}^2+\langle \nabla^2 U(\xi(z,t))\psi(t),\,\psi(t)\rangle + \mu_0 \frac{\|\psi(t)\|_\mathcal{M}^2}{t^3}\, \ud t
\\
&\ge \int_{T}^{\tilde T}  \|\dot \psi(t)\|_\mathcal{M}^2-C\frac{\|\psi(t)\|_\mathcal{M}^2}{t^3} +\mu_0 \frac{\|\psi(t)\|_\mathcal{M}^2}{t^3}\, \ud t  +\alpha\,\int_{\tilde T}^{+\infty}\|\dot \psi(t)\|^2_\mathcal{M}\, \ud t
\\
&\ge \int_{T}^{\tilde T}\|\dot \psi(t)\|^2_\mathcal{M}\, \ud t+\alpha\,\int_{\tilde T}^{+\infty}\|\dot \psi(t)\|^2_\mathcal{M}\, \ud t
\\
&\ge \tilde\alpha \|\psi\|^2_{\tilde{\mathcal D}_T} \ \mbox{ for every } \ \psi \in \tilde{\mathcal D}_T,
\end{split}
\]
where $\alpha$ is given in Lemma \ref{lem:T_hess_A_coerc}.

Hence, for every $f \in L^2((T,+\infty);\ud t/t^3)$ there exists a unique $\psi$ such that
\begin{equation}
\label{eq:coerc_modified}
\ud^2 \tilde{\mathcal{A}}_{\xi(z,T)}^T(\tilde \varphi(z,\cdot))[\psi, \eta] + \int_T^{+\infty} \frac{\langle \mu_0\psi(t), \eta(t)\rangle}{t^3}\,\ud t = \int_T^{+\infty} \frac{\langle f(t), \eta(t)\rangle}{t^3}\,\ud t, \ \mbox{ for every } \ \eta \in \tilde{\mathcal D}_T.
\end{equation}
As a consequence, it results defined an operator $\mathcal{F}:L^2((T,+\infty);\ud t/t^3)\to L^2((T, \infty); \ud t/t^3)$ such that $\mathrm{im}(\mathcal{F})=\tilde{\mathcal D}_T$, and also such that $\mathcal{F}(f)=\psi$ solving \eqref{eq:coerc_modified}. From Lemma \ref{prop:compact_emb}, $\mathcal{F}$ is compact (and positive), and hence standard techniques imply that \eqref{eq:eigenfunction} admits a sequence of eigenvalues
\[
\lambda_1 \le \lambda_2 \le \dots \lambda_n\le \cdots \to +\infty,
\]
such that

\begin{equation}\label{lambda_1}
\lambda_1(\xi(z,T),T) = \min_{\psi\in\tilde{\mathcal{D}}_{T},\ \psi\not\equiv0} \frac{\int_{T}^{+\infty}\|\dot{\psi}(t)\|_\mathcal{M}^2+\langle  \nabla^2 U(\xi(z,t))\psi(t),\psi(t)\rangle\ \ud t}{\int_{T}^{+\infty}\frac{\|\psi(t)\|_\mathcal{M}^2}{t^3}\ \ud t}.
\end{equation}

We define the function $\Lambda$ as:
\[
\Lambda(z,t)= \lambda_1(\xi(z,t),t).
\]

\smallskip
\begin{rem}
By the variational formulation of eigenvalues as minmax, we can write the $k$-th eigenvalue as
\[
\Lambda_k(z,T) = \min_{M\subseteq \tilde{\mathcal D}_T,\ \dim(M)=k} \max_{M\setminus\{0\}} \mathcal{R}_a(\psi),
\]
where $\mathcal{R}_a(\psi)$ is the quotient in \eqref{lambda_1}.
\end{rem}

\begin{rem}
Given $z \in \mathcal B_0$, we have that $t \mapsto \Lambda(z,t)$ is strictly decreasing. Indeed, for every fixed $\delta >0$, if we extend by zero in $(t,t+\delta)$  an eigenfunction associated with $\Lambda(z,t+\delta)$, we obtain a function $\bar \psi$ in $\tilde{\mathcal D}_T$ such that $\mathcal R(\bar \psi)=\Lambda(z,t+\delta)$. Hence, by the characterization of $\Lambda(z,t)$ given by \eqref{lambda_1} we have $\Lambda(z,t)\le \Lambda(z,t+\delta)$. Moreover,
if $\Lambda(z,t+\delta)=\Lambda(z,t)=\lambda$, for some $\delta>0$, one can extend by zero an eigenfunction of \eqref{eq:eigenfunction} associated with $\lambda$ in $\tilde{\mathcal{D}}_{t+\delta}$ to an eigenfunction of \eqref{eq:eigenfunction} associated with $\lambda$ in $\tilde{\mathcal D}_T$. This is impossible since the eigenfunctions of \eqref{eq:eigenfunction} cannot be zero in an open set of their domain, thanks to unique continuation due merely to the continuity of $\nabla^2 U(\xi(z,t))$. Hence, for every $z \in \mathcal B_0$, there exists a unique $t$ such that $\Lambda(z,t)=0$.
\end{rem}

\begin{rem}
    We notice that there exists $\mathcal{U}_x$, a neighborhood of $x$, such that 
    \[
    \Gamma \cap \mathcal{U}_x = \{y \in \mathbb R^n:\,  y = \xi(z,t) , \, \mbox{ for some } \, (t,z) \in (1-\varepsilon, T_0)\times \mathcal{B}_0\, \mbox{ such that } \, \Lambda(z,t)=0 \}.
    \]
    For every $x \in \Gamma \cap \mathcal U_x$, if $(z,t)$ is such that $\xi(z,t) =x$, then $\xi(\xi(z,t-t'),t')=x$. 
\end{rem}

\begin{rem}
For every $\mathcal W$ transversal section to $\xi$ containing $z_0$, we can define 
\[
\xi_\mathcal W (z,t) = \xi(z,t), \ \mbox{ for every } \ z\in \mathcal W, t \in (1-\varepsilon, T_0).
\]
Consequently, we can define 
\[
\Lambda_\mathcal W(z,t)=\lambda_1(\xi_\mathcal W(z,t),t),  \ \mbox{ for every } \ z\in \mathcal W, t \in (1-\varepsilon, T_0).
\]
Again, $t\mapsto \Lambda_\mathcal W(z,t)$ is strictly decreasing. Hence, for every $z \in \mathcal W$, there exists a unique $t$ such that $\Lambda_\mathcal W(z,t)=0$.
\end{rem}

\subsubsection{Case $\Lambda(z_0,1)$ simple}
\label{ssec:simple}

Since $\lambda_1(x_0,1)<\lambda_2(x_0,1)$, then there exists $\delta$ such that $\Lambda(z_0,t)$ is simple (i.e. $\lambda_1(\xi(z_0,t),t)<\lambda_2(\xi(z_0,t),t)$) for $t\in(1-\delta,1+\delta)$. Let $t\mapsto \psi(t,T)$ be the first eigenfunction in \eqref{eq:eigenfunction} associated with $\Lambda(T):=\Lambda(z_0,T)$ such that 
\[
\int_{T}^{+\infty} \frac{\|\dot{\psi}(t,T)\|_\mathcal{M}^2}{t^2}\ \ud t = 1.
\]

Denoting $\psi_T(t,T) := \frac{\partial}{\partial T}\psi(t,T)$, we use \eqref{lambda_1} to compute $\Lambda(T)$:
\[
\begin{split}
    \frac{\ud}{\ud T} \Lambda(T)
&= -\|\dot{\psi}(T,T)\|_\mathcal{M}^2 + \int_{T}^{+\infty} 2\langle \dot{\psi}_T(t,T),\dot{\psi}_T(t,T)\rangle_\mathcal{M} + 2\langle V(t)\psi_T(t,T),\psi_T(t,T)\rangle_\mathcal{M}\ \ud t\\
& = -\|\dot{\psi}(T,T)\|_\mathcal{M}^2 + 2\langle \psi_T(t,T),\dot{\psi}_T(t,T)\rangle_\mathcal{M}\bigg|_{T}^{+\infty} + 2 \int_{T}^{+\infty} -\langle\ddot{\psi}(t,T),\psi_T(t,T)\rangle_\mathcal{M} + \langle V(t)\psi_T(t,T),\psi(t,T)\rangle_\mathcal{M}\ \ud t\\
& = -\|\dot{\psi}(T,T)\|_\mathcal{M}^2 - 2\langle \psi_T(T,T),\dot{\psi}_T(T,T)\rangle_\mathcal{M} + 2 \lambda_1(T)\int_{T}^{+\infty} \frac{\langle\psi(t,T),\psi_T(t,T)\rangle_\mathcal{M}}{t^2}\ \ud t.
\end{split}
\]
From the boundary condition and the fact that we supposed $\int_{T}^{+\infty} \frac{\|\dot{\psi}(t)\|_\mathcal{M}^2}{t^2}\ \ud t \equiv 1\ \forall T$, it follows
\[
\frac{\ud}{\ud T}\int_{T}^{+\infty} \frac{\|\dot{\psi}(t,T)\|_\mathcal{M}^2}{t^2}\ \ud t = 0,
\]
which implies
\[
\int_{T}^{+\infty} \frac{2\langle\psi(t,T),\psi_T(t,T)\rangle_\mathcal{M}}{t^2}\ \ud t = 0.
\]
Besides, from the boundary condition, we have
\[
\frac{\partial}{\partial T}\psi(T,T) = \dot{\psi}(T,T) + \psi_T(T,T) = 0.
\]
We can thus conclude that 
\[
\frac{\ud}{\ud T}\Lambda(T) = -\|\dot{\psi}(T,T)\|_\mathcal{M}^2 - 2\langle \psi_T(T,T),\dot{\psi}(T,T)\rangle_\mathcal{M} = - \|\dot{\psi}(T,T)\|_\mathcal{M}^2 + 2\|\dot{\psi}(T,T)\|_\mathcal{M}^2 = \|\dot{\psi}(T,T)\|_\mathcal{M}^2 > 0.
\]

This implies that we can apply the Implicit Function Theorem to obtain that the function $t(z)$, defined for all $z \in \mathcal U_{z_0}$ such that
\[
\Lambda(z,t)=0 \ \mbox{ if and only if } \ (z,t)=(z,t(z)),
\]
is smooth. The same holds if one considers $\Lambda_\mathcal W$ instead of $\Lambda$, implying that for every transversal section $\mathcal W$, $\Lambda_\mathcal W(z,t) =0$ if and only if $t=t_\mathcal W(z)$ is smooth on a neighborhood of $z_0$ in $\mathcal W$, that is, $\mathcal V_{z_0}=\mathcal U_{z_0}\cap \mathcal W$.

Therefore, $\Gamma' \cap \mathcal{U}_x$ is the image through $\xi_\mathcal W$ of a graph of a smooth function.
Since $\Gamma' = \cup_{k \in \mathbb N} \Gamma_k$, where 
\begin{equation}
    \label{eq:gammaone}
\Gamma_k=\{x=\xi(z,t):\, z \in \mathcal W, \, \lambda_1(\xi(z,t),t) \le \lambda_2(\xi(z,t),t)+1/k\},
\end{equation}
 then  $\Gamma' \cap \mathcal{U}_x$ is countably $\mathcal{H}^{d(N-1)-1}$-rectifiable. Indeed, if we define
 \[
 \hat \xi(z)=\xi(z,t_{z_0}(z)), \ \mbox{ for } \ z \in \mathcal V_{z_0},
 \]
 we have
  \[
 \Gamma'\subset \bigcup_{x_0 \in \Gamma'} \hat\xi(\mathcal V_{z_0})=\bigcup_{k \in \mathbb N} \bigcup_{x_0 \in \Gamma_k} \hat\xi(\mathcal V_{z_0})=\bigcup_{k \in \mathbb N} \hat\xi(\mathcal V_{z_k}).
 \]

\subsubsection{Case $\Lambda$ not simple}
\label{ssec:multiple}

Let $x \in \Gamma$, for which a local flow \eqref{eq:NDsystemT0} is defined. Suppose that for some $z \in B_R$ and $s \in (1-\varepsilon,T_0]$, $\xi(z,s)\in \Gamma$ and  $\lambda_1(z,s)=\dots = \lambda_k(z,s)=0$, with $k \ge 2$. This implies that there exist $w_1, \cdots, w_k$ linearly independent in $\tilde{\mathcal D}_{s}$ such that 
\begin{equation}
    \label{eq:Hessian_zero}
\int_{s}^{+\infty}\|\dot{w_i}(t)\|_\mathcal{M}^2+\langle \nabla^2 U(\xi(z,t))\,w_i(t),\ w_i(t)\rangle_\mathcal{M}\ \ud t = 0,\quad \forall i=1,\dots, k.
\end{equation}
The functions $w_i$ are independent solutions of
\begin{equation}
    \begin{cases}
\label{eq:linearpb}
    \ddot w = \nabla^2U (\xi(z,t))\,w, \ \ t >1
    \\
    w\in \tilde{\mathcal D}_{s}.
\end{cases}
\end{equation}

Consider now the fundamental solution $\Phi=\Phi(z,t) \in \mathbb R^{d(N-1)}$, satisfying
\begin{equation}\label{eq:system_Phi}
\begin{cases}
\ddot{\Phi} = \nabla^2U(\xi(z,t))\,\Phi, \ \  t > 1-\varepsilon
\\
\Phi(z,T_0) = I_d,
\\
\Phi \in \tilde{\mathcal D}_{1-\varepsilon}.
\end{cases}
\end{equation}

It is plain to verify that $w_i(t)=\Phi(z,t)\,w_i(T_0)$. Hence, $w_i(T_0)$ are linear independent elements of $\mathrm{Ker}\,\Phi(z,s)$. Therefore, $\mathrm{dim}(\mathrm{Ker}\,\Phi(z,s))\ge k$.

Vice versa, let $\theta_i \in \mathrm{Ker}\,\Phi(z,s)$. Hence, the function $v_i$ defined by
\[
v_i(t):=\Phi(z,t)\,\theta_i
\]
is a solution of \eqref{eq:linearpb}. Therefore, $v_i$ satisfies \eqref{eq:Hessian_zero}, and hence $\mathrm{dim}(\ud^2 \tilde{\mathcal{A}}_{\xi(z,s)}^s(\varphi(z,\cdot))) \ge k$. 

Hence, we have
\[
\mathrm{dim}(\mathrm{Ker}\,\ud^2 \tilde{\mathcal{A}}_{\xi(z,s)}^s(\tilde\varphi(z,\cdot))) = \mathrm{dim}(\mathrm{Ker}\,\Phi(z,s)).
\]

Therefore, if $x \in \mathcal U_{x_0}$ is such that $x=\xi(z,s)$, with $\lambda_1(\xi(z,s),s)$ not simple, then 
\[
\mathrm{dim}(\mathrm{Ker}\,\Phi(z,s)) \ge 2.
\]

\subsection{$\mathcal{H}^{d(N-1)-1}$-rectifiability of $\Gamma$}

\begin{thm}
    $\Gamma$ is countably $\mathcal{H}^{d(N-1)-1}$-rectifiable.
\end{thm}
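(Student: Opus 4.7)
The plan is to decompose $\Gamma$ according to the multiplicity of $0$ as an eigenvalue of the Hessian along the flow, and to handle the two resulting strata separately with the tools built in Sections 4.3.3 and 4.3.4. Since $\Omega$ is second countable and, for each $x_0 \in \Gamma$, Sections 4.3.1--4.3.2 construct a local flow $\xi : B_R(z_0) \times (1-\varepsilon, T_0) \to \Omega$ satisfying \eqref{eq:system_xi} together with the spectral function $\Lambda$ on a neighborhood $\mathcal U_{x_0}$, a standard covering argument reduces matters to showing that $\Gamma \cap \mathcal U_{x_0}$ is countably $\mathcal{H}^{d(N-1)-1}$-rectifiable for a fixed $x_0$. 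Inside such a neighborhood I split $\Gamma \cap \mathcal U_{x_0} = \Gamma_s \cup \Gamma_m$, where $\Gamma_s$ is the set of conjugate points $x = \xi(z,s)$ at which the first eigenvalue $\Lambda(z,s) = 0$ is simple (equivalently $\dim \mathrm{Ker}\, \Phi(z,s) = 1$), and $\Gamma_m$ is the complementary multiple stratum.

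The stratum $\Gamma_s$ is essentially handled in Section 4.3.3. For a transversal section $\mathcal W$ of codimension one through $z_0$, the implicit function theorem, combined with the strict positivity $\partial_T \Lambda > 0$ computed there, yields a smooth function $t_{\mathcal W} : \mathcal V_{z_0} \subset \mathcal W \to (1-\varepsilon, T_0)$ such that $\Gamma_s \cap \mathcal U_{x_0}$ is covered by the image $\hat\xi_{\mathcal W}(\mathcal V_{z_0})$ of the smooth map $\hat\xi_{\mathcal W}(z) := \xi_{\mathcal W}(z, t_{\mathcal W}(z))$ defined on an open subset of the $(d(N-1)-1)$-dimensional section $\mathcal W$. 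Combined with the countable decomposition $\Gamma_s = \bigcup_k \Gamma_k$ from \eqref{eq:gammaone} and the separability of $\Omega$, this makes $\Gamma_s$ countably $\mathcal{H}^{d(N-1)-1}$-rectifiable.

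For $\Gamma_m$, Section 4.3.4 gives $\mathrm{rk}(\xi_z(z,s)) = \mathrm{rk}(\Phi(z,s)) \le d(N-1) - 2$ at every preimage of a point of $\Gamma_m$, and I apply the Sard-type Theorem \ref{thm:CS_Sard} to the $C^\infty$ map $F(z,s) = \xi(z,s) : B_R \times (1-\varepsilon, T_0) \to \mathcal X$. The crucial observation is that a flow-composition identity, applied to the Lagrangian section $z \mapsto (z, -\mathcal M^{-1} \nabla u_{T_0}(z) + \dot r_0(T_0))$, together with the stationary Hamilton-Jacobi equation verified by $u_{T_0}$, forces $\dot\xi(z,s) \in \mathrm{Range}(\xi_z(z,s))$; hence $\mathrm{rk}(DF) = \mathrm{rk}(\xi_z) \le d(N-1) - 2$ on $F^{-1}(\Gamma_m)$. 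Theorem \ref{thm:CS_Sard} then gives, for every $R \ge 1$,
\[
\mathcal{H}^{d(N-1) - 2 + 3/R}\bigl(F(A_{d(N-1)-2})\bigr) = 0,
\]
with $N = d(N-1)+1$, $M = d(N-1)$, $k = d(N-1) - 2$. Letting $R$ be arbitrarily large (the flow is $C^\infty$ because $U \in C^\infty(\Omega)$) yields $\dim_{\mathcal H}(\Gamma_m) \le d(N-1) - 2$, so in particular $\mathcal{H}^{d(N-1)-1}(\Gamma_m) = 0$ and $\Gamma_m$ is trivially countably $\mathcal{H}^{d(N-1)-1}$-rectifiable. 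Hence $\Gamma = \Gamma_s \cup \Gamma_m$ is countably $\mathcal{H}^{d(N-1)-1}$-rectifiable, as required.

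The main technical hurdle I anticipate is the inclusion $\dot\xi \in \mathrm{Range}(\xi_z)$ for the specific flow \eqref{eq:system_xi}: because the initial velocity depends on $z$ through $\nabla u_{T_0}$, the standard flow-composition argument available for free-endpoint Lagrangian systems must be combined with the stationary Hamilton-Jacobi equation satisfied by $u_{T_0}$ (along the lines of Proposition \ref{prop:HJ_v_T}) in order to produce the desired relation $\nabla v(z)\cdot v(z) = \nabla U(z)$, where $v(z) = -\mathcal M^{-1}\nabla u_{T_0}(z) + \dot r_0(T_0)$. Without this identity the Sard bound degrades to $\dim_{\mathcal H}(\Gamma_m) \le d(N-1) - 1$, which is not enough to conclude the measure-theoretic smallness of $\Gamma_m$ directly, and rectifiability would instead have to be recovered by a separate stratification of $F^{-1}(\Gamma_m)$ by the constant-rank theorem.
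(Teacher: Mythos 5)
Your overall structure is sound and the first stratum (simple eigenvalue) is handled just as the paper does, via the smooth graph $t=t_{\mathcal W}(z)$ on transversal sections together with the covering by the closed sets $\Gamma_k$. The genuine divergence is on the multiple stratum. The paper exploits the strict monotonicity of $t\mapsto\Lambda(z,t)$ to \emph{fix} $t=1$ and work with the map $F(z)=\xi(z,1)$, which is $C^\infty$ from the $d(N-1)$-dimensional ball $\mathcal B$ to $\mathcal X$. Since $\Gamma''\cap\mathcal U_x\subset F(\{z:\mathrm{rk}\,\Phi(z,1)\le d(N-1)-2\})$, Theorem \ref{thm:CS_Sard} with $k=d(N-1)-2$, source and target both of dimension $d(N-1)$, gives $\mathcal H^{d(N-1)-2+2/R}(\Gamma''\cap\mathcal U_x)=0$ for all $R$, and one is done without needing to control $\dot\xi$. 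You instead keep the full map $(z,s)\mapsto\xi(z,s)$ on a $(d(N-1)+1)$-dimensional domain, which forces you to prove the tangency inclusion $\dot\xi(z,s)\in\mathrm{Range}(\xi_z(z,s))$ so that adding the time derivative does not raise the rank; with it, the Sard exponent $d(N-1)-2+3/R$ still yields the right conclusion. Both routes are valid; the paper's $t=1$ slicing simply bypasses the tangency step.

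The hurdle you flag is in fact not a gap: the inclusion $\dot\xi\in\mathrm{Range}(\xi_z)$ holds and there is a shorter argument than the one you sketch. Differentiating \eqref{eq:system_xi} in $z$ shows $\Phi=\xi_z$ solves the linearized equation $\ddot\Phi=\nabla^2U(\xi)\,\Phi$ with $\Phi(T_0)=I_d$ and $\dot\Phi(T_0)=-\mathcal M^{-1}\nabla^2u_{T_0}(z)$, while $\eta:=\dot\xi(z,\cdot)$ solves the same equation with $\eta(T_0)=v(z)$ and $\dot\eta(T_0)=\ddot\xi(T_0)=\mathcal M^{-1}\nabla U(z)$. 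Conservation of energy along $\gamma^z$ gives $\tfrac12\|v(z)\|_\mathcal M^2-U(z)=h$; differentiating in $z$ and using the symmetry of $\nabla^2u_{T_0}$ yields $-\mathcal M^{-1}\nabla^2u_{T_0}(z)\,v(z)=\mathcal M^{-1}\nabla U(z)$, i.e.\ $\dot\Phi(T_0)v(z)=\dot\eta(T_0)$. Thus $\eta$ and $\Phi\,v(z)$ share the same Cauchy data at $T_0$, so $\dot\xi(z,t)=\Phi(z,t)\,v(z)$ for all $t$ and in particular $\dot\xi(z,s)\in\mathrm{Range}(\Phi(z,s))$, exactly as the paper observes when discussing $\Gamma\setminus\Sigma$. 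This is the ``flow-composition plus Hamilton--Jacobi'' identity you anticipate, but it only requires the pointwise energy relation, not the full viscosity machinery. With this in place your Sard argument closes and the theorem follows.
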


\begin{proof}
We can split $\Gamma$ in two components:
\[
 \Gamma'=\{x \in \Gamma:\, \mathrm{dim}(\mathrm{Ker}(\ud^2 \mathcal A_{x}(\varphi^{x})))=1\},
\]
\[
\Gamma''=\{x \in \Gamma:\, \mathrm{dim}(\mathrm{Ker}(\ud^2 \mathcal A_{x}(\varphi^{x})))\ge 2\}.
\]

For every $x \in \Gamma$, as in \ref{eq:system_xi}, we can define a local flow $\xi=\xi(z,t)$, for $z \in \mathcal B$ and $t >1-\varepsilon$, whose image includes $\Gamma \cap \mathcal U_{x}$, with $\mathcal U_{x}$ a proper neighborhood of $x$.
 
We proved in Section \ref{ssec:simple} that for every $\mathcal W$  transversal section to $\dot \xi$ passing by $z=\gamma^x(T_0)$, we have
\begin{multline*}
    \Gamma'\cap \mathcal U_{x} =
    \{x=\xi(z,t),\,\mbox{ for some } \, z \in \mathcal W,\, \mbox{ and } \, t \in (1-\varepsilon, T_0),\, \mbox{ such that } \, \lambda_1(\xi(z,t),t)<\lambda_2(\xi(z,t),z)\},
\end{multline*}
and $\Gamma' = \bigcup_{k \in \mathbb N}\Gamma'\cap \mathcal U_{x_k}$, for (at most) countable many $x_k$. From \eqref{eq:gammaone} we proved that $\Gamma'$ is (at most) countably $(d(N-1)-1)$-rectifiable.

On the other hand, fixed $t=1$, thanks to the monotonicity of $t \mapsto \Lambda(z,t)$, there is a convenient way to rephrase $\Gamma''\cap \mathcal U_{x}$. We have
\begin{equation}
\label{eq:gammatwo}
\Gamma''\cap \mathcal U_{x}=\{x=\xi(z,1),\, \mbox{ for some }\, z \in \mathcal B,\, \mbox{ and }\, \lambda_1(\xi(z,1),1)=\lambda_2(\xi(z,1),1)\},
\end{equation}
resulting in a one-to-one correspondence between a subset of $\mathcal B$ and $\Gamma'' \cap \mathcal U_{x}$. This specific set of $z \in \mathcal B$ defined by \eqref{eq:gammatwo} is not (a priori) smooth, but we can still define the function $F:\mathcal{B}\rightarrow\R^{d(N-1)}$,
\[
F(z) = \xi(z,1).
\]

Differentiating \eqref{eq:system_xi}, we obtain \eqref{eq:system_Phi}, that is $\xi_z(z,1)=\Phi(z,1)$. There exists $\mathcal U_{x_0}'$ such that $x \in \Gamma'' \cap \mathcal U_{x_0}' = F(z)$, for $z \in \mathcal{B}$ such that $\Phi(z,1)$ has rank at most $d(N-1)-2$.  

Hence, by Sard-type lemma applied as in \cite[Lemma 6.6.1]{MR2041617} to the function $F$, we have that $\mathcal{H}^{d(N-1)-2+\varepsilon}(\Gamma''\cap \mathcal{U}_x) \le \mathcal H^{d(N-1)-2+\varepsilon}(F(\{z:\, \mathrm{rk}\,F_z(z) \le d(N-1)-2 \}))= 0$, for every $\varepsilon>0$. This, in particular, implies that $\Gamma''\cap \mathcal U_x$ is (at least) $(d(N-1)-1)$-rectifiable. Again, since $\Gamma''$ is a closed set, then there are (at most) countably many $x_k$ such that $\Gamma'' = \bigcup_{k \in \mathbb N} \Gamma''\cap \mathcal U_{x_k}$, where each $\mathcal U_{x_k}$ is a proper neighborhood of such $x_k$.
\end{proof}

\subsection{Estimate on the Hausdorff measure of $\Gamma \setminus \Sigma$}

Fix $x_0 \in \Gamma \setminus \Sigma$, and let $z_0=\gamma^{x_0}(T_0)$. Then, for every $z \in B_R(z_0)$, they are defined
\[
\theta(z) \ \mbox{generator of }\mathrm{Ker}\,\Phi(z,t(z)), \quad 
v(z) = \dot \xi(z,T_0).
\]
Observe that $\theta(z)$ and $v(z)$ are not parallel. Since $\eta(\cdot)=\dot \xi(z,\cdot)$ satisfies
\begin{equation*}
    \begin{cases}
        \ddot \eta = \mathcal M^{-1}\nabla^2 U(\xi)\eta
        \\
        \eta(T_0)=v(z)
        \\
        \eta \in {\mathcal D^{1,2}(1,T_0)},
    \end{cases}
\end{equation*}
then it must hold $\eta(t)=\Phi(z,t)\,v(z)$. Here, $\mathcal{D}^{1,2}(1,T_0)$ is defined similarly to $\mathcal D$ but on the time interval $(1,T_0)$ and without the requirement that $\eta(0)=0$.  If $v(z)= k\,\theta(z)$, for some $k \in\mathbb R$, then $\eta(t(z))=0$, and hence $\dot \xi(z,t(z))=0$. This contradicts at $t=t(z)$ the conservation of the mechanical energy, since $h\ge 0$ and $U>0$.

\smallskip
Now, consider the vector space of dimension $d(N-1)-2$, orthogonal to $\theta_0, v_0$, and denote it by $W_0$. For each $w \in W_0$, we can define $\eta_w=\eta_w(s)$ by
\[
\begin{cases}
    \dot \eta_w = \theta(\eta_w(s))
    \\
    \eta_w(0)=z_0+w.
\end{cases}
\]

We can then define $\mathcal W$ a $(d(N-1)-1)$-dimensional sub-manifold of $\mathcal B_0$, directly through the parameterization $U\ni (w,s)\mapsto \sigma(w,s)=\eta_w(s)$. It is plain to verify that $\partial_s \sigma(w,s)=\theta(\eta_w(s))$ and that $\partial_w \sigma(w,0) \in W_0$. This in turn implies that $v(\eta_w(0))\notin T_{\eta_w(0)}\mathcal W$. As a consequence, up to defining $\sigma$ in a subset of $U$, $v(\eta_w(s)) \notin T_{\eta_w(s)}\mathcal W$, for every $(w,s) \in \mathcal W$.

 Starting from the chart $(U,\sigma)$, we compute
\begin{multline*}
    \partial_s \hat \xi(w,s,t(w,s))=\partial_s(\xi(\sigma(w,s,t(w,s))))=\partial_z \xi(\sigma(w,s,t(w,s)))\,\partial_s\sigma(w,s,t(w,s))=
    \\
    \partial_z \xi(z,t)\,\theta(z) + \partial_t \xi(z,t(z))\,\langle \nabla t(z), \theta(z)\rangle.
\end{multline*}

For the second addendum of the last line above, we can adapt \cite[Proposition 6.6.8]{MR2041617} to our case by proving that for every $z\in \mathcal V_{z_0}\subset \mathcal B_0$ such that $\xi(z,t(z))\notin \Sigma$, it holds $\langle \nabla t(z), \theta(z)\rangle =0$. Indeed, as in \cite{MR2041617}, if $\langle \nabla t(\bar z), \theta(\bar z)\rangle \neq 0$, in view of \cite[Lemmas 6.6.6 and 6.6.11]{MR2041617}, it holds that for a $\rho>0$, the equation
\[
\xi(z,t(\bar z)) = \bar x  + s\,w
\]
does not admit any solution $z \in B_\rho(\bar z)$, if $s<0$ is small enough, $w$ is a generator of $\mathrm{Im}\,\Phi(\bar z, t(\bar z))^\perp$, and $\bar x = \xi(\bar z, t(\bar z))$. Hence, as in the proof of \cite[Proposition 6.6.8]{MR2041617}, but involving $\mathcal J_{\xi(\bar z,t(\bar z)),t(\bar z)}$ given in \eqref{finite-time problem} instead of $J_{\bar t}$, there exists a sequence of arcs $(\varphi_n)_n$ where every $\varphi_n$ minimizes $\mathcal J_{x_n,t(\bar z)}$ with $x_n=\xi(\bar z, t(\bar z))\pm (1/n)\,w \to \xi(\bar z,t(\bar z))$, and is such that $\varphi_n(T_0)+r_0(T_0) \notin B_\rho(\bar z)$. Up to a subsequence, passing to the limit for $n\to \infty$, we obtain another minimizer of $\mathcal J_{\xi(\bar z,t(\bar z)),t(\bar z)}$ given in \eqref{finite-time problem}, that is a contradiction.

This proves that $\ud \xi_{\mathcal W}(z,t(z)) \theta(z) = 0$, and hence $\mathrm{rk}(\ud\xi_{\mathcal W}(z,t(z))) \le d(N-1)-2$.

The application of the Sard-type Lemma on Banach manifolds  \cite{Sard} implies that
\[
\mathcal H^{
d(N-1)-2+\varepsilon}(\Gamma '\cap \mathcal U_x)\le \mathcal H^{d(N-1)-2+\varepsilon}(\{\xi_{\mathcal W}(z,t(z)):\,z \in \mathcal W\})=0,
\]
for every $\varepsilon >0$. 

Now, for $k\in \mathbb N$, consider
\[
\Gamma_k = \{x \in \Gamma': \, \lambda_2(x)\ge \lambda_1(x)+1/k\},
\]
that is a closed subset of $\Gamma$. Hence, for a fixed $n\in\N$
\[
\Gamma_k \cap B(0,n) = \bigcup_{x \in \Gamma_k} (\Gamma_k \cap \mathcal{U}_x\cap B(0,n)) = \bigcup_{x_1,\dots,x_M\in \Gamma_k} (\Gamma_k \cap \mathcal{U}_{x_i} \cap B(0,n)),
\]
where $B(0,n)$ is the ball centered at the origin with ray $n$. Therefore,
\[
\mathcal H^{d(N-1)-2+\varepsilon}(\Gamma_k\cap B(0,n)) \le \sum_{i=1}^{M}\mathcal H^{d(N-1)-2+\varepsilon}(\Gamma_k \cap \mathcal U_{x_i}\cap B(0,n))=0.
\]
It follows
\[
\mathcal H^{d(N-1)-2+\varepsilon}(\Gamma_k)\le \sum_{n=1}^{+\infty}\mathcal H^{d(N-1)-2+\varepsilon}(\Gamma_k \cap B(0,n)) =0
\]
and
\[
\mathcal{H}^{d(N-1)-2+\varepsilon}(\Gamma') \le \sum_{k=1}^{+\infty}\mathcal{H}^{d(N-1)-2+\varepsilon}(\Gamma_k)=0.
\]
Since
we already proved that $\mathcal H^{
d(N-1)-2+\varepsilon}(\Gamma '')=0$, for every $\varepsilon > 0$, we obtain
\[
\mathrm{dim}_{\mathcal H}(\Gamma \setminus \Sigma) \le d(N-1)-2.
\]

\end{document}